\newtheorem{definition}{Definition}[section]
\newtheorem{remark}{Remark}[section]
\newtheorem{theorem}{Theorem}[section]
\newtheorem{lemma}[theorem]{Lemma}
\newtheorem{proposition}[theorem]{Proposition}
\newtheorem{corollary}[theorem]{Corollary}
\newtheorem{assumption}{Assumption}
\DeclareMathOperator{\Ima}{Im}
\newcommand{\curl}{{\bf curl}\,}
\newcommand{\rot}{{\bf rot}\,}
\newcommand{\curls}{{\rm curl}\,}
\def\div{{\rm div}}
\newcommand{\grad}{{\bf grad}\,}
\newcommand{\supp}{\mathrm{supp}}
\newcommand{\bv}{{\bf v}}
\newcommand{\bw}{{\bf w}}
\newcommand{\M}{{\cal M}}
\newcommand{\MG}{{\cal G}}
\newcommand{\Xhath}[1]{\,\hspace{-1pt}\widehat{X}^{#1}_h\hspace{-1pt}\,}
\newcommand{\Xhathb}[1]{\,\hspace{-1pt}\widehat{X}^{#1}_{h,0}\hspace{-1pt}\,}
\newcommand{\Zhath}[1]{\,\hspace{-1pt}\widehat{Z}^{#1}_h\hspace{-1pt}\,}
\newcommand{\Zhathb}[1]{\,\hspace{-1pt}\widehat{Z}^{#1}_{h,0}\hspace{-1pt}\,}
\newcommand{\Xhatlh}[1]{\,\hspace{-1pt}\widehat{X}^{#1}_{\ell,h}\hspace{-1pt}\,}
\newcommand{\What}{\hat W}
\newcommand{\Xhat}{\hat X}
\let\hat\widehat
\let\tilde\widetilde
\newcommand{\nh}{n_{holes}}
\newcommand{\nc}{n_{comp}}
\newcommand{\hole}{\eta}
\begin{document}
\title{Hierarchical B-spline Complexes of Discrete Differential Forms}
\author{John A. Evans, Michael A. Scott, Kendrick Shepherd, Derek Thomas, and Rafael V\'azquez}
\maketitle

\begin{abstract}
In this paper, we introduce the hierarchical B-spline complex of discrete differential forms for arbitrary spatial dimension.  This complex may be applied to the adaptive isogeometric solution of problems arising in electromagnetics and fluid mechanics.  We derive a sufficient and necessary condition guaranteeing exactness of the hierarchical B-spline complex for arbitrary spatial dimension, and we derive a set of local, easy-to-compute, and sufficient exactness conditions for the two-dimensional setting.  We examine the stability properties of the hierarchical B-spline complex, and we find that it yields stable approximations of both the Maxwell eigenproblem and Stokes problem provided that the local exactness conditions are satisfied.  We conclude by providing numerical results showing the promise of the hierarchical B-spline complex in an adaptive isogeometric solution framework.
\end{abstract}

\section{Introduction} \label{sec:introduction}
Many systems of partial differential equations exhibit important underlying geometric structure, often in the form of topological constraints, conservation or balance laws, symmetries, or invariants. For a large collection of these systems, the underlying geometric structure is best expressed in the language of exterior calculus of differential forms. For instance, the primary quantities of interest in electromagnetics, namely potential fields, electric fields, magnetic fields, and current densities, may be described as differential 0, 1, 2, and 3-forms, respectively, and the symmetry and invariance properties of Maxwell's equations are directly embedded in the classical de Rham complex of differential forms \cite{BOS88}.  Consequently, there has been significant research in recent years dedicated to the development of numerical methods based on discrete analogues of exterior calculus, including discrete exterior calculus \cite{DEC-arxiv,Hirani-darcy,Hirani} and finite element exterior calculus \cite{AFW06,AFW-2}.

Isogeometric discrete differential forms were recently introduced to extend finite element exterior calculus to the emerging framework of isogeometric analysis \cite{BRSV11,Buffa_Sangalli_Vazquez}.  Isogeometric discrete differential forms are constructed via an exact sequence of spline spaces of mixed degree, and they can be understood as generalizations of nodal, edge, face, and volume finite elements to splines of high regularity.  Isogeometric discrete differential forms were first constructed based on tensor-product B-splines, though generalizations based on analysis-suitable T-splines \cite{BSV14} and LR B-splines \cite{Johannessen15} have also been proposed.  It was shown in \cite{BRSV11} that isogeometric discrete differential forms based on tensor-product B-splines conform to a commutative de Rham diagram in conjunction with specially defined projection operators, thus they are a suitable candidate for use in computational electromagnetics.  Indeed, isogeometric discrete differential forms have been applied to the stable and convergent discretization of Maxwell's equations \cite{BRSV11,Corno20161}, and they have also been used in the development of divergence-conforming discretization methods for incompressible fluid flow \cite{Buffa_deFalco_Sangalli,Evans12,EvHu12,EvHu12-2,EvHu12-3,Van17}.  In these works, it is demonstrated that isogeometric discrete differential forms are more accurate on a per degree of freedom basis than classical finite element methods for electromagnetic scattering and incompressible fluid flow.

In this paper, we extend the definition of isogeometric discrete differential forms to the setting of hierarchical B-splines.  As opposed to tensor-product B-splines, hierarchical B-splines allow for local refinement of a B-spline space through the select addition of basis functions belonging to a hierarchy of nested B-spline spaces \cite{Grinspun02,Kraft,Vuong_giannelli_juttler_simeon}.  Therefore, they may be employed in an adaptive isogeometric solution framework in conjunction with suitable error estimators and refinement strategies \cite{Kuru14,Buffa16-1,Buffa16-2,BGi17}.  Hierarchical B-splines have been successfully applied to problems arising in fluid mechanics \cite{Dettmer16}, structural mechanics \cite{Schillinger12}, and fluid-structure interaction \cite{Kadapa16}, though they have not yet been applied to the stable approximation of electromagnetic scattering or the divergence-conforming approximation of incompressible fluid flow.  These latter applications require the use of hierarchical B-spline spaces comprising a discrete de Rham complex, inspiring the current work.

There are many challenges associated with extending isogeometric discrete differential forms to hierarchical B-splines, including the specification of conditions guaranteeing exactness and stability of the hierarchical B-spline complex of isogeometric discrete differential forms.  In this paper, we not only define isogeometric discrete differential forms based on hierarchical B-splines, but we also derive a sufficient and necessary condition guaranteeing exactness of the hierarchical B-spline complex for arbitrary spatial dimension.  Moreover, we derive a set of local, easy-to-compute, and sufficient exactness conditions for the two-dimensional setting, and we demonstrate through numerical example that these conditions also yield stable approximations of both the Maxwell eigenvalue problem and the Stokes problem.  Though the focus of this paper is on hierarchical B-splines, the theory presented in this paper also applies to other spaces of discrete differential forms with hierarchical structure including hierarchical T-splines \cite{Evans15} and wavelets \cite{Cohen01}.

An outline of the paper is as follows.  In Section~\ref{sec:derham}, we discuss mathematical preliminaries, including mathematical notation as well as the definition of the de Rham complex of differential forms.  In Section~\ref{sec:splinecomplex}, we present the B-spline complex of isogeometric discrete differential forms, and we discuss exactness and stability properties exhibited by the B-spline complex.  In Section~\ref{sec:hierarchicalcomplex}, we introduce the hierarchical B-spline complex of isogeometric discrete differential forms.  In Section~\ref{sec:exactness}, we examine the cohomological structure of the hierarchical B-spline complex, and we establish a set of conditions guaranteeing exactness of the complex.  In Section~\ref{sec:stability}, we examine the stability properties of the hierarchical B-spline complex in the context of the Maxwell eigenvalue problem and the Stokes problem.  In Section~\ref{sec:tests}, we apply the hierarchical B-spline complex to the numerical solution of canonical electromagnetics and creeping flow problems of interest.  Finally, in Section~\ref{Sec:conclusion}, we draw conclusions.

\section{Mathematical Preliminaries} \label{sec:derham}
%\noindent {\color{blue} \textbf{Assigned Author(s):} Evans, Thomas, and Vazquez}\\

%\noindent {\color{blue} \textbf{Synopsis:} Provide background on the de Rham complex and present basic mathematical notation.  In addition, discuss how physical vector quantities (e.g., magnetic field, electric field, velocity field, pressure field) belong to the various Sobolev spaces in the complex.  For now, we can be brief as we are targeting a more mathematical audience, but we also do not want to completely lose the general isogeometric community (yet).}

In this section, we introduce the de Rham complex of differential forms and present basic mathematical notation.
%In addition, we demonstrate how physical vector quantities arising in continuum mechanics (e.g., magnetic fields, electric fields, velocity fields, and pressure fields) belong to various spaces appearing in the de Rham complex.  As such, the de Rham complex provides a natural mathematical setting for the discretization of vector fields.
Before explicitly defining the de Rham complex, we first introduce the more general concept of a Hilbert complex.  Our presentation largely follows \cite[Sections~3 and 4]{AFW-2}, so the interested reader is referred to that work for a more thorough introduction.

\subsection{Hilbert Complexes and Subcomplexes}

A {\bf \emph{Hilbert complex}} $\left( V, d \right)$ is a sequence of Hilbert spaces $\left\{V^k\right\}_{k=0}^n$ connected by a set of closed linear maps $d^k: V^k \rightarrow V^{k+1}$ such that the composition of any two consecutive maps is zero (that is, $d^k \circ d^{k-1} = 0$ for $k = 0, \ldots, n-1$).  A Hilbert complex is typically written as
\begin{equation} \label{eq:complex}
\begin{CD}
V^0 @>d^0>> V^1 @>d^1>> V^2 @>d^2>> \ldots @>d^{n-1}>> V^n.
\end{CD}
\end{equation}
%Note that the first and last spaces in the above sequence are {\bf \emph{trivial}} in that they only contain zero.
The elements of the kernel of $d^k$ are called {\bf \emph{closed forms}}, and we denote $Z^k(V) \equiv Z^k(V,d) = \ker\left(d^k\right)$.  The elements of the range of $d^{k-1}$ are called {\bf \emph{exact forms}}, and we denote $B^k(V) \equiv B^k(V,d) = \Ima\left(d^{k-1}\right)$.  
%It is implied that the space $Z^n(V) = V^n$ and the space $B^0(V)$ is trivial (that is, the only member of $B^0(V)$ is zero).  
The $k^{\textup{th}}$ {\bf \emph{cohomology group}} is given by the quotient space $H^k(V) \equiv H^k(V,d) = Z^k(V)/B^k(V)$.  When the cohomology groups are all trivial, we say that the sequence is {\bf \emph{exact}}.  We will deal with exact Hilbert complexes extensively in this work.

A Hilbert complex is said to be {\bf \emph{bounded}} if, for each $k$, $d^k$ is a bounded operator from $V^k$ to $V^{k+1}$, and a Hilbert complex is said to be {\bf \emph{closed}} if, for each $k$, the range of $d^k$ is closed in $V^{k+1}$.  Finite-dimensional Hilbert complexes are bounded and closed by construction, but infinite-dimensional Hilbert complexes are generally neither closed nor bounded.  Most of the Hilbert complexes presented in this paper, including the de Rham complex, are bounded and closed.

Utilizing the inner product structure of the Hilbert space $V^k$, we define the space of {\bf \emph{harmonic forms}} as $\mathfrak{h}^k(V) = Z^k(V) \cap \left(B^k(V)\right)^\perp$ where $\left(B^k(V)\right)^\perp$ is the {\bf \emph{orthogonal complement}} to $B^k(V)$ with respect to the inner-product defined on $V^k$.  For a closed Hilbert complex, the space of harmonic forms $\mathfrak{h}^k(V)$ is isomorphic to the cohomology group $H^k(V)$, and we have the {\bf \emph{Hodge decomposition}}:
\begin{equation*}
V^k = B^k(V) \oplus \left(Z^k(V)\right)^\perp \oplus \mathfrak{h}^k(V).
\end{equation*}
As most of the Hilbert complexes presented in this paper are closed, we will employ cohomology groups and harmonic forms interchangeably in our mathematical analysis.

We finish here by defining the notion of a Hilbert {\bf \emph{subcomplex}}.  We say that $(V_h,d)$ is a subcomplex of $(V,d)$ if $V_h^k \subset V^k$ for $k = 0, \ldots, n$ and $d^k(V_h^k) \subset V_h^{k+1}$ for $k = 0, \ldots, n-1$.  An important fact to note is that a subcomplex of a given Hilbert complex may not share its cohomology structure.  More specifically, the cohomology groups of the subcomplex may not be equal to the cohomology groups of the parent Hilbert complex, and in fact, the cohomology groups of the subcomplex may not even be isomorphic to the cohomology groups of the parent Hilbert complex.  Consequently, a subcomplex of an exact Hilbert complex may not be exact.

\subsection{The de Rham Complex of Differential Forms}

We are now in a position to define the {\bf \emph{de Rham complex}} of {\bf \emph{differential forms}}.  As previously mentioned, the de Rham complex is an example of a bounded and closed Hilbert complex, and to distinguish between the de Rham complex and the more general notion of a Hilbert complex, we employ the notation $\left(X(D), d\right)$ where $D$ is a domain of interest.

Before proceeding, let us briefly review the concepts of {\bf \emph{differential $\textbf{k}$-forms}} and {\bf \emph{exterior derivative operators}}.  We restrict our discussion to the Euclidean setting, but the foundational ideas also extend to the setting of smooth manifolds (see \cite[Section~4.1]{AFW-2} for more details).  With this in mind, let $D \subset \mathbb{R}^n$ be a given open domain.  For such a domain, a differential $0$-form is simply a function $\omega: D \rightarrow \mathbb{R}$, and a differential $1$-form is a function
\begin{equation*}
\omega = \sum_{\alpha=1}^{n} \omega_{\alpha} dx_{\alpha},
\end{equation*}
where $\omega_{\alpha}: D \rightarrow \mathbb{R}$ for $\alpha = 1, \ldots, n$ and $dx_{\alpha}: \mathbb{R}^n \rightarrow \mathbb{R}$ is the linear form which associates to a vector its $\alpha^{\textup{th}}$ coordinate.  Note that we may also view $dx_{\alpha}$ as a differential length element or, more specifically, the differential change in the $\alpha^{\textup{th}}$ spatial coordinate.  With this in mind, we can also define differential surface elements $dx_{\alpha_1} \wedge dx_{\alpha_2}$ and differential volume elements $dx_{\alpha_1} \wedge dx_{\alpha_2} \wedge dx_{\alpha_3}$ where it is assumed that $\alpha_1 < \alpha_2 < \alpha_3$\footnote{It should be noted that under the assumption that $\alpha_1 < \alpha_2$, the differential surface elements $dx_{\alpha_1} \wedge dx_{\alpha_2}$ may not be positively oriented for the standard orientation on $\mathbb{R}^n$, and this is also true for general $k$-forms $dx_{\alpha_1} \wedge dx_{\alpha_2} \wedge \cdots \wedge dx_{\alpha_k}$ with $\alpha_1 < \alpha_2 < \ldots < \alpha_k$.  For instance in the three-dimensional setting, $dx_1 \wedge dx_2$ and $dx_2 \wedge dx_3$ are positively oriented while $dx_1 \wedge dx_3$ is negatively oriented.  That being said, the assumption of ordered multi-indices dramatically simplifies our later presentation.}, and these elements comprise a basis for differential $2$-forms and differential $3$-forms respectively.  Generally, for $k > 1$, a differential $k$-form is a function
\begin{equation*}
\omega = \sum_{\boldsymbol \alpha \in \mathcal{I}_k} \omega_{\boldsymbol \alpha} dx_{\alpha_1} \wedge dx_{\alpha_2} \wedge \cdots \wedge dx_{\alpha_k},
\end{equation*}
where $\omega_{\boldsymbol \alpha}: D \rightarrow \mathbb{R}$, $\boldsymbol \alpha = (\alpha_1, \alpha_2, \ldots, \alpha_k)$ is a multi-index, and $\mathcal{I}_k$ is the set of ordered multi-indices
\begin{equation*}
\mathcal{I}_k = \left\{ \boldsymbol \alpha = (\alpha_1, \alpha_2, \ldots, \alpha_k): 1 \leq \alpha_1 < \alpha_2 < \ldots < \alpha_k \leq n \right\}.
\end{equation*}
Up to this point, we have assumed that $\alpha_1 < \alpha_2 < \ldots < \alpha_k$ when defining the differential basis $k$-form $dx_{\alpha_1} \wedge dx_{\alpha_2} \wedge \cdots \wedge dx_{\alpha_k}$.  However, we may break this assumption if we define
\begin{equation*}
dx_{\alpha_1} \wedge dx_{\alpha_2} \wedge \cdots \wedge dx_{\alpha_k} = \textup{sgn}(\sigma) dx_{\sigma(\alpha_1)} \wedge dx_{\sigma(\alpha_2)} \wedge \ldots \wedge dx_{\sigma(\alpha_k)},
\end{equation*}
where $\sigma: \left\{\alpha_1, \alpha_2, \ldots, \alpha_k\right\} \rightarrow \left\{\alpha_1, \alpha_2, \ldots, \alpha_k\right\}$ is an arbitrary permutation of the set $\left\{\alpha_1, \alpha_2, \ldots, \alpha_k\right\}$ and $\textup{sgn}(\sigma)$ is its sign (+1 if there is an even number of pairs $\alpha_i < \alpha_j$ such that $\sigma(\alpha_i) > \sigma(\alpha_j)$ and -1 otherwise).  Note that this implies
\begin{equation*}
dx_{\alpha_1} \wedge dx_{\alpha_2} \wedge \cdots \wedge dx_{\alpha_k} = 0
\end{equation*}
if $\alpha_i = \alpha_j$ for some $i \neq j$.  With these conventions in mind, we define the {\bf \emph{wedge product}} between a differential $k$-form and a differential $l$-form as
\begin{equation*}
\omega \wedge \eta = \sum_{\boldsymbol \alpha \in \mathcal{I}_k} \sum_{\boldsymbol \beta \in \mathcal{I}_l} \omega_{\boldsymbol \alpha} \eta_{\boldsymbol \beta} dx_{\alpha_1} \wedge \ldots \wedge dx_{\alpha_k} \wedge dx_{\beta_1} \wedge \ldots \wedge dx_{\beta_l},
\end{equation*}
and the result is a differential $(k+l)$-form. We note that the wedge product generalizes the notion of cross product to arbitrary dimension. %, and indeed we note that:
%\begin{equation*}
%\omega \wedge \eta = \sum_{1 \leq \alpha < \beta \leq n} \left( \omega_{\alpha} \eta_{\beta} - \omega_{\beta} \eta_{\alpha} \right) dx_{\alpha} \wedge dx_{\beta}
%\end{equation*}
%for two differential $1$-forms $\omega = \sum_{\alpha=1}^{n} \omega_{\alpha} dx_{\alpha}$ and $\eta = \sum_{\beta=1}^{n} \eta_{\beta} dx_{\beta}$.  Further note that the wedge product between a differential $k$-form and a differential $l$-form yields a differential $(k+l)$-form.
We are now finally ready to define the exterior derivative operator for a differential $k$-form.  The exterior derivative of a differential $0$-form, which is also referred to as the {\bf \emph{differential}}, is defined as
\begin{equation*}
d\omega = \sum_{\alpha=1}^n \frac{\partial \omega}{\partial x_{\alpha}} dx_\alpha,
\end{equation*}
while the exterior derivative of a differential $k$-form for $k > 0$ is defined as
\begin{equation*}
d^k \omega = \sum_{\boldsymbol \alpha \in \mathcal{I}_k} d\omega_{\boldsymbol \alpha} \wedge dx_{\alpha_1} \wedge dx_{\alpha_2} \wedge \cdots \wedge dx_{\alpha_k},
\end{equation*}
where we note $d\omega_{\boldsymbol \alpha}$ is the differential of the differential $0$-form $\omega_{\boldsymbol \alpha}$.  For ease of notation, we interchangeably write the exterior derivative of a differential $k$-form $\omega$ as $d\omega$ and $d^k\omega$.  Note that, by construction, the exterior derivative of a differential $k$-form is a differential $(k+1)$-form, and we also have that the composition of two exterior derivatives is identically zero (i.e, $d^k \circ d^{k-1} = 0$).  

To construct the de Rham complex, we need to endow differential $k$-forms with Hilbert space structure.  To do so, we first define the space of {\bf \emph{smooth}} differential $k$-forms as $\Lambda^k(D)$, and we define an $L^2$-inner product between two smooth differential $k$-forms as
\begin{equation*}
\left( \omega, \eta \right)_{L^2 \Lambda^k(D)} =  \sum_{\boldsymbol \alpha \in \mathcal{I}_k} \int_D \omega_{\boldsymbol \alpha} \eta_{\boldsymbol \alpha} dD.
\end{equation*}
If we take the completion of $\Lambda^k(D)$ with respect to the above inner-product, we obtain a Hilbert space which we denote as $L^2 \Lambda^k(D)$.  Now, let us define
\begin{equation*}
X^k(D) = \left\{ \omega \in L^2\Lambda^k(D) : d^k \omega \in L^2\Lambda^{k+1}(D) \right\},
\end{equation*}
which is a Hilbert space equipped with the inner-product
\begin{equation*}
\left( \omega, \eta \right)_{X^k(D)} = \left( \omega, \eta \right)_{L^2 \Lambda^k(D)} + \left( d^k \omega, d^k \eta \right)_{L^2 \Lambda^{k+1}(D)}.
\end{equation*}
With the above notation established, we see that we have arrived at a bounded and closed Hilbert complex of the form:
\begin{equation} \label{eq:derhamcomplex}
\begin{CD}
X^0(D) @>d^0>> X^1(D) @>d^1>> X^2(D) @>d^2>> \ldots @>d^{n-1}>> X^n(D).
\end{CD}
\end{equation}
We refer to the above as the $L^2$ de Rham complex of differential forms or simply the de Rham complex for short.

The meaning of the de Rham complex is most evident in the two- and three-dimensional settings.  Here, all differential $k$-forms may be viewed as either scalar or vector {\bf \emph{proxy}} fields  (see \cite[Section~4.2]{AFW-2} for more details).  Via proxy fields, the exterior derivatives coincide with standard differential operators of calculus.  For a three-dimensional domain $D \subset \mathbb{R}^3$, we find that
\begin{equation*}
d^0 = \grad, \hspace{15pt} d^1 = \curl, \hspace{15pt} d^2 = \div,
\end{equation*}
and the de Rham complex becomes:
\begin{equation} \label{eq:derhamcomplex_3D}
\begin{CD}
H^1(D) @>\grad>> {\bf H}(\curl, D) @>\curl>> {\bf H}(\div, D) @>\div>> L^2(D).
\end{CD}
\end{equation}
For a two-dimensional domain $D \subset \mathbb{R}^2$, we find that
\begin{equation*}
d^0 = \grad, \hspace{15pt} d^1 = \curls,
\end{equation*}
and the de Rham complex becomes:
\begin{equation} \label{eq:derhamcomplex_2D}
\begin{CD}
H^1(D) @>\grad>> {\bf H}(\curls, D) @>\curls>> L^2(D).
\end{CD}
\end{equation}
Consequently, the de Rham complex provides an algebraic framework for vector calculus in the two- and three-dimensional settings.  We will often work with proxy fields in the paper as they are both easier to intuit and to visualize.

In the two-dimensional setting, one can also consider the {\bf \emph{rotated de Rham complex}}, that is, the standard de Rham complex rotated by $\pi/2$, that takes the form:
\begin{equation} \label{eq:stokescomplex_2D}
\begin{CD}
H^1(D) @>\rot>> {\bf H}(\div,D) @>\div>> L^2(D),
\end{CD}
\end{equation}
where $\rot$ is the {\bf \emph{rotor}} or {\bf \emph{perpendicular gradient}} operator.  The rotated de Rham complex is particularly useful in the development of discretization methodologies for the mixed form of the Laplacian operator, and as we will later see, it also provides a natural framework for incompressible fluid flow.

In this work, we will primarily work with de Rham complexes of differential forms subject to {\bf \emph{homogeneous}} or {\bf \emph{vanishing boundary conditions}}.  Such complexes are built from spaces of differential $k$-forms with {\bf \emph{compact support}} in exactly the same manner as that above.  We denote de Rham complexes with vanishing boundary boundary conditions by $(X_0(D),d)$, and we write:
\begin{equation} \label{eq:derhamcomplex_homogeneous}
\begin{CD}
X^0_0(D) @>d^0>> X^1_0(D) @>d^1>> X^2_0(D) @>d^2>> \ldots @>d^{n-1}>> X^n_0(D).
\end{CD}
\end{equation}
Through the use of proxy fields, the de Rham complex $(X_0(D), d)$ is comprised of the spaces
\begin{equation*}
X^0_0(D) = H^1_0(D), \; X^1_0(D) = {\bf H}_0(\curls, D), \; X^2_0(D) = L^2(D),
\end{equation*}
for a two-dimensional domain $D \subset \mathbb{R}^2$, and
\begin{equation*}
X^0_0(D) = H^1_0(D), \; X^1_0(D) = {\bf H}_0(\curl, D), \; X^2_0(D) = {\bf H}_0(\div, D), \; X^3_0(D) = L^2(D),
\end{equation*}
for a three-dimensional domain $D \subset \mathbb{R}^3$.

In the setting when the domain $D$ is {\bf \emph{contractible}}, the de Rham complexes presented above have special structure.  Specifically, when the domain $D$ is contractible, the {\bf \emph{extended de Rham complexes}}
\begin{equation*}
\begin{CD}
0 @>>> \mathbb{R} @>\subset>> X^0(D) @>d^0>> X^1(D) @>d^1>> X^2(D) @>d^2>> \ldots @>d^{n-1}>> X^n(D) @>>> 0,
\end{CD}
\end{equation*}
and
\begin{equation*}
\begin{CD}
0 @>>> X^0_0(D) @>d^0>> X^1_0(D) @>d^1>> X^2_0(D) @>d^2>> \ldots @>d^{n-1}>> X^n_0(D) @>\int>> \mathbb{R} @>>> 0
\end{CD}
\end{equation*}
are exact.  Consequently, in this setting, all of the cohomology groups of the de Rham complex $(X(D),d)$ are trivial except for $H^0(X(D)) = \mathbb{R}$, and all of cohomology groups of the de Rham complex $(X_0(D),d)$ are trivial except for $H^n(X_0(D)) = \mathbb{R}$.  When the domain $D$ is not contractible, the extended de Rham complexes presented above are no longer exact.  However, the structure of the cohomology groups is well-understood in this more general setting.  Namely, $\dim H^k(X(D)) = \dim H^{n-k}(X_0(D)) = \beta_k$, where $\beta_k$ is the $k^{\textup{th}}$ {\bf \emph{Betti number}} for the domain $D$.  Recall the zeroth Betti number is the number of connected components in the domain, the first Betti number is the number of one-dimensional or ``circular'' holes in the domain, and the second Betti number is the number of two-dimensional ``voids'' or ``cavities'' in the domain.  Betti numbers exist for more general  topological spaces including graphs, and the topological meaning of the Betti number will be implicitly exploited throughout this paper in establishing theoretical results for discrete spaces of differential forms.

\subsection{The Parametric Domain, The Physical Domain, and Pullback Operators}
\label{subsec:pullback}
Throughout this paper, we will work with a particular open domain, $\hat \Omega = (0,1)^n$, which we refer to as the {\bf \emph{parametric domain}}.  It is most natural in the isogeometric setting to first define discretizations over the parametric domain $\hat \Omega$ and then map these to a {\bf \emph{physical domain}} $\Omega \subset \mathbb{R}^n$ using a parametric mapping $\textbf{F}: \hat \Omega \rightarrow \Omega$.  Given the frequency with which we will work with the parametric domain, we will denote the de Rham complex corresponding to the parametric domain by $(\hat{X}(\hat \Omega),d)$ or simply $(\hat{X},d)$.

We relate differential $k$-forms in the parametric domain to differential $k$-forms in the physical domain using a set of {\bf \emph{pullback operators}} $\iota^k: X^k(\Omega) \rightarrow \hat{X}^k(\hat{\Omega})$.  The pullback operator for differential $0$-forms takes the form $\iota^0(\omega) = \omega \circ \textbf{F}$ for all $\omega \in X^0(\Omega)$, while the pullback operator for differential $n$-forms takes the form $\iota^n(\omega) = \det(D\textbf{F}) \left(\omega \circ \textbf{F}\right)$ for all $\omega \in X^n(\Omega)$ where $D\textbf{F}$ is the Jacobian matrix of the mapping $\textbf{F}$.  The exact form of the pullback operator for other differential $k$-forms is more involved, and the interested reader is referred to \cite[Section~2]{AFW06} for more details.  The pullback operators are commutative in the sense that $d^k \circ \iota^k = \iota^{k+1} \circ d^k$.  Consequently, the following commutative de Rham diagram is satisfied:
\begin{equation}
\begin{CD}
\Xhat^0(\hat \Omega) @>d^0>> \Xhat^1(\hat \Omega) @>d^1>> \ldots @>d^{n-1}>> \Xhat^n(\hat \Omega) \\
@A\iota^0AA @A\iota^1AA @. @A\iota^nAA  \\
X^0(\Omega) @>d^0>> X^1(\Omega) @>d^1>> \ldots @>d^{n-1}>> X^n(\Omega).
\end{CD}
\end{equation}
The commutative property of the pullback operators will be exploited when defining the spline complex of discrete differential forms in the physical domain.

\subsection{Application of the de Rham Complex in Electromagnetics and the Stokes Complex for Incompressible Fluid Mechanics}
The most prominent application of differential forms in numerical analysis is in computational electromagnetism. Bossavit formulated Maxwell's equations in terms of differential forms in \cite{BOS88}, and he established a link between {\bf\emph{Whitney forms}}, developed in the computational geometry community, and low-order finite element methods used in computational electromagnetics. Specifically, he showed the electric and magnetic field intensities may be represented as differential 1-forms which can be further approximated through edge finite elements, while the magnetic induction and the electric displacement may be represented as differential 2-forms which can be approximated with face finite elements. The computational electromagnetics community has further developed this connection, and it is now common to introduce finite element schemes as discrete differential forms (see \cite{AFW-2,HIP02a} and the references therein). Coincidentally, preservation of the structure of the de Rham complex at the discrete level is crucial for the development of stable and convergent approximation schemes in computational electromagnetics, and in particular for the solution of Maxwell's eigenproblem \cite{Boffi01} as we will see later in this paper.

%The scalar and vector fields appearing in electromagnetics are often represented as differential forms.  Specifically, the electric and magnetic field intensities are commonly represented as differential one-forms while the magnetic flux and electric current densities are commonly represented as differential two-forms.  Coincidentally, preservation of the structure of the de Rham complex at the discrete level is vital to the development of stable and convergent approximations of Maxwell's eigenproblem \cite{Boffi01} as we will see later in this paper.

The scalar and vector fields appearing in incompressible fluid mechanics may also be represented as differential forms, although additional smoothness is required in the context of viscous flow.  The {\bf \emph{Stokes complex}} naturally accommodates such smoothness requirements \cite{Buffa_deFalco_Sangalli,EvHu12,EvHu12-2,EvHu12-3}.  For a two-dimensional domain $D \subset \mathbb{R}^2$, the Stokes complex is a subcomplex of the rotated de Rham complex and takes the form:
\begin{equation} \label{eq:stokescomplex_2D_regular}
\begin{CD}
H^2(D) @>\rot>> {\bf H}^1(D) @>\div>> L^2(D),
\end{CD}
\end{equation}
where
\begin{equation*}
{\bf H}^1(D) = \left(H^1(D)\right)^n.
\end{equation*}
For a three-dimensional domain $D \subset \mathbb{R}^3$, the Stokes complex is a subcomplex of the standard de Rham complex and takes the form:
\begin{equation} \label{eq:stokescomplex}
\begin{CD}
H^1(D) @>\grad>> \boldsymbol{\Phi}(D) @>\curl>> {\bf H}^1(D) @>\div>> L^2(D),
\end{CD}
\end{equation}
where
\begin{equation*}
\boldsymbol{\Phi}(D) = \left\{ \boldsymbol{\phi} \in {\bf H}(\curl, D): \curl \boldsymbol{\phi} \in {\bf H}^1(D) \right\}.
\end{equation*}
The last two spaces of the Stokes complex, namely ${\bf H}^1(D)$ and $L^2(D)$, are natural spaces for the velocity and pressure fields in incompressible fluid mechanics.  Preservation of the Stokes complex at the discrete level yields stable finite element methods which return point-wise divergence-free velocity fields, a property which is highly desirable in multi-physics applications such as coupled flow-transport \cite{Matthies_Tobiska07} and fluid-structure interaction \cite{Kamensky17}.

\section{The B-spline Complex of Discrete Differential Forms} \label{sec:splinecomplex}
%\noindent {\color{blue} \textbf{Assigned Author(s):} Vazquez and Evans}\\

%\noindent {\color{blue} \textbf{Synopsis:} Provide background on previous work on B-spline complexes of isogeometric discrete differential forms.  The aim is to introduce not only the flavor of isogeometric discrete differential forms, but also introduce notions such as Greville grids which we will later exploit in our discussion of hierarchical spline complexes.  This will also require introducing the notion of a spline complex restricted to a submesh.  We should also make mention of T-spline complexes.}

In this section, we present the tensor-product B-spline complex of isogeometric discrete differential forms, which we refer to henceforth as simply the B-spline complex, extending the construction in \cite{BRSV11} to arbitrary dimension.  This complex is built using the tensor-product nature of multivariate B-splines as its name implies.  To fix notation, we first present univariate B-splines and multivariate B-splines before introducing the B-spline complex in both the parametric and physical domain.  We then discuss exactness and stability properties exhibited by the B-spline complex, and we finish with a geometric interpretation of the complex.

\subsection{Univariate B-splines} \label{sec:univariate}
To begin, let us briefly recall the definition of {\bf \em{univariate B-splines}}, and we refer the reader to \cite{deBoor} for further details.  Let $p$ denote the polynomial degree of the univariate B-splines, and let $m$ denote the number of B-spline basis functions in the space.  To define the B-spline basis functions, we first introduce a {\bf \em{($p$-)open knot vector}} $\Xi = \{\xi_{1}, \ldots , \xi_{m+p+1} \}$, where
\begin{equation*}
0 = \xi_{1} = \ldots = \xi_{p+1} < \xi_{p+2} \le \ldots \le \xi_m < \xi_{m+1} = \ldots = \xi_{m+p+1} = 1.
\end{equation*}
Using the well known Cox-de Boor formula, we can then define B-spline basis functions $\left\{B_{i,p}\right\}_{i=1}^m$.  We denote by $S_p(\Xi)$ the space they span, which is the space of piecewise polynomials of degree $p$, with the number of continuous derivatives at each knot $\xi_i$ given by $p-r_i$, where $r_i$ is the multiplicity of the knot.

We can actually define the $i^{\textup{th}}$ B-spline basis function $B_{i,p}$ as the unique B-spline basis function defined from the Cox-de Boor algorithm and the local knot vector $\{\xi_i, \ldots, \xi_{i+p+1}\}$, and its {\bf \emph{support}} is given by the interval $(\xi_i,\xi_{i+p+1})$. We remark that the support of functions will be always taken as an open set\footnote{This is the standard choice in the literature for hierarchical B-splines.} throughout the paper. From the internal knots of the local knot vector, we associate to each basis function a {\bf \emph{Greville site}} \cite[Chapter~9]{deBoor}:
\begin{equation} \label{eq:greville-point}
\gamma_i = \frac{\xi_{i+1} + \ldots + \xi_{i+p}}{p}.
\end{equation}
We will make heavy use of Greville sites in our later mathematical analysis.

Assuming the multiplicity of each of the internal knots is less or equal to $p$ (i.e., the B-spline functions are at least continuous), the derivative of a B-spline belonging to the space  $S_{p}(\Xi)$ is a B-spline belonging to the space $S_{p-1}(\Xi')$, where $\Xi' = \{\xi_2, \ldots, \xi_{m+p}\}$ is defined from $\Xi$ by removing the first and last repeated knots. We define the normalized basis functions $D_{i,p-1} = \frac{p}{\xi_{i+p} - \xi_i}B_{i,p-1}$, also called {\bf \em{Curry-Schoenberg B-splines}}, and note the following formula holds:
\begin{equation*}
B'_{i,p}(\zeta) = D_{i,p-1}(\zeta) - D_{i+1,p-1}(\zeta)
\end{equation*}
for $i = 1, \ldots, m$, where we are considering the convention that $D_{1,p-1}(\zeta) = D_{m+1,p-1}(\zeta) = 0$ for any $\zeta \in (0,1)$.  Analogous to before, the support of the $i^{\textup{th}}$ Curry-Schoenberg B-spline $D_{i,p}$ is $(\xi_i, \ldots, \xi_{i+p})$, and we can uniquely define the Curry-Schoenberg B-splines from their local knot vectors.

\subsection{Multivariate Tensor-Product B-splines}

{\bf \em{Multivariate tensor-product B-splines}}, or simply multivariate B-splines, are constructed from a tensor-product of univariate B-splines.  Namely, given a vector of polynomial degrees ${\bf p} = (p_1, \ldots, p_n)$ and a set of open knot vectors ${\boldsymbol \Xi} = \left\{ \Xi_q \right\}_{q=1}^n$, we define the multivariate B-spline basis as
\[
{\cal B}_{\bf p}(\boldsymbol \Xi) := \{ B_{\bf i,p}({\boldsymbol \zeta}) = B_{i_1,p_1}(\zeta_1) \ldots B_{i_n,p_n}(\zeta_n) \} \; \textup{ for ${\boldsymbol \zeta} \in (0,1)^n$},
\]
and the corresponding space they span as
\begin{equation*}
S_{\bf p}(\boldsymbol \Xi) \equiv S_{p_1,\ldots,p_n}(\Xi_1,\ldots,\Xi_n) = \otimes^n_{q=1} S_{p_q}\left(\Xi_q\right) = {\rm span} ({\cal B}_{\bf p}(\boldsymbol \Xi)),
\end{equation*}
where we have introduced the shorthand $S_{\bf p}(\boldsymbol \Xi)$ for ease of notation. %The Greville sites can also be generalized to the multivariate setting by simple Cartesian product.

For the definition of the B-spline complex, it is convenient to define an alternate set of multivariate B-splines as the product of a suitable choice of standard B-splines and Curry-Schoenberg B-splines. To do so we first introduce, for a given multi-index $\boldsymbol \alpha = \left(\alpha_1,\alpha_2,\ldots,\alpha_k\right)$, the notation
\begin{equation*}
\tilde{p}_q \equiv \tilde{p}_q(\boldsymbol \alpha) = \left\{
\begin{array}{cl}
p_q - 1 & \textup{ if } q = \alpha_j \textup{ for some } j \in \left\{1, \ldots, k \right\}, \\
p_q & \textup{ otherwise, }
\end{array} \right.
\end{equation*}
and
\begin{equation*}
\tilde{\Xi}_q \equiv \tilde{\Xi}_q(\boldsymbol \alpha) = \left\{
\begin{array}{cl}
\Xi'_q & \textup{ if } q = \alpha_j \textup{ for some } j \in \left\{1, \ldots, k \right\}, \\
\Xi_q & \textup{ otherwise. }
\end{array} \right.
\end{equation*}
With this notation, we define
\[
{\cal B}_{\bf p}(\boldsymbol \Xi; \boldsymbol \alpha) := \{\tilde B_{\bf i,\tilde p}({\boldsymbol \zeta}) = \tilde{B}_{i_1,\tilde p_1}(\zeta_1) \ldots \tilde{B}_{i_n,\tilde p_n}(\zeta_n) \} \; \textup{ for ${\boldsymbol \zeta} \in (0,1)^n$},
\]
where
\[
\tilde B_{i_q,\tilde p_q} \equiv \tilde B_{i_q,\tilde p_q}(\boldsymbol \alpha) := 
\left \{
\begin{array}{cl}
D_{i_q,p_q-1} & \textup{ if } q = \alpha_j \textup{ for some } j \in \left\{1, \ldots, k \right\}, \\
B_{i_q,p_q} & \textup{ otherwise, }
\end{array}
\right.
\]
and we denote the space these basis functions span by
\begin{equation*}
S_{\bf p}(\boldsymbol \Xi;\boldsymbol \alpha) = \otimes^n_{q=1} S_{\tilde{p}_q}\left(\tilde{\Xi}_q\right) = {\rm span}({\cal B}_{\bf p}(\boldsymbol \Xi; \boldsymbol \alpha)). 
\end{equation*}
Note that in constructing the multivariate spline space $S_{\bf p}(\boldsymbol \Xi;\boldsymbol \alpha)$, we have reduced the polynomial degree and continuity in each of the directions indicated by the multi-index $\boldsymbol \alpha$. In these directions, standard univariate B-splines are replaced by Curry-Schoenberg B-splines in order to construct multivariate basis functions.

%% \begin{equation*}
%% S_{\bf p}(\boldsymbol \Xi;\boldsymbol \alpha) \equiv S_{p_1,\ldots,p_n}(\Xi_1,\ldots,\Xi_n; \boldsymbol \alpha) = \otimes^n_{i=1} S_{\tilde{p}_i(\alpha)}\left(\tilde{\Xi}_i(\alpha)\right), 
%% \end{equation*}
%% where
%% \begin{equation*}
%% \tilde{p}_i(\alpha) = \left\{
%% \begin{array}{cc}
%% p_i - 1 & \textup{ if } i = \alpha_j \textup{ for some } j \in \left\{1, \ldots, k \right\}, \\
%% p_i & \textup{ otherwise, }
%% \end{array} \right.
%% \end{equation*}
%% and
%% \begin{equation*}
%% \tilde{\Xi}_i(\alpha) = \left\{
%% \begin{array}{cc}
%% \Xi'_i & \textup{ if } i = \alpha_j \textup{ for some } j \in \left\{1, \ldots, k \right\}, \\
%% \Xi_i & \textup{ otherwise, }
%% \end{array} \right.
%% \end{equation*}
%% where again we have introduced the shorthand $S_p(\Xi;\alpha)$ for ease of notation.  Note that in constructing the multivariate B-spline space $S_p(\Xi;\alpha)$, we have reduced the polynomial degree and continuity in each of the directions indicated by the multi-index $\alpha$.  Consequently, each of the basis functions for $S_p(\Xi;\alpha)$ is built from a tensor product of a suitable choice of standard B-spline basis functions $B_{i_j,p_j}$, and Curry-Schoenberg B-splines $D_{i_j,p_j-1}$. See \cite[Section~4]{BSV14} or \cite{Ratnani} for more details.

\subsection{The B-spline Complex}\label{sec:spline-complex}
We are now in a position to define the {\bf \em{B-spline complex of isogeometric discrete differential forms}}.  We first define this complex in the parametric domain before mapping it onto the physical domain.  To begin, let us assume that we are given a set of polynomial degrees $\left\{p_q\right\}_{q=1}^n$ and open knot vectors $\left\{ \Xi_q \right\}_{q=1}^n$, and let us further assume that multiplicity of the internal knots of $\Xi_q$ is never greater than $p_q$ for $q = 1, \ldots, n$.  This implies that the functions in $S_{\bf p}(\boldsymbol \Xi)$ are continuous and hence $S_{\bf p}(\boldsymbol \Xi) \subset H^1(\hat \Omega)$.  Then, we define the tensor-product spline space of isogeometric discrete differential $0$-forms as $\Xhath{0} := S_{\bf p}(\boldsymbol \Xi)$, and denote the corresponding basis by ${\cal B}^0 := {\cal B}_{\bf p}(\boldsymbol \Xi)$. For $k>0$, we define the basis of B-splines differential $k$-forms as
\begin{equation*}
{\cal B}^k = \bigcup_{\boldsymbol \alpha \in {\cal I}_k} \left\{ \tilde B_{\bf i, \tilde p} d\hat{x}_{\alpha_1} \wedge \ldots \wedge d\hat{x}_{\alpha_k} : \, \tilde B_{\bf i, \tilde p} \in {\cal B}_{\bf p}(\boldsymbol \Xi;\boldsymbol \alpha) \right\},
\end{equation*}
and we denote the space they span, which we call the tensor-product spline space of isogeometric discrete differential $k$-forms, by
\begin{equation*}
\Xhath{k} = \left\{ \hat{\omega}^h = \sum_{\boldsymbol \alpha \in \mathcal{I}_k} \hat{\omega}^h_{\boldsymbol \alpha} d\hat{x}_{\alpha_1} \wedge \ldots \wedge d\hat{x}_{\alpha_k} : \, \hat{\omega}^h_{\boldsymbol \alpha} \in S_{\bf p}(\boldsymbol \Xi;\boldsymbol \alpha), \hspace{5pt} \forall \boldsymbol \alpha \in \mathcal{I}_k \right\} = {\rm span}({\cal B}^k).
\end{equation*}
%\begin{equation*}
%\Xhath{k} = \left\{ \hat{\omega}^h = \sum_{\alpha \in \mathcal{I}_k} \hat{\omega}^h_\alpha d\hat{x}_{\alpha_1} \wedge \ldots \wedge d\hat{x}_{\alpha_k} : \hat{\omega}^h_{\alpha} \in S_p(\Xi;\alpha), \hspace{5pt} \forall \alpha \in \mathcal{I}_k \right\}.
%\end{equation*}
Then, it is easily shown that the above spaces comprise a discrete de Rham complex of the form:
\begin{equation} \label{eq:cd-iga-tp}
\begin{CD}
0 @>>> \mathbb{R} @>\subset>> \Xhath{0} @>d^0>> \Xhath{1} @>d^1>> \ldots @>d^{n-1}>> \Xhath{n} @>>> 0,
\end{CD}
\end{equation}
which we call the B-spline complex.

As in the infinite-dimensional setting, the meaning of the B-spline complex is most evident in the two- and three-dimensional settings, since the exterior derivatives coincide with differential operators of calculus via scalar or vector proxy fields.  In the two-dimensional case, we find that:
\begin{align}
\Xhath{0} &= S_{\bf p}(\boldsymbol \Xi) = S_{p_1,p_2}(\Xi_1,\Xi_2) \nonumber \\
\Xhath{1} &= S_{\bf p}\left(\boldsymbol \Xi; 1\right) \times S_{\bf p}\left(\boldsymbol \Xi; 2\right) = S_{p_1-1,p_2}(\Xi'_1,\Xi_2) \times S_{p_1,p_2-1}(\Xi_1,\Xi'_2) \nonumber \\
\Xhath{2} &= S_{\bf p}\left(\boldsymbol \Xi; (1, 2) \right) = S_{p_1-1,p_2-1}(\Xi'_1,\Xi'_2), \nonumber
\end{align}
and in the three-dimensional case, we find that:
\begin{align}
\Xhath{0} &= S_{\bf p}(\boldsymbol \Xi) \nonumber \\
&= S_{p_1,p_2,p_3}(\Xi_1,\Xi_2,\Xi_3) \nonumber \\
\Xhath{1} &= S_{\bf p}\left(\boldsymbol \Xi; 1\right) \times S_{\bf p}\left(\boldsymbol \Xi; 2\right) \times S_{\bf p}\left(\boldsymbol \Xi; 3\right) \nonumber \\
&= S_{p_1-1,p_2,p_3}(\Xi'_1,\Xi_2,\Xi_3) \times S_{p_1,p_2-1,p_3}(\Xi_1,\Xi'_2,\Xi_3) \times S_{p_1,p_2,p_3-1}(\Xi_1,\Xi_2,\Xi'_3) \nonumber \\
\Xhath{2} &= S_{\bf p}\left(\boldsymbol \Xi; (2,3)\right) \times S_{\bf p}\left(\boldsymbol \Xi; (1,3)\right) \times S_{\bf p}\left(\boldsymbol \Xi; (1,2)\right) \nonumber \\
&= S_{p_1,p_2-1,p_3-1}(\Xi_1,\Xi'_2,\Xi'_3) \times S_{p_1-1,p_2,p_3-1}(\Xi'_1,\Xi_2,\Xi'_3) \times S_{p_1-1,p_2-1,p_3}(\Xi'_1,\Xi'_2,\Xi_3) \nonumber \\
\Xhath{3} &= S_{\bf p}\left(\boldsymbol \Xi; (1, 2, 3) \right) \nonumber \\
&= S_{p_1-1,p_2-1,p_3-1}(\Xi'_1,\Xi'_2,\Xi'_3). \nonumber
\end{align}
Note that when all of the polynomial degrees are equal to one, the above spaces collapse onto the spaces of the {\bf \em{lowest-order finite element complex}}.  For instance, in the three-dimensional case, the discrete $0$-forms become continuous piecewise trilinear finite elements, the discrete $1$-forms 
become lowest-order edge N\'{e}d\'{e}lec finite elements, the discrete $2$-forms become lowest-order face N\'{e}d\'{e}lec finite elements, and the discrete $3$-forms become discontinuous piecewise constant finite elements.  Later, we will exploit a relationship between the B-spline complex and the lowest-order finite element complex to develop a geometric interpretation of the B-spline complex.

Throughout this paper, we will work with spaces subject to vanishing boundary conditions.  With this in mind, we define $\Xhathb{k} = \Xhath{k} \cap \Xhat^k_0$ to be the tensor-product spline space of isogeometric discrete differential $k$-forms subject to vanishing boundary conditions for all integers $k \geq 0$, and it can again be easily shown that these spaces comprise a discrete de Rham complex of the form:
\begin{equation} \label{eq:cd-iga-tp-bc}
\begin{CD}
0 @>>> \Xhathb{0} @>d^0>> \Xhathb{1} @>d^1>> \ldots @>d^{n-1}>> \Xhathb{n} @>\int>> \mathbb{R} @>>> 0.
\end{CD}
\end{equation}
Now that we have defined B-spline complexes with and without vanishing boundary conditions, it remains to define the corresponding B-spline complexes in the physical domain.  This is simply done using the pullback operators defined in Subsection \ref{subsec:pullback}.  Specifically, we define
\begin{equation*}
X_h^k\left(\Omega\right) := \left\{ \omega : \iota^k(\omega) \in \Xhath{k} \right\},
\end{equation*}
and
\begin{equation*}
X_{h,0}^k\left(\Omega\right) := \left\{ \omega : \iota^k(\omega) \in \Xhathb{k} \right\}
\end{equation*}
for all integers $k \geq 0$.  Using the definition of the pullback operators, it is easily shown that the above spaces comprise discrete de Rham complexes in the physical domain.

In the two-dimensional setting, a \textbf{\textit{rotated B-spline complex}} may easily be constructed by rotating the orientation of the discrete 1-forms by $\pi/2$ in parametric space and employing appropriate push-forward operators to map the rotated complex to the physical domain.  The rotated B-spline complex is ideally suited for the design of divergence-conforming discretizations for incompressible fluid flow.  See \cite{Evans12,EvHu12,EvHu12-2,EvHu12-3} for more details.

\subsection{Exactness and Stability of the B-spline Complex}

We have the following result regarding the exactness and stability of the B-spline complexes introduced in the preceding section.  The result was proven in three dimensions in \cite{BRSV11}, though the method of proof extends naturally to an arbitrary number of spatial dimensions.

\begin{proposition}
\label{proposition:commuting_parametric}
There exist $L^2$-stable projection operators $\Pi^k: X^k\left(\Omega\right) \rightarrow X^k_h\left(\Omega\right)$ and $\Pi^k_0 : X^k_0\left(\Omega\right) \rightarrow X^k_{h,0}\left(\Omega\right)$ for $k = 0, \ldots, n$ such that both
\begin{equation}
\begin{CD}
0 @>>> \mathbb{R} @>\subset>> X^0\left(\Omega\right) @>d^0>> X^1\left(\Omega\right) @>d^1>> \ldots @>d^{n-1}>> X^n\left(\Omega\right) @>>> 0 \\
& & & & @V \Pi^0 VV @V \Pi^1 VV & & @V \Pi^n VV \\
0 @>>> \mathbb{R} @>\subset>> X^0_h\left(\Omega\right) @>d^0>> X^1_h\left(\Omega\right) @>d^1>> \ldots @>d^{n-1}>> X^n_h\left(\Omega\right) @>>> 0
\end{CD}
\end{equation}
and
\begin{equation}
\begin{CD}
0 @>>> X_0^0\left(\Omega\right) @>d^0>> X_0^1\left(\Omega\right) @>d^1>> \ldots @>d^{n-1}>> X_0^n\left(\Omega\right) @>\int>> \mathbb{R} @>>> 0 \\
& & @V \Pi^0_0 VV @V \Pi^1_0 VV & & @V \Pi^n_0 VV & &\\
0 @>>> X^0_{h,0}\left(\Omega\right) @>d^0>> X^1_{h,0}\left(\Omega\right) @>d^1>> \ldots @>d^{n-1}>> X^n_{h,0}\left(\Omega\right) @>\int>> \mathbb{R} @>>> 0
\end{CD}
\end{equation}
commute. Furthermore, the continuity constants associated with the commutative projection operators only depend on the polynomial degrees of the B-spline discretization, the shape regularity of the B\'{e}zier mesh, and the parametric mapping.
\end{proposition}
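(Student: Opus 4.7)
The plan is to reduce everything to the parametric domain and to a univariate construction, then tensorize and pull back. Because pullback operators $\iota^k$ commute with $d^k$, it suffices to produce commuting $L^2$-stable projectors $\hat\Pi^k: \hat X^k(\hat\Omega) \to \Xhath{k}$ and $\hat\Pi^k_0: \hat X^k_0(\hat\Omega) \to \Xhathb{k}$; the physical-domain projectors are then defined by $\Pi^k = (\iota^k)^{-1} \circ \hat\Pi^k \circ \iota^k$, and their continuity constants pick up a factor depending only on the parametric mapping $\mathbf{F}$ and the shape regularity of the B\'ezier mesh. This conversion step is routine, so the entire content sits in the parametric construction.

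For the univariate building blocks, I would construct a pair of quasi-interpolants $\hat\pi_p : L^2(0,1) \to S_p(\Xi)$ and $\hat\pi_{p-1}: L^2(0,1) \to S_{p-1}(\Xi')$ satisfying (i) projection onto their respective spaces, (ii) $L^2$-stability with constants depending only on $p$ and the shape regularity of the knot vector, and (iii) the commutation $(\hat\pi_p f)' = \hat\pi_{p-1} f'$. A convenient route is to use the dual-basis quasi-interpolant of the de Boor–Fix / Schumaker type: define $\hat\pi_p f = \sum_i \lambda_{i,p}(f) B_{i,p}$ where $\lambda_{i,p}$ is a local linear functional biorthogonal to $\{B_{j,p}\}$, and set $\hat\pi_{p-1}$ using Curry–Schoenberg functionals that are designed so the identity $B'_{i,p} = D_{i,p-1} - D_{i+1,p-1}$ transfers to the dual side. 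A summation-by-parts argument at the level of the coefficients then gives the commutation with $d/d\zeta$. Local $L^2$-stability follows from the standard bound $|\lambda_{i,p}(f)| \lesssim h_i^{-1/2} \|f\|_{L^2(\mathrm{supp}\, B_{i,p})}$ together with the finite overlap of supports.

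Tensorizing delivers the multivariate projectors: for a multi-index $\boldsymbol\alpha \in \mathcal{I}_k$ and a component $\hat\omega_{\boldsymbol\alpha}$ of a $k$-form, apply $\hat\pi_{p_q-1}$ in each direction $q \in \boldsymbol\alpha$ and $\hat\pi_{p_q}$ in the other directions, and assemble componentwise. Since tensor products of projectors are projections, and since $d^k$ in components is a sum of partial derivatives acting on one direction at a time, the 1D commutation property propagates to $d^k \circ \hat\Pi^k = \hat\Pi^{k+1} \circ d^k$; $L^2$-stability follows from the tensor-product structure with constants depending only on $\mathbf{p}$ and the mesh regularity. Commutativity with the inclusion $\mathbb{R} \hookrightarrow \hat X^0$ is automatic because constants are reproduced.

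The main obstacle is the homogeneous-boundary case. One needs the 1D dual functionals $\lambda_{i,p}$ associated with B-splines whose support touches $\{0,1\}$ to act only on functions that vanish there, so that $\hat\pi_p$ maps $H^1_0(0,1)$ into $S_p(\Xi) \cap H^1_0$; the same must hold, compatibly, for the Curry–Schoenberg side in order to preserve the commutation. This can be arranged by a careful local modification of the dual functionals near the endpoints, exploiting that $p$-open knot vectors make the first and last basis functions interpolate the endpoints. Once the boundary-compatible 1D projectors are in hand, the tensor-product construction gives $\hat\Pi^k_0$, commutativity with $\int$ on the top form follows from the fundamental theorem of calculus applied componentwise, and the second diagram commutes by the same argument as the first.
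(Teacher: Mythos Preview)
The paper does not give its own proof of this proposition; it simply cites \cite{BRSV11} for the three-dimensional case and asserts that the method of proof extends to arbitrary $n$. Your sketch is precisely that method: build univariate quasi-interpolants that commute with the derivative, tensorize to obtain $\hat\Pi^k$ on the parametric domain, and conjugate by the pullback $\iota^k$ to land on $\Omega$. So at the level of strategy there is nothing to compare---you have reproduced the approach the paper defers to.

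One technical point is worth flagging. The classical de~Boor--Fix dual functionals involve point evaluations of derivatives and are therefore not bounded on $L^2$; the estimate $|\lambda_{i,p}(f)|\lesssim h_i^{-1/2}\|f\|_{L^2(\supp B_{i,p})}$ you wrote does not hold for those functionals. The construction in \cite{BRSV11} (see also \cite[Section~5]{BBSV-acta}) gets $L^2$-stability either by using integral-type dual functionals from the outset or by pre-composing the spline quasi-interpolant with a local smoothing operator; the commutation with the derivative is then checked for the composite. Your outline is otherwise sound, but this step needs the correct variant of the dual basis rather than the Schumaker/de~Boor--Fix form as stated.
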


The above result has two very important consequences regarding the B-spline complex.  First of all, the result states that the B-spline complex has the same cohomological structure as the de Rham complex.  Notably, when the domain $\Omega$ is contractible, the extended B-spline complexes, defined in a similar manner as the extended de Rham complexes, are exact.  Second of all, the existence of a set of commutative projection operators ensures that a discretization of the Maxwell eigenproblem using the B-spline complex is stable and convergent, and it also ensures that a discretization of the Stokes problem using the B-spline complex is inf-sup stable.

\subsection{Geometric Interpretation of the B-spline Complex} \label{sec:geometric_interpretation}

Now that we have properly defined the B-spline complex and examined its topological properties, we are ready to present a geometric interpretation of the complex.  This will prove especially useful when defining and analyzing the hierarchical B-spline complex.  To proceed forward, we need to establish the notion of a {\bf \emph{Greville grid}}.  We define the Greville grid in the parametric domain, but the Greville grid may also be defined in the physical domain using pushforward operators.

Recall from our previous discussion that we associate to each univariate B-spline basis function a Greville site.  Since multivariate B-splines are built from a tensor-product of univariate B-splines, we can associate to each basis function in $\Xhath{0}$ a {\bf \emph{Greville point}} defined as the Cartesian product of univariate Greville sites.  By connecting the Greville points, we obtain a Cartesian grid which we refer to as the Greville grid and we denote by $\MG$.  The Greville grid is a convenient means of visualizing the degrees of freedom of the B-spline complex, and we can in fact associate the basis functions of the spaces of the B-spline complex to the geometrical entities of the Greville grid as explained in \cite{BSV14}. By construction, the basis functions in $\Xhath{0}$ are uniquely identified with the {\bf \emph{vertices}} of the Greville grid $\MG$. Similarly, the basis functions of $\Xhath{1}$ and $\Xhath{2}$ are uniquely identified with the {\bf \emph{edges}} and the {\bf \emph{faces}} of $\MG$, respectively, while the basis functions of $\Xhath{3}$ are uniquely identified by the {\bf \emph{cells}} of $\MG$ in the three-dimensional setting. The matrices representing the differential operators $\{d^k\}_{k=0}^n$ between the spaces of the B-spline complex in terms of their respective basis functions are precisely the {\bf \emph{incidence matrices}} of the Greville grid.

Using tools from finite element exterior calculus, we can establish a more formal identification between the B-spline complex and the Greville grid.  In this direction, let us define $\{\Zhath{k}\}_{k=0}^n$ to be spaces of the {\bf \emph{lowest-order finite element complex}} defined on the Greville grid $\MG$.  The degrees of freedom for the spaces $\Zhath{0}$, $\Zhath{1}$, and $\Zhath{2}$ are associated to the vertices, the edges, and the faces of $\MG$, respectively, while the degrees of freedom of $\Zhath{3}$ are associated to the cells of $\MG$ in the three-dimensional setting. Exploiting the structure of the Greville grid, it was shown in \cite{BSV14} (see also \cite[Section~5]{BBSV-acta}) that we can define isomorphisms $\hat I^k_h: \Xhath{k} \longrightarrow \Zhath{k}$ that commute with the differential operators, that is,
\begin{equation*}
\hat I^{k+1}_h d^k = d^k \hat I^{k}_h, \text{ for } k = 0, \ldots, n-1,
\end{equation*}
and as a consequence we have the following de Rham commutative diagram:
\begin{equation} \label{eq:cd-iga-fem}
\begin{CD}
\Xhath{0} @>d^0>> \Xhath{1} @>d^1>> \ldots @>d^{n-1}>> \Xhath{n} \\
@V\hat I^0_hVV @V\hat I^1_hVV @. @V\hat I^n_hVV\\
\Zhath{0} @>d^0>> \Zhath{1} @>d^1>> \ldots @>d^{n-1}>> \Zhath{n}.
\end{CD}
\end{equation}
This indicates that the topological structure of the B-spline complex is identical to that of the finite element complex.  Moreover, as the cohomological structure of the finite element complex is tightly linked to the Greville grid, so is the cohomological structure of the B-spline complex.  We will exploit this property later when analyzing the hierarchical B-spline complex.

As was mentioned earlier, we will frequently work with spline spaces with vanishing boundary conditions in this work.  It is a straight-forward exercise to build a lowest-order finite element complex which commutes with such a B-spline complex using the Greville grid, so we do not discuss details herein for the sake of brevity.  We simply remark that we use the zero subindex (e.g., $\Zhathb{k}$) to identify spaces with vanishing boundary conditions.

We will need one more geometric concept in our later mathematical analysis.  Namely, we will need the concept of a {\bf \emph{B\'ezier mesh}}.  The B\'ezier mesh is simply the Cartesian grid generated by the knot vectors of multivariate B-splines, neglecting knot repetitions, and we denote this mesh by $\M$.  Note that the B\'ezier mesh and Greville grid only align in the setting when $p_1 = p_2 = \ldots = p_n = 1$.

To ground the concepts of B\'ezier mesh and Greville grid in an example, consider a two-dimensional B-spline complex with polynomial degrees $p_1 = 2$ and $p_2 = 2$ and knot vectors $\Xi_1 = \{0,0,0,1/6,2/6,3/6,4/6,5/6,1,1,1 \}$ and $\Xi_2 = \{ 0,0,0,1/5,2/5,3/5,4/5,1,1,1 \}$.  The B\'ezier and Greville grids for this particular example are displayed in Figure~\ref{fig:meshes}.  Note that as $p_1 \neq 1$ and $p_2 \neq 1$, the B\'ezier and Greville grids do not align as expected.  The finite element spaces $\Zhath{0}, \Zhath{1}$ and $\Zhath{2}$ are the spaces of {\bf \emph{bilinear nodal elements}}, {\bf \emph{lowest-order edge elements}}, and {\bf \emph{piecewise constant elements}}, respectively, defined on the Greville grid $\MG$ with degrees of freedom associated with the vertices, edges, and cells of the grid.
%Defining $\Xhath{0} = S_{p,p}(\Xi_1,\Xi_2)$, the space of splines of degree $p$, the spline complex takes the form
%\begin{equation*}
%\begin{CD}
%0 @>>> \mathbb{R} @>\subset>>\Xhath{0} @>\hat \grad>> \Xhath{1} @>\hat \curl>> \Xhath{2} @>>> 0 ,
%\end{CD}
%\end{equation*}
%where, with obvious notation, the space of 1-forms is the space of vector-valued splines of mixed degree $\Xhath{1}= (S_{p-1,p}(\Xi'_1,\Xi_2), S_{p,p-1}(\Xi_1,\Xi'_2))$, and the space of 2-forms is $\Xhath{2} = S_{p-1,p-1}(\Xi_1',\Xi_2')$. The knot vectors $\Xi_1'$ and $\Xi'_2$ are simply defined by removing from $\Xi_1$ and $\Xi_2$ the first and last knots. 
\begin{figure}[tp]
\begin{subfigure}[B\'ezier mesh]{
\centering
\includegraphics[width=3in]{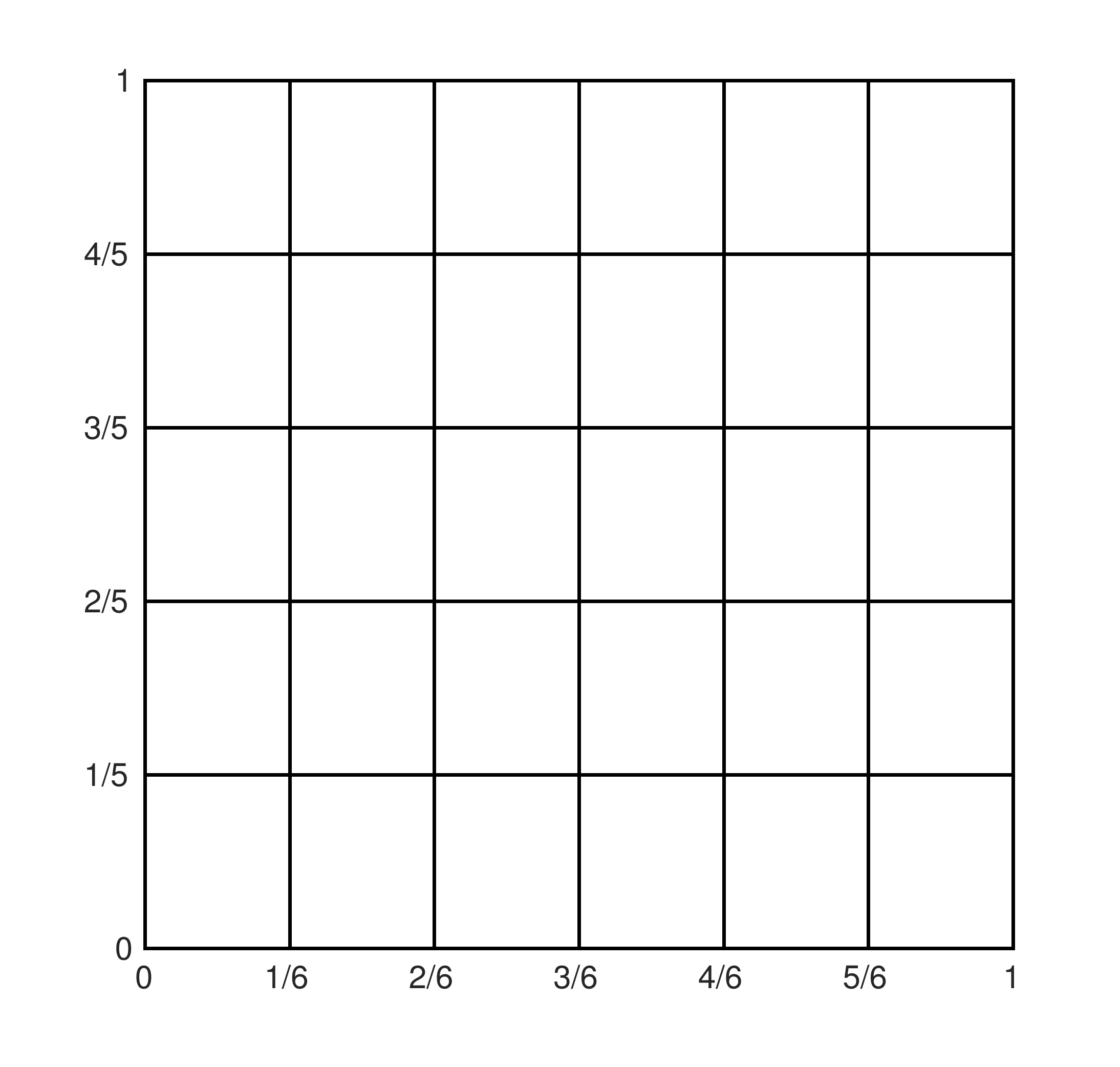}
}
\end{subfigure}
\begin{subfigure}[Greville grid]{
\centering
\includegraphics[width=3in]{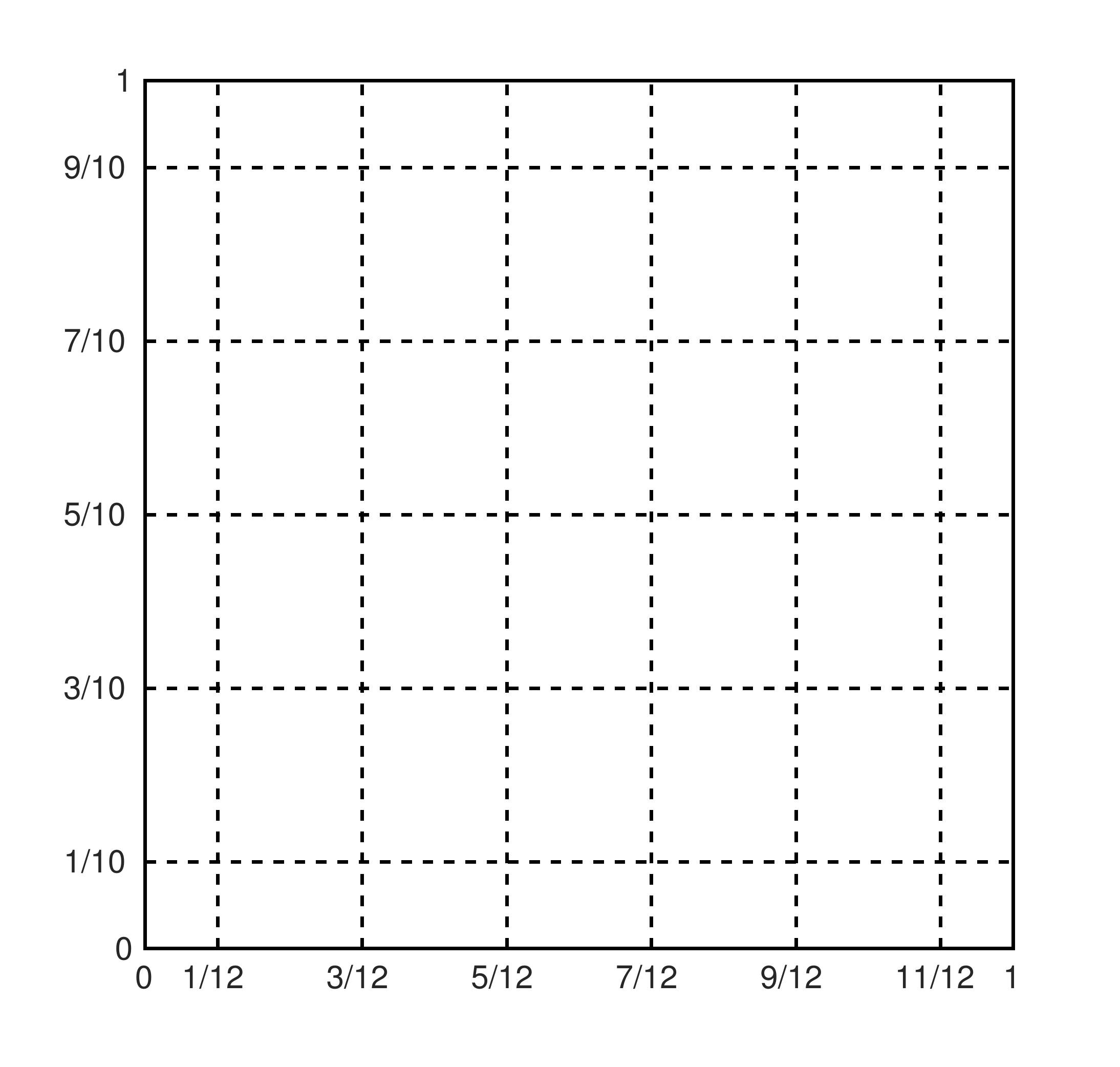}
}
\end{subfigure}
\caption{B-spline meshes associated with polynomial degrees $p_1 = 2$ and $p_2 = 2$ and knot vectors $\Xi_1 = [0,0,0,1/6,2/6,3/6,4/6,5/6,1,1,1]$ and $\Xi_2 = [0,0,0,1/5,2/5,3/5,4/5,1,1,1]$.}
\label{fig:meshes}
\end{figure}

\subsection{Extension to Multi-Patch Geometries} \label{sec:multipatch}
Most geometries of scientific and engineering interest cannot be represented as the image of the unit domain.  Instead, the {\bf \emph{multi-patch concept}} must be invoked.  In this direction, we assume that there exist $n_p$ (corresponding to the number of patches) sufficiently smooth parametric mappings $\textbf{F}^j : \hat \Omega = \left(0,1\right)^n \rightarrow \mathbb{R}^n$ such that the subdomains 
$$\Omega^j = \textbf{F}^j(\widehat{\Omega}), \hspace{3pt} j = 1,\ldots,n_p$$ 
are non-overlapping and
\begin{equation}
\overline{\Omega} = \cup_{j=1}^{n_p} \overline{\Omega^j}. \nonumber
\end{equation}
We refer to each subdomain $\Omega^j$ (and its pre-image) as a {\bf \emph{patch}}.  For a visual depiction of a multi-patch construction in $\mathbb{R}^2$, see Figure \ref{fig:stokes_multipatch}.  We build B-spline complexes over each patch $\Omega^j, \hspace{3pt} j = 1,\ldots,n_p$ in the same manner as described in previous subsections, and we denote the tensor-product spline spaces for these complexes as $\left\{ X^k_h(\Omega^j) \right\}_{k=1}^n$.

\begin{figure}[tp]
\centering
\includegraphics[width=4.50in]{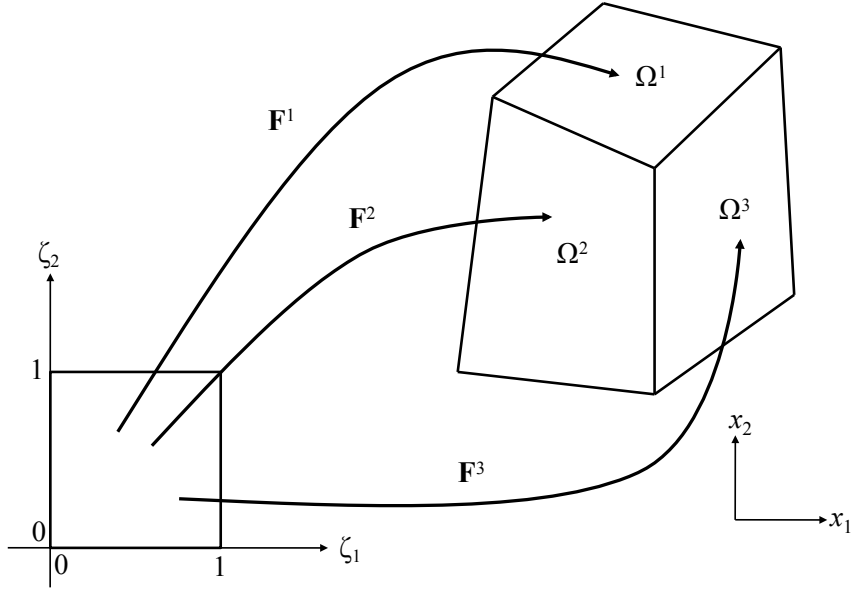} 
\caption[Multi-patch construction]{Example multi-patch construction in $\mathbb{R}^2$.}
\label{fig:stokes_multipatch}
\end{figure}

To proceed further, we must make some assumptions.  First of all, we assume that if two disjoint patches $\Omega^{j_1}$ and $\Omega^{j_2}$ have the property that $\partial \Omega^{j_1} \cap \partial \Omega^{j_2} \neq \emptyset$, then this intersection consists strictly of $k$-faces with $k < n$ (e.g., vertices, edges, and faces in the three-dimensional setting).  More succinctly, two patches cannot intersect along an isolated portion of a $k$-face.  Second, we assume that if two patches $\Omega^{j_1}$ and $\Omega^{j_2}$ share a $k$-face, the B-spline meshes associated with the patches are identical along that $k$-face.  This guarantees our mesh is conforming.  Finally, we assume that the mappings $\left\{ \textbf{F}^j \right\}_{j=1}^{n_p}$ are compatible in the following sense: if two patches $\Omega^{j_1}$ and $\Omega^{j_2}$ share a $k$-face, then $\textbf{F}^{j_1}$ and $\textbf{F}^{j_2}$ parametrize that $k$-face identically up to changes in orientation.  The above three assumptions automatically hold if we employ a conforming NURBS multi-patch construction.  See, for example, \cite[Chapter~2]{IGA-book}.

With the above assumptions in mind, the spaces of the de Rham complex over the multi-patch geometry are simply defined as
\begin{equation} \label{eq:spline_complex_multipatch}
X^k_h = \left\{ \omega \in X^k(\Omega) : \omega|_{\Omega^j} \in X^k_h\left(\Omega^j\right),  \textup{ for } j = 1, \ldots, n_p \right\} \nonumber
\end{equation}
for $k = 1, \ldots, n$.  Basis functions for these spaces are easily constructed from their patch-wise counterparts.  For discrete 0-forms, we equivalence the coefficients of any basis functions whose traces are nonzero and equal in magnitude along shared hyper-faces between patches.  For discrete $k$-forms with $0 < k < n$, we use a similar procedure to equivalence basis functions between adjacent patches except that we must take into account the {\bf \textit{orientation}} of the basis functions.  The orientations are directly associated to the entities of the Greville grid as described in the preceding subsection.  For example, the orientations of discrete $2$-forms are associated with the faces of the Greville grid (and, in particular, the normal vectors to said faces) in the three-dimensional setting.  Thus, along the intersection between adjacent patches, we (i) equivalence the coefficients of any basis functions whose normal components are nonzero and equal in magnitude and direction and (ii) set the coefficient of one to minus the coefficient of another for any basis functions whose normal components are nonzero, equal in magnitude, and opposite in direction.  This is in direct analogy with finite element differential forms.  Finally, discrete $n$-forms are piecewise discontinuous and hence we do not need to equivalence basis functions between adjacent patches.  For more details on the construction of the B-spline complex in the multi-patch setting, the reader is referred to \cite[Section~5.4]{BBSV-acta}.\\

\section{The Hierarchical B-spline Complex} \label{sec:hierarchicalcomplex}
In this section, we finally present the hierarchical B-spline complex of isogeometric discrete differential forms.  We start with a short introduction of hierarchical B-splines, where we follow the construction in \cite{Kraft} and \cite{Vuong_giannelli_juttler_simeon}, and we present an auxiliary lemma that will be useful for further study of the hierarchical B-spline complex. We then introduce a simple definition of the hierarchical B-spline complex, and we provide a counterexample showing that the complex is not always exact.  We next introduce the concept of Greville subgrids which will prove useful for the analysis of exactness in Section~\ref{sec:exactness}. We end the section with an explanation on how to construct the complex in the physical domain as well as for multi-patch domains.

For the sake of simplicity, from now on we only use spaces with homogeneous boundary conditions. The zero subindex corresponding to these spaces is removed to alleviate notation. For ease of reading, a given level of the multi-level mesh associated with a hierarchical B-spline basis is indicated by the subindex $\ell$, while the position in the complex for $k$-forms is indicated by the superindex $k$. %We will also often refer to hierarchical B-splines as simply hierarchical splines.

\subsection{Hierarchical B-splines} \label{sec:HB-splines}

Recall that the parametric domain is given by $\hat \Omega = (0,1)^n$. Let us assume that we have a sequence of tensor-product spline spaces of 0-forms
\begin{equation}\label{eq:subspaces}
\Xhat_{0,h} \subset \Xhat_{1,h} \subset \ldots \subset \Xhat_{N,h}.
\end{equation}
More precisely, for each {\bf\emph{level}} $\ell = 0, \ldots, N$, the $n$-variate tensor product spline space $\Xhat_{\ell,h} \subset H^1(\hat \Omega)$ is defined as the tensor product of univariate spline spaces $S_{p_{j,\ell}}(\Xi_{j,\ell})$, each of degree $p_{j,\ell}$ and built using a knot vector $\Xi_{j,\ell}$.  That is, $\Xhat_{\ell,h} = \otimes_{j=1}^n S_{p_{j,\ell}}(\Xi_{j,\ell})$. We denote the set of B-spline basis functions for each level by ${\cal B}_\ell$, and we denote by $\M_\ell$ the corresponding B\'ezier mesh, that is, the Cartesian grid generated by the knot vectors without repetitions. 
Let moreover
\begin{equation}\label{eq:subdomains}
\hat \Omega = \Omega_0 \supset \Omega_1 \supset \ldots \supset \Omega_{N} \supset \Omega_{N+1} = \emptyset
\end{equation}
denote a sequence of nested subdomains, where we assume that (the closure of) each subdomain $\Omega_\ell$ is (the closure of) a union of elements of $\M_{\ell-1}$ for $\ell = 1, \ldots, N$. This is the same condition as the {\bf \emph{strong condition}} in \cite{Vuong_giannelli_juttler_simeon}.  With the above notation, \textbf{\textit{hierarchical B-splines}} are defined in a recursive manner as follows.

\begin{definition} \label{def:hierarchical}
We define the space of hierarchical splines $\What_N = {\rm span} \{{\cal H}_N\}$, with ${\cal H}_N$ the hierarchical basis constructed with the following recursive algorithm \cite{Kraft}:
\begin{enumerate}
\item Initialization: ${\cal H}_0 = {\cal B}_0$.
\item Recursively construct ${\cal H}_{\ell+1}$ from ${\cal H}_{\ell}$ by setting
\begin{equation*}
{\cal H}_{\ell+1} = {\cal H}_{\ell+1,c} \cup {\cal H}_{\ell+1,f}, \text{ for } \ell = 0, \ldots N-1,
\end{equation*}
with 
\begin{equation*}
\begin{array}{l}
{\cal H}_{\ell+1,c} = \{\beta \in {\cal H}_{\ell} : \supp(\beta) \not \subset \Omega_{\ell+1}\}, \\
{\cal H}_{\ell+1,f} = \{\beta \in {\cal B}_{\ell+1} : \supp(\beta) \subset \Omega_{\ell+1}\}.
\end{array}
\end{equation*}
%\item ${\cal H} = {\cal H}_{N}$.
\end{enumerate}
\end{definition}
\noindent That is, at step $\ell$ of the recursive algorithm we add the basis functions of level $\ell+1$ completely supported in $\Omega_{\ell+1}$ to the basis and remove the functions of previous levels completely supported in $\Omega_{\ell+1}$ from the basis. B-spline basis functions which are included in ${\cal H}_N$ are referred to as \textbf{\textit{active}} basis functions, and in particular, ${\cal H}_N \cap {\cal B}_\ell$ is the set of active basis functions of level $\ell$, for an integer $0 \leq \ell \leq N$.

Associated with the space of hierarchical splines $\What_N$ is a \textit{\textbf{hierarchical B\'{e}zier mesh}}
\begin{equation}
\mathcal{M}_\mathcal{H} := \bigcup_{\ell=0}^N \{Q \in {\cal M}_\ell \, \text{ such that } \, Q \subset \Omega_\ell \, \text{ and } \, Q \not \subset \Omega_{\ell+1} \}.
\end{equation}
The elements of the hierarchical B\'{e}zier mesh are precisely those included in a formation and assembly routine for isogeometric analysis \cite{Hennig16, Garau16}. An example of a two-dimensional hierarchical B\'ezier mesh, for a simple configuration with only two levels, is depicted in Figure~\ref{fig:hierarchical_bezier}. We have identified the Greville points (see Subsection~\ref{sec:geometric_interpretation}) associated with the active basis functions for both levels in Figure~\ref{fig:0forms_two_levels} assuming the basis functions are maximally continuous and of degree $p_1 = p_2 = 2$.

\begin{figure}
\centerline{\includegraphics[width=0.4\textwidth, trim = 11cm 2cm 11cm 2cm, clip]{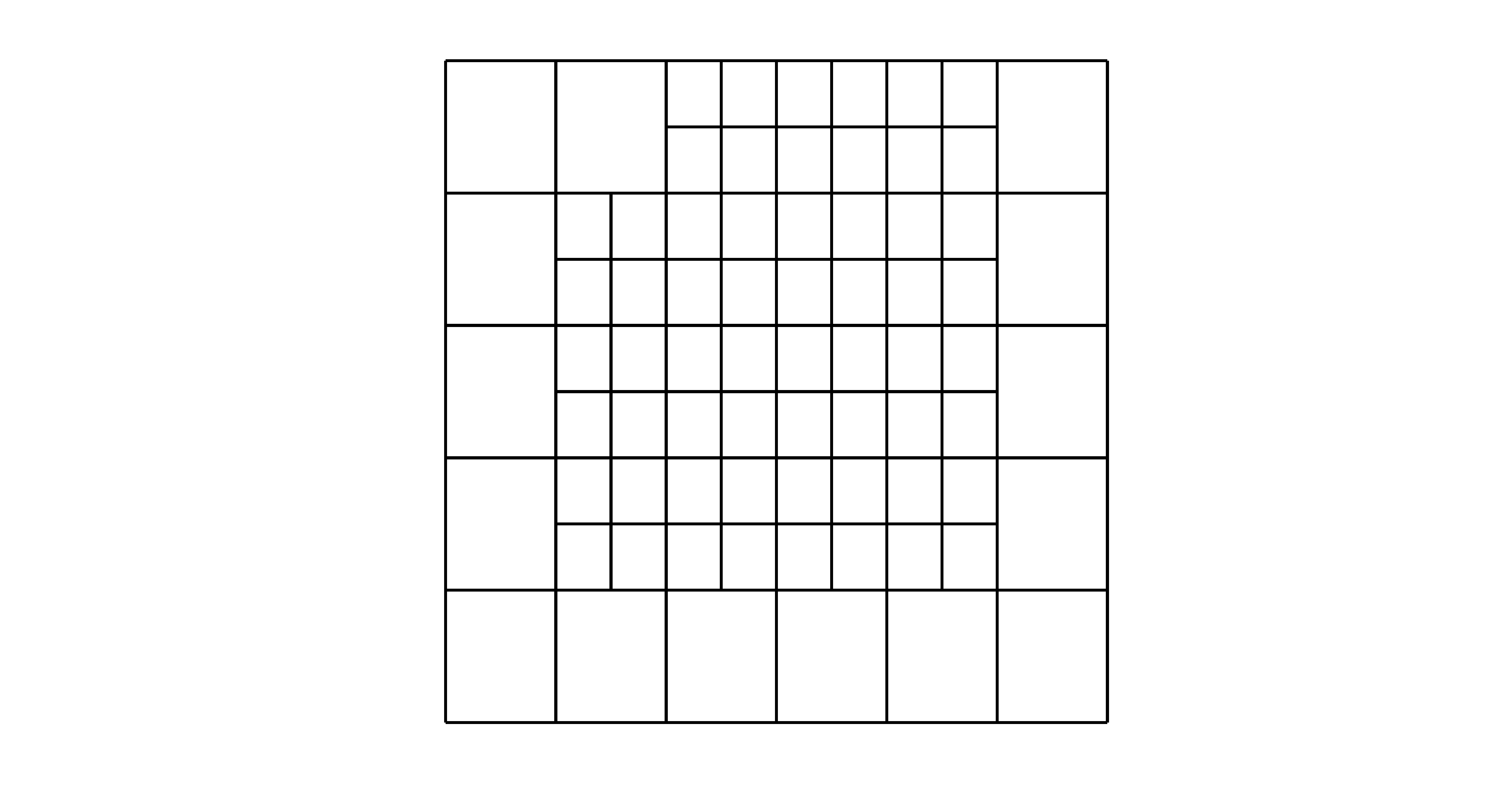}}
\caption{Example of a hierarchical B\'ezier mesh with two levels.}
\label{fig:hierarchical_bezier}
\end{figure}

\begin{figure}
\centerline{\includegraphics[width=0.4\textwidth, trim = 11cm 2cm 11cm 1cm, clip]{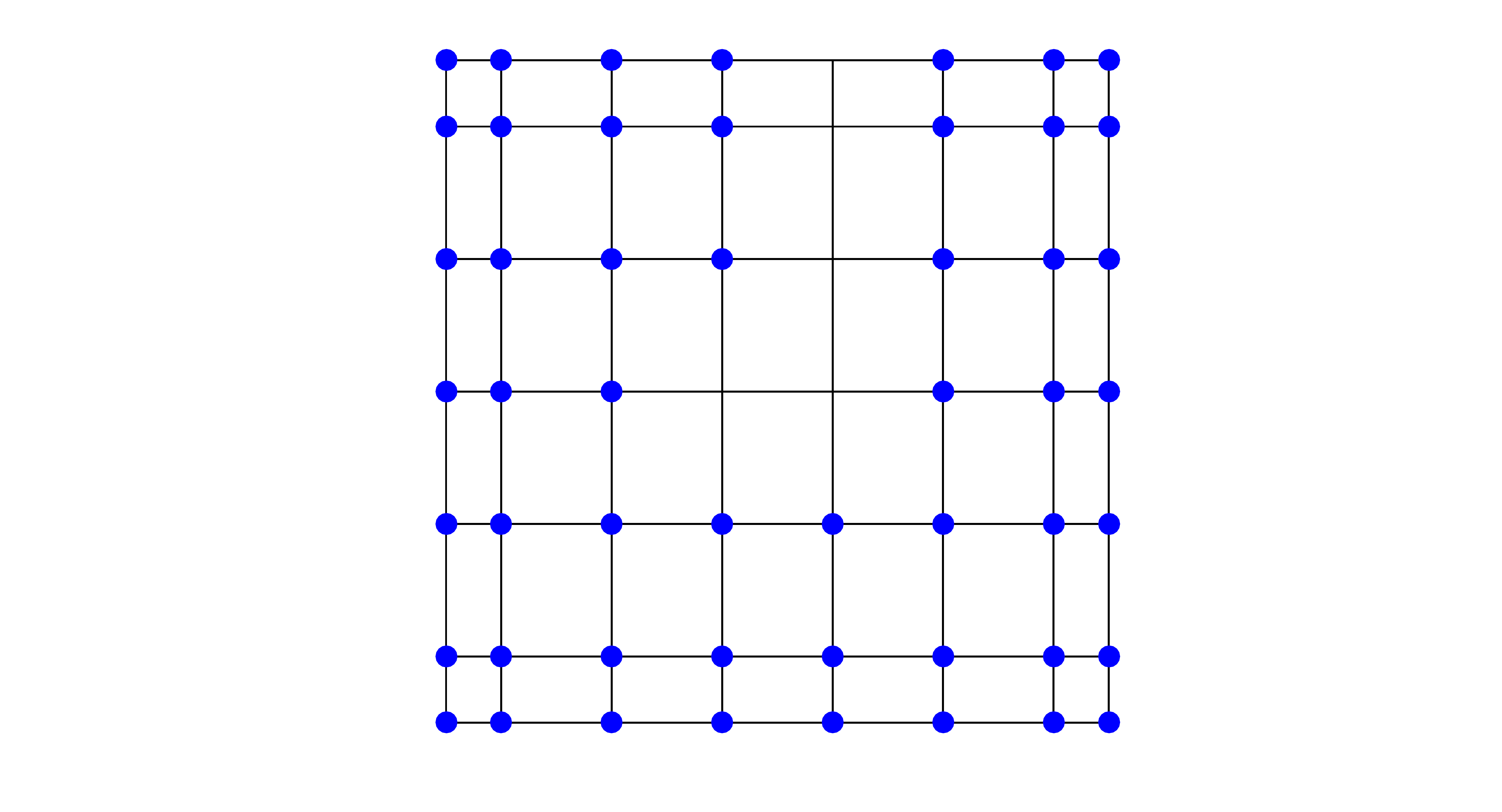}
\includegraphics[width=0.4\textwidth, trim = 11cm 2cm 11cm 1cm, clip]{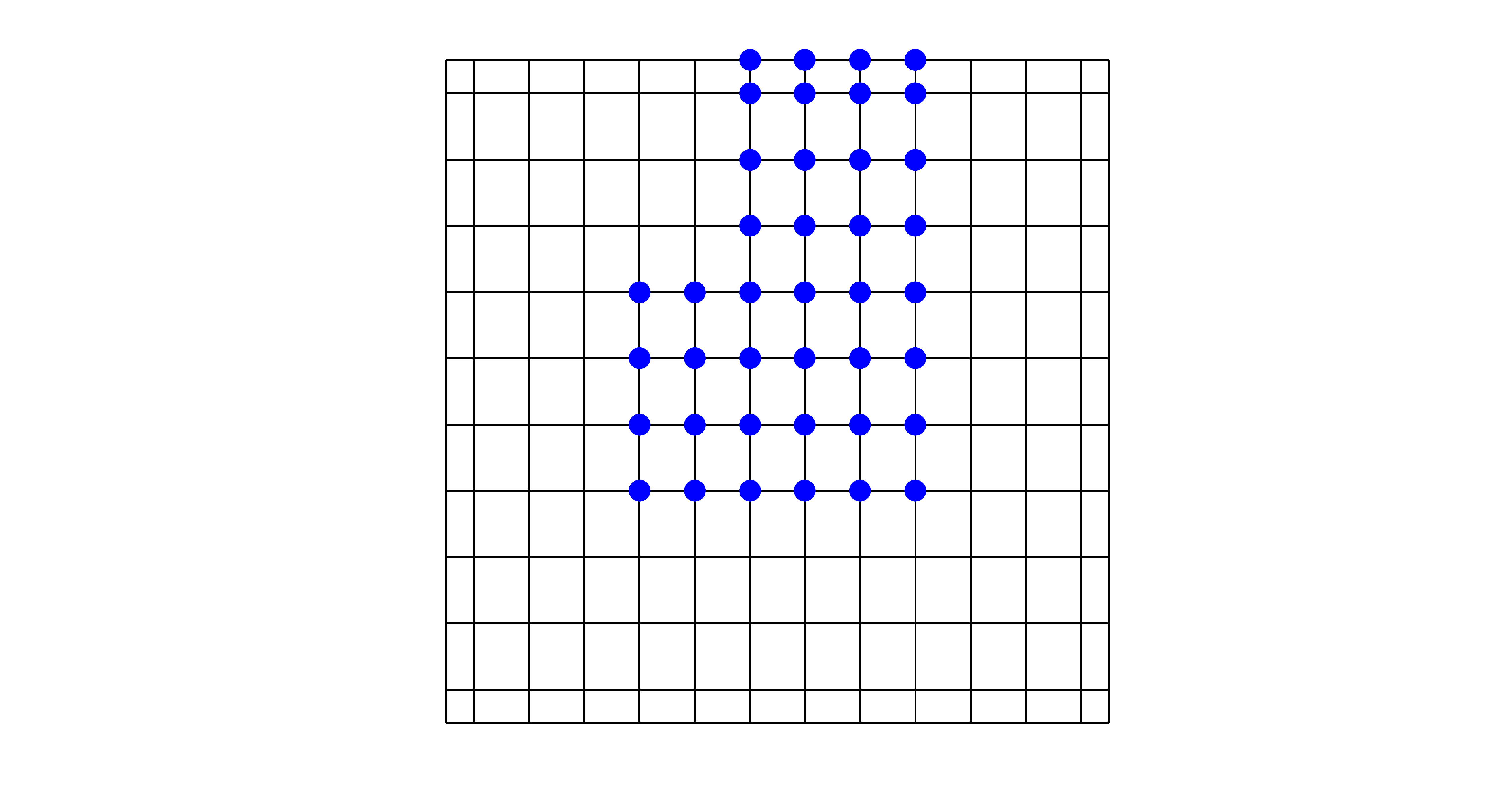}}
\caption{Greville points associated with the active basis functions for levels $\ell = 0$ (left) and $\ell = 1$ (right) in the hierarchical B\'ezier mesh depicted in Figure~\ref{fig:hierarchical_bezier} assuming the basis functions are maximally continuous and of degree $p_1 = p_2 = 2$.}
\label{fig:0forms_two_levels}
\end{figure}

It is useful to define, for $0 \le \ell \le \ell' \le N$, the set of basis functions of level $\ell$ with support contained in $\Omega_{\ell'}$, that is 
\begin{equation} \label{eq:B_lk}
{\cal B}_{\ell,\Omega_{\ell'}} \equiv {\cal B}_{\ell,\ell'} := \{ \beta \in {\cal B}_\ell : \supp (\beta) \subset \Omega_{\ell'} \}.
\end{equation}
Notice that, in Definition \ref{def:hierarchical}, we have ${\cal H}_{\ell+1,f} = {\cal B}_{\ell+1, {\ell+1}}$. We will also make use of the following auxiliary spaces, which are closely related to the recursive construction of the hierarchical spline space.
\begin{definition} \label{def:X_ell}
For $0 \le \ell \le \ell' \le N$, we define the spaces:
\begin{align*}
& \What_\ell := {\rm span} \{ {\cal H}_\ell \}, \\
& \Xhat_{\ell,\Omega_{\ell'}} \equiv \Xhat_{\ell,\ell'} := {\rm span} \{ {\cal B}_{\ell,\ell'} \} = \{ \beta \in \hat X_{\ell,h} : \supp (\beta) \subset \Omega_{\ell'} \},
\end{align*}
where the equality holds thanks to the local linear independence of B-splines.
\end{definition}
\noindent Note that we are using the $\Xhat$ notation for spaces spanned by basis functions of one single level, and we are using $\What$ for spaces spanned by functions of multiple levels. In particular, in the previous definition, the space $\What_\ell$ represents the space of hierarchical splines when applying the recursive algorithm up to level $\ell$. The set of auxiliary spaces denoted with the $\Xhat$ notation is helpful to represent the spaces spanned by the basis functions that have to be added ($\Xhat_{\ell+1,{\ell+1}}$) and removed ($\Xhat_{\ell,{\ell+1}}$) at each level in the recursive algorithm, as stated in the following lemma.

\begin{lemma} \label{lemma:cupcap}
For $0 \le \ell \le N-1$ it holds that
\begin{enumerate}[i)]
\item $\What_{\ell+1} = \What_{\ell} + \Xhat_{\ell+1,{\ell+1}} := \{u = v + w: v \in \What_\ell, \, w \in \Xhat_{\ell, {\ell+1}} \}$.
\item $\What_\ell \cap \Xhat_{\ell+1, {\ell+1}} = \Xhat_{\ell, {\ell+1}}$.
\end{enumerate}
\end{lemma}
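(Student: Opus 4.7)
The plan is to derive both identities directly from the recursive construction in Definition~\ref{def:hierarchical} and the support-based characterization of the auxiliary spaces in Definition~\ref{def:X_ell}, with the one nontrivial ingredient being the two-scale refinement property of B-splines: every $B \in \mathcal{B}_{\ell'}$ can be written as a linear combination of B-splines in $\mathcal{B}_{\ell'+1}$ whose supports are contained in $\supp(B)$. Granted this property, both parts reduce to bookkeeping about which basis functions end up in $\mathcal{H}_\ell$ versus $\mathcal{H}_{\ell+1}$.

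For part (i), I would first obtain $\What_{\ell+1} \subset \What_\ell + \Xhat_{\ell+1,\ell+1}$ almost for free from the decomposition $\mathcal{H}_{\ell+1} = \mathcal{H}_{\ell+1,c} \cup \mathcal{H}_{\ell+1,f}$, using that $\mathcal{H}_{\ell+1,c} \subset \mathcal{H}_\ell$ and $\mathcal{H}_{\ell+1,f} = \mathcal{B}_{\ell+1,\ell+1}$. The reverse inclusion reduces to showing $\What_\ell \subset \What_{\ell+1}$, since $\Xhat_{\ell+1,\ell+1} \subset \What_{\ell+1}$ is immediate. I would verify $\What_\ell \subset \What_{\ell+1}$ basis-function by basis-function: each $\beta \in \mathcal{H}_\ell$ is either ``kept'' (and thus lies in $\mathcal{H}_{\ell+1,c} \subset \mathcal{H}_{\ell+1}$), or ``removed'' because $\supp(\beta) \subset \Omega_{\ell+1}$; in the latter case the subdivision formula expands $\beta$ into B-splines of $\mathcal{B}_{\ell+1}$ whose supports lie in $\supp(\beta) \subset \Omega_{\ell+1}$, hence belong to $\mathcal{B}_{\ell+1,\ell+1} = \mathcal{H}_{\ell+1,f}$, so $\beta \in \What_{\ell+1}$.

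For part (ii), the inclusion $\supset$ follows from two elementary observations. Since $\Omega_{\ell+1} \subset \Omega_\ell$ we have $\mathcal{B}_{\ell,\ell+1} \subset \mathcal{B}_{\ell,\ell}$, which by the recursive definition (with the base case $\mathcal{H}_0 = \mathcal{B}_0$) is contained in $\mathcal{H}_\ell$, giving $\Xhat_{\ell,\ell+1} \subset \What_\ell$; and $\Xhat_{\ell,h} \subset \Xhat_{\ell+1,h}$ from (\ref{eq:subspaces}), together with the support condition, gives $\Xhat_{\ell,\ell+1} \subset \Xhat_{\ell+1,\ell+1}$. The reverse inclusion is essentially tautological once one notes that $\What_\ell \subset \Xhat_{\ell,h}$, because every element of $\mathcal{H}_\ell$ is a B-spline of some level $\ell' \leq \ell$, all of which sit inside $\Xhat_{\ell,h}$ by (\ref{eq:subspaces}). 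Hence any $u \in \What_\ell \cap \Xhat_{\ell+1,\ell+1}$ satisfies $u \in \Xhat_{\ell,h}$ with $\supp(u) \subset \Omega_{\ell+1}$, which by Definition~\ref{def:X_ell} is precisely $\Xhat_{\ell,\ell+1}$.

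The only real obstacle is the refinement step in part (i), since it leans on the locally supported subdivision masks of B-splines rather than on formal manipulations of the recursive algorithm. Once that standard property is taken for granted, the rest of the argument is pure bookkeeping with Definition~\ref{def:hierarchical} and Definition~\ref{def:X_ell}.
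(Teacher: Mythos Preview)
Your argument is correct and follows essentially the same route as the paper's proof. The paper is terser: for $\What_\ell \subset \What_{\ell+1}$ it simply invokes the nestedness~\eqref{eq:subspaces} together with the definitions, and for the inclusion $\What_\ell \cap \Xhat_{\ell+1,\ell+1} \subset \Xhat_{\ell,\ell+1}$ it cites local linear independence of B-splines rather than the support characterization in Definition~\ref{def:X_ell} (which, as the paper notes, is equivalent to local linear independence). Your explicit appeal to the two-scale subdivision masks is exactly what underlies the paper's one-line invocation of nestedness, so the two arguments differ only in level of detail, not in substance.
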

\begin{proof}
To prove the result, we show that inclusions in both directions hold. The first inclusion in i), that is, $\What_{\ell+1} \subset \What_{\ell} + \Xhat_{\ell+1,{\ell+1}}$, follows from Definitions~\ref{def:hierarchical} and~\ref{def:X_ell} since ${\cal H}_{\ell+1} = {\cal H}_{\ell+1,c} \cup {\cal H}_{\ell+1,f} \subset {\cal H}_\ell \cup {\cal H}_{\ell+1,f}$ and $\Xhat_{\ell+1,{\ell+1}} = {\rm span}\{{\cal H}_{\ell+1,f}\}$. For the second inclusion, it is obvious that $\Xhat_{\ell+1,{\ell+1}} \subset \What_{\ell+1}$, and $\What_\ell \subset \What_{\ell+1}$ is a consequence of their definition and the nestedness provided by~\eqref{eq:subspaces}.

For the first inclusion in ii), we have that $u \in \What_\ell = {\rm span}\{{\cal H}_\ell\} \subset {\rm span\{\cal B}_\ell\}$, and moreover $u \in \Xhat_{\ell+1,{\ell+1}}$ implies that $\supp (u) \subset \Omega_{\ell+1}$, which along with the local linear independence of \mbox{B-splines} yields $u \in \Xhat_{\ell, {\ell+1}}$. For the second inclusion we first use the nestedness of the subdomains given by~\eqref{eq:subdomains} and then the result from i) to obtain $\Xhat_{\ell,{\ell+1}} \subset \Xhat_{\ell,\ell} \subset \What_\ell$. Then, the nestedness of the spline spaces~\eqref{eq:subspaces}, together with the local linear independence of B-splines, immediately gives $\Xhat_{\ell, {\ell+1}} \subset \Xhat_{\ell+1, {\ell+1}}$, which completes the proof.
\end{proof}

\subsection{The Hierarchical B-spline Complex}
The construction of the hierarchical B-spline complex is based on a simple idea: given a multi-level mesh, construct a sequence of hierarchical B-splines of mixed degree, using for each level the B-spline complex in Section~\ref{sec:spline-complex}. We will see, however, that this construction does not, in general, yield an exact complex of hierarchical B-splines.  As was done for the tensor-product B-spline complex, we first define the hierarchical B-spline complex in the parametric domain before mapping it onto the physical domain.

Assume we are given a sequence of nested tensor-product spline spaces
\begin{equation*}
\Xhat_{0,h} \subset \Xhat_{1,h} \subset \ldots \subset \Xhat_{N,h},
\end{equation*}
as in~\eqref{eq:subspaces} and a sequence of nested subdomains as in~\eqref{eq:subdomains}. For each level $\ell = 0, \ldots, N$, we select $\Xhatlh{0} = \Xhat_{\ell,h}$ as the discrete space of 0-forms, and we construct a discrete de Rham complex using tensor-product splines of mixed degree as explained in Section~\ref{sec:spline-complex}. This forms a complex $(\Xhatlh{},d)$ for each level:
\begin{equation} \label{eq:spline_complex}
\begin{CD}
\Xhatlh{0} @>d^0>> \Xhatlh{1} @>d^1>> \ldots @>d^{n-1}>> \Xhatlh{n} .
\end{CD}
\end{equation}
Moreover, with each space of the sequence, $\Xhatlh{k}$, we associate the same set of basis functions introduced in Section~\ref{sec:spline-complex}, and we denote this set of basis functions by ${\cal B}^k_\ell$. Recall that we can define isomorphisms from our B-spline complex to the low-order finite element complex defined on the Greville grid using these bases.

Now, for each $k =0, \ldots, n$, we can construct the hierarchical B-splines bases ${\cal H}^k_N$ and the corresponding hierarchical spline spaces $\What^k_N = {\rm span} \{ {\cal H}^k_N \}$ following the same procedure as in Definition~\ref{def:hierarchical}.  In practice, for spaces of vector-valued splines, the hierarchical B-spline basis can be determined using the recursive algorithm for each component. Adopting the notation introduced in Section \ref{sec:derham} for Hilbert complexes, the \textbf{\textit{hierarchical B-spline complex}} is then simply $\left(\What_N,d\right)$.  As an example, let us recall consider a maximally continuous hierarchical B-spline complex with $p_1 = p_2 = 3$ defined on the hierarchical B\'ezier mesh depicted in Figure~\ref{fig:hierarchical_bezier}.  The Greville nodes associated with active basis functions for $0$-forms are displayed in Figure~\ref{fig:0forms_two_levels}.  All 0-form basis functions are biquadratic, that is, of degree $(2,2)$.  The Greville edges associated with active 1-form basis functions are displayed in Figure~\ref{fig:1forms_two_levels}, where the horizontal arrows correspond to rightward oriented basis functions of degree $(1,2)$ and the vertical arrows to upward oriented basis functions of degree $(2,1)$.  The Greville cells associated with active 2-form basis functions are displayed in Figure~\ref{fig:2forms_two_levels}.  All 2-form basis functions are bilinear, that is, of degree $(1,1)$.

As in \eqref{eq:B_lk}, it will be useful to define, for $0 \le \ell \le \ell' \le N$ and $0 \le k \le n$, the set of basis functions of level $\ell$ that are completely contained in $\Omega_{\ell'}$, that is
\begin{equation} \label{eq:Bik}
{\cal B}^k_{\ell,\Omega_{\ell'}} \equiv {\cal B}^k_{\ell,\ell'} := \{ \beta^k \in {\cal B}^k_\ell : \supp (\beta^k) \subset \Omega_{\ell'} \}.
\end{equation}
Moreover, we can also apply Definition~\ref{def:X_ell} for each $k$ to define
\begin{align}
& \What^k_\ell := {\rm span}\{{\cal H}^k_\ell\}, \\
& \Xhat^k_{\ell,\Omega_{\ell'}} \equiv \Xhat^k_{\ell,\ell'} := {\rm span} \{{\cal B}^k_{\ell,\ell'}\} =  \{ \beta^k \in \Xhatlh{k}  : \supp (\beta^k) \subset \Omega_{\ell'} \},
\end{align}
and the results of Lemma~\ref{lemma:cupcap} also hold. These spaces also form spline complexes, as proved in the next lemma.

\begin{figure}
\centerline{\includegraphics[width=0.4\textwidth, trim = 11cm 2cm 11cm 1cm, clip]{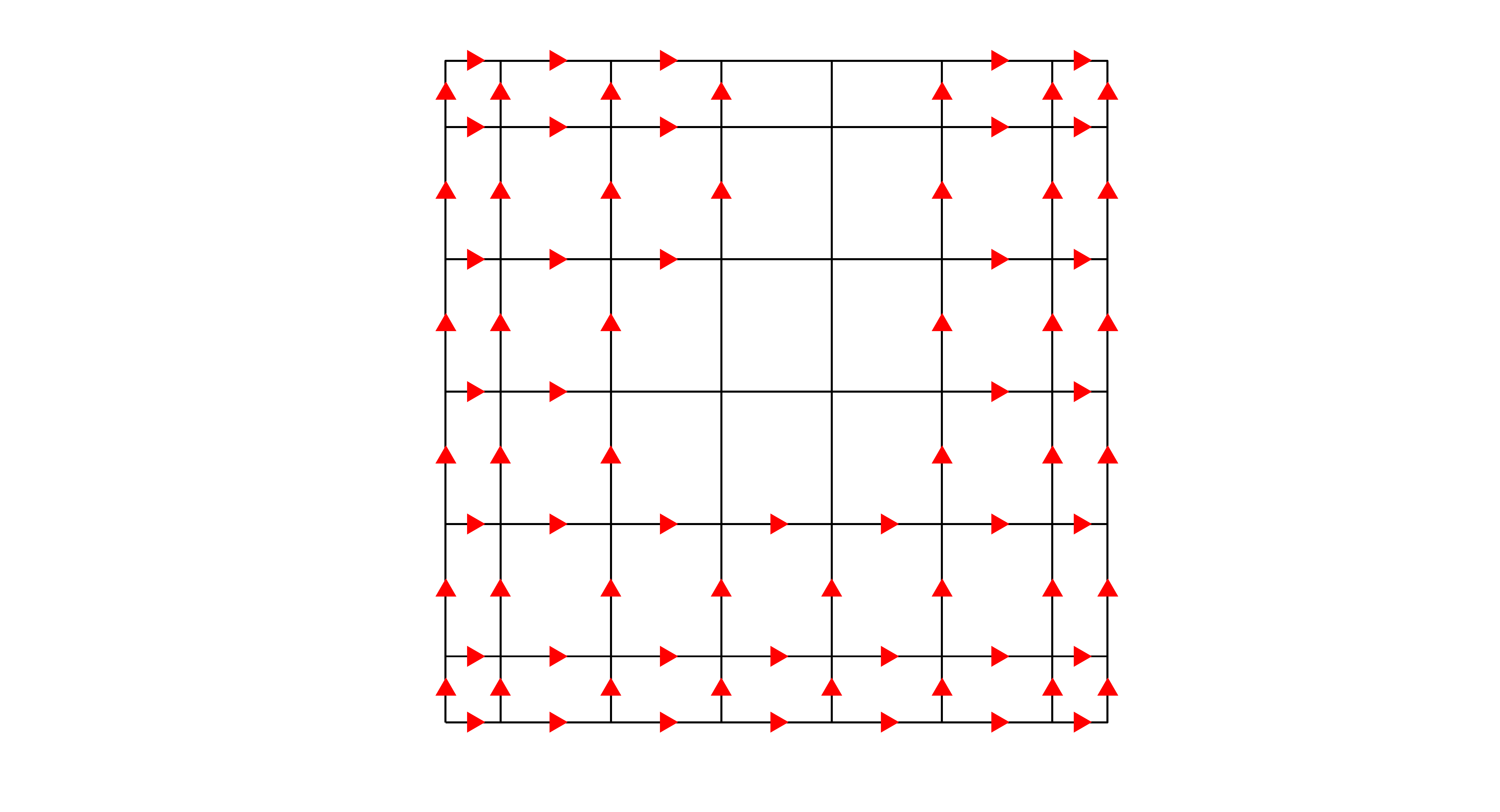}
\includegraphics[width=0.4\textwidth, trim = 11cm 2cm 11cm 1cm, clip]{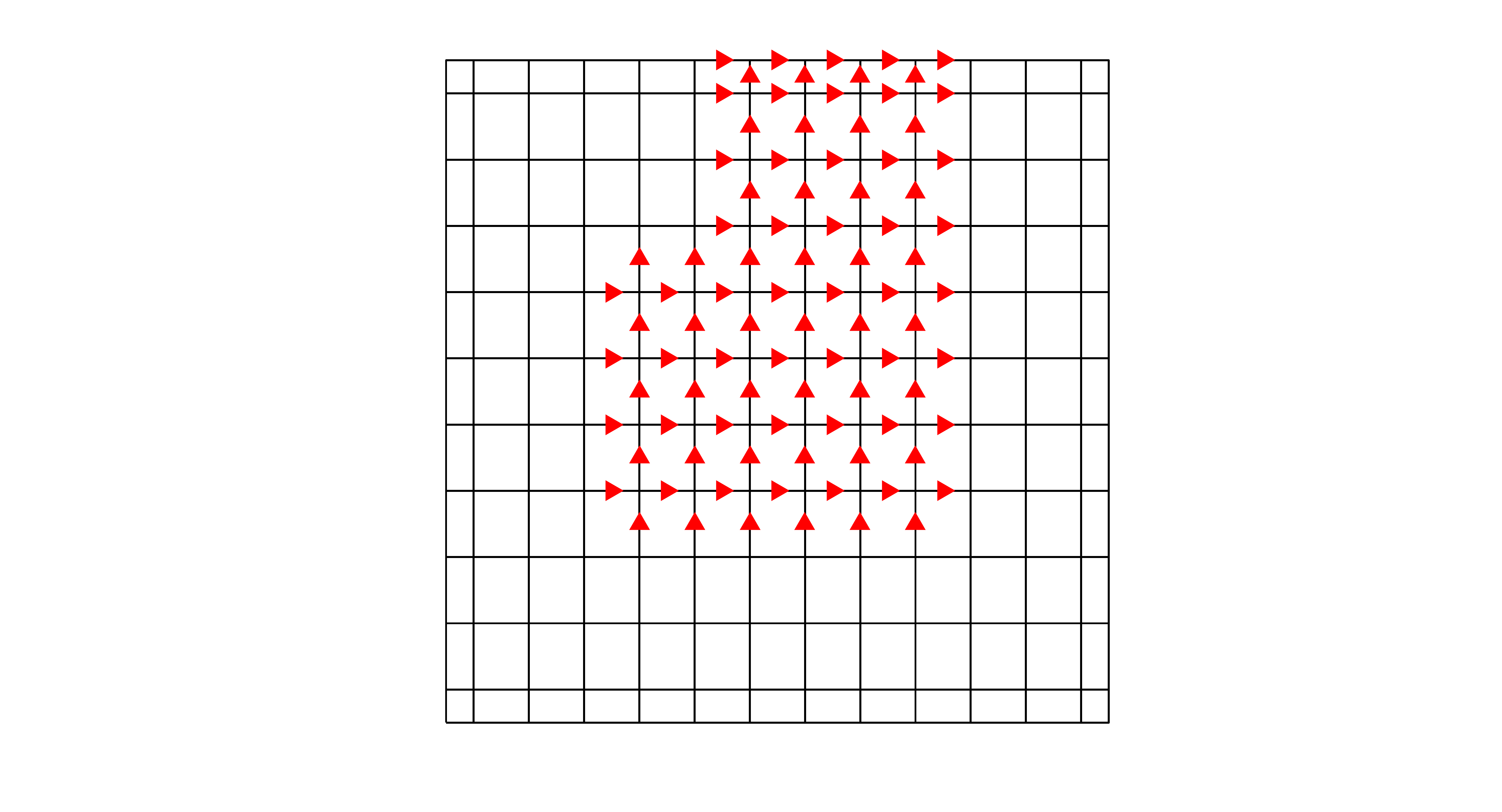}}
\caption{Greville edges associated with the active 1-form basis functions for levels $\ell = 0$ (left) and $\ell = 1$ (right) in the hierarchical B\'ezier mesh depicted in Figure~\ref{fig:hierarchical_bezier} assuming the 0-form basis functions are maximally continuous and of degree $p_1 = p_2 = 2$.}
\label{fig:1forms_two_levels}
\end{figure}

\begin{figure}[t!]
\centerline{\includegraphics[width=0.4\textwidth, trim = 11cm 2cm 11cm 1cm, clip]{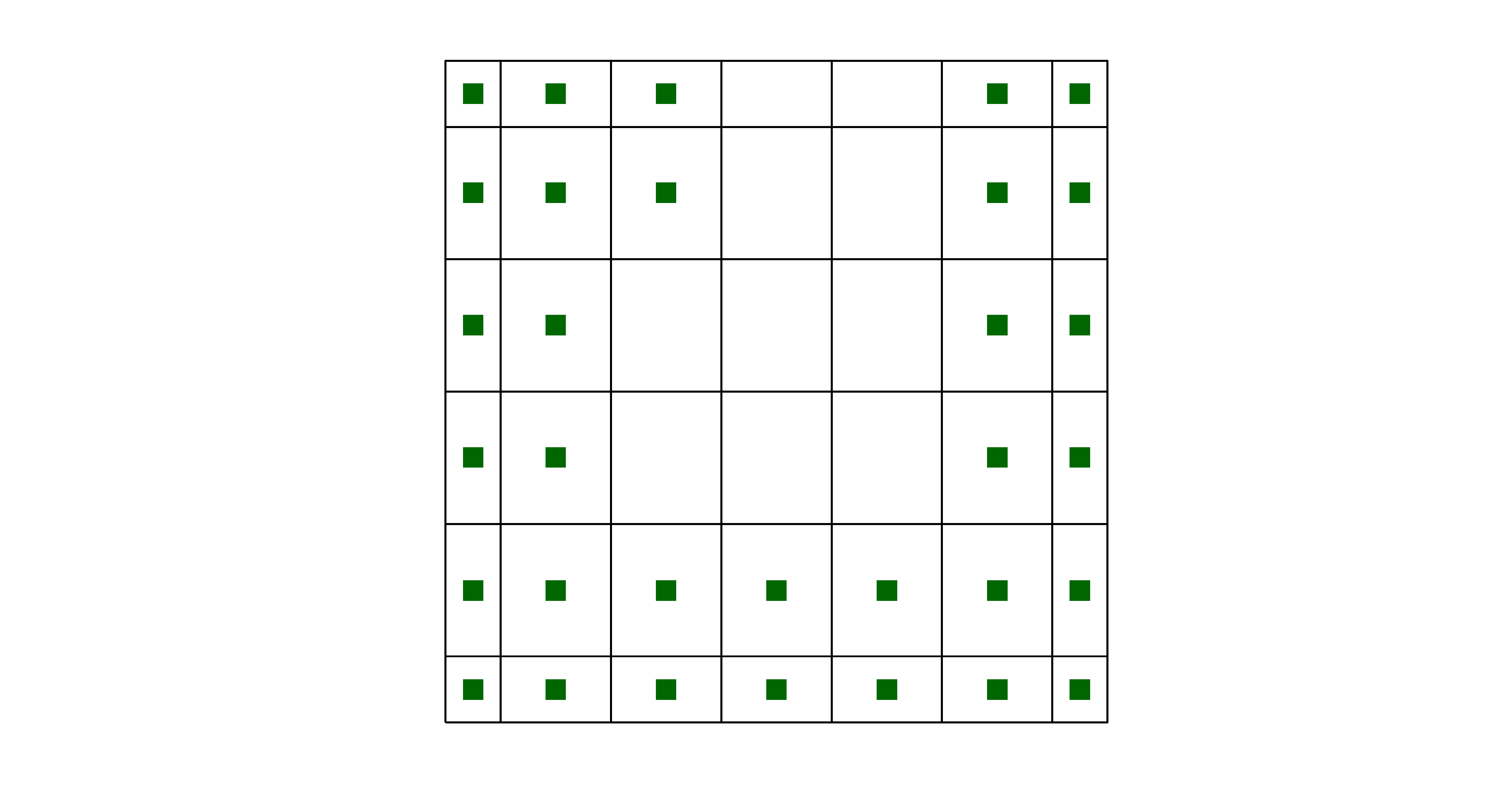}
\includegraphics[width=0.4\textwidth, trim = 11cm 2cm 11cm 1cm, clip]{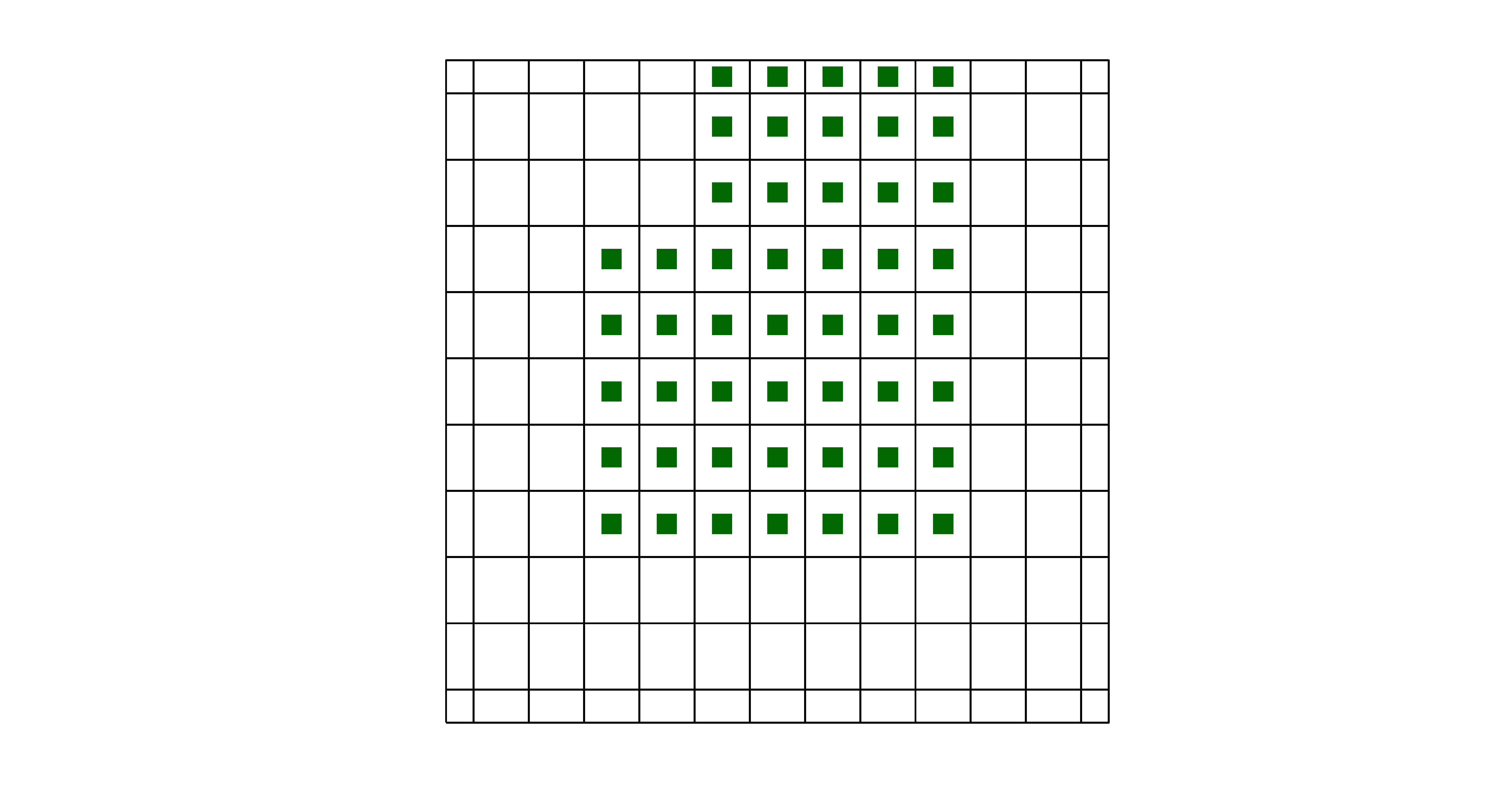}}
\caption{Greville faces associated with the active 2-form basis functions for levels $\ell = 0$ (left) and $\ell = 1$ (right) in the hierarchical B\'ezier mesh depicted in Figure~\ref{fig:hierarchical_bezier} assuming the 0-form basis functions are maximally continuous and of degree $p_1 = p_2 = 2$.}
\label{fig:2forms_two_levels}
\end{figure}

\begin{lemma}\label{lemma:subcomplex}
For $0 \le \ell \le \ell' \le N$, $(\Xhat_{\ell,\ell'},d)$ and $(\What_\ell,d)$ are Hilbert complexes, not necessarily exact.
\end{lemma}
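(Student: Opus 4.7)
The plan is to verify the three defining conditions of a Hilbert complex for each of the two sequences: (a) each space in the sequence is a Hilbert space, (b) the exterior derivatives map consecutive spaces appropriately, and (c) the composition of consecutive exterior derivatives vanishes. Condition (c) is immediate, since it is inherited pointwise from the underlying tensor-product B-spline complexes $(\Xhatlh{},d)$ at each level $\ell$, each of which was shown to be a discrete de Rham complex in Section~\ref{sec:spline-complex}. Condition (a) is also essentially free: the spaces $\Xhat^k_{\ell,\ell'}$ and $\What^k_\ell$ are defined as spans of finite sets of B-spline basis functions, so they are finite-dimensional subspaces of the ambient Hilbert spaces and thus closed. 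All the content therefore concentrates in condition (b), namely that $d^k$ sends the space of $k$-forms in the subcomplex into the space of $(k+1)$-forms in the same subcomplex.

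For the subcomplex $(\Xhat_{\ell,\ell'},d)$, I would argue directly using locality. Given $\omega \in \Xhat^k_{\ell,\ell'}$, by definition $\omega \in \Xhatlh{k}$ with $\supp(\omega) \subset \Omega_{\ell'}$. Since $d^k$ is a (local) differential operator, $\supp(d^k\omega) \subset \supp(\omega) \subset \Omega_{\ell'}$, and since $(\Xhatlh{},d)$ is itself a complex, $d^k\omega \in \Xhatlh{k+1}$. Combining these two facts with the characterization in Definition~\ref{def:X_ell} gives $d^k\omega \in \Xhat^{k+1}_{\ell,\ell'}$, which is exactly condition (b).

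For the subcomplex $(\What_\ell,d)$, I would proceed by induction on the level $\ell$. The base case $\ell = 0$ is trivial because $\What^k_0 = \Xhat^k_{0,h}$ and the tensor-product complex at level $0$ is a de Rham complex by construction. For the inductive step, suppose $d^k(\What^k_\ell) \subset \What^{k+1}_\ell$. By part (i) of Lemma~\ref{lemma:cupcap}, applied componentwise in the multi-level construction for $k$-forms, any $u \in \What^k_{\ell+1}$ can be written as $u = v + w$ with $v \in \What^k_\ell$ and $w \in \Xhat^k_{\ell+1,\ell+1}$. Then $d^k u = d^k v + d^k w$, with $d^k v \in \What^{k+1}_\ell$ by the inductive hypothesis and $d^k w \in \Xhat^{k+1}_{\ell+1,\ell+1}$ by the argument of the previous paragraph applied at level $\ell+1$ with $\ell' = \ell+1$. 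Using Lemma~\ref{lemma:cupcap} once more, $d^k u \in \What^{k+1}_\ell + \Xhat^{k+1}_{\ell+1,\ell+1} = \What^{k+1}_{\ell+1}$, closing the induction.

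The main obstacle, such as it is, is simply recognizing that the decomposition in Lemma~\ref{lemma:cupcap}, which was stated only for scalar hierarchical splines, also describes $\What^k_{\ell+1}$ for each $k$; this is because the recursive construction of Definition~\ref{def:hierarchical} is applied identically to each space ${\cal B}^k_\ell$ in the multi-level B-spline complex, and the proof of Lemma~\ref{lemma:cupcap} depends only on the nestedness in~\eqref{eq:subspaces}, the nestedness in~\eqref{eq:subdomains}, and local linear independence, all of which hold equally for the vector-valued bases of mixed-degree B-splines used in the construction of the $k$-form spaces. Note that nothing in this argument gives exactness: the subcomplexes are genuine subcomplexes of an exact complex, and the cohomology of a subcomplex need not agree with that of its parent, as was already emphasized in Section~\ref{sec:derham}; this is why the statement carefully says ``not necessarily exact.''
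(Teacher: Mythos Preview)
Your proof is correct and follows essentially the same approach as the paper: the support-locality argument for $(\Xhat_{\ell,\ell'},d)$ and the induction on $\ell$ via Lemma~\ref{lemma:cupcap} for $(\What_\ell,d)$ are exactly what the paper does, though you spell out more explicitly why $d^k$ preserves the support condition and why Lemma~\ref{lemma:cupcap} extends to $k$-forms.
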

\begin{proof}
The result for $\Xhat^k_{\ell,\ell'}$ is trivial, because clearly $d^k(\Xhat^k_{\ell,\ell'}) \subset \Xhat^{k+1}_{\ell,\ell'}$ and $(\Xhat_{\ell,\ell'},d)$ is a subcomplex of $(\Xhat_{\ell,h},d)$. For $\What^k_\ell$, since the spaces are contained in $\Xhatlh{k}$, the differential operators $d^k$ are well defined, and to prove the result we proceed by induction. The result is true for $\ell =0$, since $\What^k_0 = \hat X^k_{0,h}$ are the tensor-product spline spaces in Section~\ref{sec:spline-complex}. Assuming that $(\What_\ell,d)$ forms a Hilbert complex, the result holds for $(\What_{\ell+1},d)$, because $\What^k_{\ell+1} = \What^k_{\ell} + \Xhat^k_{\ell+1,{\ell+1}}$ by Lemma~\ref{lemma:cupcap}, and it is immediate to see that $d^k (\What^k_{\ell} + \Xhat^k_{\ell+1,{\ell+1}}) \subset (\What^{k+1}_{\ell} + \Xhat^{k+1}_{\ell+1,{\ell+1}})$.
% and {\Rd Lemma~\ref{lemma:cupcap_derham}}, because $\What^i_{\ell+1} = \What^i_{\ell} \cup \What^i_{\ell+1,\Omega_{\ell+1}}$.
\end{proof}

%\subsection{A Counterexample of the Exactness}
The result of Lemma~\ref{lemma:subcomplex} for $\ell = N$ guarantees that the hierarchical spline spaces $\What^k_N$ form a Hilbert complex, but in general the complex {\bf \emph{may not be exact}}. Actually, it is relatively easy to find counterexamples where the hierarchical spline spaces do not form an exact sequence. For instance, in the two-dimensional case a necessary (but not sufficient) condition for exactness is
\begin{equation*}
\dim (\What^0_N) + \dim (\What^2_N) = \dim (\What^1_N) + 1.
\end{equation*}
We can see that the condition is not fulfilled for the maximally continuous hierarchical B-spline complex with $p_1 = p_2 = 3$ defined on the hierarchical B\'{e}zier mesh depicted in Figure~\ref{fig:2x2}. In this case, a simple computation gives that the dimensions of the hierarchical spaces with boundary conditions are $\dim (\What^0_N) = 147$, $\dim (\What^1_N) = 328$, $\dim(\What^2_N) = 181$, which violates the previous condition.

\begin{figure}[t]
\centerline{\includegraphics[width=0.45\textwidth]{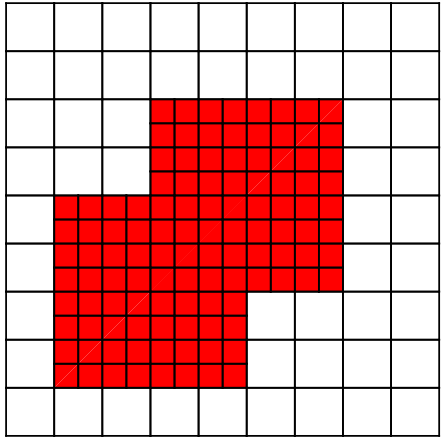}}
\caption{A hierarchical B\'{e}zier mesh for which the sequence is not exact for $p_1 = p_2 = 3$.}
\label{fig:2x2}
\end{figure}

Exactness of the hierarchical B-spline complex is crucial for the correct behavior of discretization schemes based on the complex, as we will see later in the examples of Section~\ref{sec:stability}. In Section~\ref{sec:exactness}, we develop conditions guaranteeing exactness of the complex based on an analysis of the spaces $\Xhat^k_{\ell,\ell}$ and $\Xhat^k_{\ell,\ell+1}$ appearing in the construction of the hierarchical B-spline complex. In this analysis, we invoke the concept of a Greville subgrid which we now describe in detail.

\begin{remark}
In the two-dimensional setting, a \textbf{rotated hierarchical B-spline complex} may easily be constructed by rotating the orientation of the discrete 1-forms by $\pi/2$ in parametric space, just as was the case for tensor-product B-splines.
\end{remark}

\subsection{The Spline Complex Restricted to a Submesh and the Greville Subgrid}\label{sec:submesh}
Consider a particular level $\ell$, and consider a subdomain $\Omega_{\ell'}$ with $0 \le \ell \le \ell' \le N-1$. From the strong condition in Section~\ref{sec:HB-splines}, we know that $\Omega_{\ell'}$ is a union of elements in $\M_\ell$ for $\ell' \in \{\ell,\ell+1\}$. %We recall that we defined ${\cal B}^k_{\ell, \ell'}$ in \eqref{eq:Bik} as the basis of tensor product $k$-forms completely contained in $\Omega_{\ell'}$, and $\Xhat^k_{\ell,\ell'}$ is the space they span. 
As we already proved in Lemma~\ref{lemma:subcomplex}, the spaces $\Xhat^k_{\ell,\ell'}$ form the complex $(\Xhat_{\ell,\ell'},d)$, but the sequence they form is not necessarily exact.

Employing the geometrical entities in the Greville grid $\MG_\ell$ associated with the basis functions in ${\cal B}^k_{\ell,\ell'}$, we build a {\bf \emph{Greville subgrid}} that we denote by $\MG_{\ell,\ell'} \subset \MG_\ell$. With some abuse of notation, we will also denote by $\MG_{\ell,\ell'}$ (the interior of the closure of) the union of the cells in $\MG_{\ell,\ell'}$. In Figure~\ref{fig:submeshes} we have included, for the same spaces of tensor-product B-splines depicted in Figure~\ref{fig:meshes}, an example of a subdomain $\Omega_{\ell'}$ and the corresponding Greville subgrid. We notice that, in general, the subdomain defined by $\MG_{\ell,\ell'}$ is different from $\Omega_{\ell'}$, and in fact it will depend on the degree of the spline space. We also notice that, since the support of the B-spline basis functions is reduced with the degree, the basis functions associated to the entities on the boundary of $\MG_{\ell,\ell'}$ do not belong to ${\cal B}^k_{\ell,\ell'}$.

\begin{figure}[tp]
\begin{subfigure}[The subdomain $\Omega_{\ell'}$ as a B\'ezier submesh of $\M_\ell$]{
\centering
\includegraphics[width=3.0in]{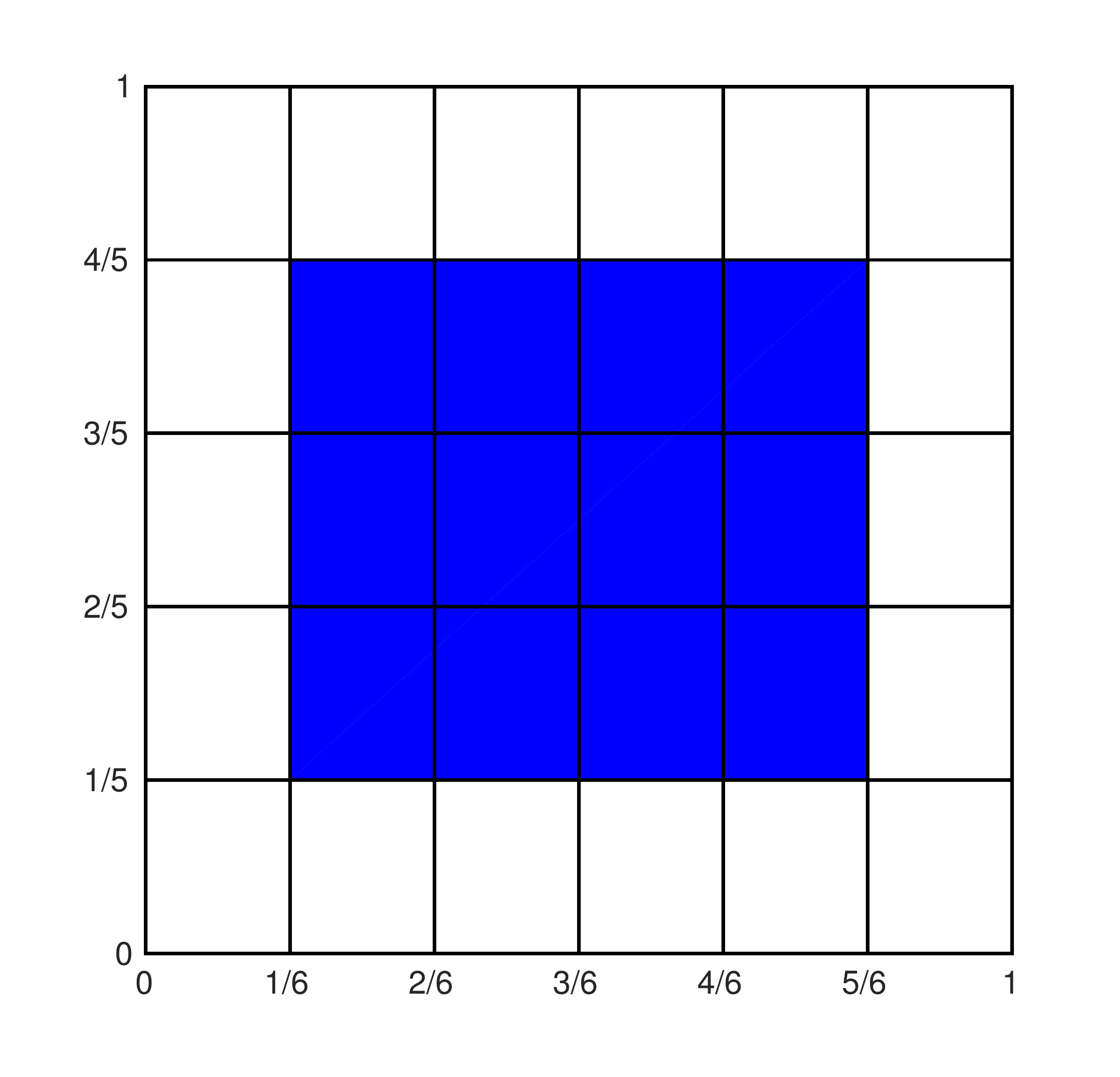}
\label{fig:subbezier}
}
\end{subfigure}
\begin{subfigure}[The corresponding Greville subgrid $\MG_{\ell,\ell'}$]{
\includegraphics[width=3.0in]{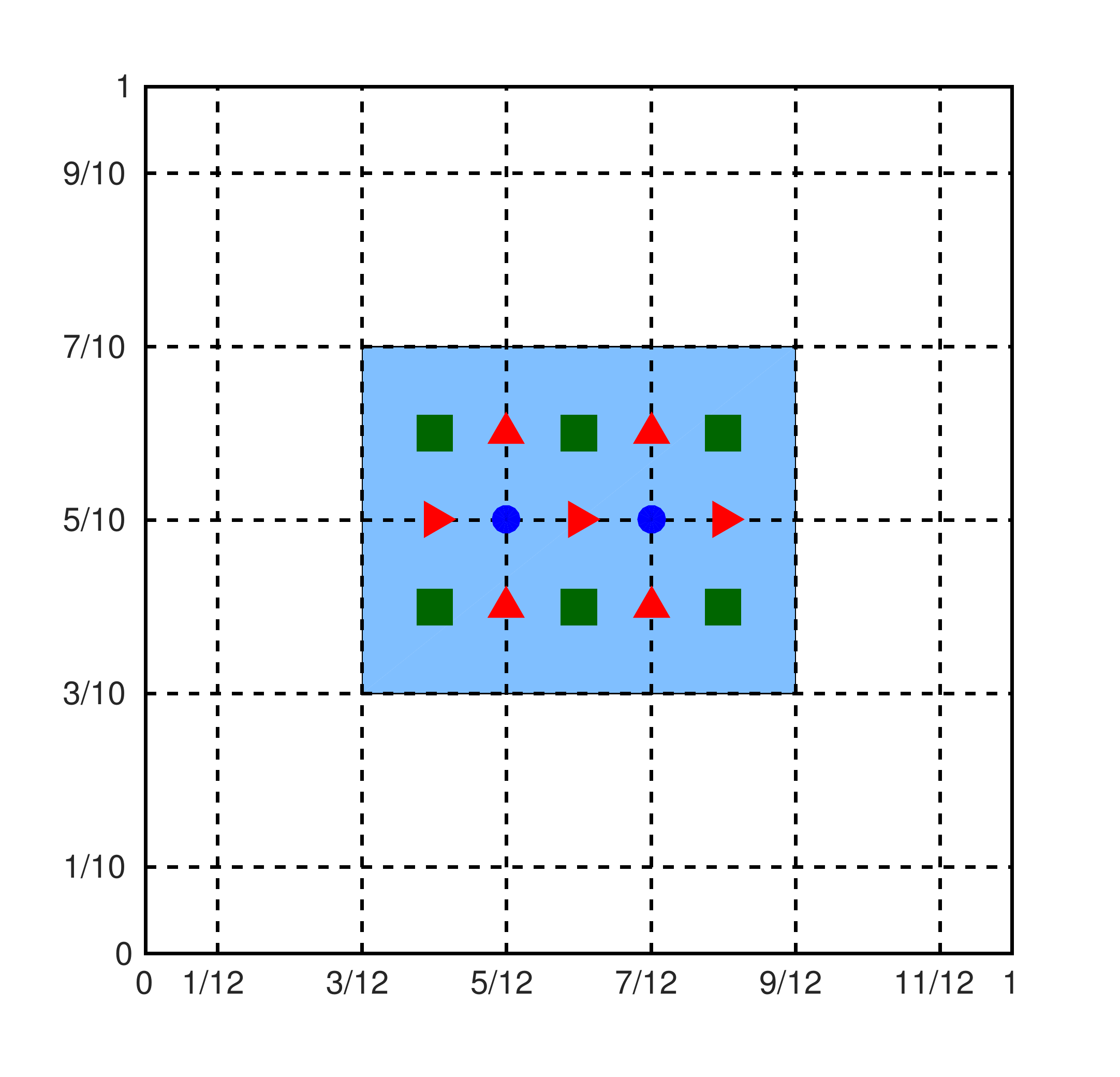}
\label{fig:subgreville}
}
\end{subfigure}
\caption{Construction of the Greville subgrid $\MG_{\ell,\ell'}$. Considering the same degrees and knot vectors as in Figure~\ref{fig:meshes} to represent $\M_\ell$ and $\MG_\ell$, we choose the subdomain  $\Omega_{\ell'} = (1/6,5/6) \times (1/5,4/5)$ in (a). The Greville subgrid $\MG_{\ell,\ell'}$ is formed by the darkened elements in (b), while the geometric entities corresponding to basis functions in ${\cal B}^k_{\ell,\ell'}$ are marked by blue dots (0-forms), red arrows (1-forms) and green squares (2-forms).}
\label{fig:submeshes}
\end{figure}

Finally, for each level $\ell$, we can define isomorphisms as in the diagram \eqref{eq:cd-iga-fem} that relate the B-spline complex to the low-order finite element complex defined on the Greville grid. Considering the subdomain $\Omega_{\ell'}$, the same isomorphisms relate the spline subspaces $\Xhat^k_{\ell,\ell'}$ to the spaces of the finite element complex defined on the Greville subgrid, which we henceforth denote by $\hat{Z}^k_{\ell,\ell'}$, and a commutative diagram can also be defined for these subspaces.

\begin{remark}
Since we are assuming homogeneous boundary conditions on $\partial \Omega$, the finite element spaces $\hat{Z}^k_{\ell,\ell'}$ have homogeneous boundary conditions on $\partial \MG_{\ell,\ell'}$. Removing this assumption would cause the finite element spaces to have homogeneous boundary conditions on $\partial \MG_{\ell,\ell'} \setminus (\partial \MG_{\ell,\ell'} \cap \partial \hat \Omega)$. The results of the paper can be easily applied to this case, but writing it in detail would only add more complexity without giving more insight.
\end{remark}

\subsection{Extension to the Physical Domain and Multi-patch Geometries}

The hierarchical B-spline complex in the physical domain is built in exactly the same manner as the B-spline complex. Namely, assuming that we have a parametrization ${\bf F}: \hat \Omega \rightarrow \Omega$, we simply use the pullback operators defined in Subsection \ref{subsec:pullback}, yielding:
\begin{equation*}
W_\ell^k\left(\Omega\right) := \left\{ \omega \in X^k\left(\Omega\right) : \iota^k(\omega) \in \What^k_\ell \right\}
\end{equation*}
for all integers $0 \leq k \leq n$ and $0 \leq \ell \leq N$.

To define hierarchical differential forms in a multi-patch domain, one must take into account the presence of non-conforming meshes between patches \cite{Scott2014222}. This can be easily done following the approach used in \cite{Buchegger2016159} for 0-forms. We start by defining a multi-patch B-spline complex for each level, exactly as in Section~\ref{sec:multipatch}. Then, we apply the hierarchical algorithm of Definition~\ref{def:hierarchical} to the multi-patch spaces of the complex, taking into account that the supports of interface functions overlap two or more patches. 
%With this procedure, the check of the support and the orientation assignment for each basis function can be done at the tensor-product level.  JOHN: I found this possibly confusing for the reader.
We show an example of the construction for a multi-patch, maximally continuous hierarchical B-spline complex with $p_1 = p_2 = 2$ defined on the hierarchical B\'{e}zier mesh in Figure~\ref{fig:multipatch}, where the thick line represents the interface between both interfaces. The set of active basis functions for 0-forms is represented in the Greville grid in Figure~\ref{fig:multi_0forms}, for levels 0 and 1. We note the presence of an active basis function at level 0 on the interface that would not appear on the left patch when considering the patches separately. This is due to the fact that the basis function is supported on both patches. Similarly, some functions of level 1 on the interface are not active but would be active on the left patch if we considered the patches separately. A similar behavior is also encountered for the 1-forms depicted in Figure~\ref{fig:multi_1forms}.

\begin{figure}[t]
\centerline{\includegraphics[width=0.6\textwidth, trim=5cm 4cm 4cm 3cm, clip]{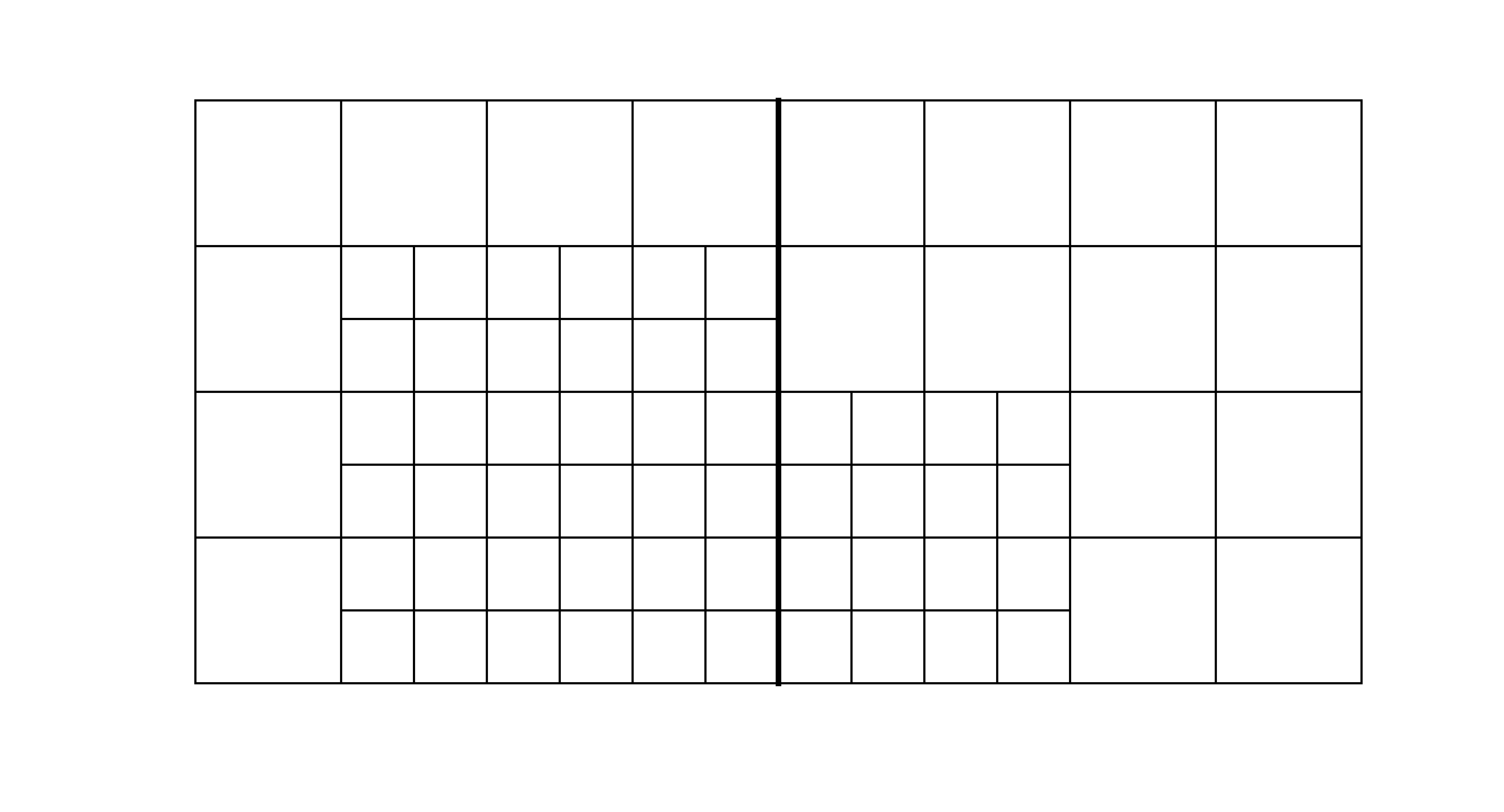}}
\caption{A hierarchical B\'{e}zier mesh for a simple multi-patch geometry with $p_1 = p_2 = 2$.}\label{fig:multipatch}
\end{figure}

\begin{figure}[t]
\begin{subfigure}[Active basis functions of level 0]{
\includegraphics[width=0.49\textwidth, trim=5cm 4cm 4cm 3cm, clip]{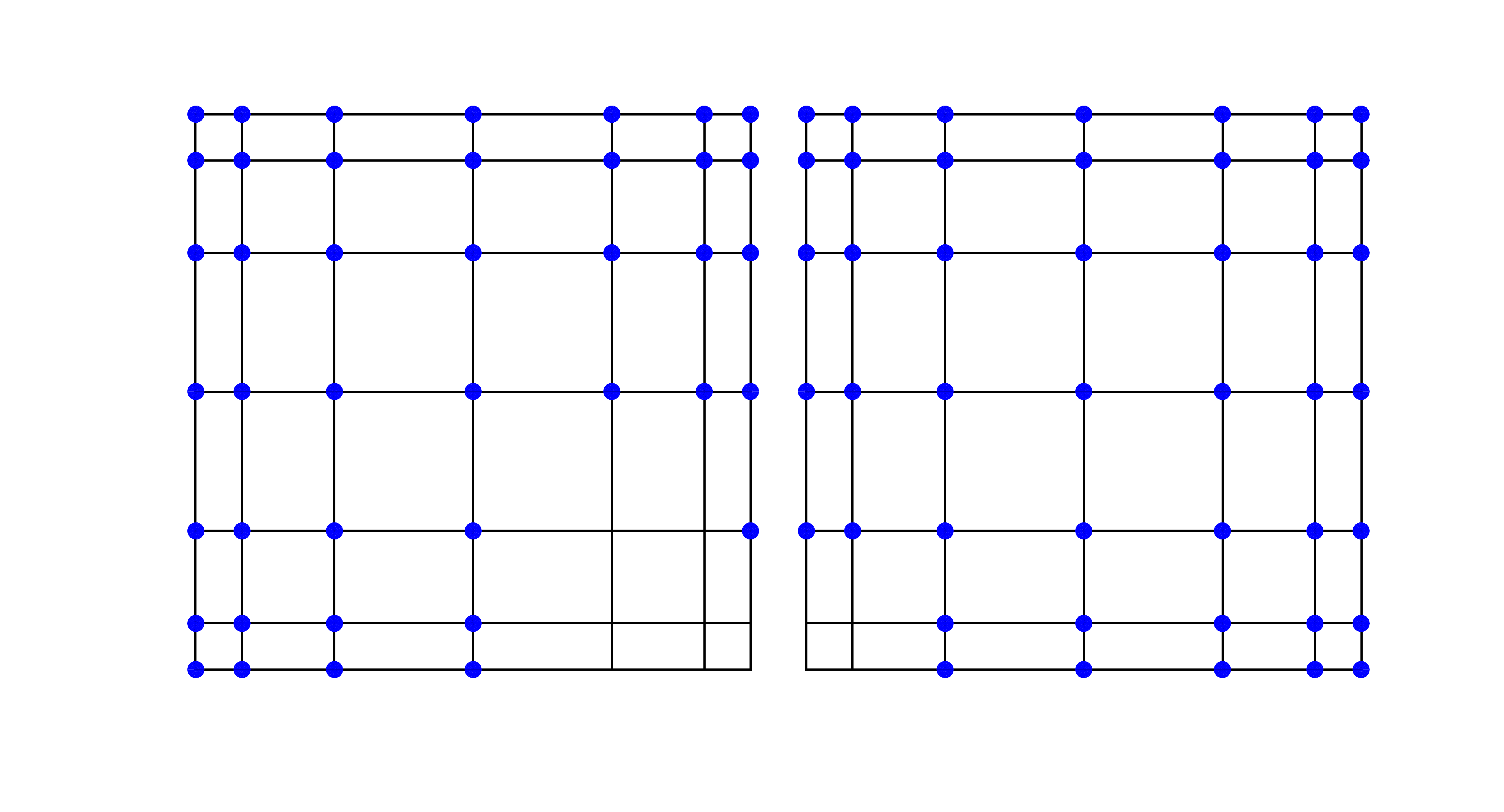}
\label{fig:multi_0forms_1lev}
}
\end{subfigure}
\begin{subfigure}[Active basis functions of level 1]{
\includegraphics[width=0.49\textwidth, trim=5cm 4cm 4cm 3cm, clip]{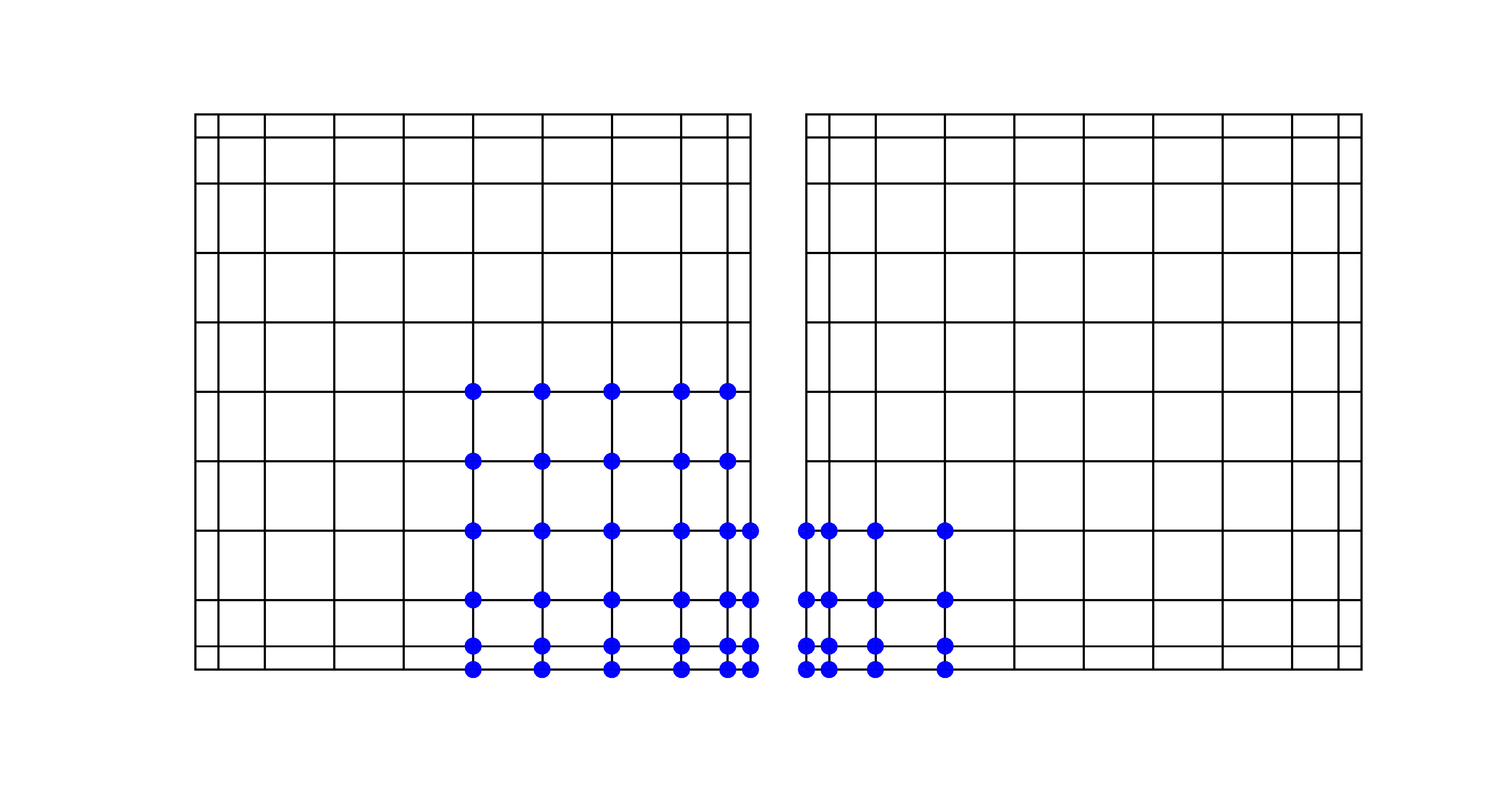}
\label{fig:multi_0forms_2lev}
}
\end{subfigure}
\caption{Geometric entities in the Greville grids associated with active basis functions for 0-forms corresponding to the multi-patch example in Figure~\ref{fig:multipatch}.}
\label{fig:multi_0forms}
\end{figure}

\begin{figure}[h!]
\begin{subfigure}[Active basis functions of level $\ell = 0$]{
\includegraphics[width=0.49\textwidth, trim=5cm 4cm 4cm 3cm, clip]{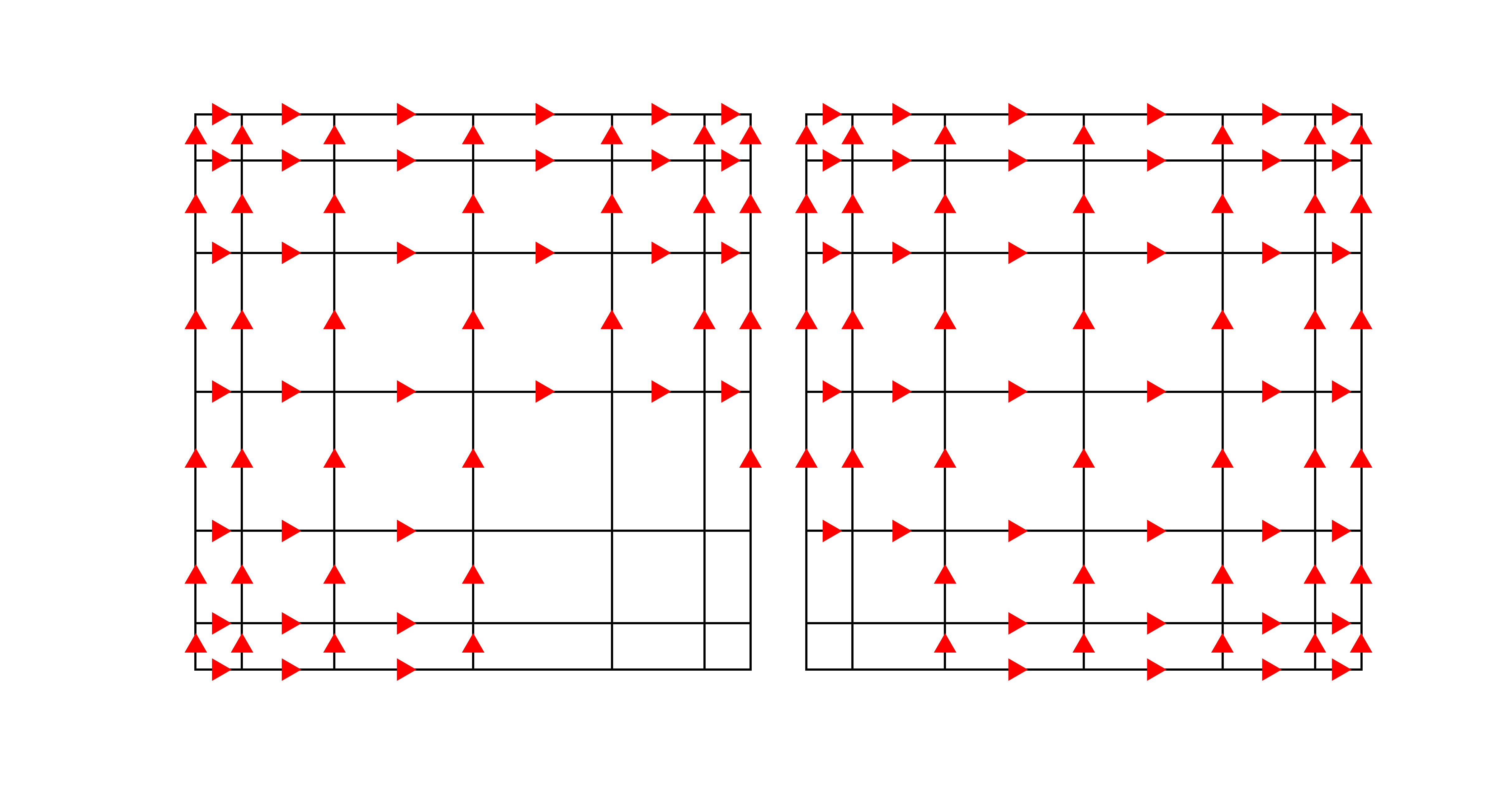}
\label{fig:multi_1forms_1lev}
}
\end{subfigure}
\begin{subfigure}[Active basis functions of level $\ell = 1$]{
\includegraphics[width=0.49\textwidth, trim=5cm 4cm 4cm 3cm, clip]{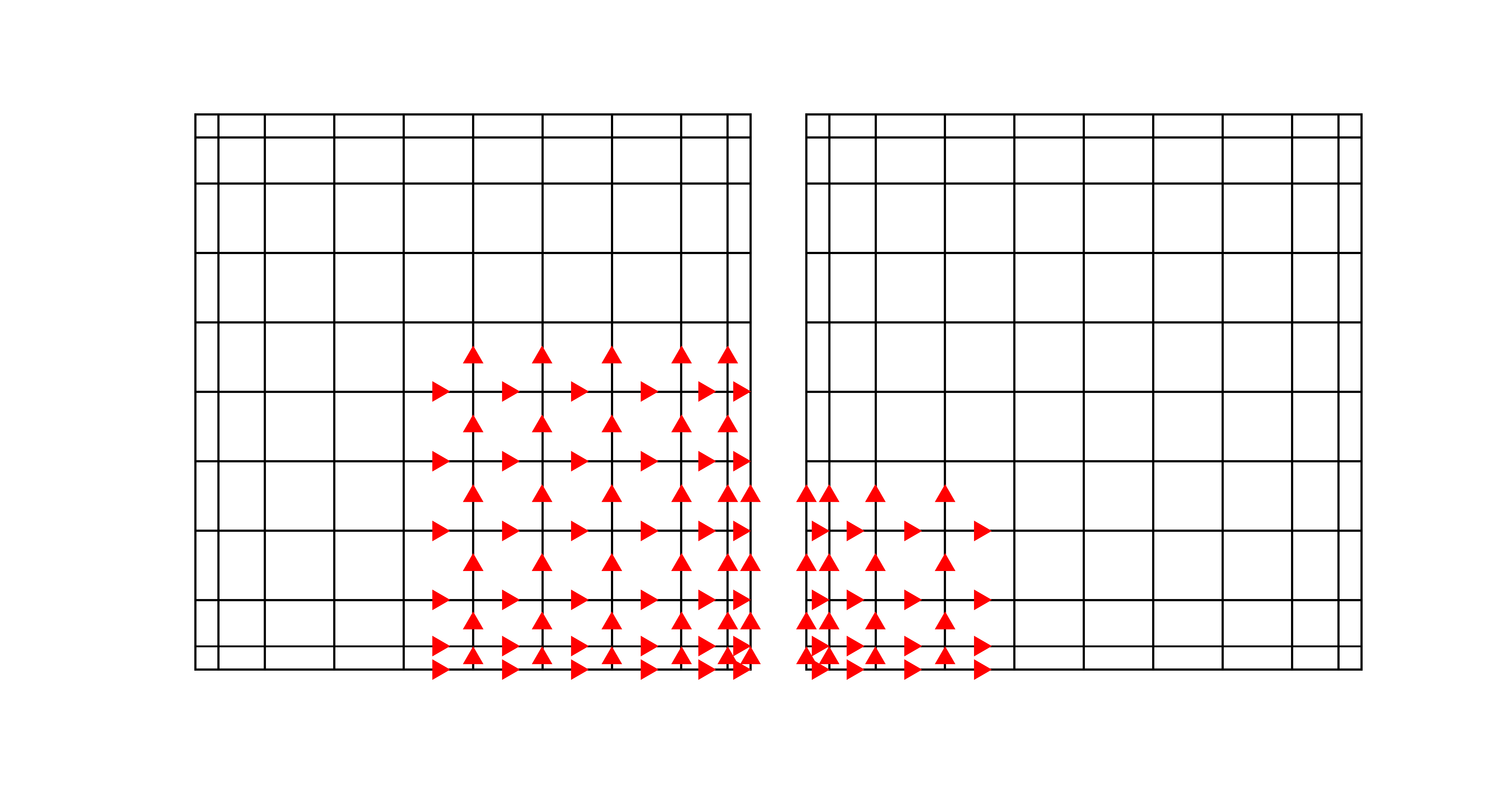}
\label{fig:multi_1forms_2lev}
}
\end{subfigure}
\caption{Geometric entities in the Greville grids associated with active basis functions for 1-forms corresponding to the multi-patch example in Figure~\ref{fig:multipatch}.}
\label{fig:multi_1forms}
\end{figure}

\section{Exactness of the Hierarchical B-spline Complex} \label{sec:exactness}
In this section, we begin the analysis of the hierarchical B-spline complex, and in particular, we study under which conditions the complex is exact. For our analysis, we make use of some powerful tools from abstract cohomology theory that we present at the beginning of the section. These results are then applied to the particular case of the hierarchical B-spline complex, from which we obtain a sufficient and necessary condition for exactness, given in Theorem~\ref{th:exact}. Since the proof is based on abstract cohomology theory, the condition we obtain is valid in a very general setting: it is dimension independent and can be also applied to other hierarchical spaces constructed with the algorithm in Definition~\ref{def:hierarchical}, including hierarchical T-splines \cite{Evans15}. We also give an interpretation of the derived condition based on Greville subgrids. 

The drawback of the aforementioned condition is that is not easily computable. For this reason, we introduce a local condition related to the support of the basis functions on the hierarchical B\'ezier mesh which is local and easier to implement. The last part of this section is devoted to proving this is a sufficient condition for exactness for the two-dimensional case. Some of the proofs in this section are rather technical, hence we will relegate the details to the appendix.

% In this section we start the analysis of the hierarchical spline complex, and in particular develop a condition for the exactness, with a proof based on abstract cohomology theory. We begin the section presenting the necessary results in cohomology, and then apply them to the particular case of the hierarchical spline complex. Coming from an abstract theory, the condition we obtain is valid in a very general setting. Indeed, the result works for the construction of a hierarchical spline complex in any dimension, and it can also be applied to other hierarchical spaces constructed with the algorithm in Definition~\ref{def:hierarchical}, such as hierarchical T-splines \cite{Evans15}. In the last part of the section we show that the local condition presented in Section~\ref{} {\Rd FIX} is a sufficient condition for exactness. Some of the proofs in this section are rather technical, hence we will relegate the details to the appendix.

\subsection{Background Results from Cohomology Theory}\label{sec:homology}
%% To present the abstract setting of cohomology we mainly follow \cite[Section~3]{AFW-2}. We define a Hilbert complex $(V,d)$ as a sequence of Hilbert spaces $V^i$, which are the domain of definition of the linear maps $d^i$
%% \begin{equation} \label{eq:sequence}
%% \begin{CD}
%% 0 @>>> V^0 @>d^0>> V^1 @>d^1>> V^2 @>d^2>> \ldots @>d^{n-1}>> V^n @>>> 0,
%% \end{CD}
%% \end{equation}
%% with $d^i \circ d^{i-1} = 0$. The elements of the kernel of $d^i$ are called closed forms, and denoted by $Z^i(V) \equiv Z^i(V,d)$. %:= \{u^i \in V^i : d^i u^i = 0 \}$. 
%% The elements of the range of $d^{i-1}$ are called exact forms, and denoted by $B^i(V) \equiv B^i(V,d)$. % := \{ u^i \in V^i : u^i = d^{i-1} u^{i-1}, u^{i-1} \in V^{i-1} \}$. 
%% The $i$th {\bf \emph{cohomology group}} is given by the quotient space $H^i(V) \equiv H^i(V,d) = Z^i(V) / B^i(V)$. When the cohomology groups are all trivial, we say that the sequence is {\bf \emph{exact}}.

%% We say that $(V_h,d)$ is a {\bf \emph{subcomplex}} of $(V,d)$ if $V_h^k \subset V^k$, and $d^k(V_h^k) \subset V_h^{k+1}$. We notice that what we have denoted Hilbert complex is called the domain subcomplex in \cite{AFW-2}, {\Rd but for the theory developed in this paper we don't need the missing ``supercomplex''}. We will make use of the following result about subcomplexes.

\begin{lemma}\label{lemma:cupcap_derham}
Let $(V_h,d)$ and $(W_h,d)$ be two subcomplexes of $(V,d)$. Then $(V_h \cap W_h,d)$ and $(V_h + W_h, d)$ are also subcomplexes.
\end{lemma}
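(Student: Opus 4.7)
The plan is to verify directly that both $(V_h \cap W_h, d)$ and $(V_h + W_h, d)$ meet the two requirements in the definition of a subcomplex given in the preliminaries: at each degree $k$ the space must embed into $V^k$, and each differential $d^k$ must map the degree-$k$ piece into the degree-$(k+1)$ piece. The inclusion into $V^k$ is immediate in both constructions since $V_h^k \subset V^k$ and $W_h^k \subset V^k$ by hypothesis, so the real content is the stability of the two newly constructed graded spaces under the differentials.

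For the intersection, I would argue pointwise. Given $\omega \in V_h^k \cap W_h^k$, the subcomplex property of $(V_h,d)$ yields $d^k \omega \in V_h^{k+1}$, and the same property applied to $(W_h,d)$ yields $d^k \omega \in W_h^{k+1}$; hence $d^k \omega \in V_h^{k+1} \cap W_h^{k+1}$, which is exactly what is needed. For the sum, I would take an arbitrary $\omega = v + w$ with $v \in V_h^k$ and $w \in W_h^k$, use linearity of $d^k$ to split $d^k \omega = d^k v + d^k w$, and place the first summand in $V_h^{k+1}$ and the second in $W_h^{k+1}$ using the subcomplex hypothesis on each factor. Their sum then lies in $V_h^{k+1} + W_h^{k+1}$ by definition.

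The identity $d^{k+1} \circ d^k = 0$ is inherited automatically from $(V,d)$ on any linear subspace, so no separate verification of that axiom is needed. There is essentially no obstacle in this argument; the only subtlety is to invoke the linearity of $d^k$ explicitly in the sum case to decouple the two summands. I would also note in passing that the proof uses only that each $d^k$ is a linear map respecting the gradings, so it applies verbatim to purely algebraic chain complexes and does not rely on any Hilbert-space completeness or on the subspaces $V_h^k$ and $W_h^k$ being closed.
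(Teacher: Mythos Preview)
Your proposal is correct and follows essentially the same approach as the paper's proof, which simply records that $V_h^k \cap W_h^k \subset V^k$, $V_h^k + W_h^k \subset V^k$, and that $d^k$ preserves both the intersection and the sum at each degree. You have merely spelled out these verifications in slightly more detail than the paper, which dismisses the result as trivial in one sentence.
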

\begin{proof}
The result is trivial, since $V^k_h \cap W^k_h \subset V^k$ and $V^k_h + W^k_h \subset V^k$, and moreover we also have $d^k(V_h^k \cap W_h^k) \subset V_h^{k+1} \cap W_h^{k+1}$ and $d^k(V_h^k + W_h^k) \subset V_h^{k+1} + W_h^{k+1}$.
\end{proof}

Let $(V,d_V)$ and $(W,d_W)$ be two Hilbert complexes, and let $f^k : V^k \rightarrow W^k$ be a set of linear maps satisfying $d_W^k f^k = f^{k+1} d_V^k$. Following \cite{AFW-2} we call this a cochain map, and we denote by $f^*$ its induced map in the cohomology groups. The map induced in cohomology by a composition of cochain maps takes the form $(f \circ g)^* = f^* \circ g^*$. Given a Hilbert complex $(V,d)$ and any subcomplex $(V_h,d)$, the inclusion map $j:V_h \rightarrow V$ is a cochain map which induces a map between the cohomology spaces, $j^* : H^k(V_h) \longrightarrow H^k(V)$.

\begin{lemma}\label{lemma:MV}
Let $(V_h,d)$ and $(W_h,d)$ be two subcomplexes of the Hilbert complex $(V,d)$, with cohomology groups $H^k(V_h)$ and $H^k(W_h)$. Then we have an exact Mayer-Vietoris sequence of the form
\begin{equation*}
\begin{CD}
\ldots \rightarrow H^k(V_h \cap W_h) @>\varphi^*>> H^k(V_h) \oplus H^k(W_h)  @>\psi^*>> H^k(V_h + W_h) @>d^*>> H^{k+1}(V_h \cap W_h) \rightarrow \ldots
\end{CD}
\end{equation*}
\end{lemma}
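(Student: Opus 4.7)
The plan is to obtain the Mayer--Vietoris sequence as the long exact cohomology sequence associated with a short exact sequence of cochain complexes. Define, at each level $k$, the maps $\varphi^k : V_h^k \cap W_h^k \to V_h^k \oplus W_h^k$ by $\varphi^k(x) = (x, x)$ and $\psi^k : V_h^k \oplus W_h^k \to V_h^k + W_h^k$ by $\psi^k(x, y) = x - y$. Since the differential $d^k$ restricts compatibly to each of the subcomplexes $V_h \cap W_h$, $V_h \oplus W_h$ (with componentwise differential), and $V_h + W_h$ (subcomplexes by Lemma~\ref{lemma:cupcap_derham}), both $\varphi$ and $\psi$ are cochain maps, that is, they commute with the differentials.

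Next I would verify that
\begin{equation*}
\begin{CD}
0 @>>> V_h \cap W_h @>\varphi>> V_h \oplus W_h @>\psi>> V_h + W_h @>>> 0
\end{CD}
\end{equation*}
is a short exact sequence of cochain complexes. Injectivity of $\varphi^k$ is immediate from its definition. Surjectivity of $\psi^k$ follows from the very definition of $V_h^k + W_h^k$, since any element there can be written as $x + y$ with $x \in V_h^k$ and $y \in W_h^k$, and then $\psi^k(x, -y) = x+y$. Exactness in the middle amounts to the identity $\ker \psi^k = \{(x, x) : x \in V_h^k \cap W_h^k\} = \operatorname{Im} \varphi^k$, which is straightforward: if $\psi^k(x, y) = x - y = 0$ in $V^k$, then $x = y$ lies in $V_h^k \cap W_h^k$.

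With the short exact sequence in hand, I would invoke the standard zig-zag (snake) lemma from homological algebra, which produces a long exact sequence in cohomology whose connecting homomorphism $d^* : H^k(V_h + W_h) \to H^{k+1}(V_h \cap W_h)$ is defined by the usual diagram chase: given a closed representative $\omega = x - y$ with $x \in V_h^k$, $y \in W_h^k$, exactness $d\omega = 0$ gives $d^k x = d^k y \in V_h^{k+1} \cap W_h^{k+1}$, and one sets $d^*[\omega] = [d^k x]$. Finally, I would use the elementary identification $H^k(V_h \oplus W_h) \cong H^k(V_h) \oplus H^k(W_h)$, since cohomology commutes with finite direct sums of complexes, to rewrite the middle term of the long exact sequence in the desired form.

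The main obstacle is not conceptual but bookkeeping: one must check carefully that $d^*$ is well-defined (independent of the choice of decomposition $\omega = x - y$ and of representative within the cohomology class) and that the sequence is exact at each of the three types of terms. These verifications are purely formal, and since we are working with subcomplexes of a fixed ambient Hilbert complex $(V, d)$ the linear-algebraic arguments go through without modification from the standard abelian-group or vector-space case. For the purposes of this paper, once the setup above is in place I would simply cite the standard homological algebra result (e.g., from the construction of the connecting morphism in the snake lemma) to conclude.
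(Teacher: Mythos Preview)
Your proposal is correct and follows essentially the same approach as the paper: both set up the short exact sequence $0 \to V_h \cap W_h \xrightarrow{\varphi} V_h \oplus W_h \xrightarrow{\psi} V_h + W_h \to 0$ with $\varphi(u)=(u,u)$ and $\psi(v,w)=v-w$, and then appeal to the standard long exact sequence in cohomology (the paper cites Hatcher, you cite the snake lemma) with the same description of the connecting map $d^*$. If anything, you spell out slightly more detail in verifying exactness of the short sequence and the identification $H^k(V_h \oplus W_h) \cong H^k(V_h) \oplus H^k(W_h)$.
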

\begin{proof}
From Lemma~\ref{lemma:cupcap_derham}, both $(V_h \cap W_h,d)$ and $(V_h + W_h,d)$ are subcomplexes. We define the inclusion maps $i_{V_h}$ and $i_{W_h}$ from $V_h \cap W_h$ into $V_h$ and $W_h$, respectively, and also the inclusion maps $j_{V_h}$ and $j_{W_h}$ into $V_h + W_h$, from $V_h$ and $W_h$, respectively. It is trivial to prove that the following sequence is exact
\begin{equation} \label{eq:short}
\begin{CD}
0 @>>> V_h \cap W_h @>\varphi>> V_h \oplus W_h  @>\psi>> V_h + W_h @>>> 0,
\end{CD}
\end{equation}
where $\varphi(u) = (u,u) \equiv (i_{V_h} u, i_{W_h} u)$, and $\psi (v,w) = v-w \equiv j_{V_h}v - j_{W_h}w$. This induces a Mayer-Vietoris sequence of the form (see Chapters~2 and~3 in \cite{Hatcher})
\begin{equation*}
\begin{CD}
\ldots \rightarrow H^k(V_h \cap W_h) @>\varphi^*>> H^k(V_h) \oplus H^k(W_h)  @>\psi^*>> H^k(V_h + W_h) @>d^*>> H^{k+1}(V_h \cap W_h) \rightarrow \ldots
\end{CD}
\end{equation*}
The map $d^*$ is defined in the following way: given a closed form  $u \in Z(V_h + W_h)$, from the exactness of \eqref{eq:short} there exist $v \in V_h$ and $w \in W_h$ such that $u = \psi (v,w) = v-w$, and since it is a closed form $du = dv - dw = 0$. Thus in the cohomology groups we can define $d^*[u] = [dv] = [dw]$, where the brackets represent the equivalence class in the cohomology groups.
\end{proof}

\begin{theorem}\label{th:exactness}
Let $(V_h,d)$ and $(W_h,d)$ be two subcomplexes of the Hilbert complex $(V,d)$. Then the map induced by the inclusion, $i_{W_h}^*: H^k(V_h \cap W_h) \rightarrow H^k(W_h)$, is an isomorphism if and only if $j_{V_h}^*: H^k(V_h) \rightarrow H^k(V_h + W_h)$ is also an isomorphism.
\end{theorem}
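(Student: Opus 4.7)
The plan is to convert the stated equivalence into a Five Lemma question by producing two short exact sequences of cochain complexes that share a common quotient. The key observation, which is a cochain-level second isomorphism theorem, is that the composition $V_h \hookrightarrow V_h+W_h \twoheadrightarrow (V_h+W_h)/W_h$ has kernel exactly $V_h \cap W_h$, and therefore descends to a canonical isomorphism of cochain complexes $V_h/(V_h \cap W_h) \xrightarrow{\cong} (V_h+W_h)/W_h$. Calling this common quotient $Q$, I obtain two short exact sequences
\begin{equation*}
0 \to V_h \cap W_h \to V_h \to Q \to 0,
\qquad
0 \to W_h \to V_h + W_h \to Q \to 0,
\end{equation*}
linked by a morphism of short exact sequences whose three vertical columns are $i_{W_h}$, $j_{V_h}$, and $\mathrm{id}_Q$ respectively.

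The next step is to pass to long exact sequences and invoke naturality of the connecting homomorphism. This produces the commutative ladder
\begin{equation*}
\begin{CD}
H^{k-1}(Q) @>>> H^k(V_h\cap W_h) @>>> H^k(V_h) @>>> H^k(Q) @>>> H^{k+1}(V_h\cap W_h) \\
@| @V i_{W_h}^* VV @V j_{V_h}^* VV @| @V i_{W_h}^* VV \\
H^{k-1}(Q) @>>> H^k(W_h) @>>> H^k(V_h+W_h) @>>> H^k(Q) @>>> H^{k+1}(W_h),
\end{CD}
\end{equation*}
in which the outer vertical maps are identities and the two connecting homomorphisms are identified through $\mathrm{id}_Q$.

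The equivalence is then extracted from this ladder by the Five Lemma, in two steps. Assuming $i_{W_h}^*$ is an isomorphism (in the relevant degrees), the injectivity four-lemma applied at positions $(k-1, k)$ of the ladder --- using the identity on $H^{k-1}(Q)$ as the surjective map, and $i_{W_h}^*$ in degree $k$ as the injective map --- forces $j_{V_h}^*$ to be injective in degree $k$. Dually, the surjectivity four-lemma applied at positions $(k, k+1)$ --- using $i_{W_h}^*$ in degree $k$ (surjective) and $i_{W_h}^*$ in degree $k+1$ (injective), together with the identity on $H^k(Q)$ --- forces $j_{V_h}^*$ to be surjective in degree $k$. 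The converse implication is obtained by running exactly the same chase in the opposite direction, since the ladder is symmetric in the roles of its two middle columns.

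The only delicate point is purely bookkeeping: the surjectivity half of the four-lemma requires control of $i_{W_h}^*$ in degree $k+1$ as well as in degree $k$, so the cleanest reading of the argument establishes the equivalence simultaneously in all cohomological degrees --- which is exactly the form in which the theorem is subsequently invoked in Section~\ref{sec:exactness} to study exactness of the hierarchical B-spline complex. Nothing beyond the second isomorphism theorem for cochain complexes and the Five Lemma is required, so the main intellectual content is the identification of the common quotient $Q$ that makes the ladder commute.
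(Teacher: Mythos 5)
Your proof is correct, but it follows a genuinely different route from the paper's. The paper (via Lemma~\ref{lemma:MV}) builds the Mayer--Vietoris long exact sequence from the short exact sequence $0 \to V_h \cap W_h \to V_h \oplus W_h \to V_h + W_h \to 0$ and then proves the equivalence by an explicit element-level chase: injectivity of $\varphi^* = (i_{V_h}^*, i_{W_h}^*)$ splits the sequence into short exact pieces, after which injectivity and surjectivity of $j_{V_h}^*$ are verified by hand. You instead invoke the second isomorphism theorem for cochain complexes to identify the common quotient $Q \cong V_h/(V_h\cap W_h) \cong (V_h+W_h)/W_h$ (which is legitimate here since $d^k(V_h^k\cap W_h^k)\subset V_h^{k+1}\cap W_h^{k+1}$ by Lemma~\ref{lemma:cupcap_derham}), obtain a morphism of short exact sequences with vertical maps $i_{W_h}$, $j_{V_h}$, $\mathrm{id}_Q$, and outsource the chase to the four/five lemma applied to the resulting commutative ladder. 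The two decompositions are two faces of the same homological algebra --- the Mayer--Vietoris sequence can itself be derived from your quotient identification --- but your version makes the symmetry between the two implications transparent and replaces the hand chase with a standard citation, at the price of assuming naturality of the connecting homomorphism and the five lemma; the paper's version is more self-contained for readers not fluent in that machinery. Your closing remark on degree bookkeeping is accurate and applies equally to the paper's argument: splitting the Mayer--Vietoris sequence at degree $k$ already requires $\varphi^*$ to be injective at degree $k+1$, so both proofs really establish the statement with ``isomorphism in every degree'' on both sides, which is exactly the form in which Theorem~\ref{th:exact} consumes it.
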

\begin{proof}
We will only prove that if $i_{W_h}^*$ is an isomorphism, then $j_{V_h}^*$ is also an isomorphism. The second implication is proved similarly.

If $i_{W_h}^*$ is an isomorphism, then $\varphi^*$ is injective. Thus, the Mayer-Vietoris sequence can be split into the short exact sequences
\begin{equation*}
\begin{CD}
0 @>>> H^k(V_h \cap W_h) @>\varphi^*>> H^k(V_h) \oplus H^k(W_h)  @>\psi^*>> H^k(V_h + W_h) @>>> 0,
\end{CD}
\end{equation*}
which in particular implies that $\psi^*$ is surjective.

To prove that $j_{V_h}^*$ is an isomorphism, first we prove that it is injective. Assume that $v \in V_h$ is such that $j_{V_h}^*[v] = 0$. Then the pair $([v],0) \in \ker \psi^* = {\rm Im} \, \varphi^*$, that is, $\varphi^* [u] = ([v],0)$ for some $u \in V_h \cap W_h$. Since $\varphi^* = (i_{V_h}^*, i_{W_h}^*)$ and $i_{W_h}^*$ is injective, we get $[u] = 0$ and therefore $[v] = i_{V_h}^*[u] = 0$.

To prove the surjectivity of $j_{V_h}^*$ we use that $\psi^*$ is surjective. That is, given $z \in V_h+W_h$ there exist $v \in V_h$ and $w \in W_h$ such that $\psi^*([v],[w]) = j^*_{V_h} [v] - j^*_{W_h} [w] = [z]$. Since $i_{W_h}^*$ is surjective, there exists $u \in V_h \cap W_h$ such that $i_{W_h}^*[u] = [w]$, and by the exactness of the sequence $\varphi^*[u] = (i^*_{V_h}[u], [w]) \in \ker \psi^*$. As a consequence, $\psi^* ([v], [w]) - \psi^* (i^*_{V_h}[u], [w]) = j^*_{V_h}([v] - i^*_{V_h}[u]) = [z]$, which proves that $j_{V_h}^*$ is surjective.
\end{proof}

\begin{corollary}
Let $(V_h,d)$ an exact subcomplex of the Hilbert complex $(V,d)$, and $(W_h,d)$ another subcomplex not necessarily exact. Then the subcomplex $(V_h + W_h, d)$ is exact if and only if $i^*_{W_h} : H^k(V_h \cap W_h) \rightarrow H^k(W_h)$ is an isomorphism.
\end{corollary}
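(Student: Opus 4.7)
The plan is to obtain this corollary as an essentially immediate consequence of the equivalence established in Theorem~\ref{th:exactness}, specialized by the additional hypothesis that $(V_h, d)$ has vanishing cohomology. The first step would be simply to invoke that theorem: the inclusion-induced map $i_{W_h}^* : H^k(V_h \cap W_h) \rightarrow H^k(W_h)$ is an isomorphism for every $k$ if and only if $j_{V_h}^* : H^k(V_h) \rightarrow H^k(V_h + W_h)$ is an isomorphism for every $k$. Note that $(V_h + W_h, d)$ and $(V_h \cap W_h, d)$ are indeed subcomplexes of $(V,d)$ by Lemma~\ref{lemma:cupcap_derham}, so the theorem applies.

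Next I would exploit the hypothesis that $(V_h, d)$ is exact, which by definition means $H^k(V_h) = 0$ for every $k$. Substituting this into the statement above, the map $j_{V_h}^*$ has the zero space as its domain; hence $j_{V_h}^*$ is an isomorphism for every $k$ if and only if $H^k(V_h + W_h) = 0$ for every $k$. This is precisely the definition of exactness of the subcomplex $(V_h + W_h, d)$, so chaining the two equivalences yields the stated biconditional.

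There is no substantive obstacle to overcome: all of the real work has already been carried out in Theorem~\ref{th:exactness}, whose proof is in turn built from the Mayer--Vietoris long exact sequence supplied by Lemma~\ref{lemma:MV}. The role of this corollary is simply to repackage that abstract equivalence into a form directly usable in the next subsection, where $V_h$ will be taken to be a subcomplex known a priori to be exact, and the question of exactness of $V_h + W_h$ will be reduced to checking that the cohomology of the intersection $V_h \cap W_h$ agrees with that of $W_h$ via the inclusion.
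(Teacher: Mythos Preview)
Your proposal is correct and is precisely the intended argument: the paper states this corollary immediately after Theorem~\ref{th:exactness} without an explicit proof, leaving it as a direct specialization of that theorem under the hypothesis $H^k(V_h)=0$, exactly as you describe.
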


%% \begin{remark}
%% We notice that our Mayer-Vietoris sequence is not ordered like the standard Mayer-Vietoris sequence in cohomology, but rather like the Mayer-Vietoris sequence for cohomology with compact support, see \cite[Lemma~3.36]{Hatcher}.
%% \end{remark}

\subsection{A Necessary and Sufficient Condition for Exactness} \label{sec:nec-suff}
By applying the previous results to hierarchical B-splines, we obtain a necessary and sufficient condition for the exactness of the hierarchical B-spline complex. We first present the result, and then we explain how it can be applied in practice with the help of the Greville subgrids.

\begin{theorem} \label{th:exact}
The hierarchical B-spline complex $(\What_{\ell+1},d)$ is exact for $\ell = 0, \ldots, N-1$ if and only if the inclusion $j: \Xhat^k_{\ell, {\ell+1}} \rightarrow \Xhat^k_{\ell+1, {\ell+1}}$ induces an isomorphism between the cohomology groups $H^k(\Xhat_{\ell, {\ell+1}})$ and $H^k(\Xhat_{\ell+1,{\ell+1}})$ for $k = 1, \ldots, n$.
\end{theorem}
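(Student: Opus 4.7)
The plan is to induct on $\ell$ and apply the abstract results of Section~\ref{sec:homology}. For the base case $\ell = 0$, exactness of $(\What_0,d) = (\hat X_{0,h},d)$ is the exactness of the tensor-product B-spline complex, which follows from Proposition~\ref{proposition:commuting_parametric} combined with contractibility of $\hat\Omega$.

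For the inductive step, suppose $(\What_\ell,d)$ is exact. Applied degreewise, Lemma~\ref{lemma:cupcap} gives
$$\What^k_{\ell+1} = \What^k_\ell + \Xhat^k_{\ell+1,\ell+1}, \qquad \What^k_\ell \cap \Xhat^k_{\ell+1,\ell+1} = \Xhat^k_{\ell,\ell+1},$$
which is exactly the hypothesis of the Corollary at the end of Section~\ref{sec:homology}, taken with $V_h = \What_\ell$ (exact by induction) and $W_h = \Xhat_{\ell+1,\ell+1}$ (a subcomplex by Lemma~\ref{lemma:subcomplex}). The Corollary then states that $(\What_{\ell+1},d)$ is exact if and only if the inclusion-induced map
$$i^*: H^k(\Xhat_{\ell,\ell+1}) \longrightarrow H^k(\Xhat_{\ell+1,\ell+1})$$
is an isomorphism for every $k$, which is precisely the $j^*$ appearing in the theorem statement. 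The reason the theorem restricts the condition to $k \geq 1$ rather than $k \geq 0$ is that the homogeneous boundary conditions force $Z^0(\Xhat_{\ell,\ell+1}) = Z^0(\Xhat_{\ell+1,\ell+1}) = 0$ (no nonzero constant function is compactly supported in $\Omega_{\ell+1}$), so $H^0$ vanishes on both sides and the $k=0$ part of the condition is automatic.

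The main subtlety I anticipate lies at the top of the complex. ``Exactness'' of the boundary-condition-respecting complexes is understood in the extended sense with an $\int \to \mathbb{R}$ map appended at the end, and one must verify that the short exact sequence \eqref{eq:short} underlying Lemma~\ref{lemma:MV}, and hence the Corollary, continues to splice correctly through this extra $\mathbb{R}$ so that the highest cohomology is tracked consistently throughout the induction. This amounts to checking that integration is additive across the pair $(\What_\ell, \Xhat_{\ell+1,\ell+1})$, which is immediate, but the bookkeeping should be stated explicitly. Once this is settled, the argument is a clean and direct application of the Mayer--Vietoris framework assembled in Section~\ref{sec:homology}, with Lemma~\ref{lemma:cupcap} providing exactly the decomposition needed to invoke it.
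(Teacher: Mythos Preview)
Your proposal is correct and matches the paper's approach: induction on $\ell$, with Lemma~\ref{lemma:cupcap} supplying the sum-and-intersection decomposition that feeds into the Mayer--Vietoris machinery of Section~\ref{sec:homology}. The only cosmetic difference is that the paper invokes Theorem~\ref{th:exactness} directly rather than its Corollary, which sidesteps the top-degree bookkeeping you flagged (Theorem~\ref{th:exactness} does not require $V_h$ to be exact, only that $j_{V_h}^*$ be an isomorphism, and this is equivalent to extended-sense exactness of $\What_{\ell+1}$ once $\What_\ell$ is assumed exact); your remark that the $k=0$ condition is automatic under the standing homogeneous boundary conditions is a correct clarification the paper leaves implicit.
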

\begin{proof}
We proceed by induction. The exactness of the sequence holds for $\ell =0$ since $\What^k_0 = \hat X^k_{0,h}$ are tensor-product spline spaces, and the exactness of the tensor-product spline sequence~\eqref{eq:spline_complex} is known from \cite{BRSV11}. Assume now that the sequence $(\What_\ell,d)$ is exact. The result is immediately proved for level $\ell+1$ by applying Theorem~\ref{th:exactness} with $V_h^k = \What^k_\ell$ and $W_h^k = \Xhat^k_{\ell+1,{\ell+1}}$, using the relations given in Lemma~\ref{lemma:cupcap}.
\end{proof}

\noindent In simple words, the above condition tells us that, at each step of the recursive algorithm in Definition~\ref{def:hierarchical}, the spaces spanned by the basis functions to be removed and by the basis functions to be added should have the {\bf \emph{same topological properties}}, and these should be preserved by the inclusion operator indicated in Theorem \ref{th:exact}.

\subsubsection{An interpretation of the exactness condition using Greville subgrids}
The condition in Theorem~\ref{th:exact} may not be easy to understand for practitioners, as it requires a basic knowledge of homology theory. We give now an intuitive interpretation of this condition with the help of Greville subgrids. 

As we have seen in Section~\ref{sec:submesh}, there exist isomorphisms that define a commutative diagram between the spline subcomplex $(\Xhat_{\ell,\ell+1},d)$ (respectively, $(\Xhat_{\ell+1,\ell+1},d)$) and the low-order finite element complex $(\hat Z_{\ell,\ell+1},d)$ (resp. $(\hat Z_{\ell+1,\ell+1}, d)$) defined on the Greville subgrid. As a consequence, the topological properties of the spline subcomplex are identical to those of the finite element complex on the Greville subgrid. One can, indeed, consider the Greville subgrids associated to the basis functions to be added and removed and check that they have the same topological properties, that is, the same number of connected components, the same number of holes and, in the three-dimensional case, the same number of cavities.  If the Greville subgrids do not have the same topological properties for every $\ell = 0, \ldots, N-1$, then the condition in Theorem~\ref{th:exact} is not met and the hierarchical B-spline complex is not exact.

\begin{figure}[t]
\includegraphics[width=0.32\textwidth,trim=2cm 1.2cm 2cm 0cm, clip]{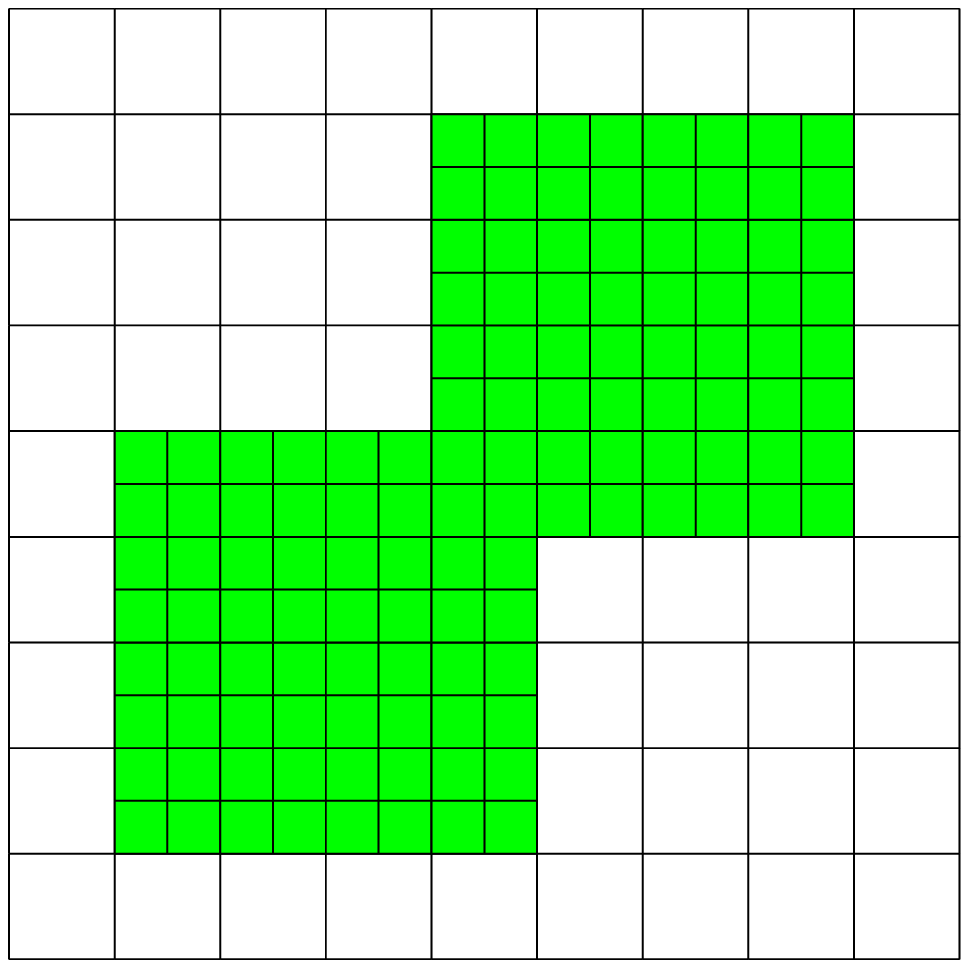}
\includegraphics[width=0.32\textwidth,trim=2cm 1.2cm 2cm 0cm, clip]{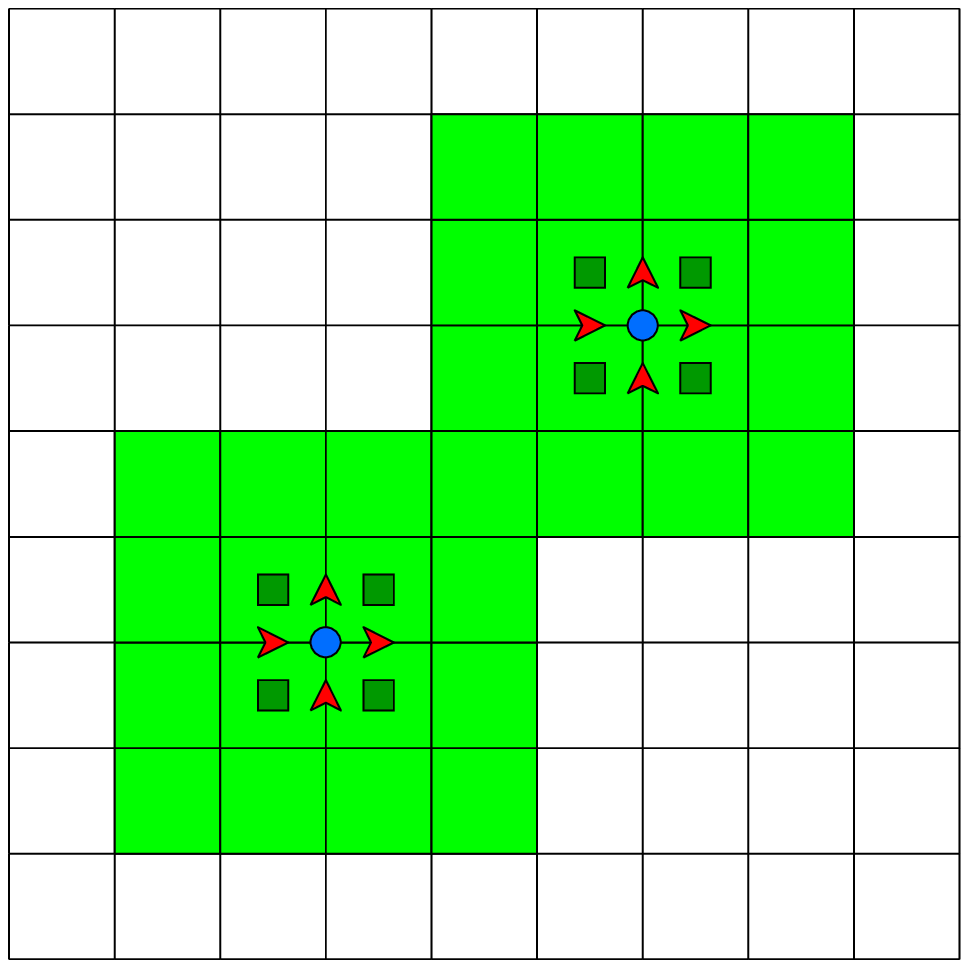}
\includegraphics[width=0.32\textwidth,trim=2cm 1.2cm 2cm 0cm, clip]{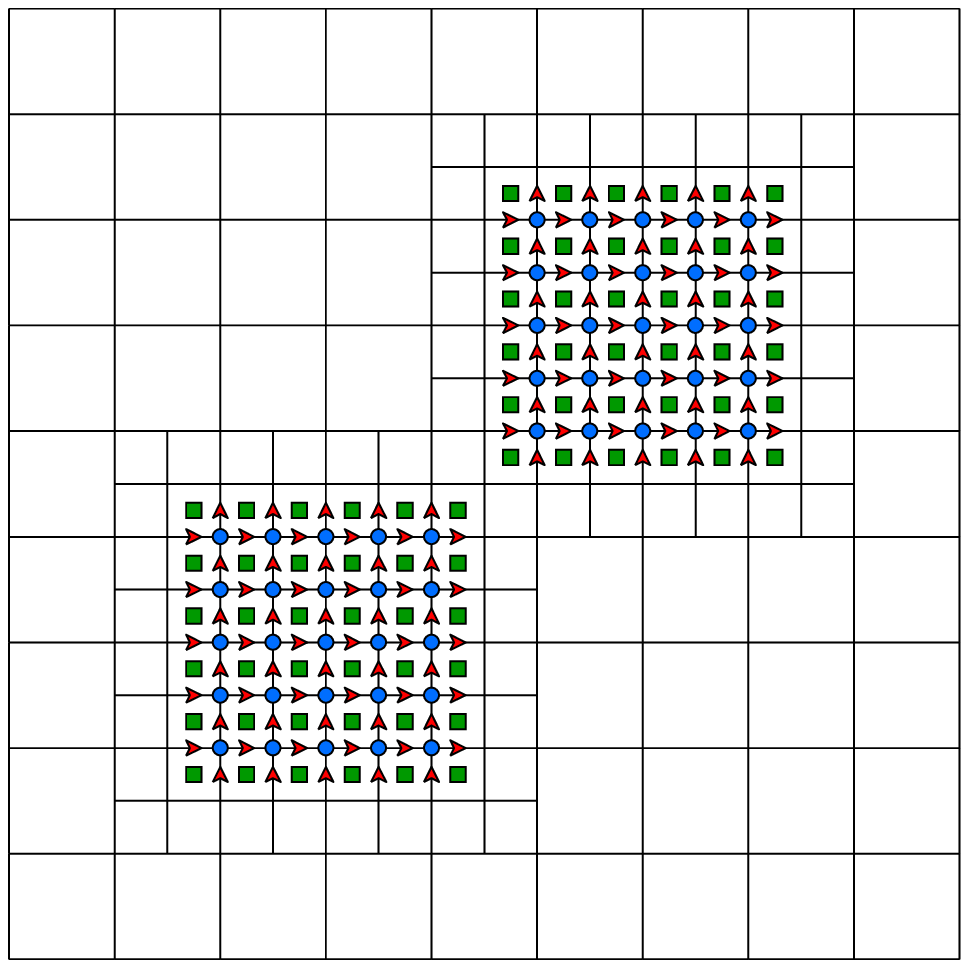}
\caption{An example of a maximally continuous hierarchical B-spline complex which is exact for $p_1=p_2=3$. On the left, the hierarchical B\'{e}zier mesh is depicted, with the refined region $\Omega_1$ colored in green. In the center and on the right, the vertices, edges, and faces on the Greville subgrids $\MG_{0,1}$ and $\MG_{1,1}$ are depicted, related to the spaces $\Xhat^k_{0,1}$ and $\Xhat^k_{1,1}$, respectively.}
\label{fig:remove-add-1x1}
\end{figure}

\begin{figure}[t!]
\includegraphics[width=0.32\textwidth,trim=2cm 1.2cm 2cm 0cm, clip]{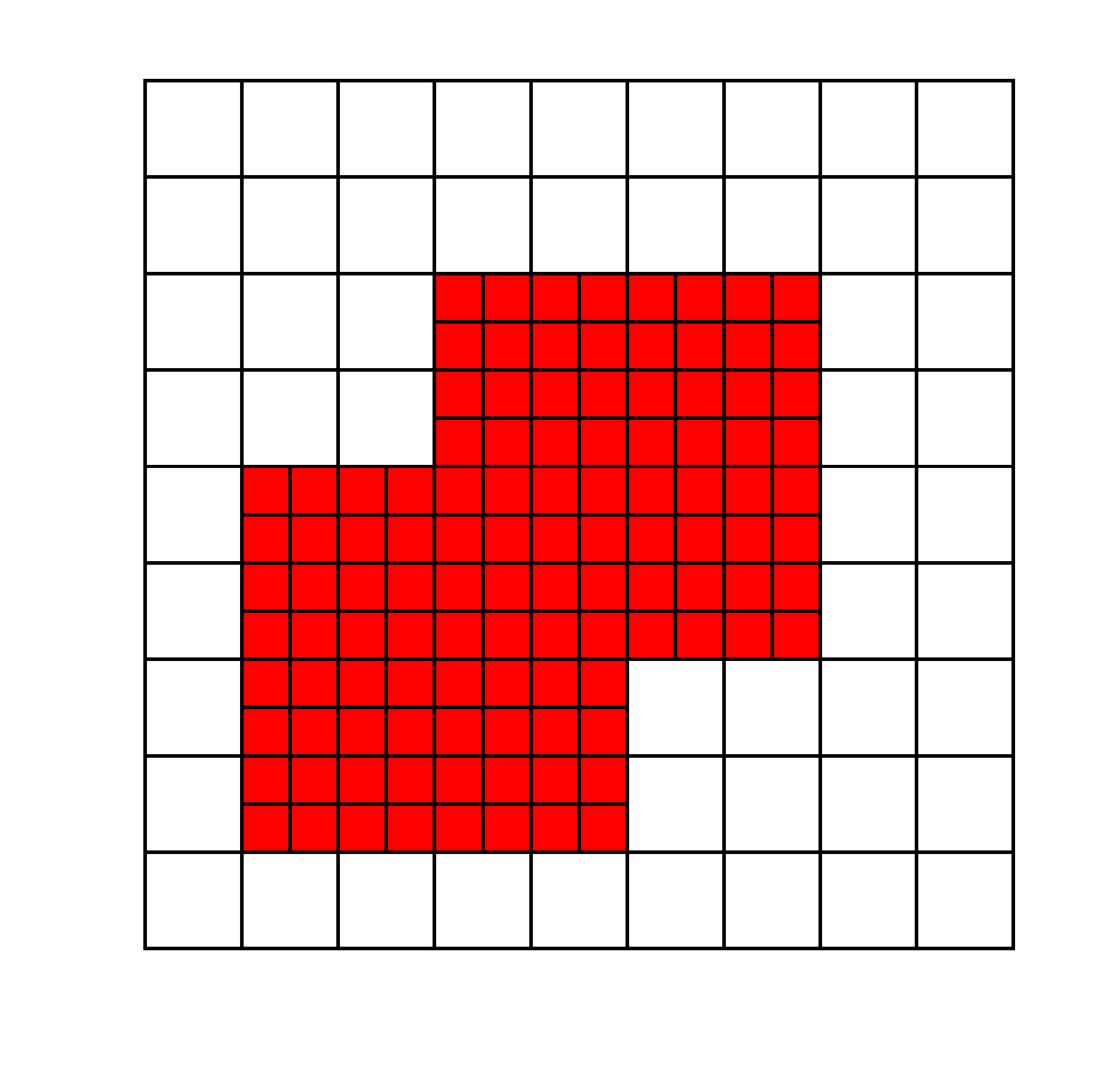}
\includegraphics[width=0.32\textwidth,trim=2cm 1.2cm 2cm 0cm, clip]{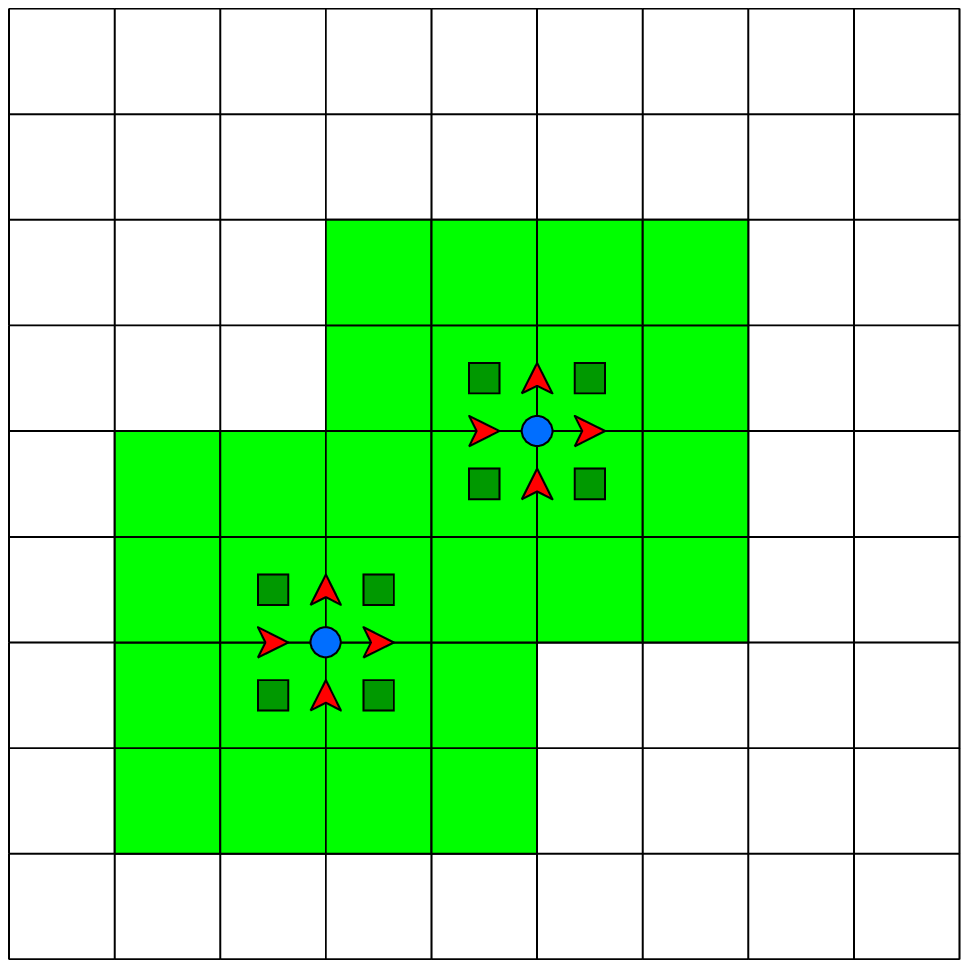}
\includegraphics[width=0.32\textwidth,trim=2cm 1.2cm 2cm 0cm, clip]{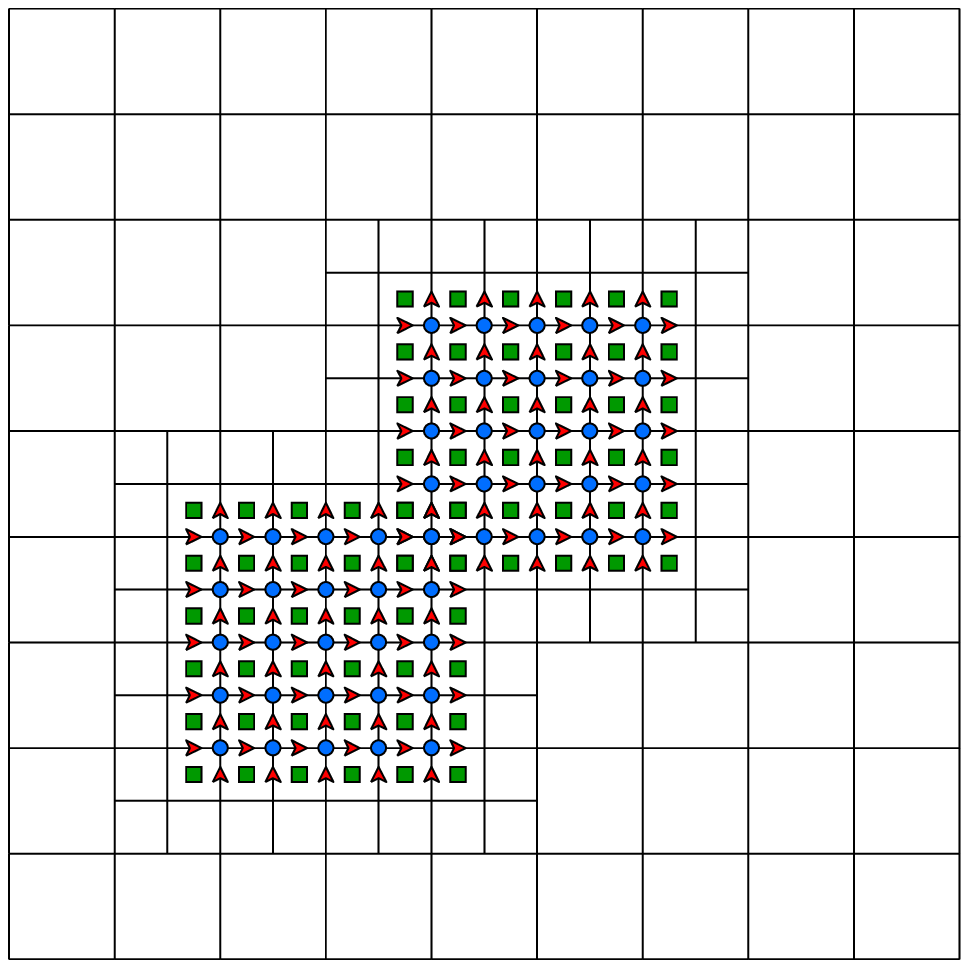}
\caption{An example of a maximally continuous hierarchical B-spline complex which is not exact for $p_1=p_2=3$. On the left, the hierarchical B\'{e}zier mesh is depicted, with the refined region $\Omega_1$ colored in red. In the center and on the right, the vertices, edges, and faces on the Greville subgrids $\MG_{0,1}$ and $\MG_{1,1}$ are depicted, related to the spaces $\Xhat^k_{0,1}$ and $\Xhat^k_{1,1}$, respectively.}
\label{fig:remove-add-2x2}
\end{figure}

Two examples are shown in Figures~\ref{fig:remove-add-1x1} and~\ref{fig:remove-add-2x2} for maximally continuous bicubic splines. The hierarchical B-spline complex associated to the mesh in Figure~\ref{fig:remove-add-1x1} is exact, because the two Greville subgrids associated to the spaces $\Xhat^k_{0,1}$ and $\Xhat^k_{1,1}$ each consist of two connected components without holes and thus have the same topology. Instead, the hierarchical B-spline complex of the example in Figure~\ref{fig:remove-add-2x2}, which is the same as that presented in Figure~\ref{fig:2x2}, does not provide an exact sequence because the number of connected components for the two Greville subgrids is different and changes from two to one.

%\end{document}
\subsection{A Local and Sufficient Condition for Exactness} \label{sec:local}
Despite its generality, the condition for the exactness provided in Theorem~\ref{th:exact} is not easy to check in practice. In fact, in real three-dimensional applications, the Greville subgrids may become very intricate objects\footnote{For instance, the region to be refined could consist of several (topological) knots with links.}, and computation of the cohomology groups is not a simple task, requiring tools from computational (co)homology. Moreover, the comparison of the cohomology groups must be done through the map induced by the inclusion, because even if the cohomology groups of the two Greville subgrids are isomorphic, it may happen that the map induced by the inclusion is not an isomorphism, which would give a non-exact sequence. Finally, for the development of adaptive methods, the refined region $\Omega_\ell$ is chosen with respect to the B\'ezier mesh, and one would like to obtain a condition in this mesh that guarantees the exactness of the sequence. We will now introduce a \textbf{\emph{local and sufficient}} condition for the exactness of the hierarchical B-spline complex. Apart from being local, the advantage of this condition is that it is easier to check because it is defined directly with respect to the B\'ezier submeshes and not with respect to the Greville subgrids.

From now on, we will restrict ourselves to the two-dimensional case, that is, $n=2$. 
%We recall that we are always working with spaces with homogeneous boundary conditions, but we have removed the zero subindex to simplify the notation. JOHN: This was already mentioned at the beginning of the section.
We start by introducing a mild assumption about the choice of the refined subdomains (see equation (11) in \cite{Vuong_giannelli_juttler_simeon}):
\begin{assumption}\label{ass:support}
The subdomain $\Omega_{\ell+1}$ is given as the union of supports of basis functions in the previous level, that is, for $\ell = 0, \ldots, N-1$
\begin{equation}
\Omega_{\ell+1} = \bigcup_{\beta \in S} \supp (\beta), \text{ with } S \subset {\cal B}^n_\ell.  \label{eq:supports}
\end{equation}
As a consequence, (the closure of) the subdomain $\Omega_{\ell+1}$ is given as (the closure of) the union of elements of level $\ell$.
\end{assumption}
The second assumption is closely related to the one introduced in \cite{Mokris2014}, where the authors prove that hierarchical B-splines span the space of piecewise polynomials in the corresponding hierarchical B\'{e}zier mesh. Let us first define, for each level $\ell = 0, \ldots, N-1$, the \textbf{\emph{open}} complementary region
\begin{equation*}
\Omega_{\ell+1}^c = \Omega_0 \setminus \overline \Omega_{\ell+1}.
\end{equation*}
\begin{assumption}\label{ass:overlap}
For each level $\ell = 0, \ldots, N-1$, and for any basis function $\beta^k \in {\cal B}^k_\ell$ of the tensor-product spaces $\Xhatlh{k}$, with $k = 0, 1, 2$, the overlap 
\begin{equation*}
{\rm overlap} (\beta^k) := \supp (\beta^k) \cap \Omega_{\ell+1}^c
\end{equation*}
is connected and simply connected.
\end{assumption}

\begin{figure}[t!]
\begin{subfigure}[]
{\includegraphics[width=0.3\textwidth,trim=2cm 2cm 2cm 1cm, clip]{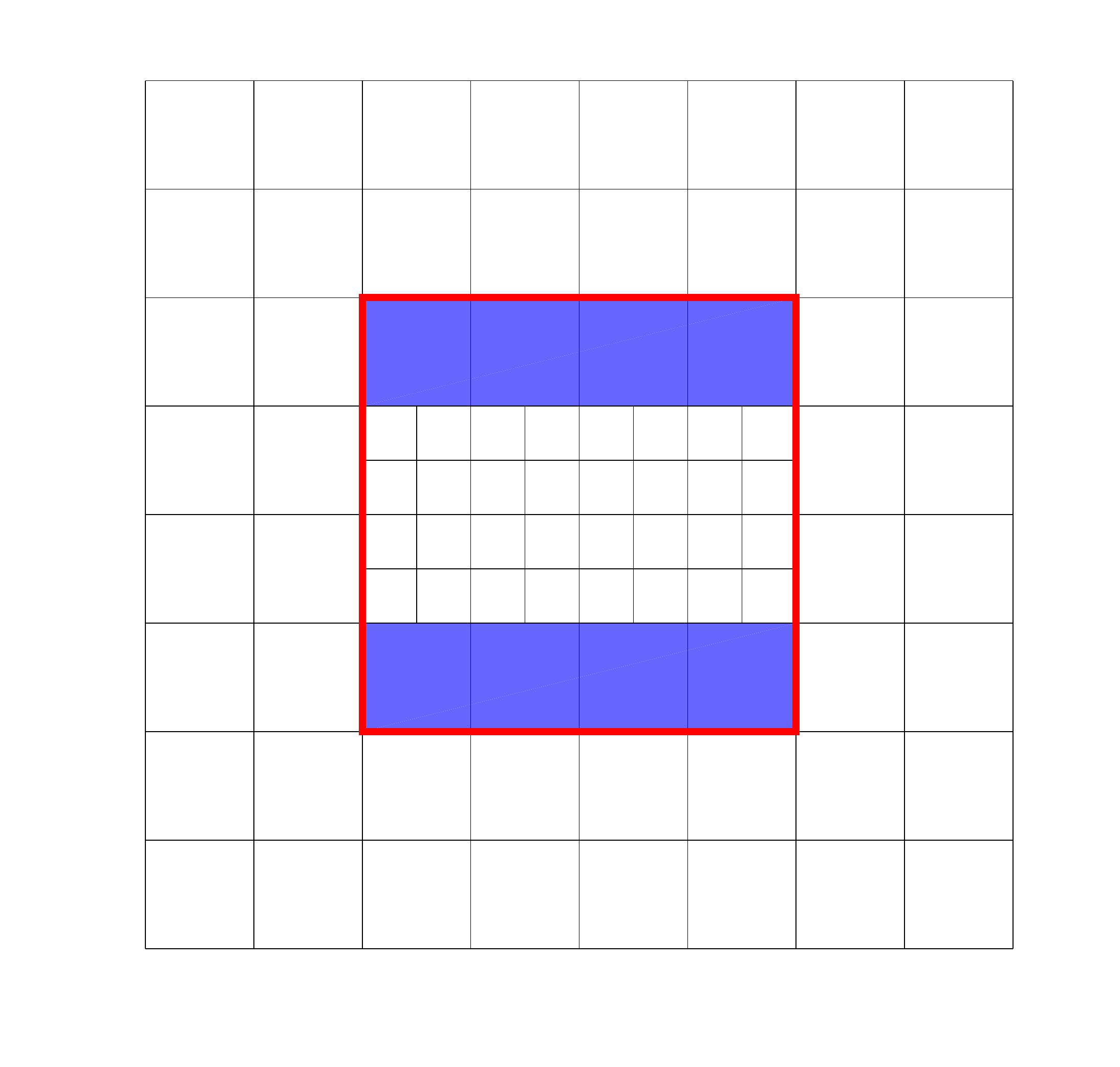} \label{fig:not-assumption1}}
\end{subfigure}
\begin{subfigure}[]
{\includegraphics[width=0.3\textwidth,trim=2cm 2cm 2cm 1cm, clip]{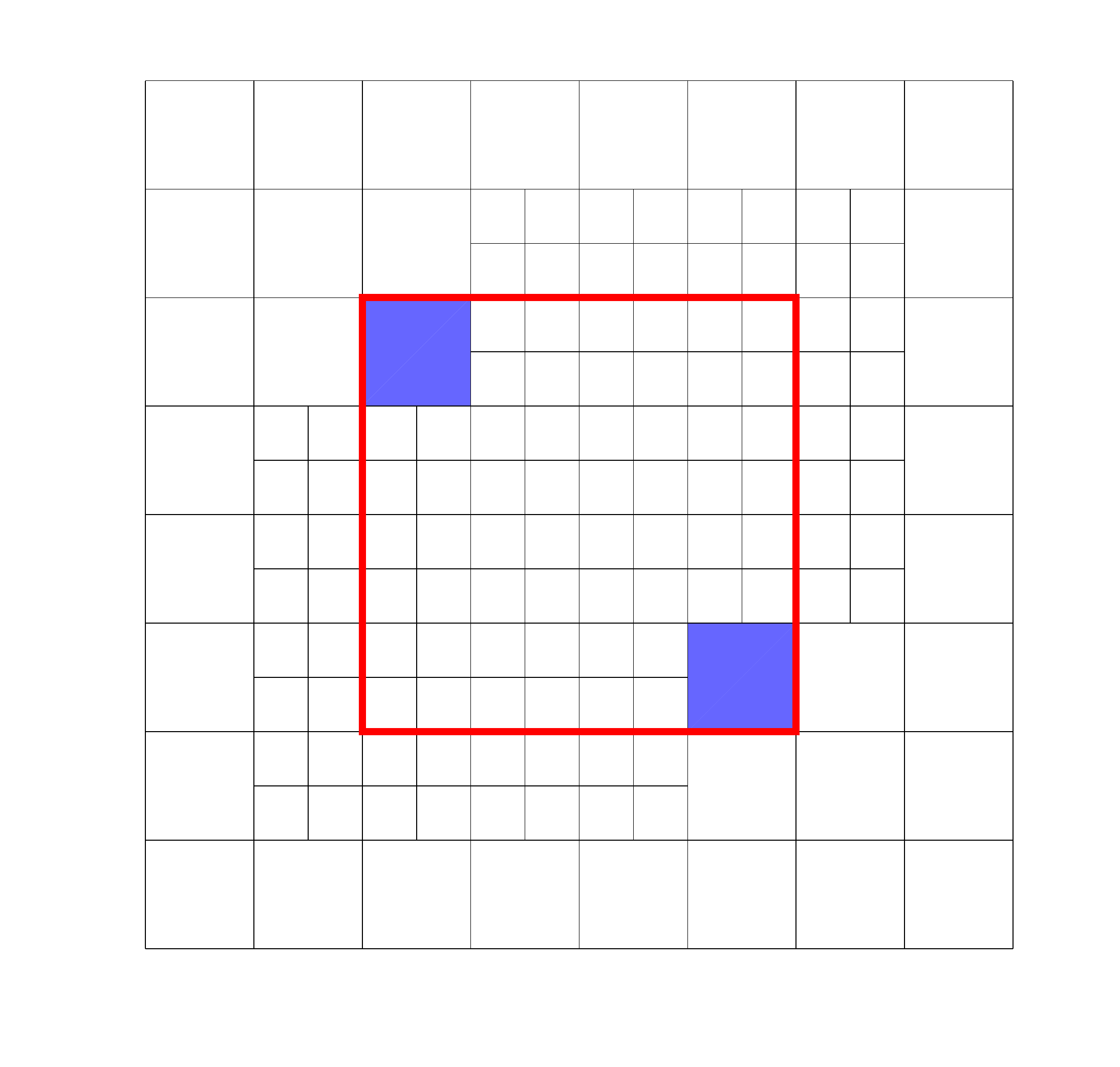} \label{fig:not-assumption2}}
\end{subfigure}
\begin{subfigure}[]
{\includegraphics[width=0.3\textwidth,trim=2cm 2cm 2cm 1cm, clip]{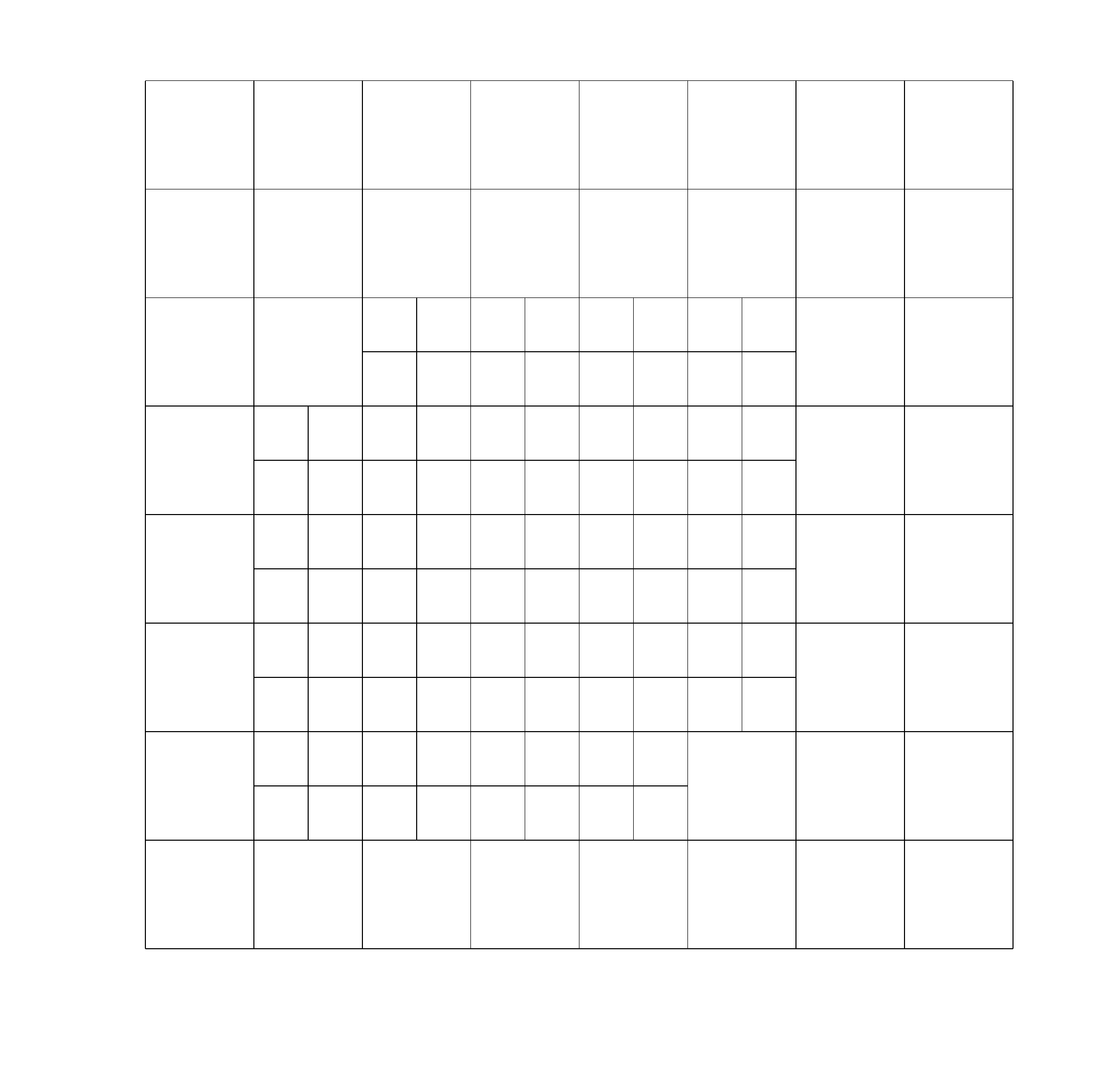} \label{fig:assumption12}}
\end{subfigure}
\caption{Different hierarchical B\'{e}zier mesh configurations for maximally continuous B-splines and $p_1=p_2=3$. The first mesh does not satisfy Assumption~\ref{ass:support}, nor Assumption~\ref{ass:overlap}, because for the function highlighted with the red square, the overlap (given by the purple colored elements) is not connected. The second mesh does not satisfy Assumption~\ref{ass:overlap}, because the overlap of the highlighted function is also not connected. The third mesh satisfies both Assumptions.}
\label{fig:assumptions-1-2}
\end{figure}

The main result of this section is the following theorem, that for $\ell = N$ gives the exactness of the hierarchical B-spline complex.
\begin{theorem} \label{th:local}
If Assumptions~\ref{ass:support} and~\ref{ass:overlap} hold, then the complex 
\begin{equation}
\begin{CD}
0 @>>> \What^0_{\ell} @>\grad>> \What^1_{\ell} @>\curl>> \What^2_{\ell} @>>> \mathbb{R} @>>> 0
\end{CD}
\end{equation}
is exact, for $\ell = 0, \ldots, N$.
\end{theorem}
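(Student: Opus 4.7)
The plan is to prove the theorem by induction on $\ell$, using the characterization of exactness provided by Theorem~\ref{th:exact}. The base case $\ell=0$ is immediate, since $\What^k_0 = \Xhat^k_{0,h}$ are tensor-product B-spline spaces, and their exactness is known from Proposition~\ref{proposition:commuting_parametric}. For the inductive step, assuming $(\What_\ell, d)$ is exact, Theorem~\ref{th:exact} reduces the problem to showing that the inclusion $j: \Xhat^k_{\ell,\ell+1} \hookrightarrow \Xhat^k_{\ell+1,\ell+1}$ induces an isomorphism on cohomology for $k = 1, 2$. So the entire argument boils down to producing these isomorphisms under Assumptions~\ref{ass:support} and~\ref{ass:overlap}.

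The strategy for this is to translate the question from spline complexes to cellular complexes via the Greville subgrids. Recall from Section~\ref{sec:submesh} that the commutative diagram \eqref{eq:cd-iga-fem} restricts to isomorphisms $\Xhat^k_{\ell,\ell+1} \cong \ZhatIh{k}$ and $\Xhat^k_{\ell+1,\ell+1} \cong \hat Z^k_{\ell+1,\ell+1}$, where the target spaces are the lowest-order finite element spaces (with vanishing boundary conditions) on the Greville subgrids $\MG_{\ell,\ell+1}$ and $\MG_{\ell+1,\ell+1}$. Standard finite element exterior calculus then identifies the cohomology of each of these FE complexes with the cellular cohomology (relative to the boundary) of the corresponding Greville subgrid. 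Thus it suffices to show that the natural comparison map between these two cellular complexes induces an isomorphism on (relative) cohomology. Equivalently, in the 2D setting, it suffices to show that both Greville subgrids have the same number of connected components and interior holes, and that the inclusion respects this correspondence.

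The main technical step, and the principal obstacle, is the geometric argument that Assumptions~\ref{ass:support} and~\ref{ass:overlap} force both Greville subgrids $\MG_{\ell,\ell+1}$ and $\MG_{\ell+1,\ell+1}$ to be homotopy equivalent to $\Omega_{\ell+1}$, with the inclusion-induced map realizing this equivalence. The idea is that Assumption~\ref{ass:support} guarantees that $\Omega_{\ell+1}$ is built exactly out of supports of level-$\ell$ basis functions, so that the subdomain is well-aligned with both B\'ezier meshes $\M_\ell$ and $\M_{\ell+1}$. Assumption~\ref{ass:overlap} then prevents any level-$\ell$ basis function from straddling $\partial \Omega_{\ell+1}$ in a way that produces a disconnected overlap; this is what guarantees that no spurious components or holes are introduced in the boundary strip of $\MG_{\ell,\ell+1}$ that would not be present in $\MG_{\ell+1,\ell+1}$. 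Concretely, I would argue by a deformation retraction from each Greville subgrid onto a common "core" subcomplex that tracks the cells of $\Omega_{\ell+1}$, and check that the inclusion $j$ descends to the identity on this core.

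Once the topological equivalence is established, functoriality of cohomology and the commutativity of the isomorphisms $\hat I^k$ with the differentials give that $j^*$ is an isomorphism on $H^k$ for all $k$, completing the inductive step. I expect that the verification in the previous paragraph will require a careful case analysis of the boundary behavior of basis functions in 2D (distinguishing $k=0, 1, 2$ since supports differ in shape), and that Assumption~\ref{ass:overlap} will be invoked separately for each form degree. The dimension restriction $n=2$ appears precisely to make this case analysis tractable, since in higher dimensions the overlap of a basis function with $\Omega_{\ell+1}^c$ can admit richer topological pathologies that a single connectedness-plus-simple-connectedness hypothesis cannot rule out.
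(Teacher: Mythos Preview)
Your inductive framework and invocation of Theorem~\ref{th:exact} are exactly what the paper does, and you correctly identify that the heart of the matter is the topology of the two Greville subgrids $\MG_{\ell,\ell+1}$ and $\MG_{\ell+1,\ell+1}$ relative to $\Omega_{\ell+1}$. That part of your plan matches the paper's Theorem~\ref{th:homology} and Corollary~\ref{corol:cohomology}, which establish (via a fairly delicate case analysis in Appendix~\ref{sec:appendix}) that all three regions have isomorphic homology under Assumptions~\ref{ass:support} and~\ref{ass:overlap}.

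The gap is in how you propose to pass from ``same cohomology groups'' to ``$j^*$ is an isomorphism.'' You write that you will check that the inclusion $j$ descends to the identity on a common core subcomplex of the two Greville subgrids. But the two Greville subgrids live in \emph{different} ambient Greville grids $\MG_\ell$ and $\MG_{\ell+1}$, and the spline inclusion $j:\Xhat^k_{\ell,\ell+1}\hookrightarrow\Xhat^k_{\ell+1,\ell+1}$ (which comes from two-scale refinability of B-splines) does \emph{not} translate, under the isomorphisms $\hat I^k_\ell$ and $\hat I^k_{\ell+1}$, into any geometric map between those subgrids. There is no cellular inclusion, restriction, or retraction to which $j$ corresponds; the induced map $\hat I^k_{\ell+1}\circ j\circ(\hat I^k_\ell)^{-1}$ is just some linear operator between FE spaces on unrelated meshes. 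So the phrase ``$j$ descends to the identity on the core'' has no obvious meaning, and your deformation-retraction argument, even if it succeeds for each grid separately, gives only an abstract isomorphism of cohomology groups, not the statement that the specific map $j^*$ realizes it.

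The paper closes this gap differently. Having established that $\dim H^k(\Xhat_{\ell,\ell+1})=\dim H^k(\Xhat_{\ell+1,\ell+1})$, it suffices to show $j^*$ is injective. For this the paper constructs explicit cochain projectors $\hat\Pi^k:\Xhat^k_{\ell+1,\ell+1}\to\Xhat^k_{\ell,\ell+1}$ satisfying $\hat\Pi^k\circ j=\mathrm{Id}$ (Lemma~\ref{lemma:projectors}), by solving mixed variational problems whose constraints are integrals over curves $\gamma_j$ and connected components $\omega_j$ lying in the \emph{physical} subdomain $\Omega_{\ell+1}$, which is common to both spline spaces. The crucial point (Lemma~\ref{lemma:curves}) is that these same integrals detect cohomology classes in \emph{both} spline complexes simultaneously; this is what makes the projectors compatible with $j$ and is precisely the piece your proposal is missing.
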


The constraints posed by the two assumptions are better understood using the simple examples depicted in Figure~\ref{fig:assumptions-1-2}. For maximally continuous B-splines and $p_1=p_2=3$, the hierarchical B\'{e}zier mesh in Figure~\ref{fig:not-assumption1} violates Assumption~\ref{ass:support}, because the refined region does not contain the support of any basis function of level zero. It also violates Assumption~\ref{ass:overlap}, because there exists one function of level zero, whose support is highlighted in red in the figure, that violates the condition in the assumption. Indeed, the intersection of the support with the non-refined region $\Omega^c_1$, which is represented by the eight purple elements, is not connected. The hierarchical B\'{e}zier mesh in Figure~\ref{fig:not-assumption2} satisfies Assumption~\ref{ass:support}, but it does not satisfy Assumption~\ref{ass:overlap}, because the same function violates the overlap condition in the assumption. Finally, the hierarchical B\'{e}zier mesh in Figure~\ref{fig:assumption12} satisfies both assumptions. More examples are given later in Section~\ref{sec:stability}.

The remainder of this section is devoted to the proof of Theorem~\ref{th:local}. We recall that, as stated in Theorem~\ref{th:exact}, the condition for the exactness is given by the relation between the spaces spanned by the functions to remove, $\Xhat^k_{\ell,\ell+1} = {\rm span} \{ {\cal B}^k_{\ell,\ell+1} \}$, and the functions to add, $\Xhat^k_{\ell+1,\ell+1} = {\rm span} \{ {\cal B}^k_{\ell+1,\ell+1} \}$. We will prove the result with the help of the Greville subgrids corresponding to these spaces, that using the same notation of Section~\ref{sec:submesh} we denote by $\MG_{\ell,\ell+1}$ and $\MG_{\ell+1,\ell+1}$, respectively. With the same abuse of notation in Section~\ref{sec:submesh}, we denote in the same manner the two subdomains covered by the Greville subgrids, that we recall that are different from $\Omega_{\ell+1}$, and also different between them.

The proof follows in three steps. First, we show that the three involved subdomains, that is, the subdomain $\Omega_{\ell+1}$ and the two Greville subgrids, are manifolds with boundary and share the same homological properties. Second, using the relations in Section~\ref{sec:submesh} between spline subspaces and finite element spaces defined in the Greville subgrids, we construct a family of commutative projectors for the subspaces restricted to a subgrid. Finally, joining the above results gives the exactness of the sequence.

\begin{remark}
The analogous condition to Assumption~\ref{ass:overlap} in the three-dimensional setting is that the overlaps are connected, simply connected, and with connected boundary.
\end{remark}
\begin{remark}
In two dimensions, Assumption~\ref{ass:support} implies that the overlaps are simply connected, so this condition could be dropped in Assumption~\ref{ass:overlap}, in this case obtaining exactly the same assumption that is used in \cite{Mokris2014} to characterize hierarchical B-splines in terms of piecewise polynomials. We have preferred to maintain the simply connected condition to make more evident that topology is playing a major role in our construction. The results of this section can actually be proved without using Assumption~\ref{ass:support}, but the proofs become more tedious.
\end{remark}

\subsubsection{First Part of the Proof: Topology of the Grids}

\begin{lemma}\label{lemma:omwb}
If Assumptions~\ref{ass:support} and~\ref{ass:overlap} hold, then the subdomain $\Omega_{\ell+1}$ is a manifold with boundary, for $\ell = 0,\ldots, N-1$.
\end{lemma}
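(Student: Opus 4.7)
My plan is to reduce the lemma to ruling out \textbf{pinch vertices}: interior mesh vertices $V$ of $\M_\ell$ at which exactly two of the four surrounding cells lie in $\Omega_{\ell+1}$ and those two cells are diagonally opposite. A case analysis on the four cells around an interior vertex shows that every other local configuration (zero, one, two adjacent, three, or four cells in $\Omega_{\ell+1}$) yields a neighborhood of $V$ in $\overline{\Omega_{\ell+1}}$ homeomorphic to a disk or a half-disk (possibly with a corner), whereas the pinch configuration produces two wedges joined only at $V$, which fails to be locally Euclidean. The lemma therefore reduces to showing that Assumptions~\ref{ass:support} and~\ref{ass:overlap} together prohibit pinch vertices.

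Arguing by contradiction, suppose $V = (\eta_{k_1}, \eta_{k_2})$ is a pinch vertex and relabel so that the below-left and above-right cells $C_1, C_2$ belong to $\Omega_{\ell+1}$ while the below-right and above-left cells $C_3, C_4$ belong to $\Omega_{\ell+1}^c$. By Assumption~\ref{ass:support}, the cell $C_1$ lies in the support $R_1$ of some 2-form basis function of level $\ell$, which is a rectangular block of $p_1 \times p_2$ cells. Since $R_1 \subset \Omega_{\ell+1}$ while $C_3, C_4 \notin \Omega_{\ell+1}$, a short index calculation forces $R_1$ to occupy exactly the $p_1 \times p_2$ block strictly below-and-left of $V$, having $C_1$ as its top-right cell. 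Symmetrically, the 2-form support $R_2 \ni C_2$ must occupy the $p_1 \times p_2$ block strictly above-and-right of $V$. Consequently $\Omega_{\ell+1}$ contains two diagonal $p_1 \times p_2$ blocks $R_1, R_2$ meeting only at $V$.

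The key step is to exhibit a basis function whose overlap is disconnected. I would take the 0-form basis function $\beta^0 \in {\cal B}^0_\ell$ whose support $S$ is the $(p_1+1) \times (p_2+1)$ block of cells having $C_1$ at its bottom-left corner; such a basis function lies in $\Xhatlh{0}$ even under the homogeneous boundary conditions, provided $p_1, p_2 \geq 1$, because $V$ is necessarily strictly interior. By construction, $S$ contains all of $R_2$, the single cell $C_1$ of $R_1$, the out-cells $C_3, C_4$, a row of $p_1 - 1$ extra cells to the right of $C_3$, and a column of $p_2 - 1$ extra cells above $C_4$. Every cell in that row has its upper neighbor inside $R_2 \subset \Omega_{\ell+1}$, every cell in that column has its right neighbor inside $R_2$, and the row and column meet only at $C_1 \in R_1 \subset \Omega_{\ell+1}$. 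Hence, within the overlap $S \cap \Omega_{\ell+1}^c$, the row cells and the column cells cannot be joined, so $C_3$ and $C_4$ lie in distinct connected components. This contradicts the connectedness half of Assumption~\ref{ass:overlap}, and the symmetric pinch configuration is dispatched by the mirror choice of $\beta^0$.

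The main conceptual obstacle is recognizing that the placement of $R_2$ forced by Assumption~\ref{ass:support} is precisely what quarantines the two out-cells around $V$ inside the chosen 0-form support; once that geometric fact is in hand, disconnectedness falls out essentially by inspection. I expect the simply-connected half of Assumption~\ref{ass:overlap} not to enter this particular proof, but to become essential in the later Greville-subgrid analysis of Theorem~\ref{th:local}.
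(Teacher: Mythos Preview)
Your proposal is correct and follows essentially the same strategy as the paper's proof: both reduce to ruling out the pinch-vertex configuration, invoke Assumption~\ref{ass:support} to place a $p_1\times p_2$ block of $\Omega_{\ell+1}$-cells adjacent to the pinch, and then exhibit a $0$-form basis function whose overlap separates the two out-cells $C_3,C_4$, contradicting Assumption~\ref{ass:overlap}. Your write-up is considerably more explicit than the paper's (which leans on Figure~\ref{fig:not-manifold} and the word ``obviously''), in particular in pinning down the position of $R_2$ and checking that the chosen $(p_1{+}1)\times(p_2{+}1)$ support actually isolates $C_3$ from $C_4$.
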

\begin{proof}
In the two-dimensional case, the only configuration for which $\Omega_{\ell+1}$ is not a manifold with boundary is the one in Figure~\ref{fig:not-manifold} (see also Figure~3 in \cite{Berdinsky201486}), in which two connected components of $\Omega_{\ell+1}$ meet in a corner. From Assumption~\ref{ass:support}, every cell in $\Omega_{\ell+1}$ belongs to the support of a basis function as in \eqref{eq:supports}. That is, in the case of maximum continuity, we have a rectangle of size $p_{1,\ell} \times p_{2,\ell}$ elements contained in $\Omega_{\ell+1}$. Therefore, we can obviously choose a basis function in ${\cal B}^0_\ell$ such that its support contains the two white elements of $\Omega^c_{\ell+1}$ in Figure~\ref{fig:not-manifold} and the overlap is not connected, which contradicts Assumption~\ref{ass:overlap}.
\end{proof}

\begin{figure}[t!]
\centerline{\includegraphics[width=0.40\textwidth]{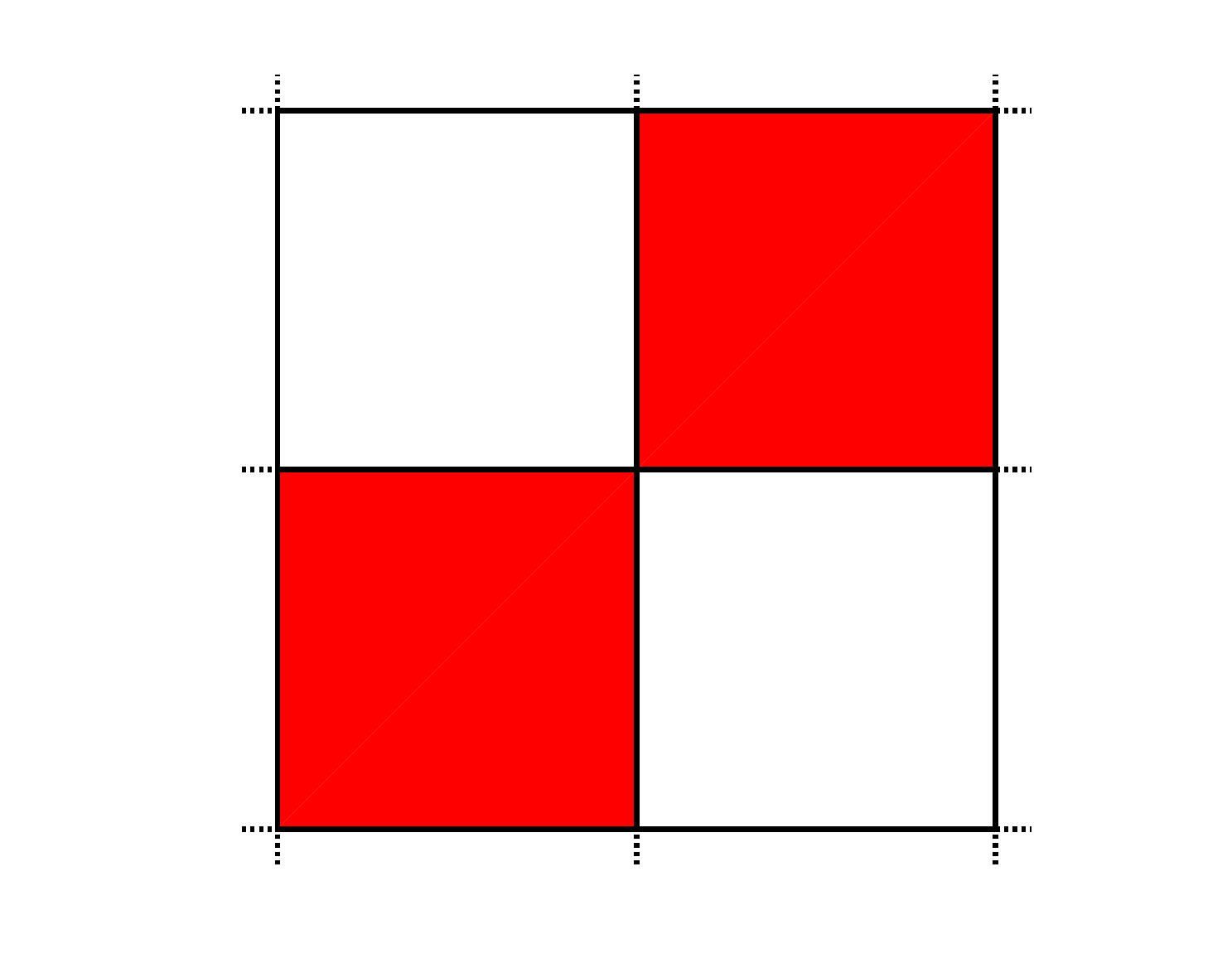}}
\caption{Configuration for which a subdomain is not a manifold with boundary. The colored elements belong to the subdomain, whereas the white elements do not.}\label{fig:not-manifold}
\end{figure}

\begin{lemma} \label{lemma:mwb}
If Assumptions~\ref{ass:support} and~\ref{ass:overlap} hold, then the regions of the Greville subgrids $\MG_{\ell,{\ell+1}}$ and $\MG_{\ell+1,{\ell+1}}$ are manifolds with boundary, for $\ell = 0,\ldots, N-1$.
\end{lemma}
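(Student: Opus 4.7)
The plan is to proceed by contradiction, mirroring the strategy of Lemma~\ref{lemma:omwb}. In two dimensions, a quadrangular cell complex fails to be a manifold with boundary precisely when some interior vertex admits a bow-tie configuration, i.e., exactly two of its four adjacent cells belong to the complex and they sit at diagonally opposite positions. I therefore assume, for contradiction, that $\MG_{\ell',\ell+1}$ exhibits such a bow-tie at a vertex $v$, for some $\ell' \in \{\ell,\ell+1\}$, and aim to derive a violation of Assumption~\ref{ass:overlap}.

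Using the identification of basis functions with Greville entities recalled in Section~\ref{sec:geometric_interpretation}, the vertex $v$ corresponds to a 0-form basis function $\beta^0 \in {\cal B}^0_{\ell'}$ and the four adjacent Greville cells correspond to four 2-form basis functions $\beta^2_1,\beta^2_2,\beta^2_3,\beta^2_4 \in {\cal B}^2_{\ell'}$. In the level-$\ell'$ B\'ezier mesh, $\supp(\beta^0)$ is a rectangle of $(p_{1,\ell'}+1)\times(p_{2,\ell'}+1)$ cells, while each $\supp(\beta^2_i)$ is a $p_{1,\ell'} \times p_{2,\ell'}$ rectangle placed as one of the four overlapping ``quadrants'' inside $\supp(\beta^0)$. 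A direct combinatorial check shows that for any pair of diagonally opposite 2-forms, the union of their supports covers every B\'ezier cell of $\supp(\beta^0)$ except the two cells at the remaining pair of opposite corners.

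Assume without loss of generality that $\beta^2_1$ and $\beta^2_3$ are the two 2-forms whose Greville cells participate in the bow-tie. Then $\supp(\beta^2_1)\cup\supp(\beta^2_3) \subset \Omega_{\ell+1}$, whereas $\supp(\beta^2_2)$ and $\supp(\beta^2_4)$ each contain at least one cell outside $\Omega_{\ell+1}$. By the combinatorial observation above, those cells can only be the two remaining corner cells of $\supp(\beta^0)$, so $\supp(\beta^0)\cap\Omega^c_{\ell+1}$ consists exactly of that pair of corner cells. Since these cells sit at diagonally opposite corners of $\supp(\beta^0)$ and share neither an edge nor a vertex in the B\'ezier mesh, the overlap is disconnected. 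For $\ell'=\ell$ this immediately contradicts Assumption~\ref{ass:overlap} applied to $\beta^0$.

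The delicate case is $\ell'=\ell+1$, since Assumption~\ref{ass:overlap} is stated only for level-$\ell$ basis functions. In this case I would invoke Assumption~\ref{ass:support} to note that both $\Omega_{\ell+1}$ and $\Omega^c_{\ell+1}$ are unions of level-$\ell$ B\'ezier cells, so each of the two offending level-$(\ell+1)$ corner cells is contained in a level-$\ell$ cell lying entirely in $\Omega^c_{\ell+1}$. I would then exhibit a level-$\ell$ basis function $\tilde\beta^0 \in {\cal B}^0_\ell$ whose support meets both of these level-$\ell$ cells while the rest of its support sits inside $\Omega_{\ell+1}$, yielding a disconnected level-$\ell$ overlap and contradicting Assumption~\ref{ass:overlap} at level $\ell$. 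Producing such a $\tilde\beta^0$ is the main technical obstacle of the argument, as it requires careful bookkeeping of the dyadic refinement from level $\ell$ to level $\ell+1$ and of the geometric placement of the 2-form supports relative to $\supp(\beta^0)$.
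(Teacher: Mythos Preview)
Your treatment of $\MG_{\ell,\ell+1}$ is correct and coincides with the paper's argument, only spelled out in more detail: the bow-tie forces the overlap of the central $0$-form to be exactly the two opposite corner cells of its support, which is disconnected.

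For $\MG_{\ell+1,\ell+1}$ there is a genuine gap, and it stems from using too little of Assumption~\ref{ass:support}. You invoke it only to say that $\Omega_{\ell+1}$ is a union of level-$\ell$ B\'ezier cells, but that is already the ``strong condition'' assumed in Section~\ref{sec:HB-splines}; the actual content of Assumption~\ref{ass:support} is that every level-$\ell$ cell in $\Omega_{\ell+1}$ lies inside a full $p_{1,\ell}\times p_{2,\ell}$ rectangle of level-$\ell$ cells contained in $\Omega_{\ell+1}$ (namely the support of some $\beta\in{\cal B}^n_\ell$). The paper uses exactly this: the level-$(\ell+1)$ B\'ezier cells in the supports of the two ``colored'' $2$-forms are in $\Omega_{\ell+1}$, hence each sits inside such a large level-$\ell$ rectangle in $\Omega_{\ell+1}$, while the two ``white'' $2$-forms contribute level-$\ell$ cells of $\Omega^c_{\ell+1}$ nearby. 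This reproduces, at level $\ell$, precisely the geometric situation of Lemma~\ref{lemma:omwb}, and the same argument produces a $\tilde\beta^0\in{\cal B}^0_\ell$ with disconnected overlap.

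By contrast, your proposed construction asks for a $\tilde\beta^0$ whose support meets both offending level-$\ell$ cells \emph{and} whose remaining support lies entirely in $\Omega_{\ell+1}$. The second requirement is stronger than what Assumption~\ref{ass:overlap} demands and need not be achievable; you only need the overlap to fall into two components, one around each obstructing cell. Without the $p_{1,\ell}\times p_{2,\ell}$ rectangles from Assumption~\ref{ass:support} you have no control over how much of $\Omega_{\ell+1}$ surrounds the two bad cells at level $\ell$, which is why your ``main technical obstacle'' does not resolve with bookkeeping alone.
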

\begin{proof}
Similar to the previous proof, the only configuration for which a Greville subgrid is not a manifold with boundary is the one in Figure~\ref{fig:not-manifold}, where now the elements correspond to the Greville cells. If $\MG_{\ell,{\ell+1}}$ is not a manifold with boundary, i.e., it is as in Figure~\ref{fig:not-manifold}, the support of the basis functions in ${\cal B}^2_\ell$ associated to the colored cells is completely contained in $\Omega_{\ell+1}$, and the support of the basis functions associated to the white cells is not. As a consequence, the overlap of the basis function in ${\cal B}^0_\ell$ corresponding to the middle Greville point is not connected, which contradicts Assumption~\ref{ass:overlap}.

If $\MG_{\ell+1,{\ell+1}}$ is as in Figure~\ref{fig:not-manifold} the support of the basis functions associated to the colored cells, which are now in ${\cal B}^2_{\ell+1}$, is contained in $\Omega_{\ell+1}$, and from Assumption~\ref{ass:support} each one of the cells is contained in a rectangle of $p_{1,\ell} \times p_{2,\ell}$ cells of level $\ell$ contained in $\Omega_{\ell+1}$. Since the support of the basis functions associated to the white cells is not contained in $\Omega_{\ell+1}$, analogously to the proof of Lemma~\ref{lemma:omwb} there exists a function in ${\cal B}^0_\ell$ such that its overlap is not connected, which contradicts Assumption~\ref{ass:overlap}.
\end{proof}

\begin{theorem}\label{th:homology}
If Assumptions~\ref{ass:support} and~\ref{ass:overlap} hold, then the homology groups of $\Omega_{\ell+1}$, $\MG_{\ell,{\ell+1}}$ and $\MG_{\ell+1,{\ell+1}}$ are isomorphic, that is, $H_k(\Omega_{\ell+1}) \cong H_k(\MG_{\ell,\ell+1}) \cong H_k(\MG_{\ell+1,\ell+1})$ for $k = 0, 1, 2$.
\end{theorem}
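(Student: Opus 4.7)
The plan is to exploit the fact that, by Lemmas~\ref{lemma:omwb} and~\ref{lemma:mwb}, all three regions $\Omega_{\ell+1}$, $\MG_{\ell,\ell+1}$, and $\MG_{\ell+1,\ell+1}$ are compact two-dimensional manifolds with boundary embedded in $\mathbb{R}^2$. For such a planar manifold $M$, the homology is completely characterized by two integer invariants: $\dim H_0(M)$, the number of connected components, and $\dim H_1(M)$, the number of holes (i.e., bounded components of $\mathbb{R}^2 \setminus M$, summed over connected components of $M$); the remaining groups vanish. Hence it suffices to verify that all three regions share these two counts.

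I intend to proceed by showing directly that each Greville subgrid is homotopy equivalent to $\Omega_{\ell+1}$. Intuitively, $\MG_{\ell,\ell+1}$ is a \emph{shrunken} version of $\Omega_{\ell+1}$ obtained by retracting the boundary inward by roughly the width of a level-$\ell$ basis function support, and similarly $\MG_{\ell+1,\ell+1}$ is a shrinking with respect to the finer level. The two hypotheses of the theorem are precisely what is needed to make this shrinking a homotopy equivalence: Assumption~\ref{ass:support} guarantees that every connected component of $\Omega_{\ell+1}$ contains the full support of at least one level-$\ell$ basis function, so no component collapses under the shrinking; Assumption~\ref{ass:overlap} rules out the pathological local configurations in which the shrinking could split a component, merge two components, or open up or close off a hole.

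A natural rigorous implementation is a nerve-theoretic argument. I would cover $\MG_{\ell,\ell+1}$ by the stars of its Greville vertices, each associated with a basis function in ${\cal B}^0_{\ell,\ell+1}$, and cover a suitable subregion of $\Omega_{\ell+1}$ by the corresponding basis function supports. Both covers consist of axis-aligned rectangles, hence are good covers, and Assumption~\ref{ass:overlap} ensures the intersections behave consistently in both regions. Matching the combinatorial nerves of the two covers and applying the nerve theorem yields $\Omega_{\ell+1} \simeq \MG_{\ell,\ell+1}$, and the same strategy at level $\ell+1$ yields $\Omega_{\ell+1} \simeq \MG_{\ell+1,\ell+1}$. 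Composing produces the desired isomorphisms $H_k(\Omega_{\ell+1}) \cong H_k(\MG_{\ell,\ell+1}) \cong H_k(\MG_{\ell+1,\ell+1})$ for $k = 0, 1, 2$.

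The main obstacle will be handling the boundary carefully: the cover of $\Omega_{\ell+1}$ by supports of basis functions in ${\cal B}^0_{\ell,\ell+1}$ does not cover a collar neighborhood of $\partial\Omega_{\ell+1}$, so one must either extend the cover by partial supports of boundary-adjacent basis functions or explicitly deformation-retract $\Omega_{\ell+1}$ onto this covered region. Assumption~\ref{ass:overlap} is precisely the ingredient that makes this retraction well-defined and topology-preserving. The combinatorial verification near the boundary is likely to require a case analysis of local configurations, and if the nerve argument proves too intricate, an alternative is a direct Euler-characteristic and component-counting argument for the three regions, treating the shrinking operation cell-by-cell on the Bézier and Greville meshes and invoking Assumptions~\ref{ass:support} and~\ref{ass:overlap} to rule out the bad cases already identified in the proofs of Lemmas~\ref{lemma:omwb} and~\ref{lemma:mwb}.
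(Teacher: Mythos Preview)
Your high-level framing is correct and matches the paper's: since Lemmas~\ref{lemma:omwb} and~\ref{lemma:mwb} make all three regions planar manifolds with boundary, the homology is determined by the number of connected components and the number of holes, so the task reduces to matching those two counts across the three regions.

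The nerve-theoretic route, however, has a concrete obstruction that you have not addressed. You propose to match the nerve of the Greville vertex-star cover of $\MG_{\ell,\ell+1}$ with the nerve of the cover of (a retract of) $\Omega_{\ell+1}$ by the supports of the corresponding $0$-form basis functions. These two nerves are \emph{not} combinatorially the same: two Greville stars intersect iff the underlying vertices are grid-adjacent (indices differ by at most one in each direction), whereas two degree-$p$ B-spline supports intersect iff the indices differ by at most $p$. The support nerve is therefore strictly larger, and showing it is homotopy equivalent to the Greville nerve is not visibly easier than the theorem itself. The second difficulty you flag---that the supports miss a collar of $\partial\Omega_{\ell+1}$, so one must first deformation-retract $\Omega_{\ell+1}$ onto the covered subregion---is also where the real content hides: constructing that retraction and verifying it does not change the number of components or holes amounts to proving the result directly. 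The paper itself remarks, at the end of Appendix~\ref{sec:appendix}, that the authors attempted homology-theoretic shortcuts and could not make them work precisely because the three grids live on three different subdomains.

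Your fallback---direct component and hole counting---is exactly the route the paper takes, and it is not light. The argument is split into Proposition~\ref{prop:cc} (a bijection between connected components of $\Omega_{\ell+1}$, $\MG_{\ell,\ell+1}$, and $\MG_{\ell+1,\ell+1}$) and Proposition~\ref{prop:holes} (the analogous bijection for holes). Both rely on two auxiliary lemmas: one giving containments $K\subset\mathrm{supp}(\beta_K)$ and $Q\subset\MG_{\ell,\tilde Q}$, and one saying that Greville cells of $\MG_{\ell,\ell+1}$ lying in the same row or column of a local window $\MG'_{\ell,\tilde Q}$ force all intermediate cells into $\MG_{\ell,\ell+1}$. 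The uniqueness part of Proposition~\ref{prop:cc} is a genuine case analysis: assuming two components of $\MG_{\ell,\ell+1}$ inside one component of $\Omega_{\ell+1}$, one locates a B\'ezier element in the overlap of their ``extended supports'' and uses the row/column lemma together with Assumption~\ref{ass:overlap} to derive a contradiction. The hole bijection argues similarly via paths of Greville cells and the connected-overlap hypothesis. If you pursue the counting route, this is the level of combinatorial detail you should expect, and the boundary-adjacent configurations you anticipated do indeed drive most of the work.
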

\begin{proof}
The proof of this result is presented in Appendix~\ref{sec:appendix}.
\end{proof}

\begin{corollary} \label{corol:cohomology}
If Assumptions~\ref{ass:support} and~\ref{ass:overlap} hold, then $H^k(\Xhat_{\ell,\ell+1}) \cong H^k(\Xhat_{\ell+1,\ell+1})$, for $k = 0,1,2$.
\end{corollary}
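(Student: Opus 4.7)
The plan is to transfer the statement from the spline cohomology to the topology of the Greville subgrids, using the commutative diagram of isomorphisms established in Section~\ref{sec:submesh}, and then to conclude via Theorem~\ref{th:homology}.

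First, I would invoke the isomorphisms $\hat I^k : \Xhat^k_{\ell,\ell+1} \rightarrow \hat Z^k_{\ell,\ell+1}$ (and similarly at level $\ell+1$) described at the end of Section~\ref{sec:submesh}, which form a cochain isomorphism between the spline subcomplex $(\Xhat_{\ell,\ell+1},d)$ and the lowest-order finite element complex $(\hat Z_{\ell,\ell+1},d)$ defined over the Greville subgrid $\MG_{\ell,\ell+1}$ with vanishing traces on $\partial \MG_{\ell,\ell+1}$. Cochain isomorphisms induce isomorphisms on cohomology, so the problem reduces to showing
\begin{equation*}
H^k(\hat Z_{\ell,\ell+1}) \cong H^k(\hat Z_{\ell+1,\ell+1}), \quad k=0,1,2.
\end{equation*}

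Second, by Lemma~\ref{lemma:mwb}, both Greville subgrids are manifolds with boundary under Assumptions~\ref{ass:support} and~\ref{ass:overlap}. For the Whitney-type lowest-order finite element de Rham complex with homogeneous boundary conditions defined on a compact manifold with boundary, it is a classical fact (see, e.g., \cite[Section~5]{AFW-2}) that the discrete cohomology computes the relative singular cohomology of the underlying domain: $H^k(\hat Z_{\ell,\ell+1}) \cong H^k(\MG_{\ell,\ell+1}, \partial \MG_{\ell,\ell+1})$. Applying Lefschetz duality for manifolds with boundary (in dimension $n=2$) yields
\begin{equation*}
H^k(\MG_{\ell,\ell+1}, \partial \MG_{\ell,\ell+1}) \cong H_{n-k}(\MG_{\ell,\ell+1}),
\end{equation*}
and analogously at level $\ell+1$.

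Third, I would chain these with Theorem~\ref{th:homology}, which gives $H_{n-k}(\MG_{\ell,\ell+1}) \cong H_{n-k}(\MG_{\ell+1,\ell+1})$, producing the desired cohomology isomorphism. The main obstacle I anticipate is not the existence of an abstract isomorphism but its \emph{naturality}: for the exactness result in Theorem~\ref{th:exact} to apply, the isomorphism ought to be induced by the inclusion $\Xhat^k_{\ell,\ell+1} \hookrightarrow \Xhat^k_{\ell+1,\ell+1}$. One therefore needs to check that the isomorphisms produced in Theorem~\ref{th:homology} (whose proof is relegated to the appendix) are compatible with inclusions, so that the composite isomorphism above is exactly the map $j^*$ induced by $\Xhat^k_{\ell,\ell+1} \subset \Xhat^k_{\ell+1,\ell+1}$; if one only has abstract isomorphism of cohomology groups, the statement of the corollary still holds as written, but its intended use downstream requires this additional naturality check.
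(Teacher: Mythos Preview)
Your proposal is correct and follows essentially the same chain of isomorphisms as the paper: spline-to-FEM via the Greville isomorphisms of Section~\ref{sec:submesh}, identification of the finite element cohomology with relative cohomology, Lefschetz duality (using Lemma~\ref{lemma:mwb}), and finally Theorem~\ref{th:homology}. Your closing remark on naturality is apt---the corollary as stated only asserts an abstract isomorphism, and the paper handles the question of whether $j^*$ itself is an isomorphism separately, via the cochain projectors of Lemma~\ref{lemma:projectors} combined with a dimension argument in the proof of Theorem~\ref{th:local}.
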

\begin{proof}
Using the definitions from Section~\ref{sec:submesh}, the spline subspaces $\Xhat_{\ell,{\ell+1}}$ and $\Xhat_{\ell+1,{\ell+1}}$ are isomorphic to the finite element spaces (with boundary conditions) defined on the Greville subgrids $\MG_{\ell,\ell+1}$ and $\MG_{\ell+1,\ell+1}$, respectively, and that we denote by $\hat Z_{\ell,\ell+1}$ and $\hat Z_{\ell+1,\ell+1}$. As it is known, the cohomology groups of finite element spaces with homogeneous boundary conditions are isomorphic to the relative cohomology groups of the domain, where the boundary plays the role of the subspace in relative cohomology. We do not need any details here, and refer the interested reader to \cite[Chapter~3]{Hatcher} for a deeper understanding of relative cohomology, and to \cite{GK} for its use in finite elements. In our case, the cohomology groups of the finite element spaces are isomorphic to the relative cohomology groups of the subdomains $\MG_{\ell,{\ell+1}}$ and $\MG_{\ell+1,{\ell+1}}$. From Lemma~\ref{lemma:mwb} they are both manifolds with boundary, and we can use the classical Lefschetz duality theorem (see \cite[Theorem~3.43]{Hatcher}), to obtain
\begin{equation*}
H^k(\Xhat_{\ell,{\ell+1}}) \cong H^k(\hat Z_{\ell,{\ell+1}})
\cong H^{k}(\MG_{\ell,\ell+1},\partial \MG_{\ell,\ell+1})
\cong H_{n-k} (\MG_{\ell,\ell+1}),
\end{equation*}
and
\begin{equation*}
H^k(\Xhat_{{\ell+1},{\ell+1}}) \cong H^k(\hat Z_{\ell+1,{\ell+1}})
\cong H^{k}(\MG_{\ell+1,\ell+1},\partial \MG_{\ell+1,\ell+1})
\cong H_{n-k} (\MG_{\ell+1,\ell+1}),
\end{equation*}
where $H^k(A,B)$ denotes relative cohomology groups. Moreover, from Theorem~\ref{th:homology} we have
\begin{equation*}
H_{n-k} (\MG_{\ell,{\ell+1}}) \cong H_{n-k} (\MG_{\ell+1,{\ell+1}}) \cong H_{n-k}(\Omega_{\ell+1}),
\end{equation*}
which implies that $H^k(\Xhat_{\ell,{\ell+1}}) \cong H^k(\Xhat_{\ell+1,{\ell+1}})$, that is, the cohomology groups are isomorphic.
\end{proof}

\subsubsection{Second Part of the Proof: Commutative Projectors}

The second step is to prove that the inclusion induces an isomorphism, as required by Theorem~\ref{th:exact}. In order to do so, we will construct a set of commutative projectors for the spline spaces, using the relation between the spline spaces $\Xhat^k_{\ell,\ell+1}$ and the low-order finite element spaces $\hat Z^k_{\ell,\ell+1}$. Before defining the projectors, we have to construct a set of generators of the cohomology groups for the finite element and the spline spaces. We explain the construction for $\hat Z_{\ell,\ell+1}$, since the construction for $\hat Z_{\ell+1,\ell+1}$ is completely analogous.

We recall that the dimension of $H^2(\hat Z_{\ell,\ell+1})$ is the number of connected components of $\MG_{\ell,\ell+1}$ minus one, that we denote by $\nc-1$. To build a generating set of $H^2(\hat Z_{\ell,\ell+1})$ we choose on each connected component (except one) of $\MG_{\ell,\ell+1}$ one Greville cell, and the associated piecewise constant basis function $z^2_j$, for $j=1, \ldots, \nc - 1$. The generating set of $H^2(\hat Z_{\ell+1,\ell+1})$ is built in an analogous way.

The dimension of $H^1(\hat Z_{\ell,\ell+1})$ is equal to the number of ``holes'' of $\MG_{\ell,\ell+1}$, $\nh$, that in the case of a planar manifold with boundary is the number of connected components of the boundary $\partial \MG_{\ell,\ell+1}$ minus the number of connected components of $\MG_{\ell,\ell+1}$. To construct a generating set of the cohomology group we first choose a reference boundary on each connected component of $\MG_{\ell,\ell+1}$, and then we pick the remaining connected components of the boundary in such a way that we have a set of boundaries $\Gamma^\MG_j$, for $j = 1, \ldots, \nh$. For each index $j$ we consider the nodal finite element function with degrees of freedom equal to one on $\Gamma^\MG_j$ and zero elsewhere, and then take the function ${\bf z}^1_j$ as its gradient, see Figure~\ref{fig:harmonic} for a graphical explanation. For a discussion on the construction of a basis of harmonic forms see Section~1.4 and Appendix~A.4 in \cite{AV-book}, and references therein.

The generators of the cohomology groups for spline spaces $H^k(\Xhat_{\ell,\ell+1})$ are constructed from the finite element ones, applying the isomorphisms \eqref{eq:cd-iga-fem}, and will be denoted by ${\boldsymbol \zeta}^1_j := \hat{I}^1_h ({\bf z}^1_j)$ and $\zeta^2_j := \hat{I}^2_h(z^2_j)$.
\begin{figure}[t!]
\includegraphics[width=0.32\textwidth]{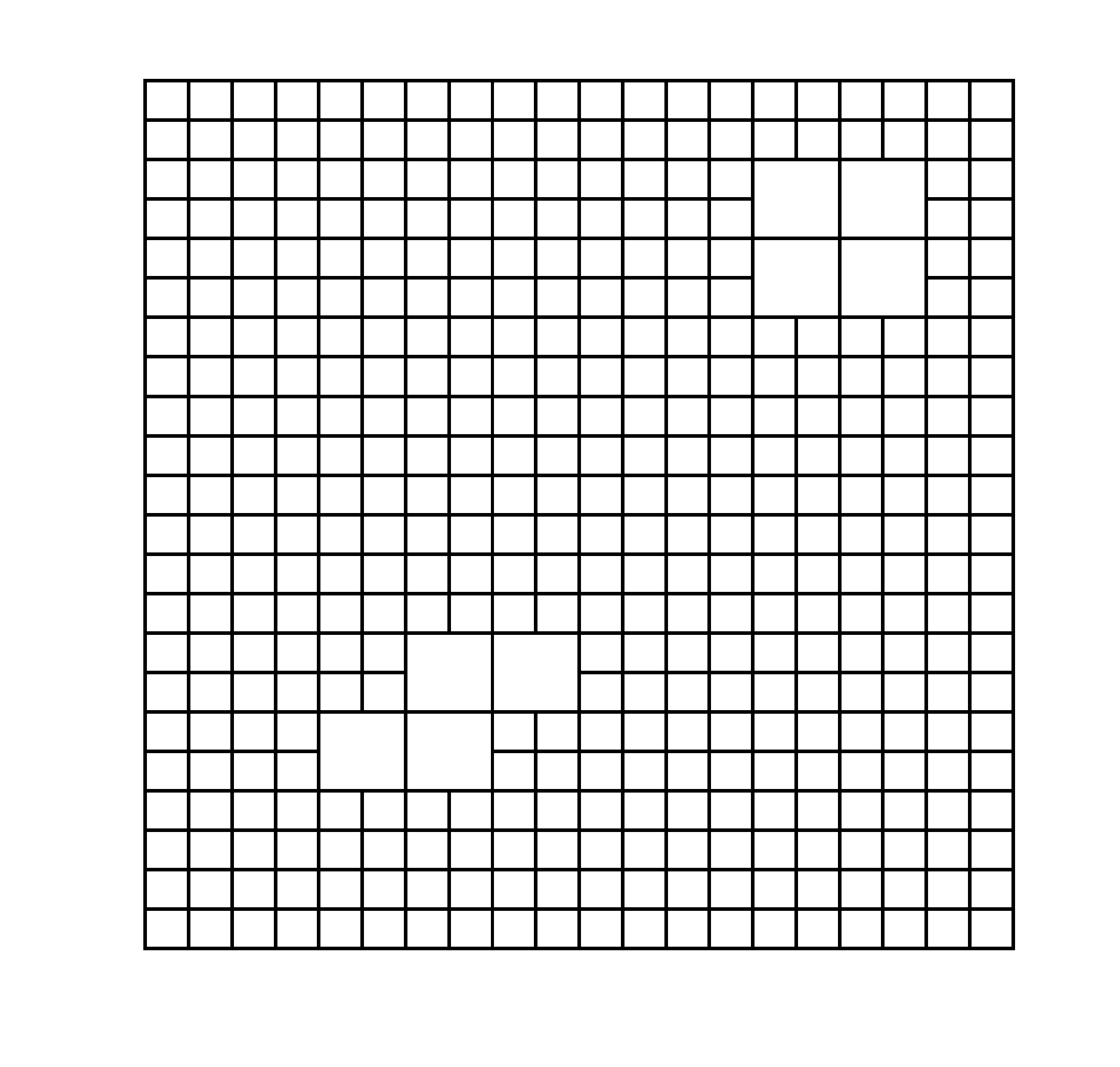}
\includegraphics[width=0.32\textwidth]{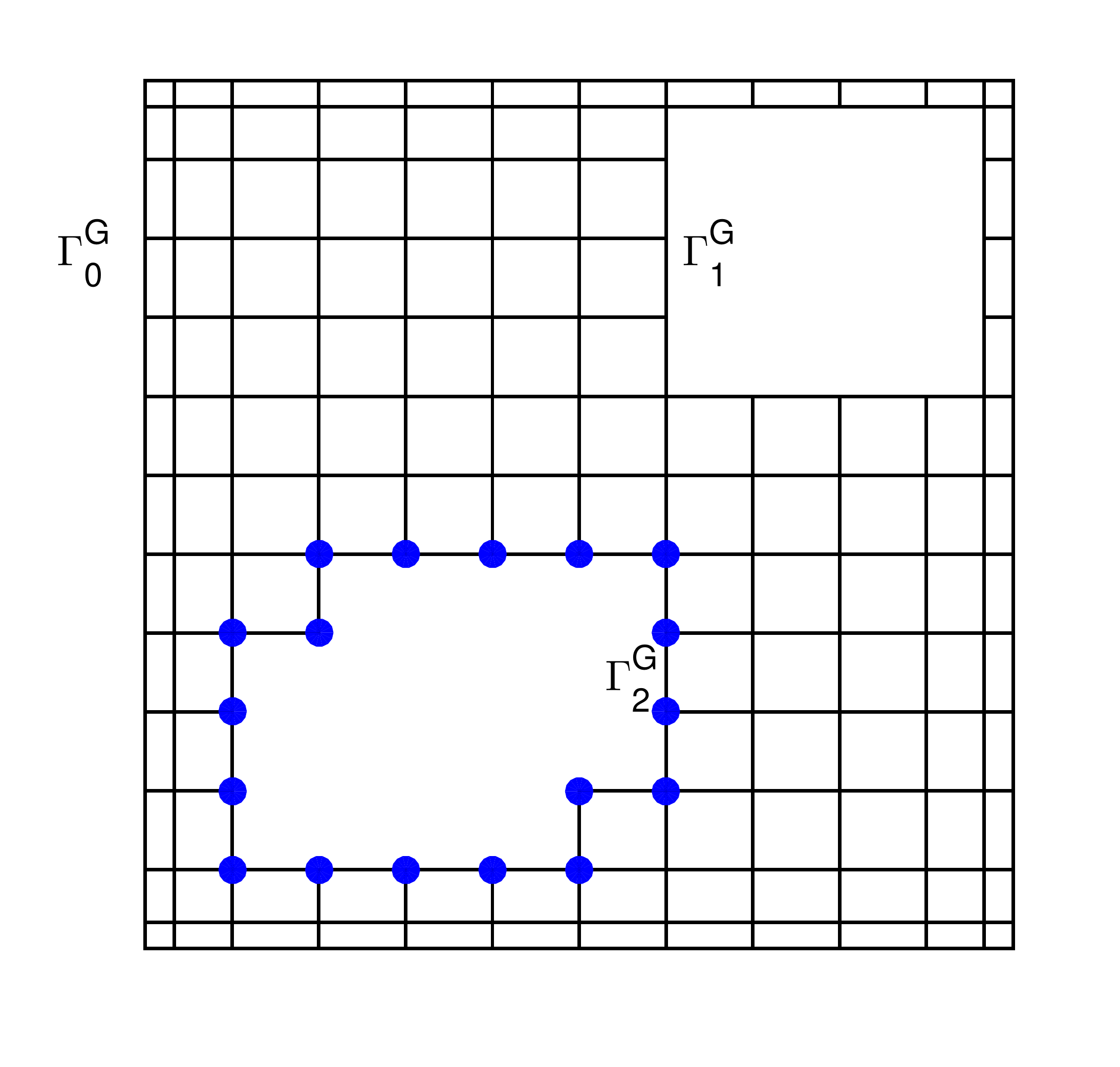}
\includegraphics[width=0.32\textwidth]{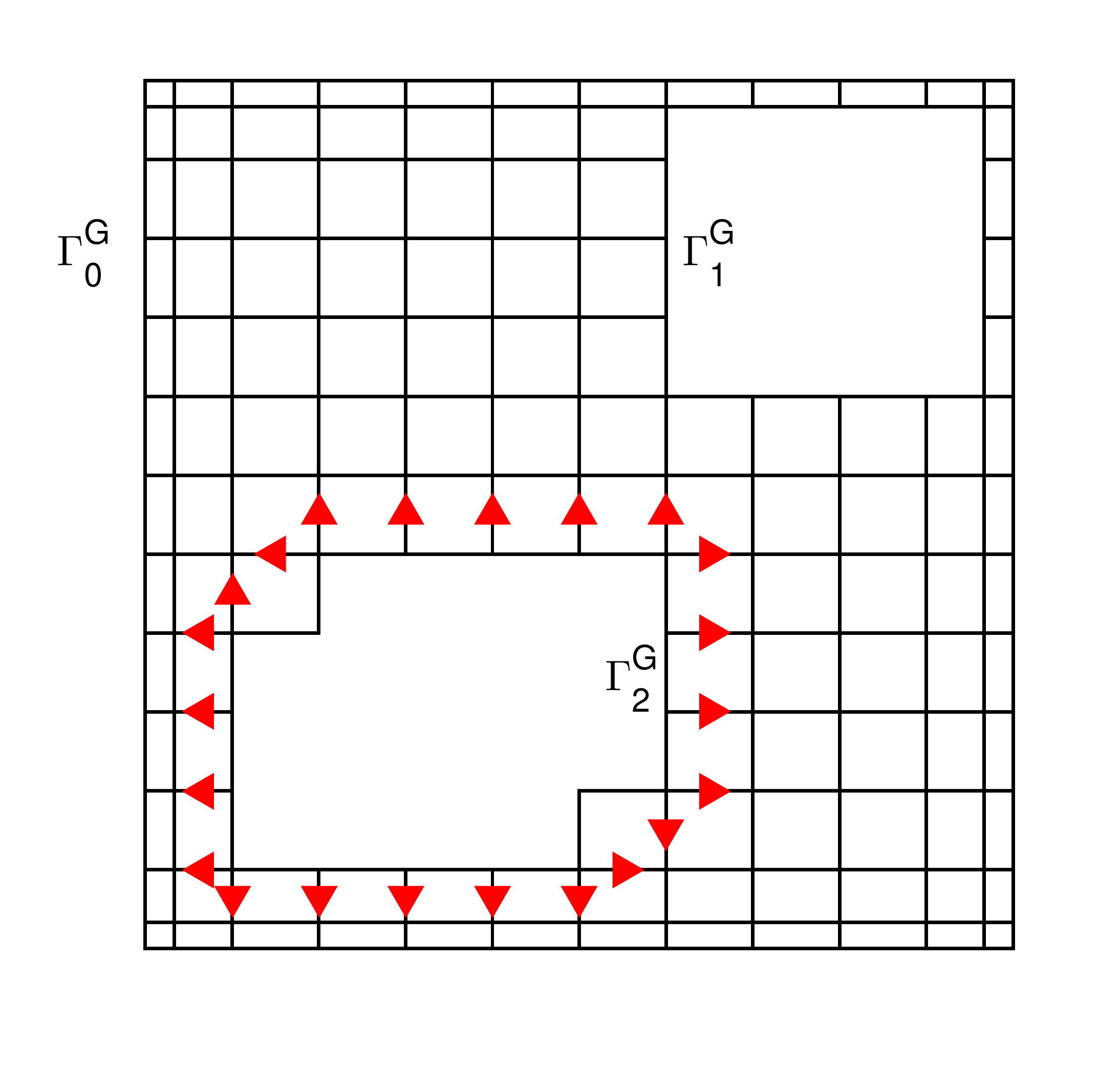}
\caption{Construction of the harmonic function for $\MG_{0,1}$ corresponding to the hierarchical B\'ezier mesh on the left. In the middle, we choose the degrees of freedom associated to $\Gamma^\MG_2$ to be equal to one and all the others equal to zero. On the right, the gradient of that function expressed in terms of the degrees of freedom of $\hat Z^1_{\ell,\ell+1}$, with coefficients equal to one or minus one depending on the arrow orientation, gives ${\bf z}^1_j$.} \label{fig:harmonic}
\end{figure}

\begin{remark} \label{rem:orthogonal}
The functions ${\bf z}^1_j$ are generators of the cohomology group for finite elements, but are not \emph{harmonic forms}, in the sense that they are not orthogonal to the exact forms. In order to obtain the harmonic forms, one could take the gradients of the functions that solve the problem
\begin{equation*}
\left \{
\begin{array}{rl}
\displaystyle 
\int_{\MG_{\ell,\ell+1}} \grad \phi_j \cdot \grad \psi = 0 & \forall \psi \in \hat Z^0_{\ell,\ell+1}, \\
\phi_j = 1 & \text{\rm  on } \Gamma^\MG_j, \\
\phi_j = 0 & \text{\rm  on } \partial \MG_{\ell,\ell+1} \setminus \Gamma^\MG_j.
\end{array}
\right.
\end{equation*}
However, orthogonality is lost when applying the isomorphism $\hat I_h^1$ to construct the spline functions ${\boldsymbol \zeta}_j^1$.
\end{remark}

\begin{lemma} \label{lemma:curves}
Under Assumptions~\ref{ass:support} and~\ref{ass:overlap}, the following statements hold.
\begin{enumerate}[i)]
\item Given a spline closed form $\psi \in Z^2(\Xhat_{\ell,\ell+1})$ (respectively, $\psi \in Z^2(\Xhat_{\ell+1,\ell+1})$), its equivalence class in $H^2(\Xhat_{\ell,\ell+1})$ (resp. $H^2(\Xhat_{\ell+1,\ell+1})$) is uniquely determined by
\begin{equation*}
\int_{\omega_j} \psi \, dx \quad \text{\rm for } j = 1, \ldots, \nc-1,
\end{equation*}
with $\omega_j \subset \Omega_{\ell+1}$ denoting the connected components of $\Omega_{\ell+1}$.
\item There exists a set of curves $\gamma_j$ in $\Omega_{\ell+1}$, with $j = 1, \ldots, \nh$, such  that given a closed form $\bv \in Z^1(\Xhat_{\ell,\ell+1})$ (respectively $\bv \in Z^1(\Xhat_{\ell+1,\ell+1})$), its equivalence class in $H^1(\Xhat_{\ell,\ell+1})$ (resp. $H^1(\Xhat_{\ell+1,\ell+1})$) is uniquely determined by
\begin{equation*}
\int_{\gamma_j} \bv \cdot d{\bf s} \quad \text{\rm for } j = 1, \ldots, \nh.
\end{equation*}
\end{enumerate}
\end{lemma}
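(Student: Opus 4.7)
The plan is to transport both claims to the low-order finite element complex on the Greville subgrid via the isomorphism $\hat I^k_h$ from the commutative diagram~\eqref{eq:cd-iga-fem}, where classical de~Rham-type pairings characterize cohomology classes, and then pull the pairings back to the spline side using the fact that integration of proxy fields over cycles only depends on the cohomology class. Throughout, I will use Theorem~\ref{th:homology} to pass freely between the homology of $\Omega_{\ell+1}$ and that of $\MG_{\ell,\ell+1}$ and $\MG_{\ell+1,\ell+1}$, and Lemma~\ref{lemma:mwb} to ensure the Greville subgrids are honest manifolds with boundary so that relative Lefschetz duality and piecewise-integration over cells make sense.

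For part (i), the statement amounts to showing that the linear map
\begin{equation*}
\Psi : H^2(\Xhat_{\ell,\ell+1}) \longrightarrow \mathbb{R}^{\nc - 1}, \qquad [\psi] \mapsto \left( \int_{\omega_j} \psi\, dx \right)_{j=1}^{\nc-1},
\end{equation*}
is injective; since by Corollary~\ref{corol:cohomology} and $H^2(\Xhat_{\ell,\ell+1})\cong H_0(\MG_{\ell,\ell+1})\cong H_0(\Omega_{\ell+1}) \cong \mathbb{R}^{\nc-1}$ the dimensions agree, this gives an isomorphism. Every $\psi \in \Xhat^2_{\ell,\ell+1}$ is supported in the Greville subgrid, and through $\hat I^2_h$ is represented by a piecewise-constant FE 2-form whose connected-component integrals span the dual basis of $H^2(\hat Z_{\ell,\ell+1})$. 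I will argue, using Assumptions~\ref{ass:support}--\ref{ass:overlap} and the argument underlying Theorem~\ref{th:homology}, that the connected components of $\MG_{\ell,\ell+1}$ are in bijection with the connected components $\omega_j$ of $\Omega_{\ell+1}$, with $\omega_j \subset \MG_{\ell,\ell+1}$ componentwise. Consequently, the integral of $\psi$ over $\omega_j$ picks out the restriction of the piecewise-constant FE representative to the corresponding component, and vanishing of all these integrals forces $[\psi]=0$.

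For part (ii), I would first choose, using Theorem~\ref{th:homology}, a family of loops $\gamma_j \subset \Omega_{\ell+1}$ for $j = 1,\dots,\nh$ whose classes form a basis of $H_1(\Omega_{\ell+1})$ and hence also of $H_1(\MG_{\ell,\ell+1})$ and $H_1(\MG_{\ell+1,\ell+1})$ under inclusion (the key point, essentially a retraction argument in the spirit of Theorem~\ref{th:homology}, is that generating cycles of $\Omega_{\ell+1}$ remain generating cycles once pushed into either Greville subgrid). On the FE side, for any closed form ${\bf v}^{FE}\in Z^1(\hat Z_{\ell,\ell+1})$ the line integrals $\int_{\gamma_j} {\bf v}^{FE}\cdot d{\bf s}$ provide a complete set of cohomology invariants by the classical pairing between $H^1$ of Whitney 1-forms and $H_1$ of the underlying cellular complex (this is a finite-dimensional de~Rham theorem, and it is the reason the generators ${\bf z}^1_j$ defined via the boundary curves $\Gamma^{\MG}_j$ were constructed). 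Pulling back through the isomorphism $\hat I^1_h$ and using that $\int_\gamma d\phi = 0$ for any closed $\gamma \subset \hat \Omega$ to see that the spline line integral depends only on the cohomology class, I obtain that the map $[{\bf v}] \mapsto (\int_{\gamma_j} {\bf v}\cdot d{\bf s})_j$ is well defined and injective; again a dimension count closes the argument.

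The main obstacle I anticipate is the identification of the cycles $\gamma_j\subset \Omega_{\ell+1}$ with generators of $H_1$ for \emph{both} Greville subgrids simultaneously, and similarly the component-to-component matching in part (i). This is precisely what Theorem~\ref{th:homology} is designed to supply; making it rigorous in practice requires exhibiting explicit deformation retractions (or homology equivalences) between $\Omega_{\ell+1}$ and each Greville subgrid that respect inclusion, so that $\gamma_j$ can be regarded as a loop in whichever subgrid we need, and an integration along $\gamma_j$ of a spline form in $\Xhat^1_{\ell,\ell+1}$ or $\Xhat^1_{\ell+1,\ell+1}$ yields the correct cohomology pairing. A secondary technical point is to reconcile the fact, noted in Remark~\ref{rem:orthogonal}, that the $\hat I^1_h$-image of a harmonic FE 1-form is generally not orthogonal in $\Xhat^1_{\ell,\ell+1}$; this does not affect the pairing-based characterization but needs to be flagged so that the statement is read at the level of cohomology classes rather than harmonic representatives.
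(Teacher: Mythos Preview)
Your treatment of part~(i) is close in spirit to the paper's, though one inclusion is stated backwards: it is the connected components of the Greville subgrid that are contained in the connected components $\omega_j$ of $\Omega_{\ell+1}$, not the other way around (this is Proposition~\ref{prop:cc}). The argument still goes through because the \emph{supports} of the spline generators $\zeta^2_j$ lie in the corresponding $\omega_j$, making the matrix $\big(\int_{\omega_i}\zeta^2_j\big)_{i,j}$ diagonal and nonsingular; that is exactly what the paper records.

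Part~(ii), however, has a genuine gap. You propose to take \emph{closed loops} $\gamma_j$ generating $H_1(\Omega_{\ell+1})$ and to pair them with closed $1$-forms in $\Xhat^1_{\ell,\ell+1}$. But the spline spaces here carry homogeneous boundary conditions, and the cohomology $H^1(\Xhat_{\ell,\ell+1})$ is identified (via Corollary~\ref{corol:cohomology}) with \emph{relative} cohomology $H^1(\MG_{\ell,\ell+1},\partial\MG_{\ell,\ell+1})$, not absolute cohomology. Concretely, the generators ${\boldsymbol\zeta}^1_j=\hat I^1_h({\bf z}^1_j)$ are, by construction, gradients of functions that equal $1$ on one boundary component and $0$ on the others; for any closed loop $\gamma\subset\Omega_{\ell+1}$ one therefore has $\int_\gamma{\boldsymbol\zeta}^1_j\cdot d{\bf s}=0$. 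Your pairing map $[{\bf v}]\mapsto\big(\int_{\gamma_j}{\bf v}\cdot d{\bf s}\big)_j$ vanishes identically on a basis of $H^1(\Xhat_{\ell,\ell+1})$ and is thus the zero map, not an isomorphism.

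The paper avoids this by choosing the $\gamma_j$ as \emph{relative} $1$-cycles: straight segments in $\Omega_{\ell+1}$ joining a distinguished corner ${\bf x}_j$ of the boundary component $\Gamma_j$ to another boundary component $\Gamma_k$ with $k<j$ (after ordering the $\Gamma_j$ by position). With this choice the integrals $\int_{\gamma_j}{\boldsymbol\zeta}^1_k\cdot d{\bf s}$ form a lower-triangular matrix with positive diagonal, obtained by a direct combinatorial/support argument using Assumption~\ref{ass:overlap}. So the fix is not just a different choice of cycles but a switch to relative homology; the resulting nondegeneracy is then established by an explicit computation rather than by an abstract de~Rham pairing.
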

\begin{proof}
Noting that the support of a B-spline function associated to a Greville cell of $\MG_{\ell,\ell+1}$ is contained in a connected component of $\Omega_{\ell+1}$, point i) is an immediate consequence of the definition of the forms $\zeta^2_j$, and the fact that there is a one-to-one correspondence between the connected components of $\MG_{\ell,\ell+1}$ and $\Omega_{\ell+1}$, as stated in Theorem~\ref{th:homology}, and more specifically in Proposition~\ref{prop:cc}. It is then clear that $\int_{\omega_j} \zeta^2_i$ form a non-singular diagonal matrix.

Point ii) requires a more technical proof, which is detailed in Appendix~\ref{sec:appendixb}.
\end{proof}

\begin{lemma} \label{lemma:projectors}
If Assumptions~\ref{ass:support} and~\ref{ass:overlap} hold, then it is possible to define a set of cochain projectors $\hat \Pi^k: \Xhat^k_{\ell+1,\ell+1} \rightarrow \Xhat^k_{\ell,\ell+1}$. That is, for every $\phi^k \in \Xhat^k_{\ell,\ell+1}$ it holds $\hat \Pi^k(\phi^k) = \phi^k$, and the following diagram commutes:
\begin{equation*}
\begin{CD}
0 @>>> \Xhat^0_{\ell+1,\ell+1} @>\grad>> \Xhat^1_{\ell+1,\ell+1} @>\curls>> \Xhat^2_{\ell+1,\ell+1} @>>> \mathbb{R} @>>> 0 \\
@. @V\hat \Pi^0VV @V\hat \Pi^1VV @V\hat \Pi^2VV  \\
0 @>>> \Xhat^0_{\ell,\ell+1} @>\grad>> \Xhat^1_{\ell,\ell+1} @>\curls>> \Xhat^2_{\ell,\ell+1} @>>> \mathbb{R} @>>> 0.
\end{CD}
\end{equation*}
\end{lemma}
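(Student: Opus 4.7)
The plan is to construct $\hat\Pi^k$ via the standard homological-algebra construction of a cochain retraction onto a subcomplex whose quotient is acyclic, combined with the explicit description of cohomology via generators and moments developed in Section~\ref{sec:local}. The cohomology isomorphism of Corollary~\ref{corol:cohomology} provides the acyclicity, and the generators $\zeta^k_j,\boldsymbol\zeta^k_j$ paired with the moment maps of Lemma~\ref{lemma:curves} give the explicit data needed to realize a contracting homotopy.

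Concretely, consider the short exact sequence of cochain complexes $0 \to \Xhat_{\ell,\ell+1} \to \Xhat_{\ell+1,\ell+1} \to Q \to 0$ with $Q^\bullet := \Xhat^\bullet_{\ell+1,\ell+1}/\Xhat^\bullet_{\ell,\ell+1}$, and the associated long exact cohomology sequence. Using Corollary~\ref{corol:cohomology} together with a compatibility check that the isomorphism there is induced by the inclusion $j$ (which follows by naturality of Lefschetz duality and the inclusion-compatibility of the topological isomorphisms in Theorem~\ref{th:homology}), $j^*$ is an isomorphism at every level, whence $H^k(Q)=0$ for all $k$. Being finite-dimensional and acyclic, $Q$ admits a contracting homotopy $K^k: Q^k \to Q^{k-1}$ satisfying $dK+Kd=\textup{id}_Q$. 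Picking a linear section $s: Q \to \Xhat_{\ell+1,\ell+1}$ of the quotient map $q$ and lifting $K$ via $\tilde K := s\circ K \circ q$, I would set
\[
\hat\Pi^k := \textup{id} - d^{k-1}\tilde K^k - \tilde K^{k+1} d^k.
\]
A direct calculation using $d^2=0$ shows that $\hat\Pi^k$ commutes with $d$. Applying the quotient $q$ and using $qs=\textup{id}_Q$, $qd=dq$, and $dK+Kd=\textup{id}_Q$ yields $q\hat\Pi^k = q - (dK+Kd)q = 0$, so the image lies in $\Xhat^k_{\ell,\ell+1}$. Finally, if $\phi \in \Xhat^k_{\ell,\ell+1}$ then $q(\phi)=0$, hence $\tilde K(\phi) = \tilde K(d\phi) = 0$ and $\hat\Pi^k(\phi)=\phi$, giving the projection property.

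The main obstacle is verifying that the cohomology isomorphism in Corollary~\ref{corol:cohomology} actually coincides with the inclusion-induced map $j^*$. The Corollary's proof proceeds via Lefschetz duality and the singular homology of $\Omega_{\ell+1}$ (through Theorem~\ref{th:homology}), so identifying the composite isomorphism with $j^*$ requires tracing the naturality of each identification against the inclusion $\Xhat^k_{\ell,\ell+1}\hookrightarrow\Xhat^k_{\ell+1,\ell+1}$. A more constructive alternative would avoid this compatibility check by building the contracting homotopy on $Q$ by hand: one inverts the non-singular matrix of moments of the generators $\zeta^k_j,\boldsymbol\zeta^k_j$ against the cycles $\omega_j,\gamma_j$ (Lemma~\ref{lemma:curves}) to assign preimages under $d$ to cocycles of $Q$, then extends by zero on a complement. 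Assumptions~\ref{ass:support} and~\ref{ass:overlap} enter throughout, by way of Lemma~\ref{lemma:mwb} and Theorem~\ref{th:homology}, in guaranteeing the favorable topology of the Greville subgrids that underlies both the abstract argument and its explicit realization.
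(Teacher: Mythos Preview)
Your abstract homological-algebra approach is elegant, but it has a genuine gap at exactly the point you flag: you need $j^*: H^k(\Xhat_{\ell,\ell+1}) \to H^k(\Xhat_{\ell+1,\ell+1})$ to be an isomorphism before you can conclude $H^k(Q)=0$ and run the contracting-homotopy construction. Your suggested fix via ``naturality of Lefschetz duality and Theorem~\ref{th:homology}'' does not go through as stated. The chain of isomorphisms in Corollary~\ref{corol:cohomology} passes through the finite element complexes on the Greville subgrids $\MG_{\ell,\ell+1}$ and $\MG_{\ell+1,\ell+1}$, which are \emph{different} subdomains of the plane, and there is no geometric inclusion between them that corresponds under the isomorphisms $\hat I^k_h$ to the spline inclusion $j$. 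So there is no map to be ``natural with respect to,'' and the compatibility check you allude to is essentially as hard as the original problem.

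More to the point, within the paper's logical structure your argument is circular: Lemma~\ref{lemma:projectors} is precisely what is used, in the proof of Theorem~\ref{th:local}, to establish that $j^*$ is injective (and hence an isomorphism by the dimension count from Corollary~\ref{corol:cohomology}). The paper therefore takes the opposite route: it builds the projectors \emph{first}, by explicit variational problems in the style of Demkowicz--Buffa, whose unique solvability is guaranteed by the Hodge-type decompositions encoded in Lemma~\ref{lemma:curves} (curl-curl plus gradient orthogonality plus circulations along the $\gamma_j$ for $\hat\Pi^1$, and curl orthogonality plus integrals over the $\omega_j$ for $\hat\Pi^2$). Commutativity then follows directly from the form of the equations. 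Your ``more constructive alternative'' using Lemma~\ref{lemma:curves} is in fact close to what the paper does and could be completed: since the moment functionals $\int_{\gamma_j}(\cdot)\cdot d\mathbf{s}$ and $\int_{\omega_j}(\cdot)\,dx$ detect cohomology classes in \emph{both} complexes simultaneously, they give injectivity of $j^*$ directly, after which your homological construction would be valid. But as written, that alternative is only sketched, and the primary naturality argument does not stand on its own.
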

\begin{proof}
The construction of these commutative projectors is inspired by \cite{DB05}. We recall that we are always considering spaces with homogeneous boundary conditions.

Given $q \in \Xhat^0_{\ell+1,\ell+1}$, we define $\hat \Pi^0 q \in \Xhat^0_{\ell,\ell+1}$ as the unique solution to
\begin{align*}
(\grad \hat \Pi^0 q, \grad \phi) = (\grad q, \grad \phi) \quad \forall \phi \in \Xhat^0_{\ell,\ell+1}.
\end{align*}

Given $\bv \in \Xhat^1_{\ell+1,\ell+1}$, we define $\hat \Pi^1 \bv \in \Xhat^1_{\ell,\ell+1}$ as the unique solution to 
\begin{align*}
(\curls \hat \Pi^1 \bv, \curls \bw) = (\curls \bv, \curls \bw) &\quad \forall \bw \in \Xhat^1_{\ell,\ell+1}, \\
(\hat \Pi^1 \bv, \grad \phi) = (\bv, \grad \phi) & \quad \forall \phi \in \Xhat^0_{\ell,\ell+1}, \\
\int_{\gamma_j} \hat \Pi^1 \bv \cdot d{\bf s} = \int_{\gamma_j} \bv \cdot d{\bf s} & \quad \text{ for } j = 1, \ldots, \nh,
\end{align*}
where the curves $\gamma_j$ are defined as in Lemma~\ref{lemma:curves}.

Given $\psi \in \Xhat^2_{\ell+1,\ell+1}$ we define $\hat \Pi^2 \psi \in \Xhat^2_{\ell,\ell+1}$ as the unique solution to 
\begin{align*}
(\hat \Pi^2 \psi, \curls \bw) = (\psi, \curls \bw) &\quad \forall \bw \in \Xhat^1_{\ell,\ell+1}, \\
\int_{\omega_j} \hat \Pi^2 \psi \, dx = \int_{\omega_j} \psi \, dx &\quad \text{ for } j = 1, \ldots, \nc,
\end{align*}
where $\omega_j \subset \Omega_{\ell+1}$ are the connected components of $\Omega_{\ell+1}$.

The correct definition of the projectors is a consequence of Lemma~\ref{lemma:curves}, and the fact that we are working with spaces with boundary conditions. The commutativity of the diagram comes trivially from the definition of the projectors.
\end{proof}

\subsubsection{Final Part of the Proof: The Main Result}

We can finally prove the main result of this section.
\begin{proof}[Proof of Theorem~\ref{th:local}]
The theorem is proved by an induction argument. The case of $\ell = 0$ corresponds to standard tensor-product B-splines, and the result holds from \cite{BRSV11}. Assuming that the sequence is exact for $\ell$, and using Theorem~\ref{th:exact}, in order to prove the exactness for $\ell+1$ we have to show that the inclusion $j: \Xhat_{\ell,{\ell+1}} \longrightarrow \Xhat_{\ell+1,{\ell+1}}$ induces an isomorphism between the cohomology groups $H^k(\Xhat_{\ell,{\ell+1}})$ and $H^k(\Xhat_{\ell+1,{\ell+1}})$.

From Corollary~\ref{corol:cohomology} we have $H^k(\Xhat_{\ell,{\ell+1}}) \cong H^k(\Xhat_{\ell+1,{\ell+1}})$, and since the spaces are finite-dimensional, it is enough to prove that $j^*:H^k(\Xhat^k_{\ell,\ell+1}) \rightarrow H^k(\Xhat^k_{\ell+1,\ell+1})$ is injective. This is proved using the cochain projectors in Lemma~\ref{lemma:projectors}. Since $\hat \Pi^k \circ j = {\rm Id}$, we have that $(\hat \Pi^k \circ j)^* = (\hat \Pi^k)^* \circ j^* = {\rm Id}^*$, which is the identity in cohomology. As a consequence $j^*$ is injective (and ($\hat \Pi^k)^*$ is surjective), which ends the proof.
\end{proof}

\begin{remark}
The exactness of the sequence might be also proved using the characterization of hierarchical B-splines in terms of piecewise polynomials of \cite{Mokris2014}, invoking the same arguments already used for T-splines in \cite{BSV14} and for LR-splines in \cite{Johannessen15}.
\end{remark}

\section{Stability of the Hierarchical B-spline Complex} \label{sec:stability}
In this section, we examine the stability properties of the hierarchical B-spline complex with specific reference to the Maxwell eigenproblem and the Stokes problem.  For these applications, stability is typically proven based on the construction of a set of local, stable and commutative projectors from the continuous de Rham complex to the discrete subcomplex. In the B-spline case, such projectors can be easily built exploiting the tensor-product structure of B-splines \cite{BRSV11}. The construction in the case of hierarchical B-splines is more subtle and is the subject of future work.

Due to the current lack of a complete theory based on commutative projectors, we have chosen to numerically study the stability of isogeometric discrete differential forms as applied to the Maxwell eigenproblem and the Stokes problem.  Not too unexpectedly, the numerical results show that exactness by itself is not sufficient to obtain a spurious free solution of Maxwell eigenproblem (see \cite{CFR01}), nor to obtain an inf-sup stable scheme for Stokes.  Instead, the numerical results suggest that Assumption~\ref{ass:support} is a necessary condition for stability for the considered applications.  However, our numerical results also suggest that Assumptions~\ref{ass:support} and~\ref{ass:overlap} together, plus a suitable grading of the mesh, are sufficient to obtain a stable scheme for both the Maxwell eigenproblem and the Stokes problem.

\subsection{Numerical Assessment of Spectral Correctness}

In our first group of stability tests, we consider the Maxwell eigenvalue problem.  In variational form, the problem consists of finding the eigenpair $({\bf u}_h, \lambda_h) \in W^1_N \times \mathbb{R}$, with $\lambda_h \ne 0$, such that 
\begin{equation}
(\curls {\bf u}_h, \curls {\bf v}_h) = \lambda_h ({\bf u}_h, {\bf v}_h) \quad \forall {\bf v}_h \in W^1_N, \label{eq:maxwell}
\end{equation}
where $(\cdot,\cdot)$ is the $L^2$-inner product defined over $\Omega$ and $W^1_N$ is the space of 1-forms, or curl conforming splines, in the hierarchical B-spline complex. In order to assess how the hierarchical B\'{e}zier mesh configuration affects the exactness of the complex, we also solve the mixed problems \cite{Boffi07}
\begin{equation} \label{eq:mf1}
\left \{
\begin{array}{rl}
(\curls {\bf u}_h, \curls {\bf v}_h) + (\grad p_h, {\bf v}_h) = \lambda_h ({\bf u}_h, {\bf v}_h) & \forall {\bf v}_h \in W^1_N, \\
(\grad q_h, {\bf u}_h) = 0 & \forall q_h \in W^0_N,
\end{array}
\right.
\end{equation}
and
\begin{equation} \label{eq:mf2}
\left \{
\begin{array}{rl}
({\bf u}_h, {\bf v}_h) + (\curls {\bf v}_h, \phi_h) = 0 & \forall {\bf v}_h \in W^1_N, \\
(\curls {\bf u}_h, \psi_h) = -\lambda_h (\phi_h, \psi_h) & \forall \psi_h \in W^2_N / \mathbb{R},
\end{array}
\right.
\end{equation}
which involve the other spaces of the sequence. For simplicity, the computations are conducted in the square domain $\Omega = (0,\pi)^2$ using the homogeneous Dirichlet condition ${\bf u}_h \times {\bf n}= 0$. In all the computations, the degree is taken equal to four in the first space of the discrete complex, which means that 1-forms are discretized with hierarchical splines of mixed degree three and four.

We have performed numerical tests for several hierarchical B\'{e}zier meshes, displayed in Figure~\ref{fig:meshes-maxwell1}, to assess the influence of Assumptions~\ref{ass:support} and~\ref{ass:overlap} on the eigenproblem discretization and also to see how the mesh configuration affects the exactness. The first mesh, in Figure~\ref{fig:mesh_3lines} does not satisfy any of the two assumptions for the chosen degree, and it does not satisfy the condition for exactness described in Section~\ref{sec:nec-suff}. The second mesh in Figure~\ref{fig:mesh_3lines_bulge} is a variation of the previous one, and it does not satisfy any of the assumptions but satisfies the exactness condition. The third through sixth meshes, in Figures~\ref{fig:mesh_p4_1x1}-\ref{fig:mesh_p4_4x4}, are refined along the diagonal, with a configuration inspired by Figures~\ref{fig:remove-add-1x1}--\ref{fig:assumptions-1-2}. They all satisfy Assumption~\ref{ass:support}, but only the last one satisfies Assumption~\ref{ass:overlap}. It can be easily checked that, among these last four meshes, only the ones in Figure~\ref{fig:mesh_p4_1x1} and Figure~\ref{fig:mesh_p4_4x4} satisfy the exactness condition. The properties satisfied by the six meshes, for the chosen degree, are summarized in Table~\ref{tab:maxwell-prop1}.

\begin{figure}[htp]
\begin{center}
\begin{subfigure}[]
{\includegraphics[width = 0.45\textwidth, trim=1cm 1cm 1cm 0cm, clip]{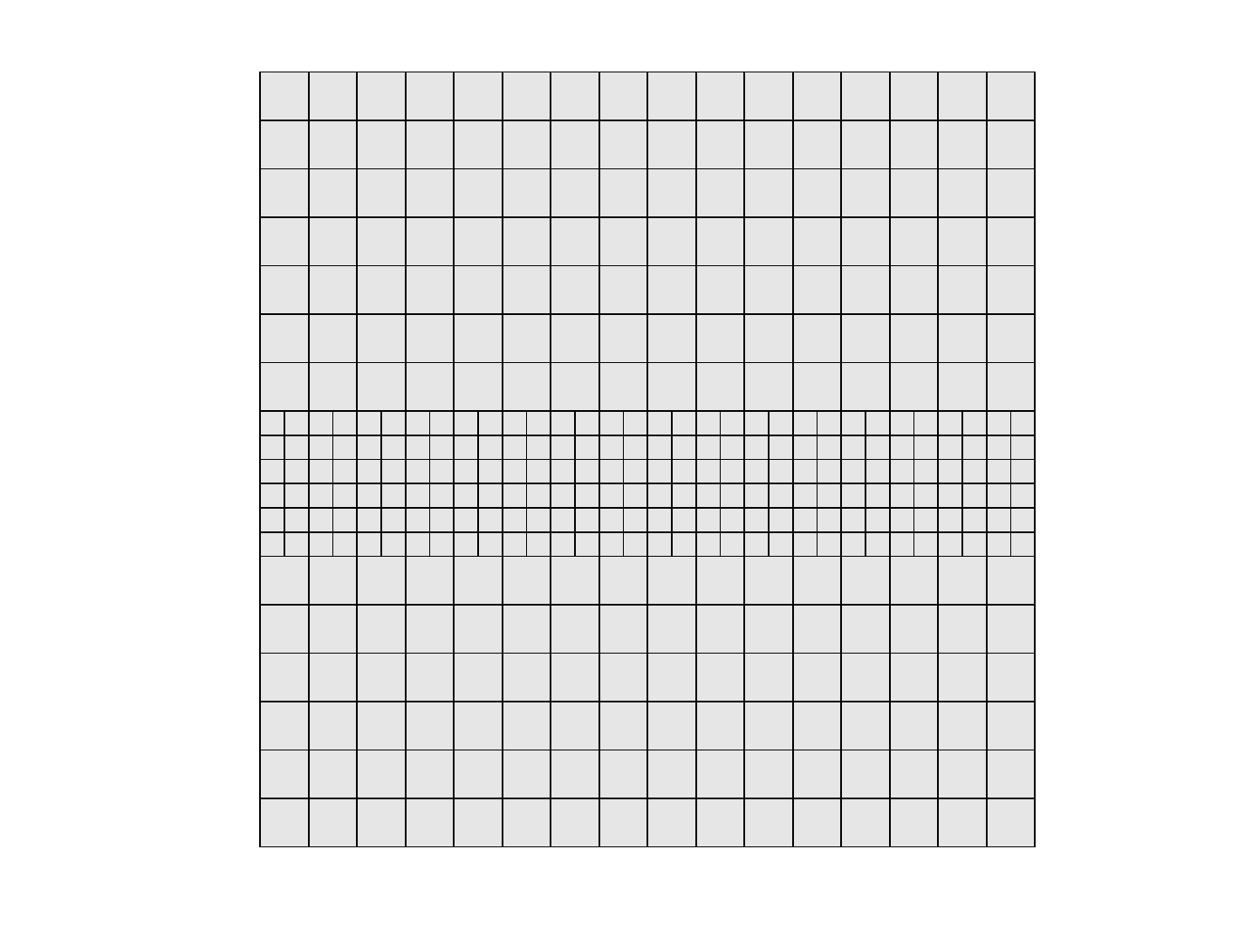}\label{fig:mesh_3lines}}
\end{subfigure}
\begin{subfigure}[]
{\includegraphics[width = 0.45\textwidth, trim=1cm 1cm 1cm 0cm, clip]{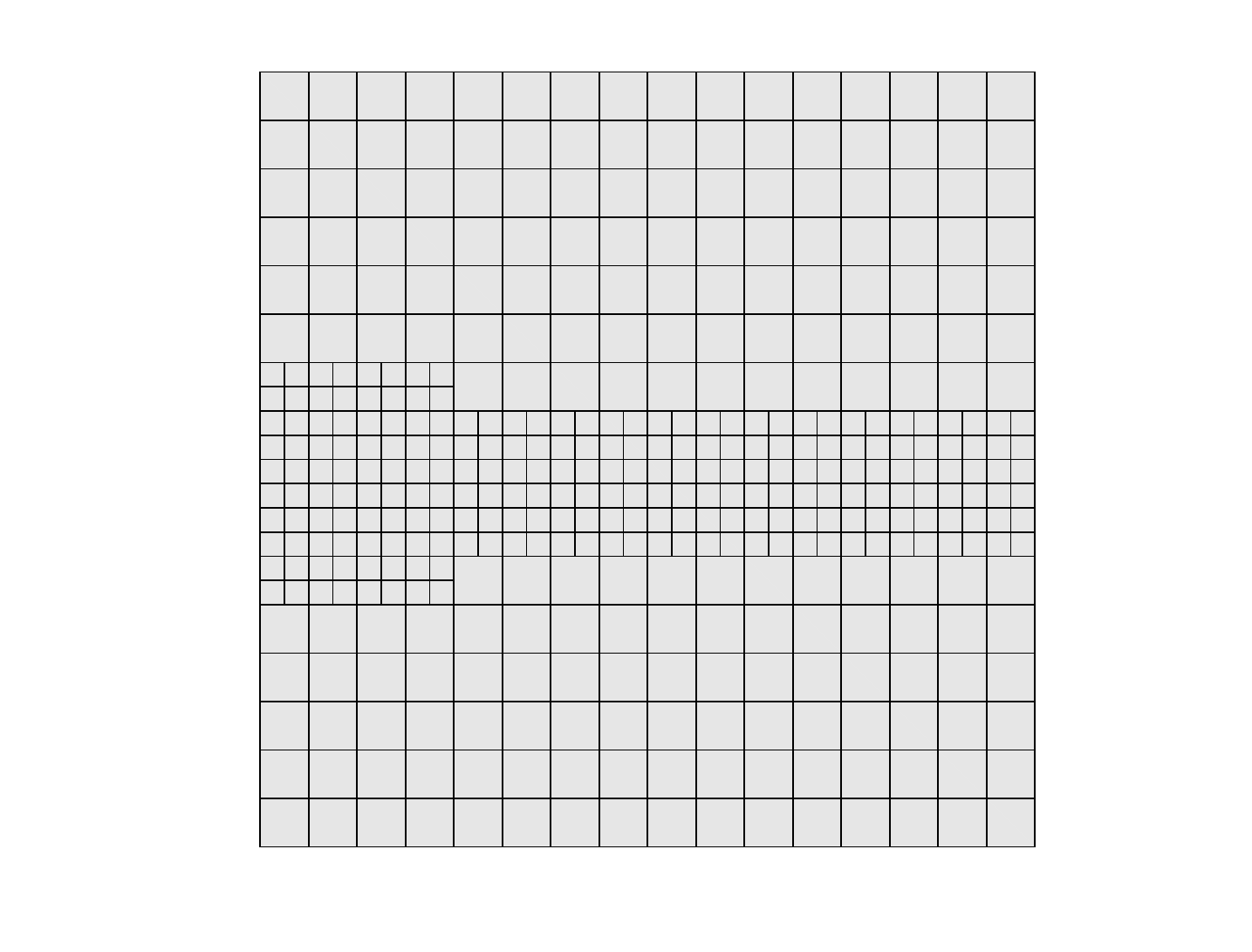}\label{fig:mesh_3lines_bulge}}
\end{subfigure}
\begin{subfigure}[]
{\includegraphics[width = 0.45\textwidth, trim=1cm 1cm 1cm 0cm, clip]{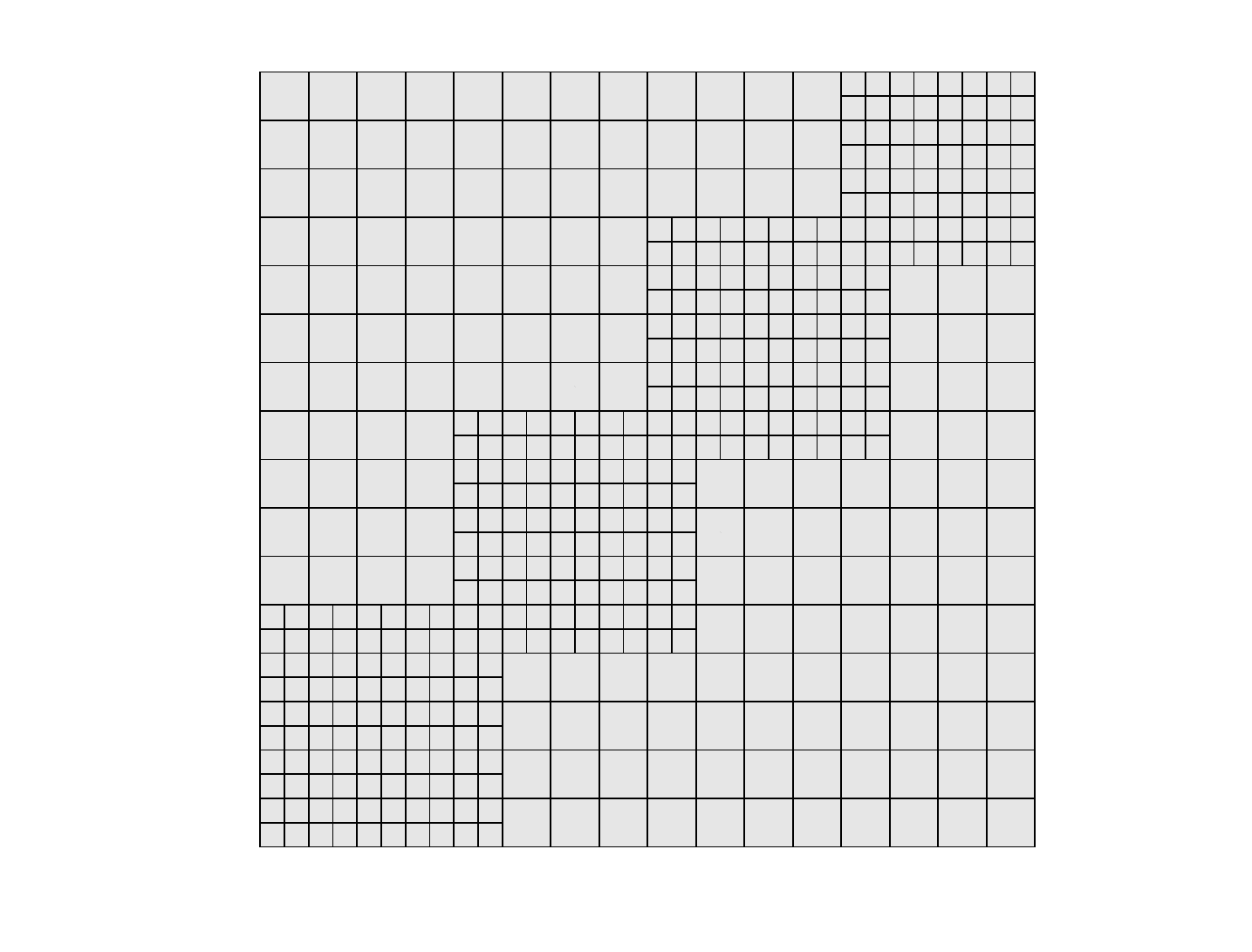}\label{fig:mesh_p4_1x1}}
\end{subfigure}
\begin{subfigure}[]
{\includegraphics[width = 0.45\textwidth, trim=1cm 1cm 1cm 0cm, clip]{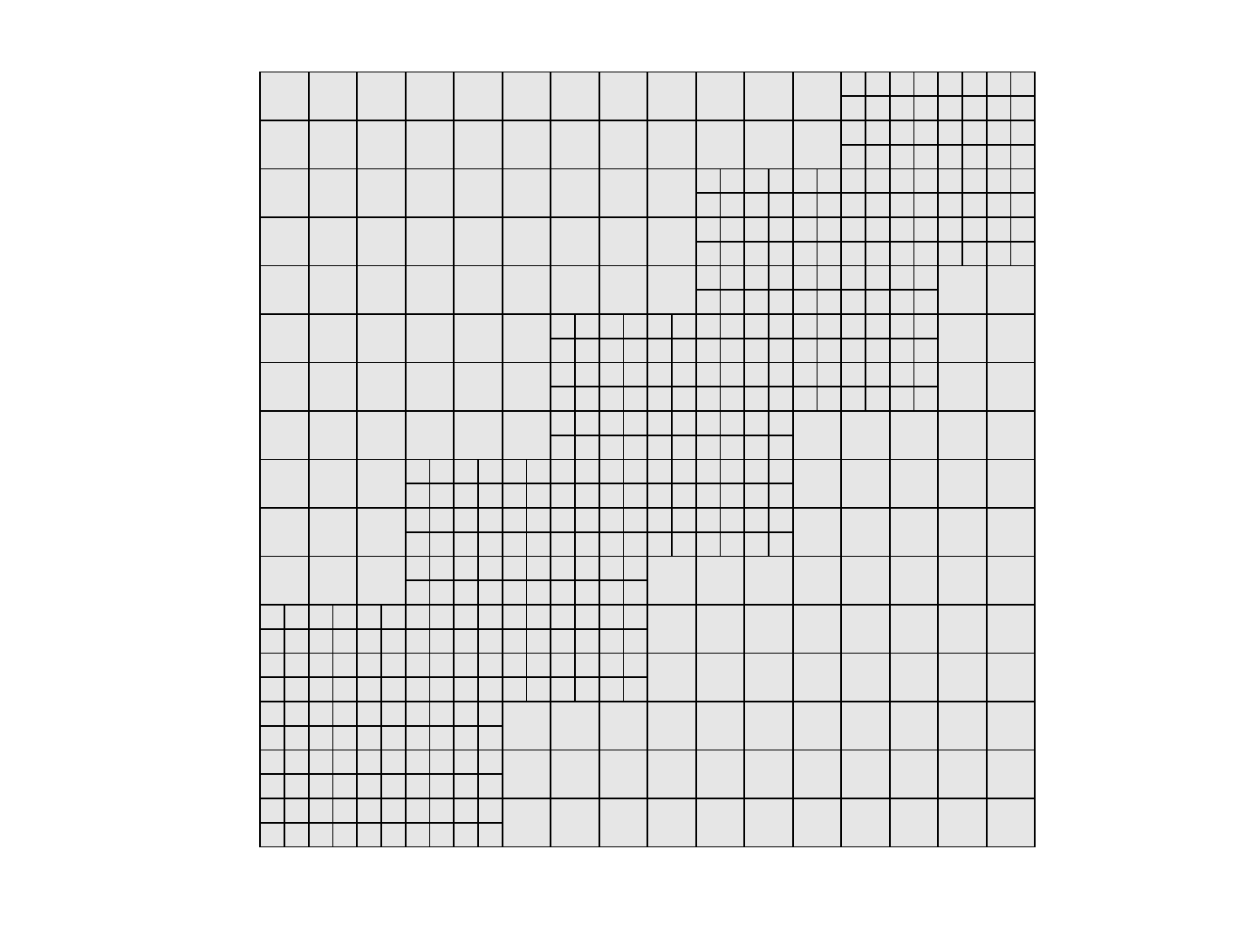}\label{fig:mesh_p4_2x2}}
\end{subfigure}
\begin{subfigure}[]
{\includegraphics[width = 0.45\textwidth, trim=1cm 1cm 1cm 0cm, clip]{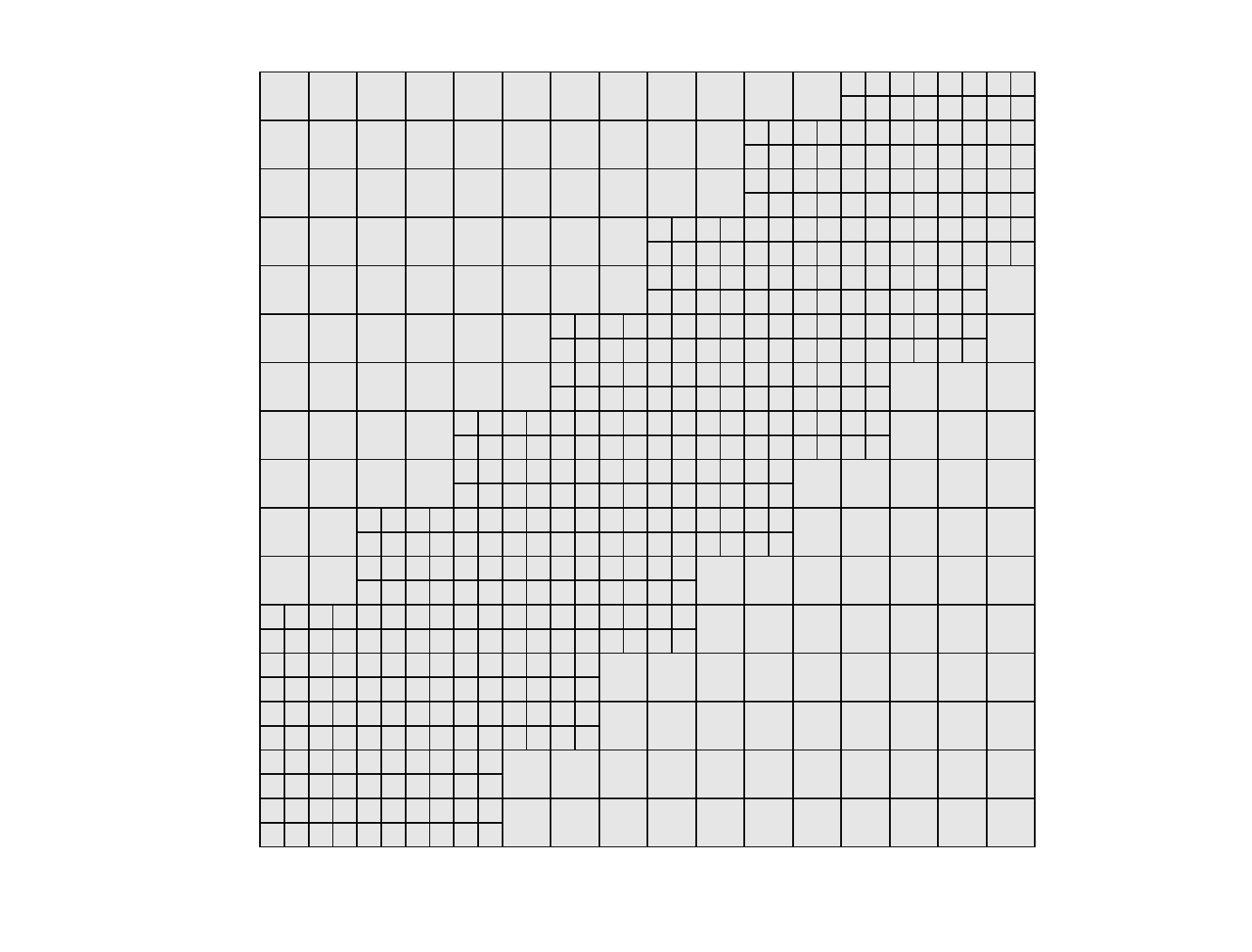}\label{fig:mesh_p4_3x3}}
\end{subfigure}
\begin{subfigure}[]
{\includegraphics[width = 0.45\textwidth, trim=1cm 1cm 1cm 0cm, clip]{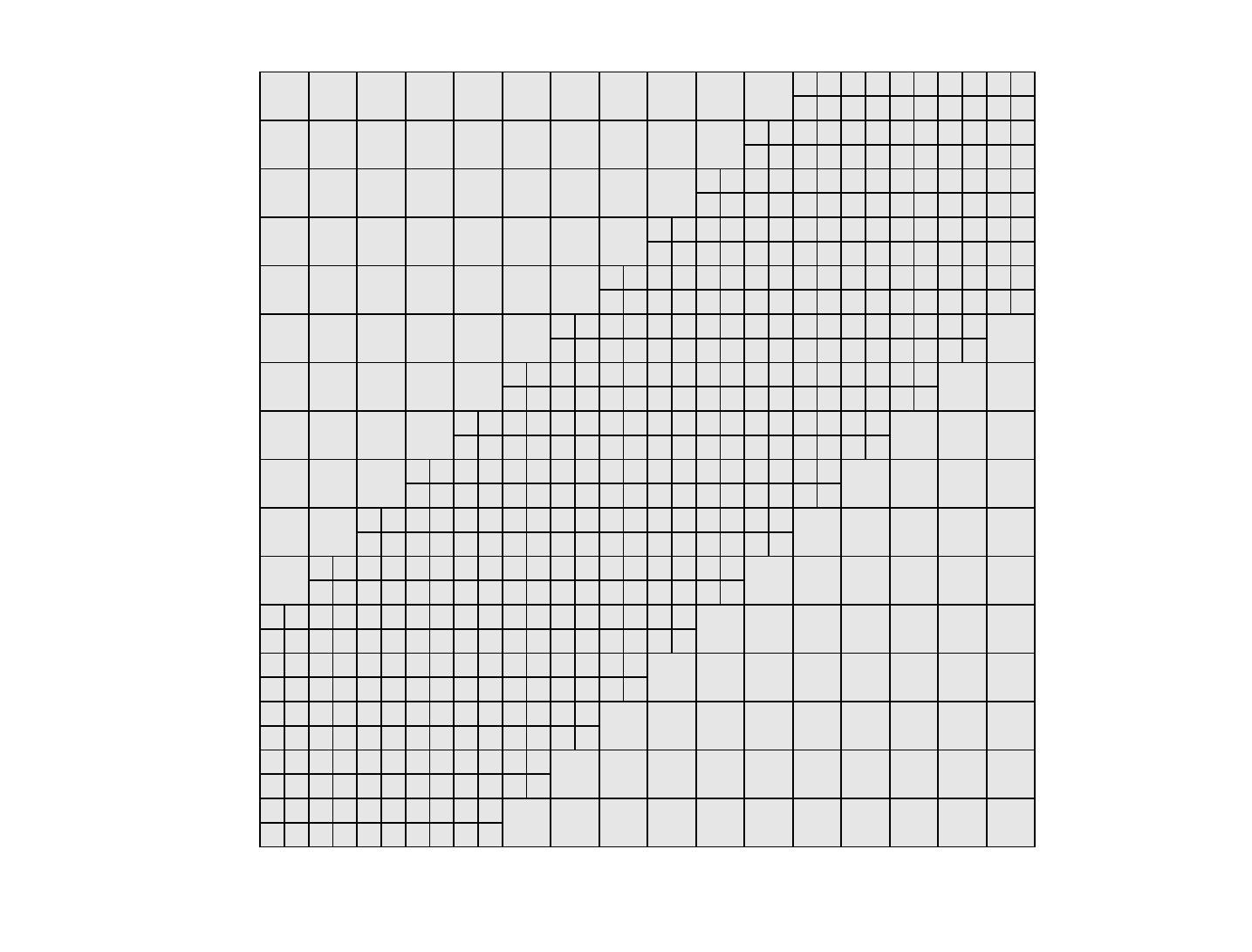}\label{fig:mesh_p4_4x4}}
\end{subfigure}
\end{center}
\caption{The six different hierarchical B\'{e}zier mesh configurations used in the Maxwell eigenproblem tests.  Maximally continuous B-splines with $p_1 = p_2 = 4$ were employed in the tests.} \label{fig:meshes-maxwell1}
\end{figure}

\begin{table}
\caption{Properties satisfied by the six meshes tested for the Maxwell eigenproblem.} \label{tab:maxwell-prop1}
\centering
\begin{tabular}{|c||c|c|c|c|c|c|}
\hline
Mesh & Fig.~\ref{fig:mesh_3lines} & Fig.~\ref{fig:mesh_3lines_bulge} & Fig~\ref{fig:mesh_p4_1x1} & Fig.~\ref{fig:mesh_p4_2x2} & Fig.~\ref{fig:mesh_p4_3x3} & Fig.~\ref{fig:mesh_p4_4x4} \\
\hline
Assumption~\ref{ass:support} & No & No & Yes & Yes & Yes & Yes\\
Assumption~\ref{ass:overlap} & No & No & No & No & No & Yes \\
Exact sequence & No & Yes & Yes & No & No & Yes \\
\hline
No. of zeros in \eqref{eq:mf1} & 0 & 0 & 0 & 4 & 6 & 0 \\
No. of zeros in \eqref{eq:mf2} & 1 & 0 & 0 & 0 & 0 & 0 \\
Spurious free & No & No & Yes & No & Yes & Yes \\
\hline
\end{tabular}
\end{table}

In Figure~\ref{fig:spectrum}, we plot the first 50 eigenvalues computed using each of the six mesh configurations and compare them with the exact ones, including the multiplicity of each eigenvalue. In the results corresponding to the first mesh, which does not satisfy Assumption~\ref{ass:support}, spurious eigenvalues appear all along the spectrum in Figure~\ref{fig:spectrum_3lines}, and from the number of zeros in the solution of the mixed formulations~\eqref{eq:mf1} and~\eqref{eq:mf2}, reported in Table~\ref{tab:maxwell-prop1}, we see that exactness is lost in the second part of the sequence (that is, $\curls W^1_N \neq W^2_N / \mathbb{R}$). The exactness issue is fixed in the second mesh, but it also presents spurious eigenvalues in Figure~\ref{fig:spectrum_3lines_bulge}. This suggests that {\bf \emph{Assumption~\ref{ass:support} is a necessary condition}} to obtain a spurious-free discrete scheme.

For last four meshes, the lack of exactness in the meshes of Figures~\ref{fig:mesh_p4_2x2} and~\ref{fig:mesh_p4_3x3} only affects the first part of the sequence, since the zero eigenvalue appears in the solution of the mixed variational problem \eqref{eq:mf1}. The results of Figure~\ref{fig:spectrum} show that the mesh in Figure~\ref{fig:mesh_p4_2x2} presents spurious eigenvalues at positions $28^{\rm th}$, $31^{\rm st}$, $35^{\rm th}$ and $40^{\rm th}$. An analysis of the associated eigenfunctions, as the one represented in Figure~\ref{fig:eigenfunction}, suggests that spurious results are related to regions where Assumption~\ref{ass:overlap} fails. Surprisingly, the mesh in Figure~\ref{fig:mesh_p4_3x3} does not seem to present spurious eigenvalues, despite the lack of exactness. Finally, the meshes in Figures~~\ref{fig:mesh_p4_1x1} and~\ref{fig:mesh_p4_4x4} yield exact sequences and the corresponding results are spurious-free.

We have conducted a number of additional tests, and in each of these, we found that hierarchical B\'{e}zier meshes satisfying both Assumptions~\ref{ass:support} and~\ref{ass:overlap} yield spurious-free approximations for the Maxwell eigenproblem.  This suggests that {\bf \emph{Assumptions~\ref{ass:support} and~\ref{ass:overlap} are sufficient conditions}} to obtain a spurious-free discrete scheme.

\begin{figure}[htp]
\begin{subfigure}[]
{\includegraphics[width = 0.49\textwidth]{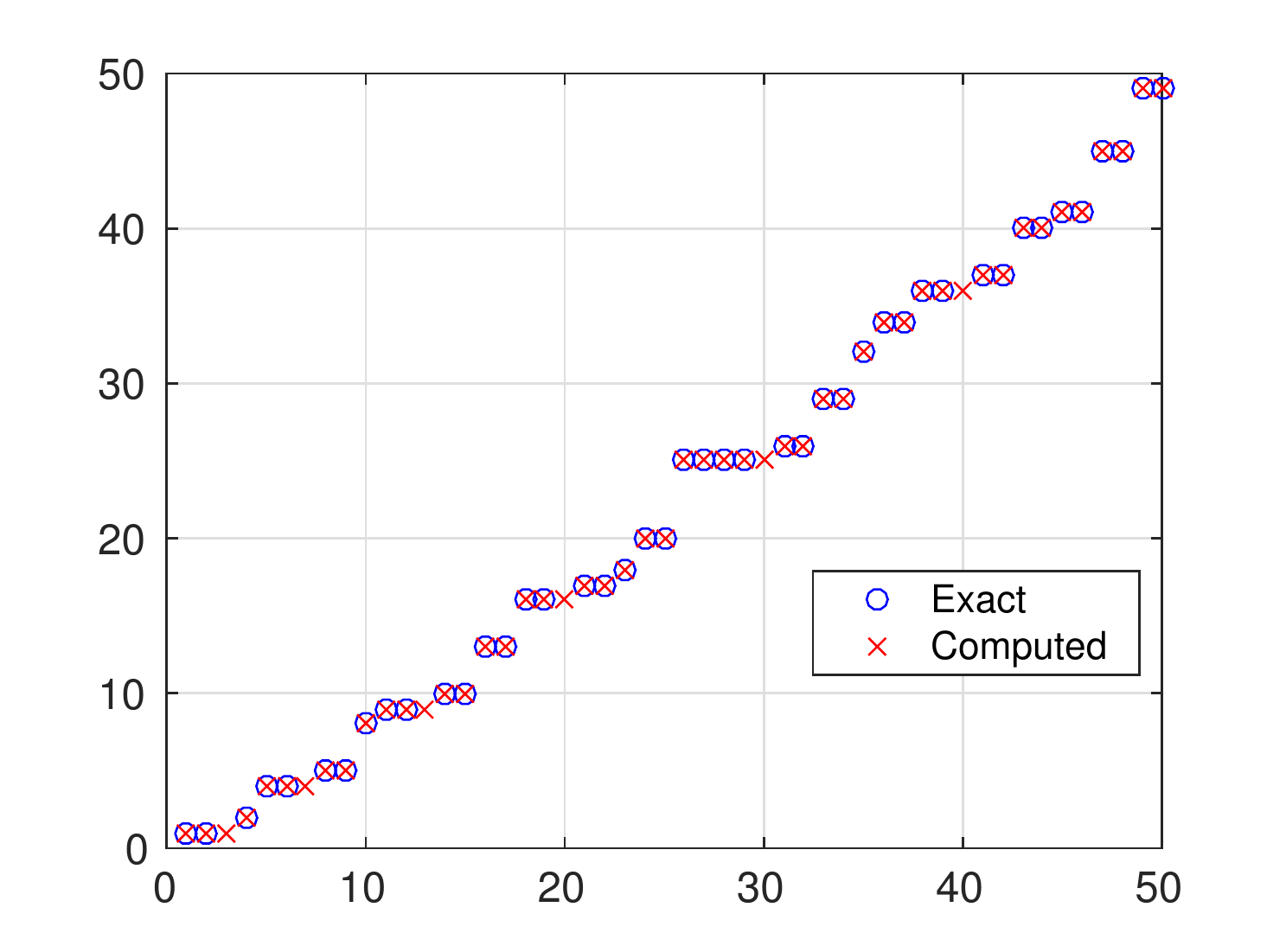}\label{fig:spectrum_3lines}}
\end{subfigure}
\begin{subfigure}[]
{\includegraphics[width = 0.49\textwidth]{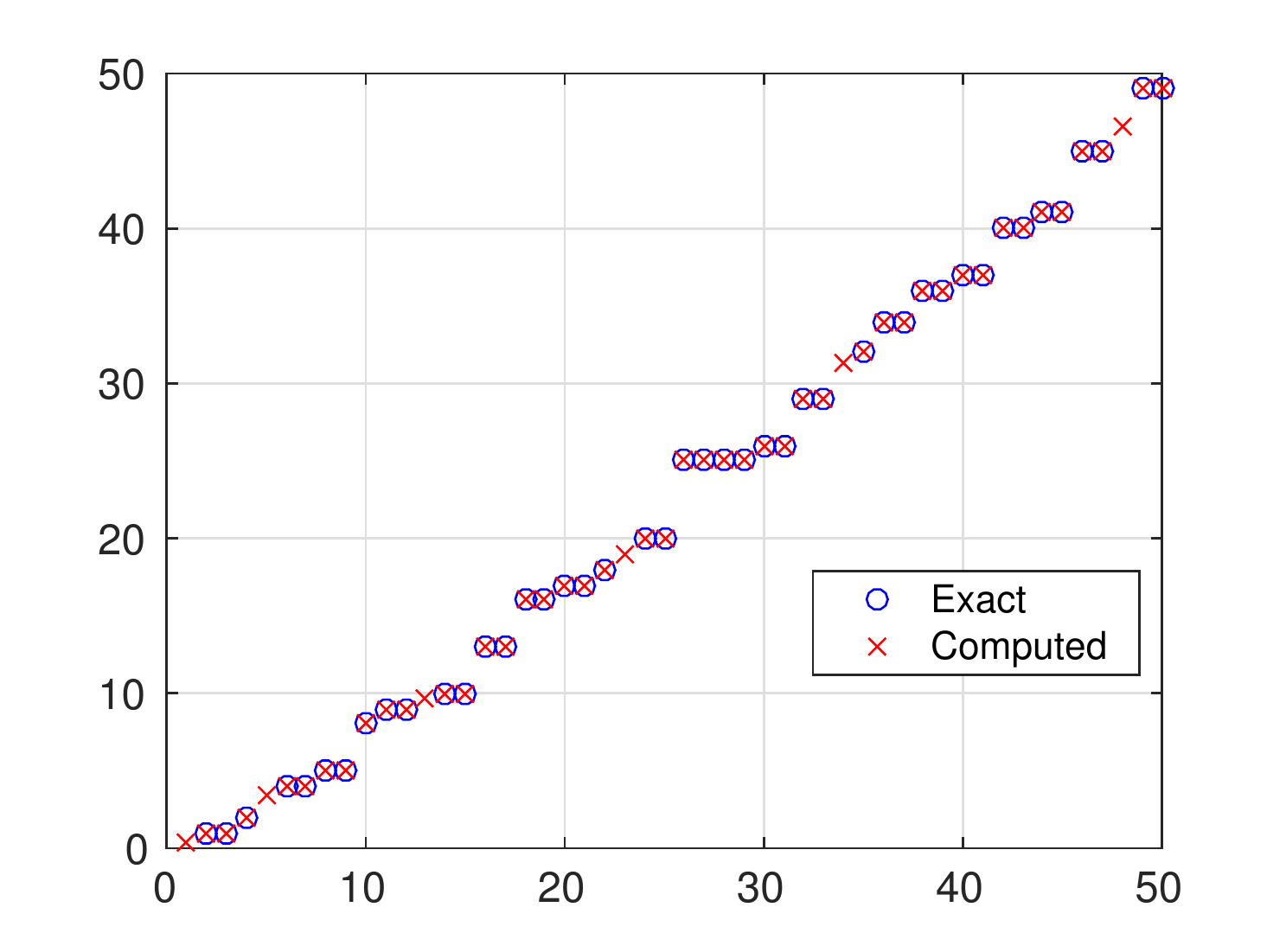}\label{fig:spectrum_3lines_bulge}}
\end{subfigure}
\begin{subfigure}[]
{\includegraphics[width = 0.49\textwidth]{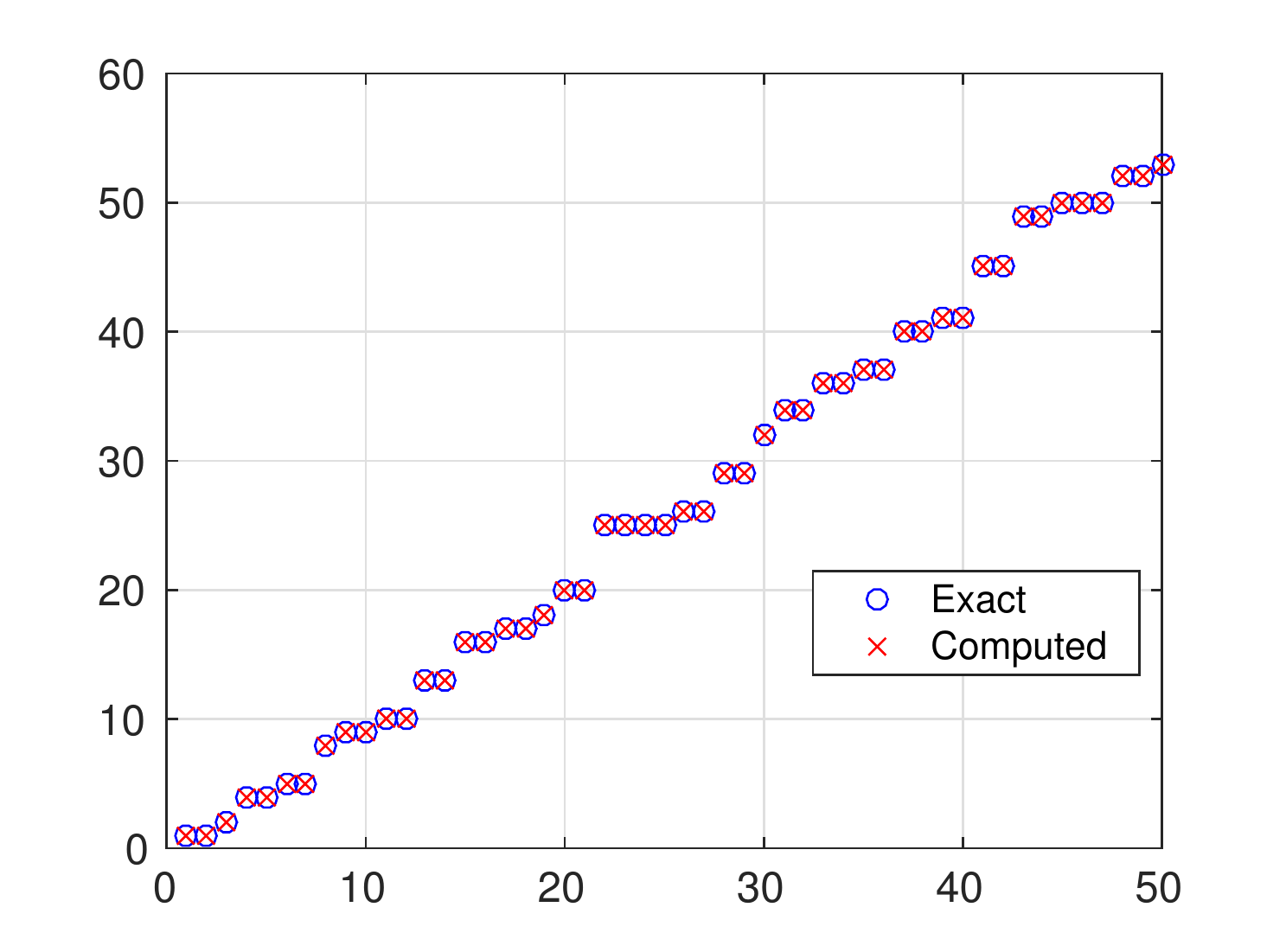}\label{fig:spectrum_p4_1x1}}
\end{subfigure}
\begin{subfigure}[]
{\includegraphics[width = 0.49\textwidth]{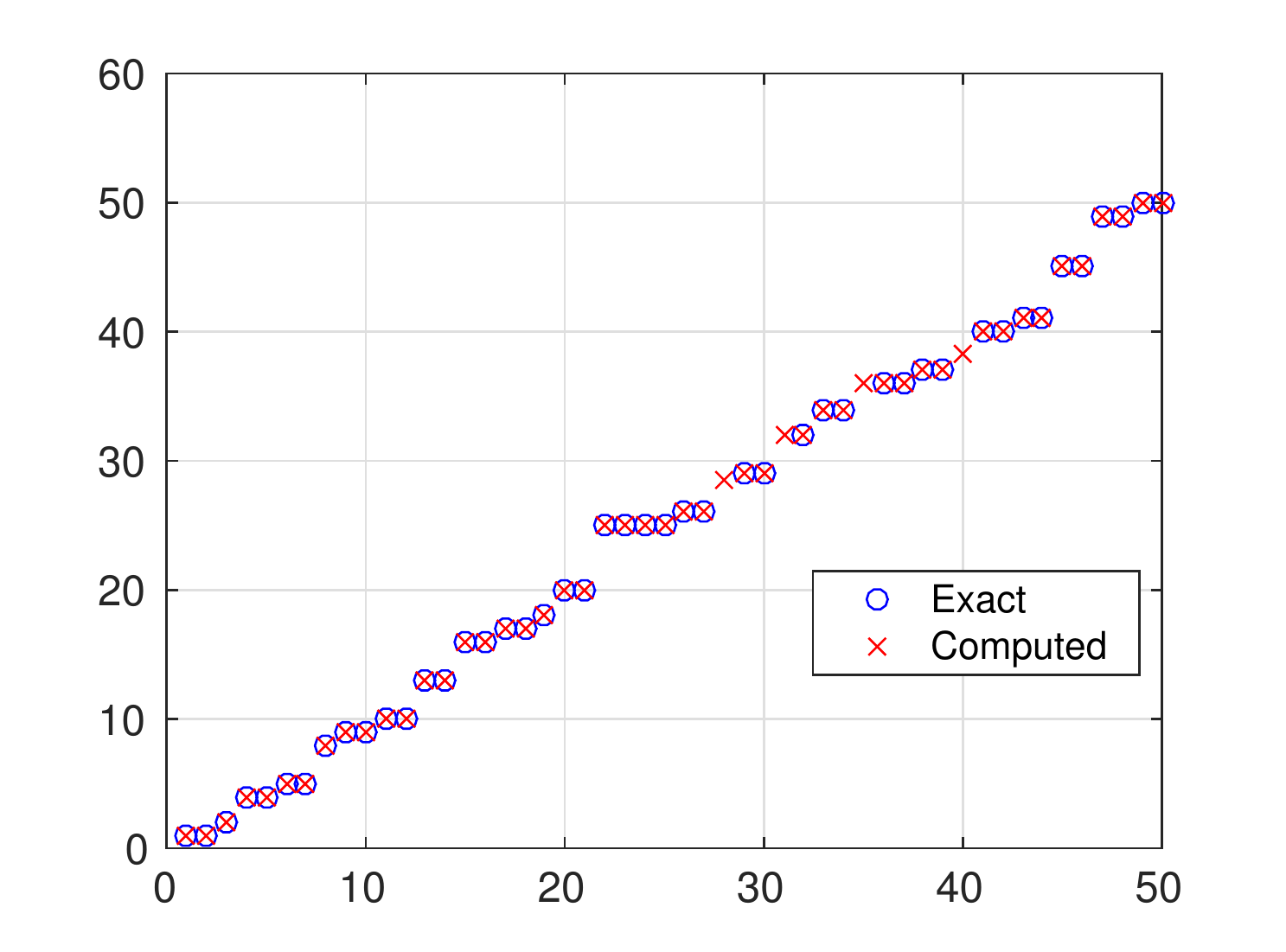}\label{fig:spectrum_p4_2x2}}
\end{subfigure}
\begin{subfigure}[]
{\includegraphics[width = 0.49\textwidth]{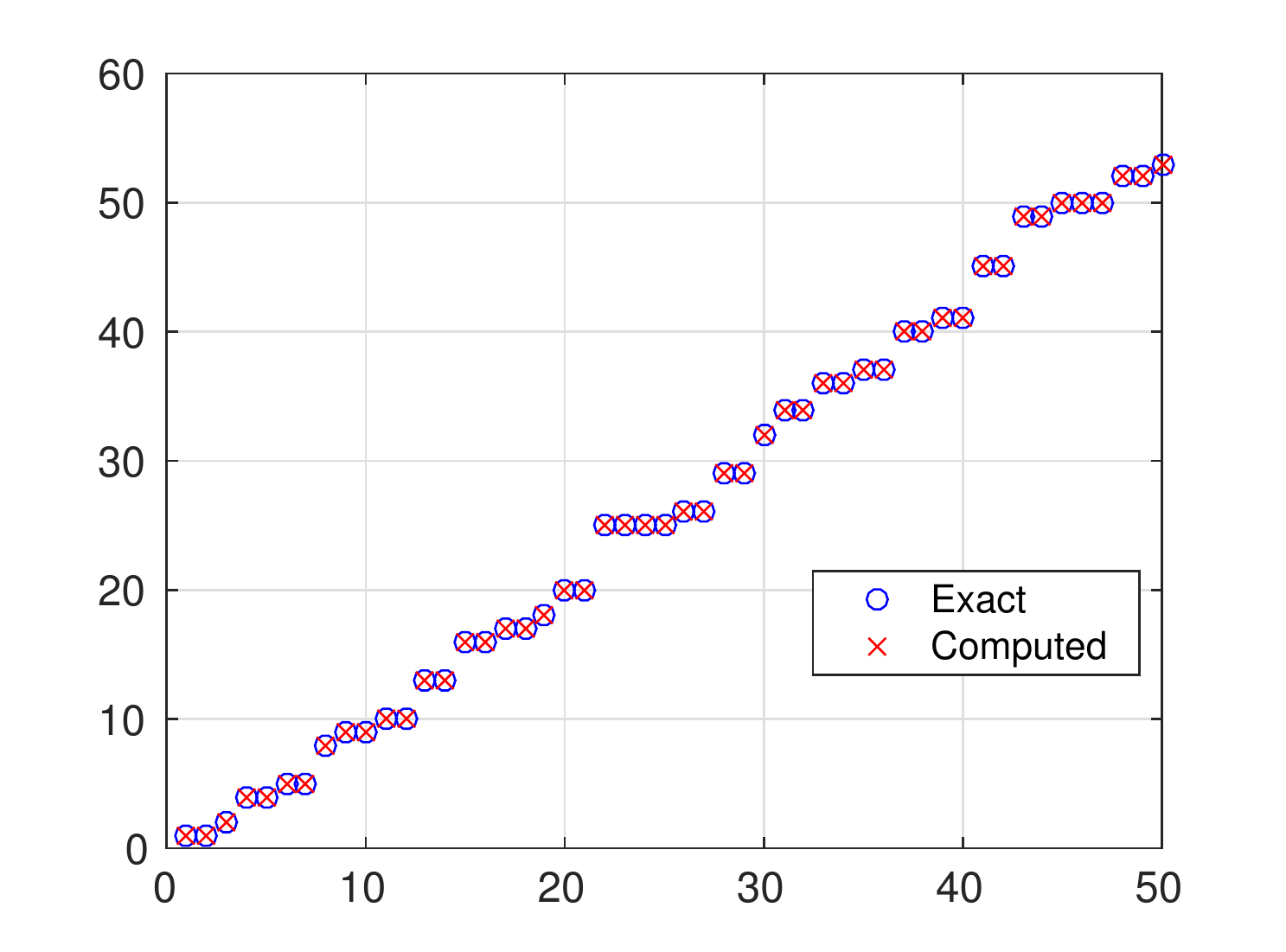}\label{fig:spectrum_p4_3x3}}
\end{subfigure}
\begin{subfigure}[]
{\includegraphics[width = 0.49\textwidth]{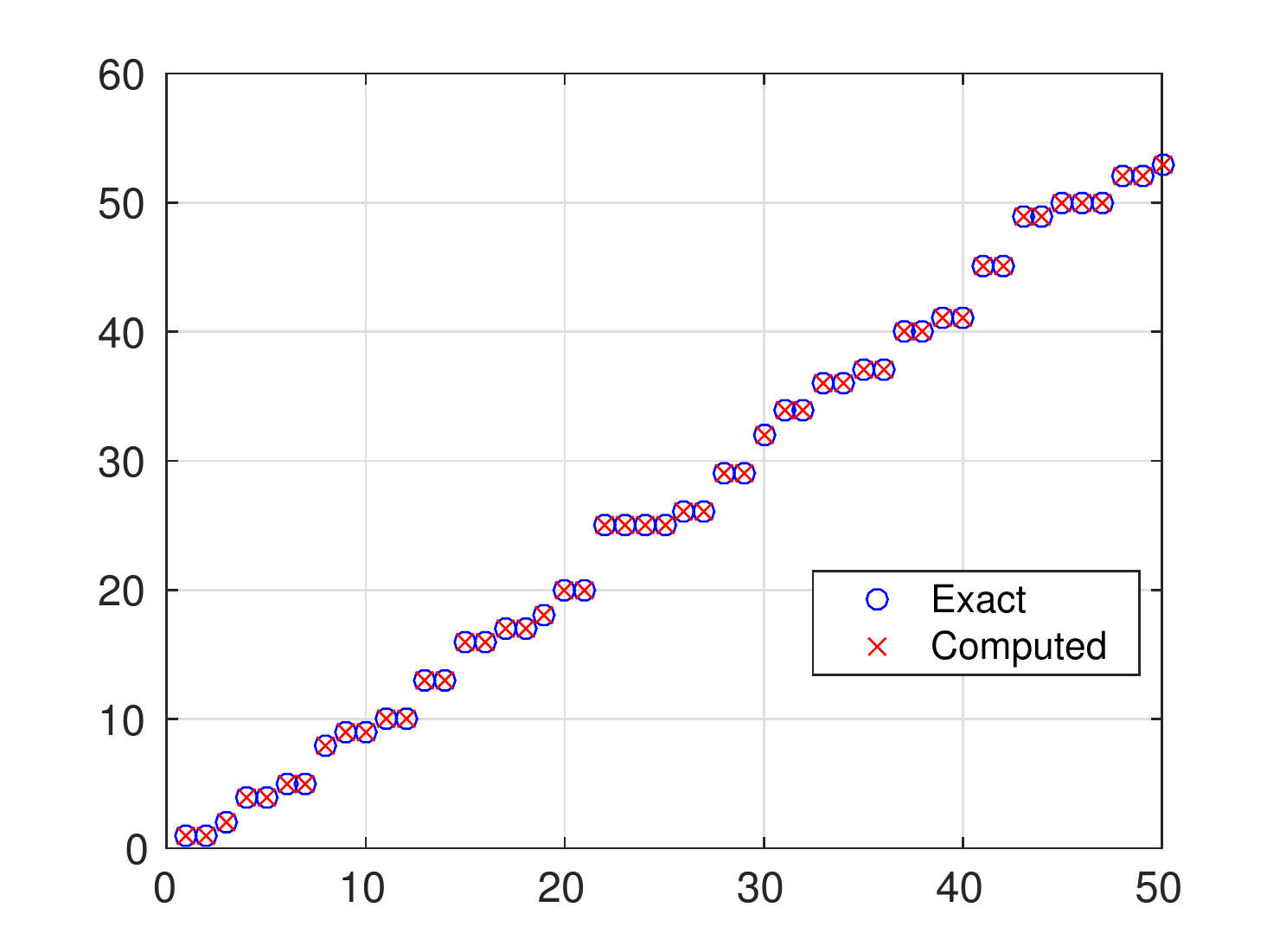}\label{fig:spectrum_p4_4x4}}
\end{subfigure}
\caption{Comparison of the exact and computed eigenvalues for the six meshes depicted in Figure~\ref{fig:meshes-maxwell1}.} \label{fig:spectrum}
\end{figure}

\begin{figure}[ht]
\centerline{\includegraphics[width=0.5\textwidth]{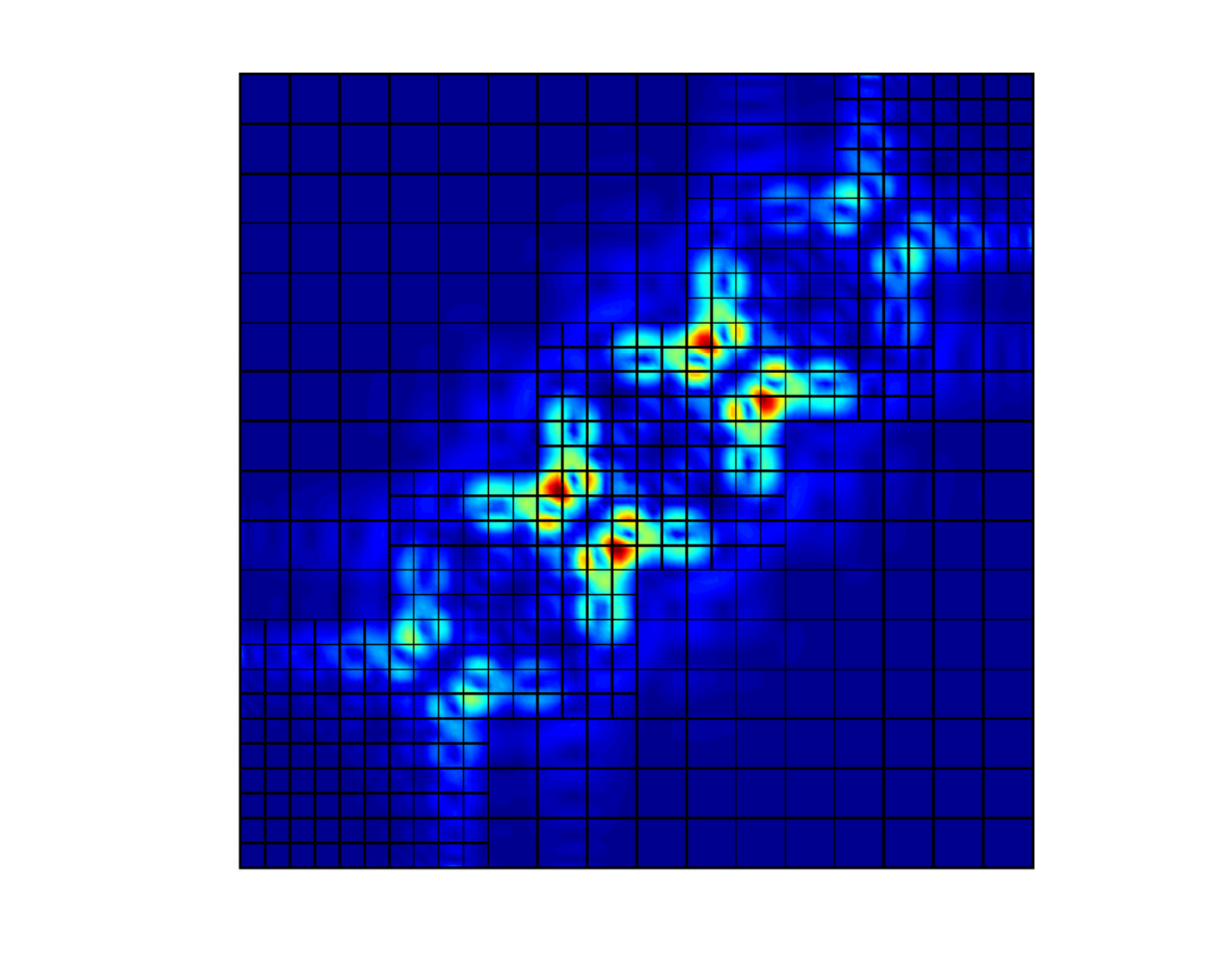}}
\caption{Magnitude of the first spurious eigenfunction for the mesh depicted in Figure~\ref{fig:mesh_p4_2x2}.}\label{fig:eigenfunction}
\end{figure}

\subsection{Numerical Assessment of Inf-Sup Stability}
Our second group of tests regards the inf-sup stability of a Stokes flow discretization based on the hierarchical B-spline complex. The variational formulation of the problem is to find a velocity ${\bf u}_h \in \textbf{V}_h$ and a pressure $p_h \in Q_h$ such that

\begin{equation*}
\left \{
\begin{array}{ll}
2 \nu ({\bf grad}^s\, {\bf u}_h, {\bf grad}^s {\bf v}_h) - c({\bf u}_h, {\bf v}_h) - (p_h, \div \, {\bf v}_h) = ({\bf f}, {\bf v}_h) - L({\bf v}_h) & \forall {\bf v}_h \in \textbf{V}_h, \\
(q_h, \div \, {\bf u}_h) = 0 & \forall q_h \in Q_h, \label{eq:stokes}
\end{array}
\right.
\end{equation*}
where ${\bf grad}^s$ is the symmetric part of the gradient, $\nu$ is a constant viscosity, ${\bf f}$ is a given source term, and the terms $c({\bf u}_h, {\bf v}_h)$ and $L({\bf v}_h)$ come from the weak imposition of tangential Dirichlet boundary conditions using Nitsche's method.  In particular:
\begin{equation*}
\begin{array}{l}
\displaystyle c({\bf u}_h, {\bf v}_h) = \int_{\Gamma_D} 2 \nu \left ((({\bf grad}^s {\bf u}_h) {\bf n}) \cdot {\bf v}_h + (({\bf grad}^s {\bf v}_h) {\bf n}) \cdot {\bf u}_h - \frac{C_{pen}}{h_F} {\bf u}_h \cdot {\bf v}_h \right), \\
\displaystyle L({\bf v}_h) = \int_{\Gamma_D} 2 \nu \left( (({\bf grad}^s {\bf v}_h){\bf n}) \cdot {\bf u}_{B} - \frac{C_{pen}}{h_F} {\bf v}_h \cdot {\bf u}_{B} \right),
\end{array}
\end{equation*}
where ${\bf u}_B$ is the imposed velocity along the Dirichlet boundary $\Gamma_D$, $h_F$ is the element size in the normal direction, and $C_{pen}$ is a sufficiently large penalization constant, see \cite{EvHu12} for details.  For the calculations shown here, $C_{pen} = 5 \max_{1 \leq i \leq n} p_i$.

In order to guarantee stability of the discretization scheme, the discrete spaces $V_h$ and $Q_h$ must satisfy the {\bf \emph{inf-sup}} stability condition
%\begin{equation*}
%\inf_{\substack{q_h \in Q_h \\ q_h \ne 0}} \sup_{\substack{ {\bf v}_h \in V_h \\ {\bf v}_h \ne 0}} \frac{(q_h, \div \, {\bf v}_h)}{\|{\bf v}_h\|_{H^1} \|q_h\|_{L^2} } \ge \beta,
%\end{equation*}
\begin{equation} \label{eq:inf-sup}
\inf_{q_h \in Q_h \setminus\{0\}} \sup_{{\bf v}_h \in \textbf{V}_h\setminus\{0\}} \frac{(q_h, \div \, {\bf v}_h)}{\|{\bf v}_h\|_{\textbf{V}_h} \|q_h\|_{L^2(\Omega)} } \ge \beta > 0,
\end{equation}
where $\beta$ is a positive constant independent of the mesh size and the number of levels and $\|\cdot \|_{\textbf{V}_h}$ is a suitable discrete norm.  Following \cite{EvHu12}, we choose:
\begin{equation}
\| \textbf{v}_h \|_{\textbf{V}_h} := \left( \| {\bf grad}^s \textbf{v}_h \|_{L^2(\Omega)}^2 + \| h^{1/2}_F \left( {\bf grad}^s \textbf{v}_h \right) {\bf n} \|^2_{L^2(\Gamma_D)} + \| \left(C_{pen}/h_F\right)^{1/2} \textbf{v}_h \|^2_{L^2(\Gamma_D)} \right)^{1/2}.
\end{equation}
We are also interested in discretization spaces that provide a pointwise divergence-free solution. A sufficient condition to satisfy the incompressibility constraint is that the divergence of $\textbf{V}_h$ is contained in $Q_h$, that is,
\begin{equation} \label{eq:equality}
\{ \div \, {\bf v}_h, {\bf v}_h \in \textbf{V}_h \} \subseteq Q_h,
\end{equation}
but this condition is in conflict with \eqref{eq:inf-sup} unless the two spaces in \eqref{eq:equality} are equal.

\begin{figure}[t]
\centering
\begin{subfigure}[]
{\includegraphics[width = 0.4\textwidth, trim=1cm 1cm 1cm 0cm, clip]{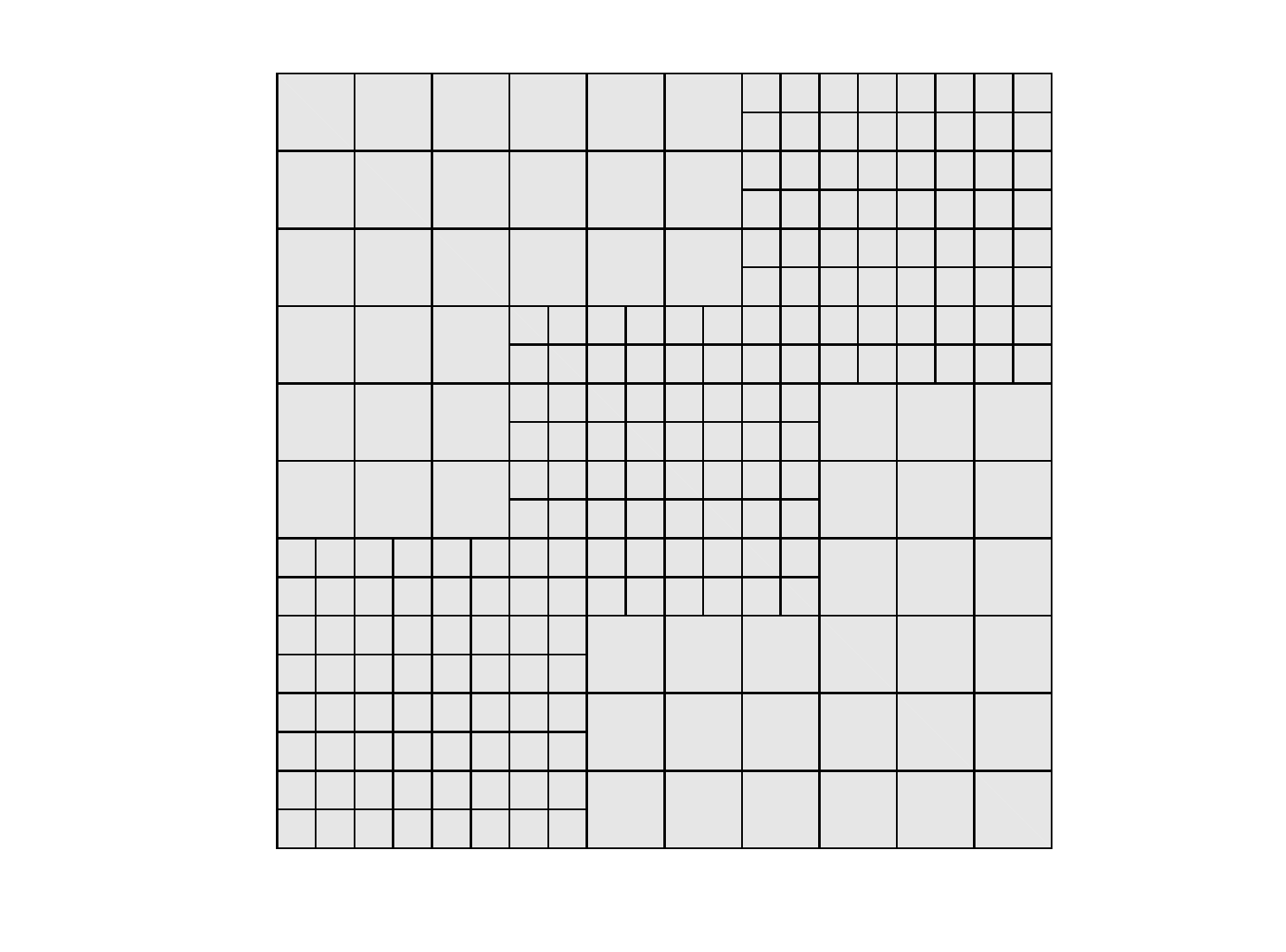}\label{fig:mesh_min_stokes}}
\end{subfigure}
\begin{subfigure}[]
{\includegraphics[width = 0.4\textwidth, trim=1cm 1cm 1cm 0cm, clip]{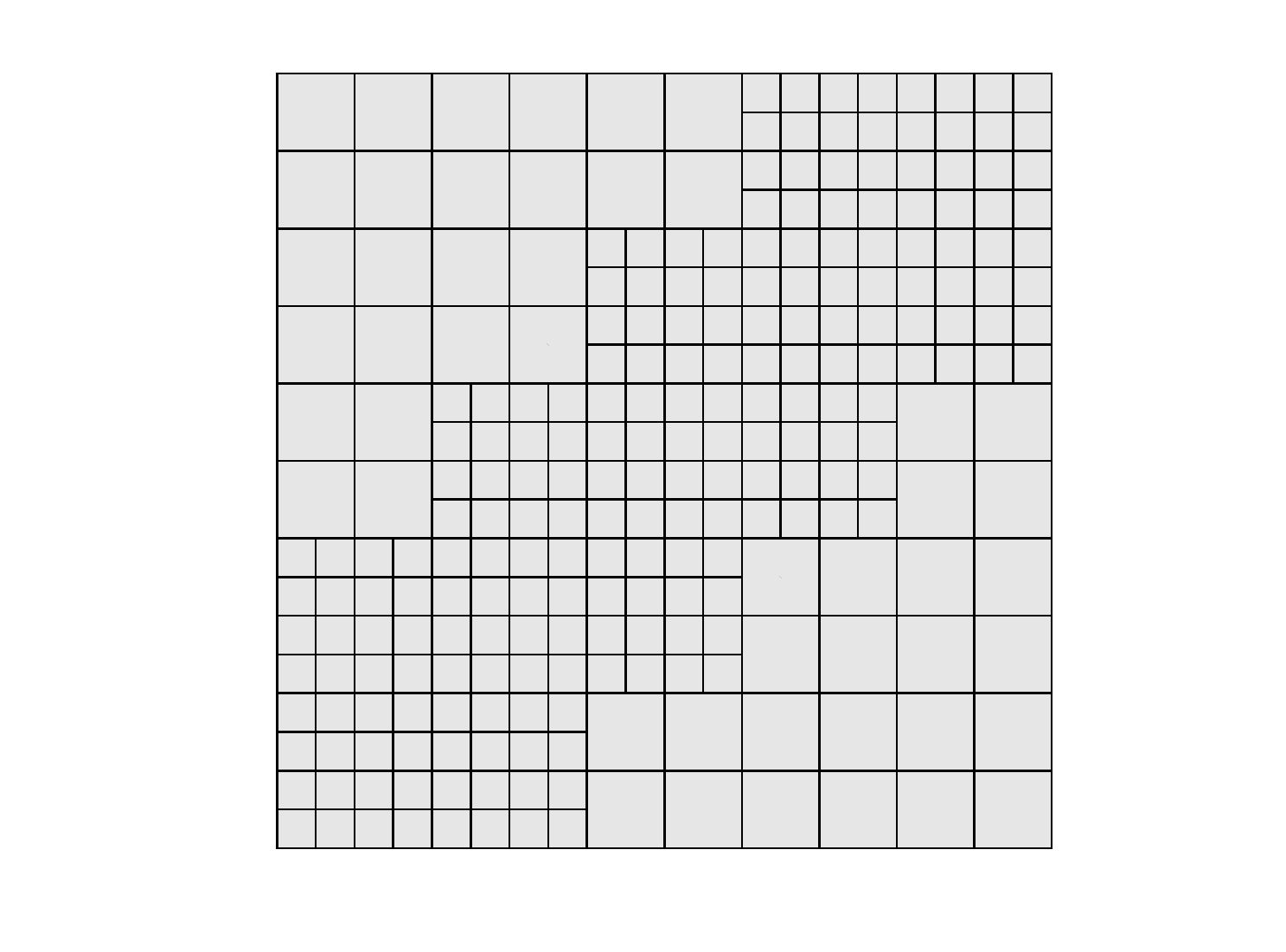}\label{fig:mesh_med_stokes}}
\end{subfigure}
\begin{subfigure}[]
{\includegraphics[width = 0.4\textwidth, trim=1cm 1cm 1cm 0cm, clip]{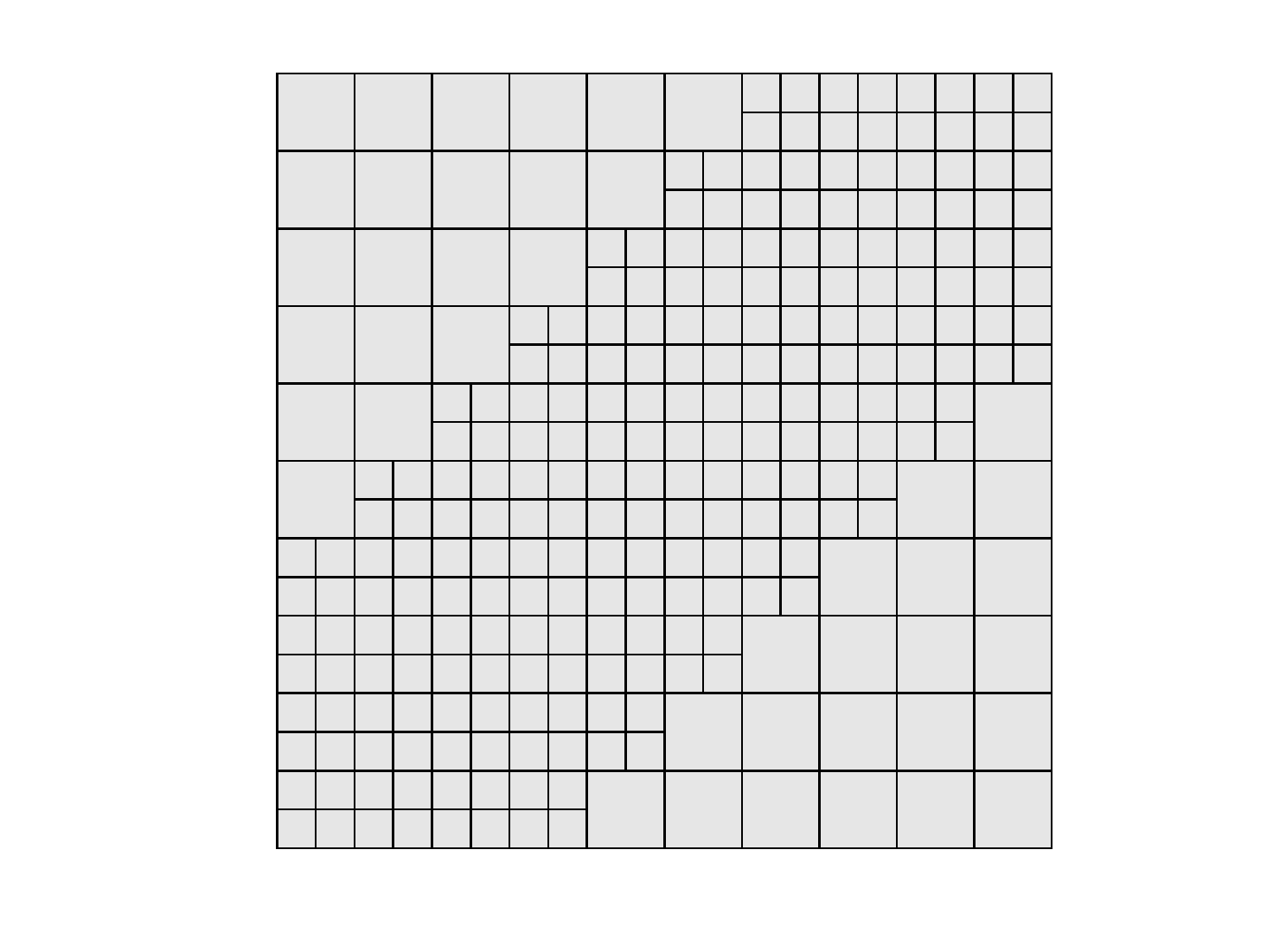}\label{fig:mesh_max_stokes} }
\end{subfigure}
\begin{subfigure}[]
{\includegraphics[width = 0.4\textwidth, trim=1cm 1cm 1cm 0cm, clip]{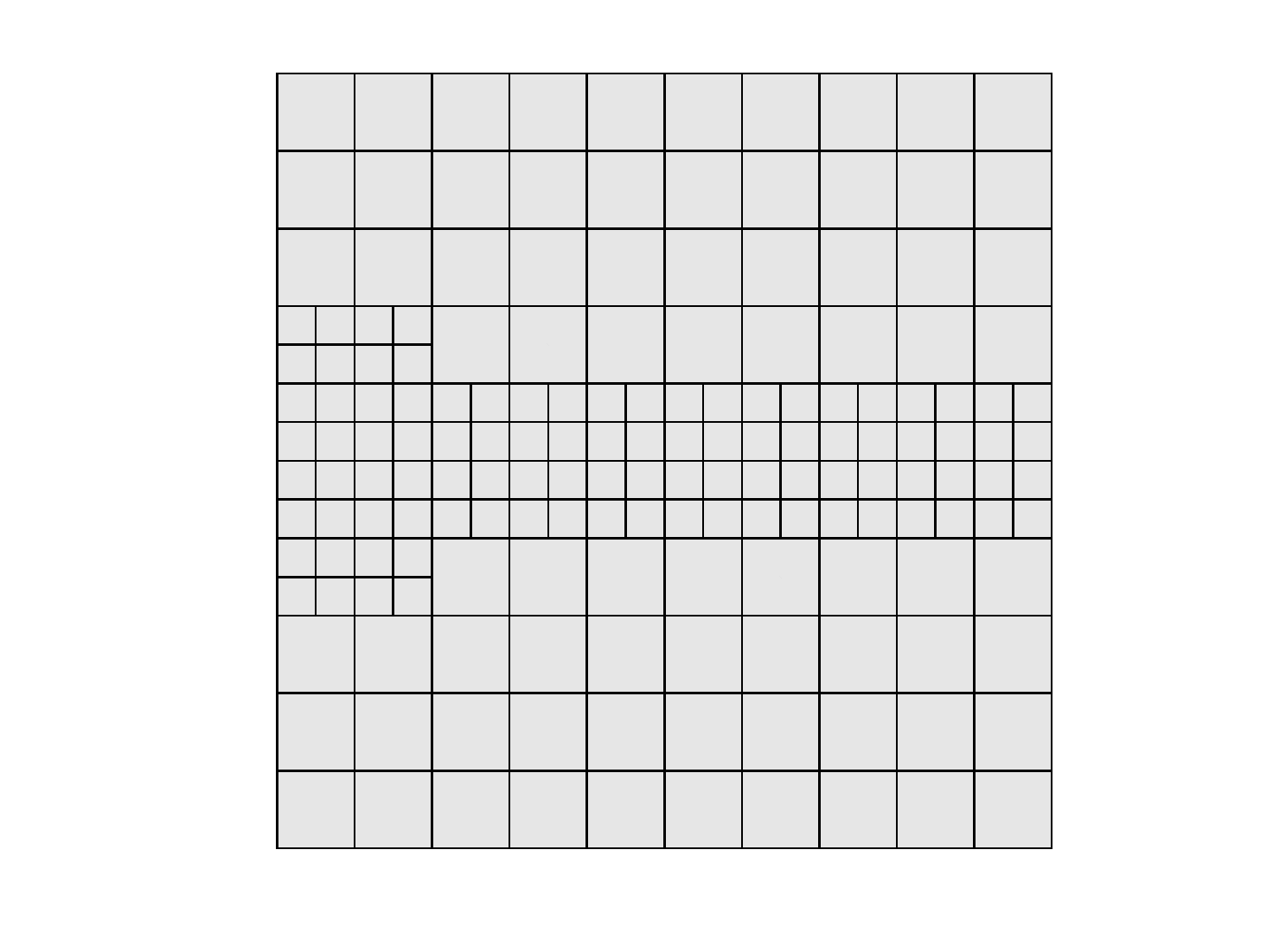}\label{fig:mesh_bulge_stokes}}
\end{subfigure}
\caption{The four different two-level hierarchical B\'{e}zier mesh configurations used in the Stokes inf-sup tests.  Maximally continuous B-splines with $p_1 = p_2 = 3$ were employed in the tests.} \label{fig:meshes-stokes}
\end{figure}

\begin{table}[t!]
\caption{Properties satisfied by the four two-level meshes considered for the Stokes inf-sup tests.} \label{tab:stokes_meshes}
\centering
\begin{tabular}{|c||c|c|c|c|}
\hline
& $1\times 1$ overlap & $2\times 2$ overlap & $3\times 3$ overlap & Bulge \\
Mesh & Fig.~\ref{fig:mesh_min_stokes} & Fig.~\ref{fig:mesh_med_stokes} & Fig.~\ref{fig:mesh_max_stokes} & Fig.~\ref{fig:mesh_bulge_stokes} \\
\hline
Assumption~\ref{ass:support} & Yes & Yes & Yes & No \\
Assumption~\ref{ass:overlap} & No & No & Yes & No \\
Exact sequence & Yes & No & Yes & Yes \\
\hline
Inf-sup stable (two levels) & Yes & Yes & Yes & No \\
Inf-sup stable (multilevel) & No & Yes & Yes & No \\
\hline
\end{tabular}
\end{table}

\begin{table}[t!]
\caption{Results of the Stokes inf-sup tests with the two-level meshes.} \label{tab:2levels}
\centering
\begin{tabular}{|c||c|c|c|c|c|}
\hline
Level 0 size & Uniform & $1\times 1$ overlap & $2\times 2$ overlap & $3\times 3$ overlap & Bulge \\
\hline
1/10 & 0.40996 & 0.40963 & 0.40963 & 0.40963 & 0.04818 \\
1/22 & 0.40957 & 0.40943 & 0.40943 & 0.40943 & 0.02046 \\
1/40 & 0.40943 & 0.40934 & 0.40934 & 0.40934 & 0.01098 \\
\hline
\end{tabular}
\end{table}

\begin{figure}[t!]
\begin{center}
\begin{subfigure}[]
{\includegraphics[width = 0.4\textwidth, trim=1cm 1cm 1cm 0cm, clip]{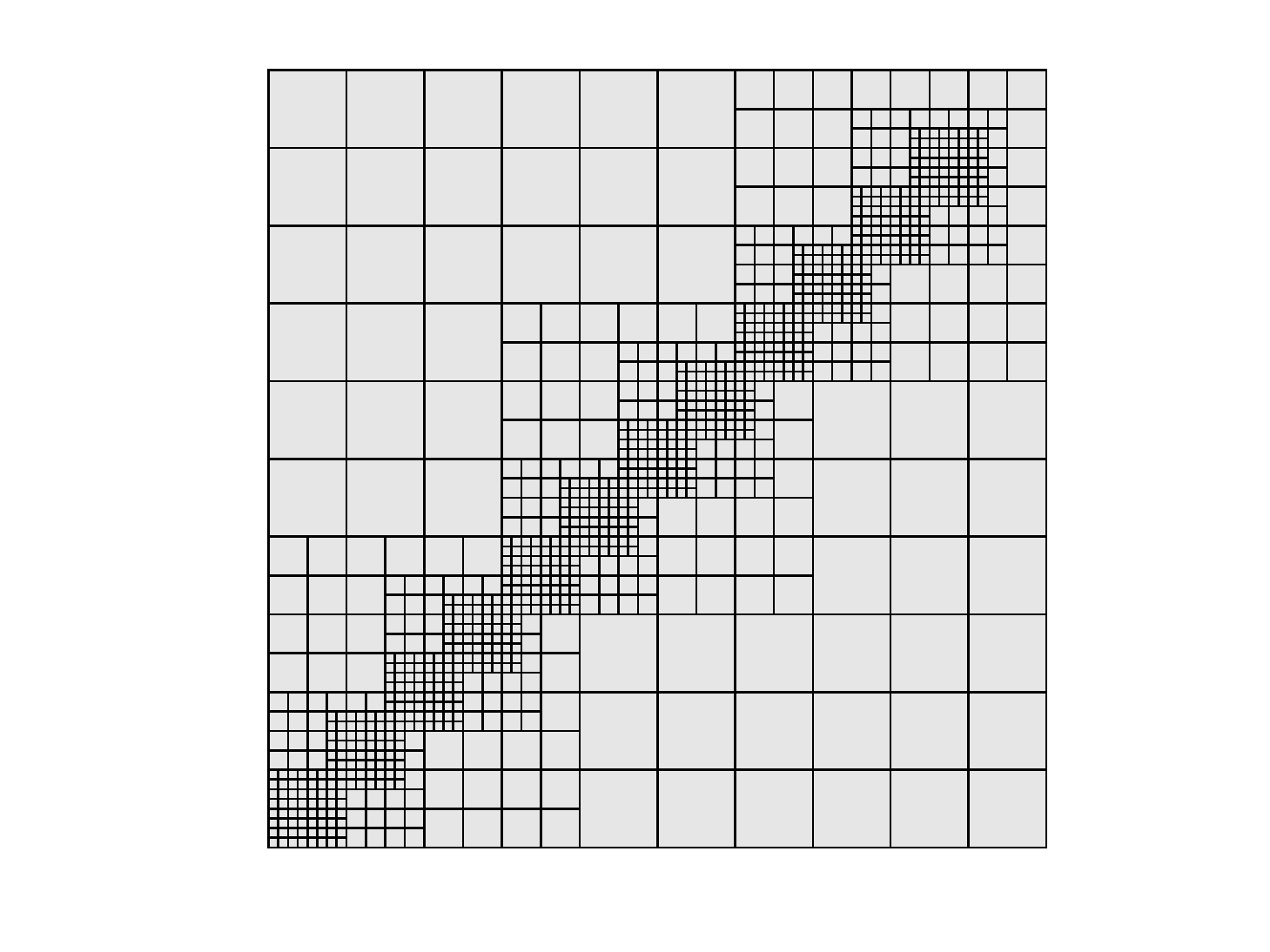}\label{fig:min_4levels}}
\end{subfigure}
\begin{subfigure}[]
{\includegraphics[width = 0.4\textwidth, trim=1cm 1cm 1cm 0cm, clip]{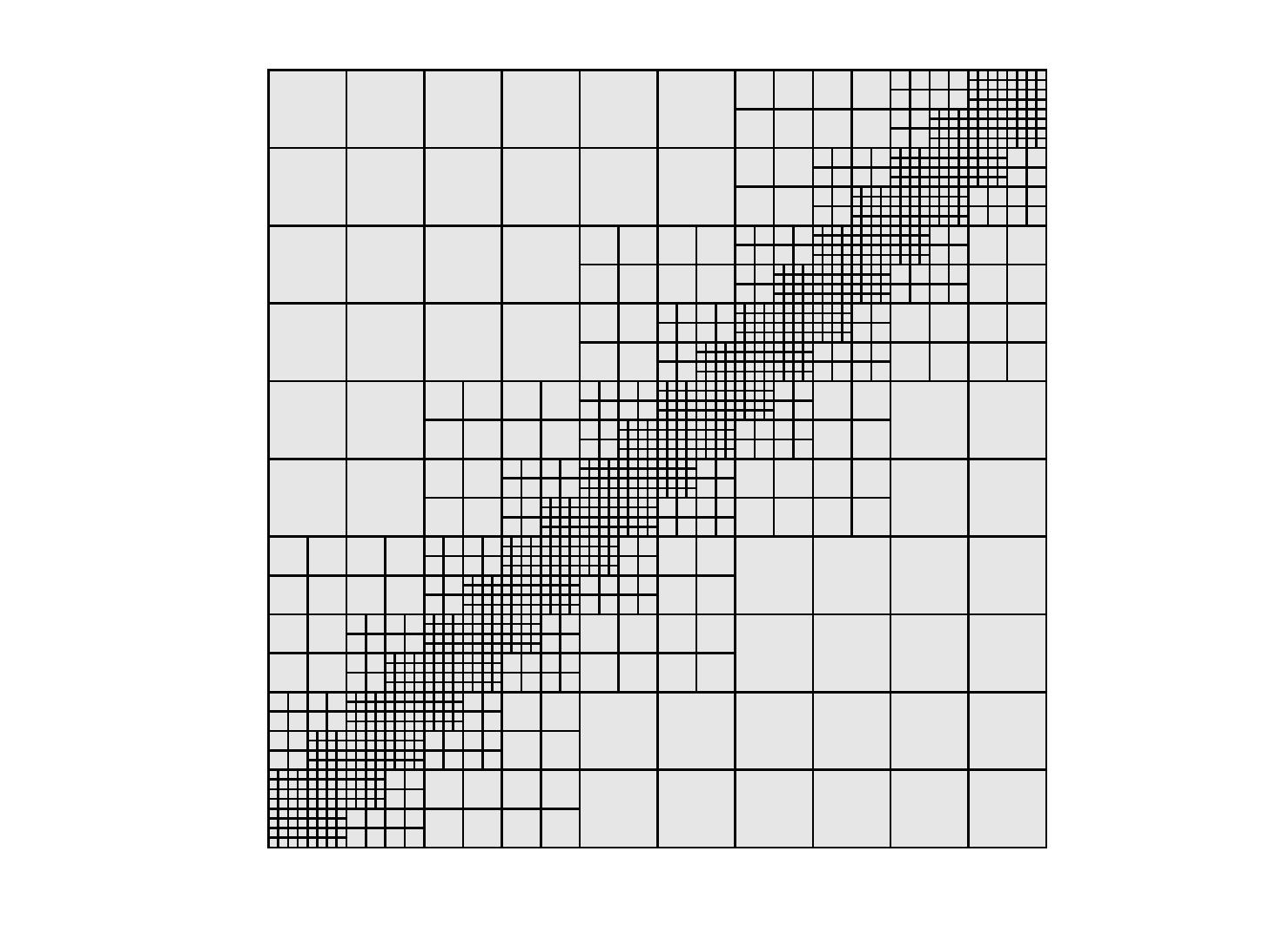}\label{fig:med_4levels}}
\end{subfigure}
\begin{subfigure}[]
{\includegraphics[width = 0.4\textwidth, trim=1cm 1cm 1cm 0cm, clip]{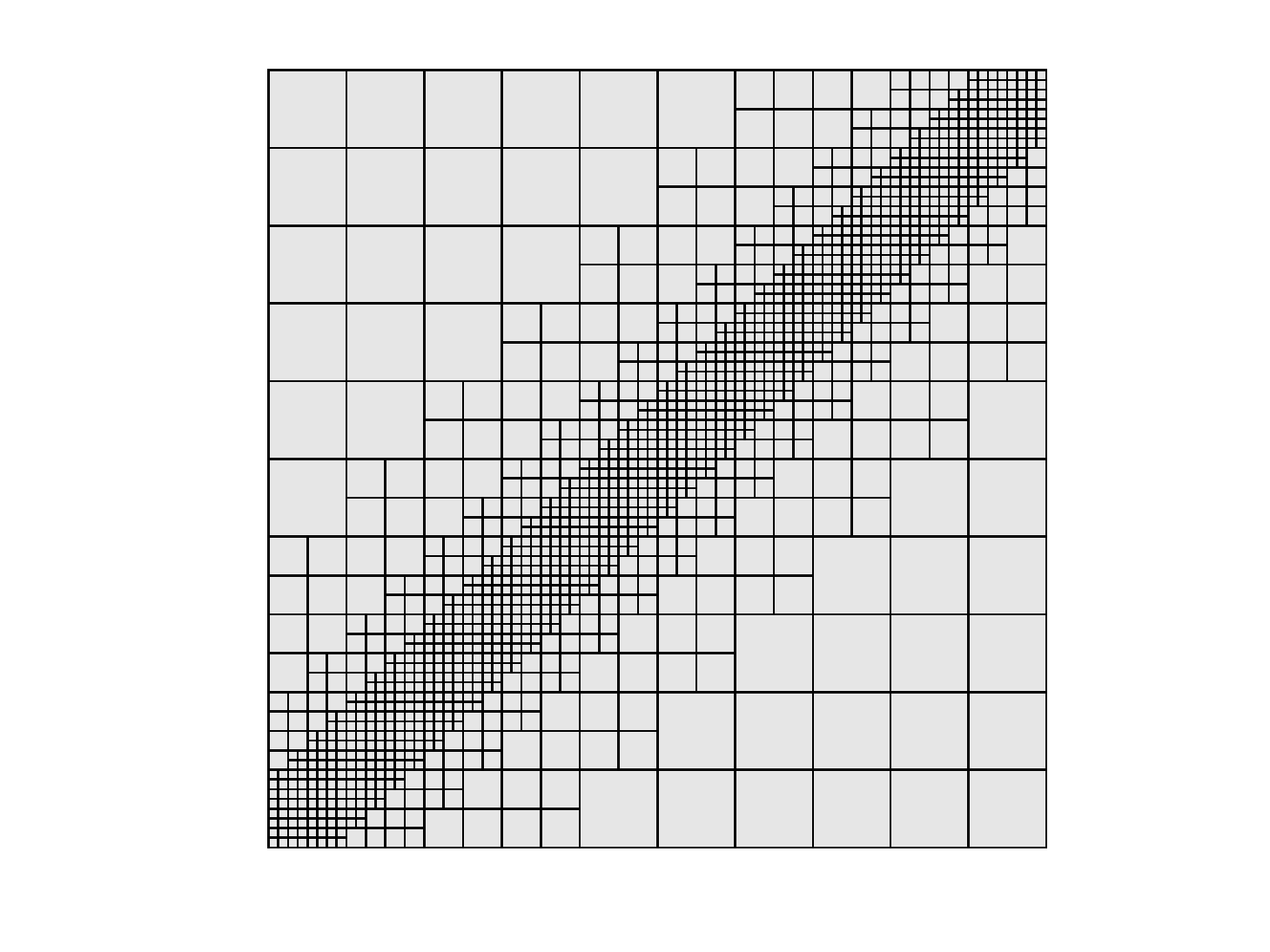}\label{fig:max_4levels}}
\end{subfigure}
\begin{subfigure}[]
{\includegraphics[width = 0.4\textwidth, trim=1cm 1cm 1cm 0cm, clip]{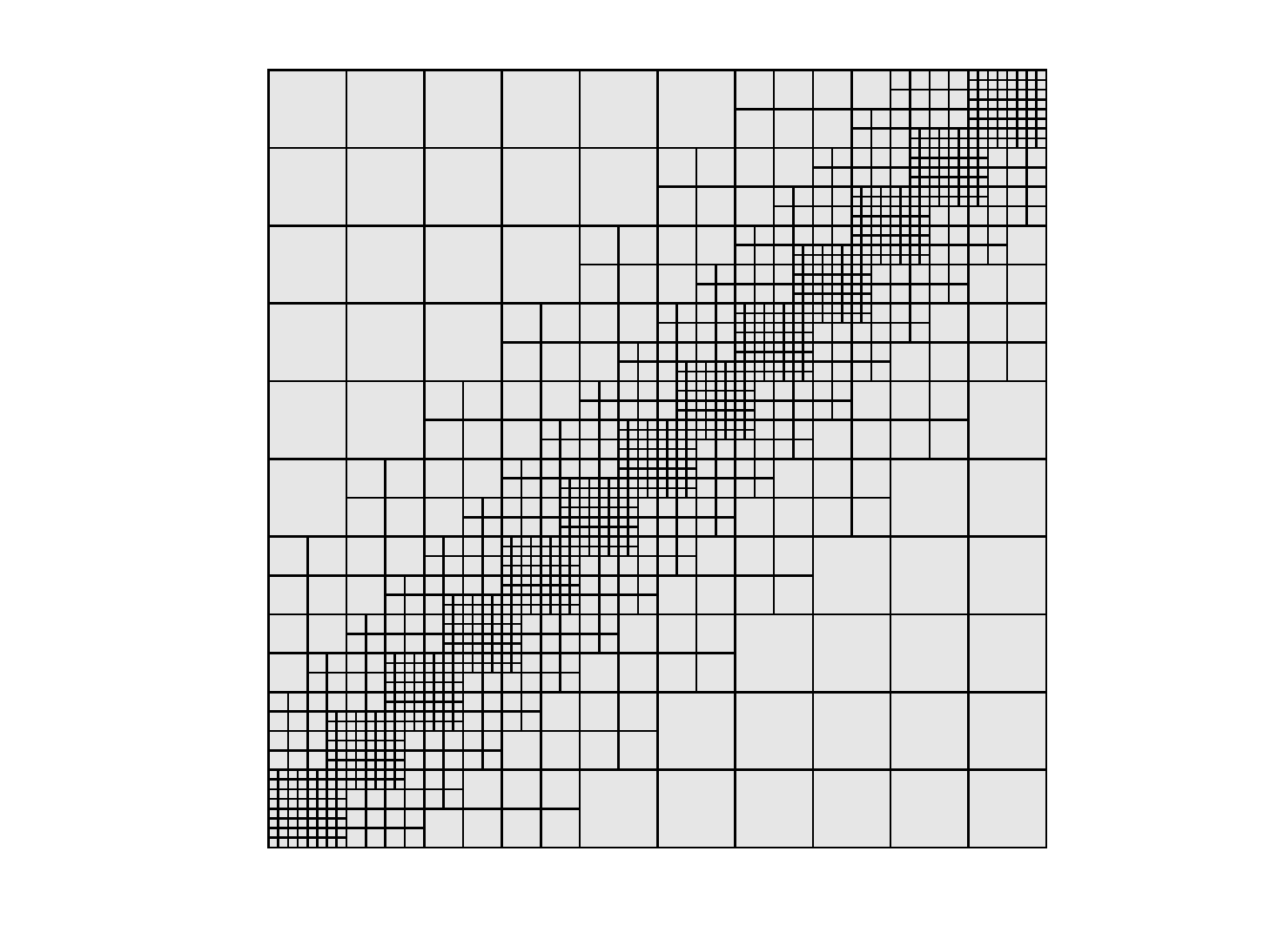}\label{fig:maxmin_4levels}}
\end{subfigure}
\end{center}
\caption{The four different multilevel hierarchical B\'{e}zier mesh configurations, with four levels, used in the Stokes inf-sup tests.  Maximally continuous B-splines with $p_1 = p_2 = 3$ were employed in the tests.} \label{fig:meshes-stokes2}
\end{figure}

\begin{table}[t!]
\caption{Results of the Stokes inf-sup tests with the multilevel meshes.} \label{tab:multilevel}
\centering
\begin{tabular}{|c||c|c|c|c|}
\hline
& $1\times 1$ overlap & $2\times 2$ overlap & $3\times 3$ overlap & Graded ($1\times 1$) overlap\\
Levels& Fig.~\ref{fig:min_4levels} & Fig.~\ref{fig:med_4levels} & Fig.~\ref{fig:max_4levels} & Fig.~\ref{fig:maxmin_4levels} \\
\hline
1 & 0.40996 & 0.40996 & 0.40996 & 0.40996 \\
2 & 0.40963 & 0.40963 & 0.40963 & 0.40963 \\
3 & 0.29883 & 0.40945 & 0.40945 & 0.40944 \\
4 & 0.22467 & 0.40935 & 0.40935 & 0.40935 \\
5 & 0.17209 & 0.40929 & 0.40929 & 0.40916 \\
6 & 0.14519 & 0.40926 & 0.40926 & 0.40926 \\
\hline
\end{tabular}
\end{table}

In \cite{Buffa_deFalco_Sangalli} and \cite{EvHu12} a discretization scheme based on spline differential forms was proposed that is both divergence-free and stable. The key point is to choose the space of divergence-conforming splines (i.e., 1-forms in the rotated hierarchical B-spline complex in the two-dimensional setting and 2-forms in the standard hierarchical B-spline complex in the three-dimensional setting) as the velocity space $\textbf{V}_h$ and the spline space of $n$-forms as the pressure space $Q_h$.  When Dirichlet boundary conditions are enforced along the entire boundary, we set $\textbf{V}_h = W^2_N$ and $Q_h = W^3_N/\mathbb{R}$ in the three-dimensional setting.  Provided the velocity space $\textbf{V}_h$ satisfies $\textbf{V}_h \subset \left(H^1(\Omega)\right)^n$, the exactness of the tensor-product B-spline complex as well as associated commutative projectors guarantee that both \eqref{eq:inf-sup} and \eqref{eq:equality} are satisfied in the case of tensor-product B-splines (see \cite{EvHu12} for details). For hierarchical B-splines, the condition \eqref{eq:equality} is satisfied by construction, but since a set of commutative projectors is not available to prove \eqref{eq:inf-sup}, we have instead numerically computed the inf-sup constant through the solution of a discrete eigenvalue problem (see \cite{inf-sup-test}) for several mesh configurations to understand whether the resulting discretization scheme is stable or not. All the examples consider quadratic splines for pressure approximation and mixed quadratic-cubic splines for the velocity approximation.

The first set of tests regards hierarchical B\'{e}zier meshes with only two levels, and the value of the constant is compared also with the one obtained for tensor-product spaces with one single level. We have considered four different kind of refinements, similar to those already tested in Maxwell eigenproblem. For the first three, displayed in Figures~\ref{fig:mesh_min_stokes}-\ref{fig:mesh_max_stokes}, we refine several regions of $4\times4$ elements along the diagonal with different levels of overlaps between them. The fourth mesh, displayed in Figure~\ref{fig:mesh_bulge_stokes}, gives an exact sequence, but it does not satisfy Assumption~\ref{ass:support}. The properties satisfied by each of the four meshes are summarized in Table~\ref{tab:stokes_meshes}.

The results of the inf-sup test are reported in Table~\ref{tab:2levels}. All the diagonal refinements show a good behavior, even if the $2\times2$ overlap does not satisfy the exactness condition (see Figure~\ref{fig:remove-add-2x2}). In fact, we have seen in the previous tests for the Maxwell eigenproblem that the lack of exactness in this latter case only affects the ``left part'' of the diagram. On the contrary, the refinement in Figure~\ref{fig:mesh_bulge_stokes}, which does not satisfy Assumption~\ref{ass:support}, gives a discrete scheme that becomes unstable with increasing levels of mesh resolution. This suggests that {\bf \emph{Assumption~\ref{ass:support} is a necessary condition}} for inf-sup stability.

To understand whether the inf-sup constant depends on the number of levels present in the hierarchical B-spline complex, we have considered spline spaces with multiple levels in a second set of tests. We consider meshes with refinement along the diagonal as before but with an increasing number of levels.  See Figure~\ref{fig:min_4levels}-\ref{fig:max_4levels} for the resulting hierarchical B\'{e}zier meshes with four levels of refinement. The results obtained for the inf-sup constant are summarized in Table~\ref{tab:multilevel}. In this case, the discrete scheme remains stable for the $2\times 2$ and the $3 \times 3$ overlap, while the stability deteriorates for the $1\times 1$ overlap. This loss of stability is caused by the presence of adjacent elements corresponding to non-consecutive levels, and the issue can be avoided with a better grading of the mesh (see, e.g., \cite{Scott2014222}) which can be obtained using the $m$-admissible meshes defined in \cite{Buffa16-1}. To show this, we have repeated the inf-sup test for hierarchical B\'{e}zier meshes similar in appearance to Figure~\ref{fig:maxmin_4levels} where only the finest level is like in the $1\times 1$ overlap, thus avoiding non-graded meshes. The results in Table~\ref{tab:multilevel} show that in this case the scheme is stable with increasing levels of mesh resolution. It is worth to note that the loss of stability on non-graded meshes does not appear when using the generalization of Taylor-Hood elements with hierarchical splines \cite{Bressan-Juttler}, but in that case the incompressibility condition is not fulfilled exactly.  We have conducted a number of additional tests, and all of these suggest that {\bf \emph{Assumptions~\ref{ass:support} and~\ref{ass:overlap}, together with a suitable grading of the mesh, are sufficient conditions}} for inf-sup stability.  However, as demonstrated above for diagonal refinements with $2\times 2$ overlap, Assumption~\ref{ass:overlap} is not necessary for inf-sup stability.  Further study is required to develop necessary and sufficient conditions for inf-sup stability.

\section{Application of the Hierarchical B-spline Complex} \label{sec:tests}
In this final section, we apply the hierarchical B-spline complex to the numerical solution of two canonical problems of interest, one stemming from electromagnetics and the other from creeping flow.  For both problems, we examine the convergence properties of the hierarchical B-spline complex and its suitability for adaptive methods, and we further examine the accuracy gains hierarchical B-splines provide as compared with uniform B-splines.

\subsection{Application to Electromagnetics: The Curved L-shaped Domain}
As a first test problem, we solve the Maxwell eigenvalue problem, given in variational form by \eqref{eq:maxwell}, over the curved L-shaped domain.  This is a challenging problem as the first few eigenfunctions exhibit a strong singularity at the reentrant corner.  The problem details are adopted from Monique Dauge's webpage of benchmark computations for Maxwell equations \cite{BMAX}. The domain, shown in Figure~\ref{fig:curvedL}, is represented by three patches, and for the computations we start with a zero level mesh of $8\times 8$ element on each patch. Then, at each refinement step we refine on each patch one quarter of the (parametric) domain near the reentrant corner, to obtain a better approximation of the singularity. The meshes obtained with this refinement procedure, as the one represented in Figure~\ref{fig:curvedL} for six levels, satisfy both Assumptions~\ref{ass:support} and~\ref{ass:overlap}.

\begin{figure}[b!]
\begin{subfigure}[Mesh with six levels]
{\includegraphics[width=0.4\textwidth,trim=3cm 2.5cm 2cm 1.8cm, clip]{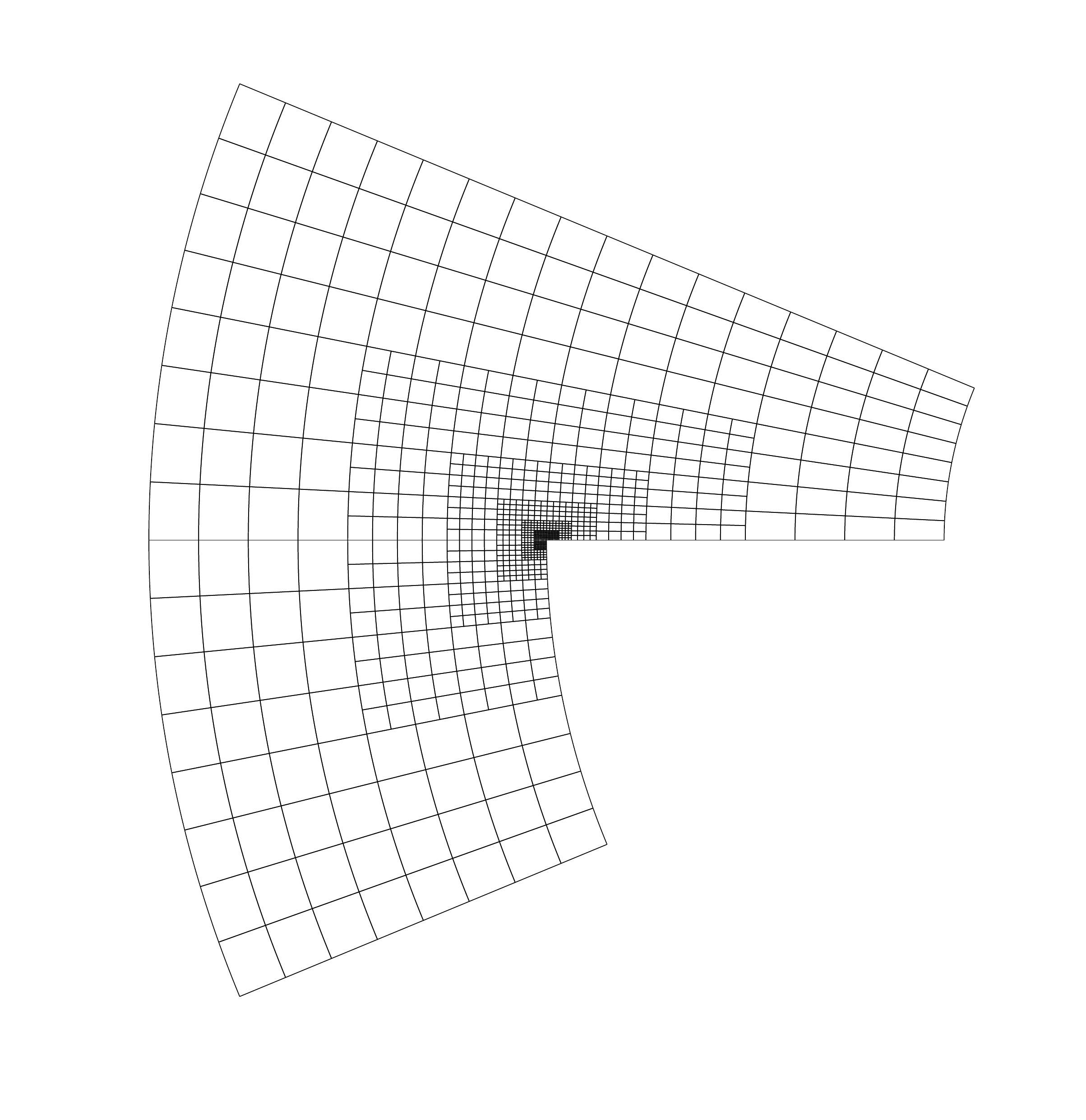} \label{fig:curvedL}}
\end{subfigure}
\begin{subfigure}[Convergence of the first eigenvalue]
{\includegraphics[width=0.58\textwidth,trim=4cm 1cm 4cm 1cm, clip]{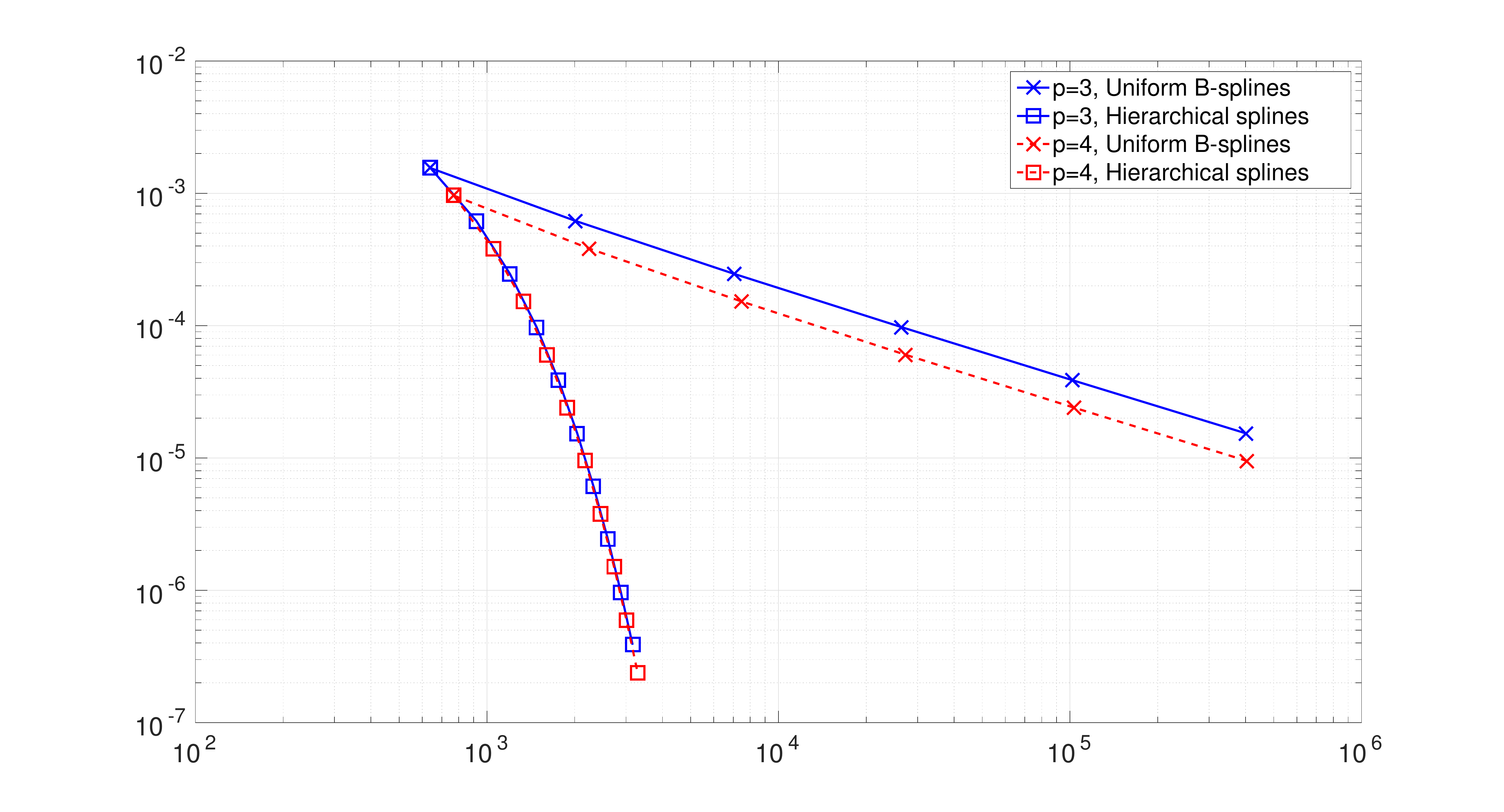} \label{fig:curvedL-results} }
\end{subfigure}
\caption{Curved L-shaped domain: Hierarchical B\'{e}zier mesh and convergence results.} 
\end{figure}

\begin{table}[b!]
\caption{Comparison of the first five computed eigenvalues of the curved L-shaped domain, for uniform tensor-product B-splines and hierarchical B-splines.} \label{tab:curvedL}
\centering
\begin{tabular}{|c||c|c|c|c|}
\hline
 & \multicolumn{2}{|c|}{$p_1 = p_2 = 3$} & \multicolumn{2}{|c|}{$p_1 = p_2 = 4$} \\
\cline{2-5}
Exact & Uniform & HB-splines & Uniform & HB-splines \\
\hline
1.818571152 & 1.818555761 & 1.818555754 & 1.818561623 & 1.818561620 \\
3.490576233 & 3.490576123 & 3.490576132 & 3.490576165 & 3.490576165 \\
10.06560150 & 10.06560150 & 10.06560272 & 10.06560150 & 10.06560150 \\
10.11188623 & 10.11188494 & 10.11188615 & 10.11188543 & 10.11188544 \\
12.43553725 & 12.43550823 & 12.43550992 & 12.43551928 & 12.43551928 \\
\hline
d.o.f. & 400416 & 2040 & 403522 & 2170 \\
\hline
\end{tabular}
\end{table}

We compare in Figure~\ref{fig:curvedL-results} the convergence results for the approximation of the first eigenvalue using hierarchical B-splines and tensor-product B-splines with uniform refinement, for degrees three and four. The associated eigenfunction only belongs to $H^{2/3-\epsilon}$ for any $\epsilon > 0$, which determines the convergence rate in the case of uniform B-splines. The results show much better approximation results for hierarchical B-splines, although the asymptotic regime is not reached after ten refinement steps. We also present, in Table~\ref{tab:curvedL}, the first five computed eigenvalues for the mesh of level six, and a comparison with the exact\footnote{The eigenvalues are taken from \cite{BMAX} and are correct up to 11 digits.} eigenvalues. The results are almost identical, but hierarchical B-splines require 200 times fewer degrees of freedom to obtain the same accuracy.

\begin{remark}
The results for tensor-product B-splines can be improved by generating a radical graded mesh, see \cite{Beirao_Cho_Sangalli} and \cite[Section~6]{BBSV-acta}. However, in this case the condition number strongly deteriorates due to the presence of anisotropic elements.
\end{remark}

\subsection{Application to Creeping Flow: Lid-Driven Cavity Flow}
As a second test problem, we solve the creeping lid-driven cavity flow problem.  Creeping flow is governed by the Stokes equations, given in variational form by \eqref{eq:stokes}, and the setup for the lid-driven cavity flow problem is elaborated in Figure \ref{f:lidcavitysetup}.  The left, right, and bottom sides of the cavity are fixed no-slip walls while the top side of the cavity is a wall which slides to the right with velocity magnitude $U$.  For the computations here, $H$ and $U$ are set to one.  Like the curved L-shaped domain problem described previously, the pressure and stress fields associated with the lid-driven cavity flow problem experience corner singularities which impede the convergence of numerical methods and expose unstable velocity/pressure pairs.  The velocity field is further characterized by a primary vortex near the center of the cavity and an infinite sequence of Moffatt eddies of decreasing size and intensity in the lower left and right corners of the cavity.  We demonstrate here that a discretization based on the hierarchical B-spline complex is able to both resolve the corner singularities as well as a select number of Moffatt eddies with a properly chosen adaptive strategy.

\begin{figure}[b!]
	\centering
	\includegraphics[height=3.25in]{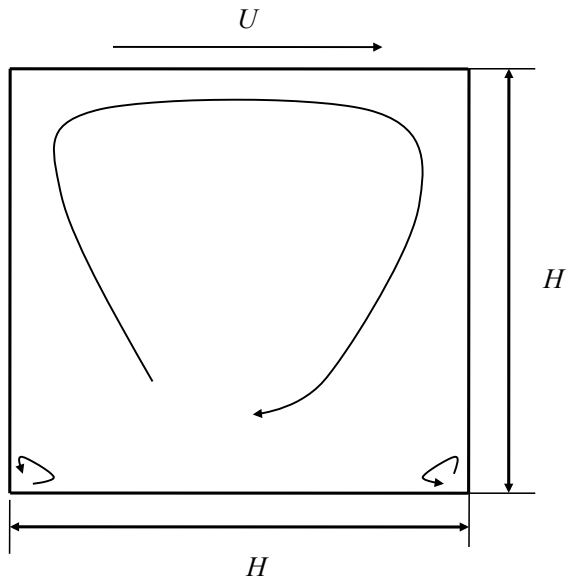}
	\caption{Problem setup for creeping lid-driven cavity flow.}
	\label{f:lidcavitysetup}
\end{figure}

We have computed approximations of lid-driven cavity flow using both hierarchical B-splines and tensor-product B-splines with varying mesh size and polynomial degrees two, three, and four.  For the case of tensor-product B-splines, we employed uniform refinement to resolve the features of the flow.  The computed streamlines for $h = 1/32$ and $h = 1/128$ with $p_1 = p_2 = 2$ are shown in Figure~\ref{f:tensorprodstreamlines}. Note that while the first Moffatt eddy is apparent in the $h = 1/32$ mesh, it is highly distorted.  The second Moffatt eddy does appear in the $h = 1/128$ mesh, but the third cannot be calculated using tensor-product B-splines until a mesh resolution of $h = 1/1024$.  For the case of hierarchical B-splines, we began with a uniform tensor-product B-spline mesh and applied three levels of hierarchical refinement in each of the corners of the domain as illustrated in Figure~\ref{f:lidcavityhierarchicalmesh}.  The computed streamlines for $h = 1/32$ and $h = 1/128$ with $p_1 = p_2 = 2$ are shown in Figure~\ref{f:hierarchicalstreamlines}, where $h$ indicates the element size for the initial tensor-product B-spline mesh and hence is the maximal element size for the hierarchical mesh.  Note that both the first and second Moffatt eddies are well-resolved for the $h = 1/32$ mesh, and the third Moffatt eddy can also be observed for the $h = 1/128$ mesh.  Consequently, while 3,153,925 degrees of freedom are necessary to capture the third Moffatt eddy with uniform tensor-product B-splines, one can capture the eddy with hierarchical B-splines using only $93,381$ degrees of freedom.  This is more than a thirty-fold reduction in total number of degrees of freedom.  It should be noted that the fourth Moffatt eddy cannot be obtained with double precision floating point arithmetic as the magnitudes of the velocities of the eddies are simply too small (below $1e{-12}$).

\begin{figure}[t]
	\centering
	\includegraphics[height=2.75in]{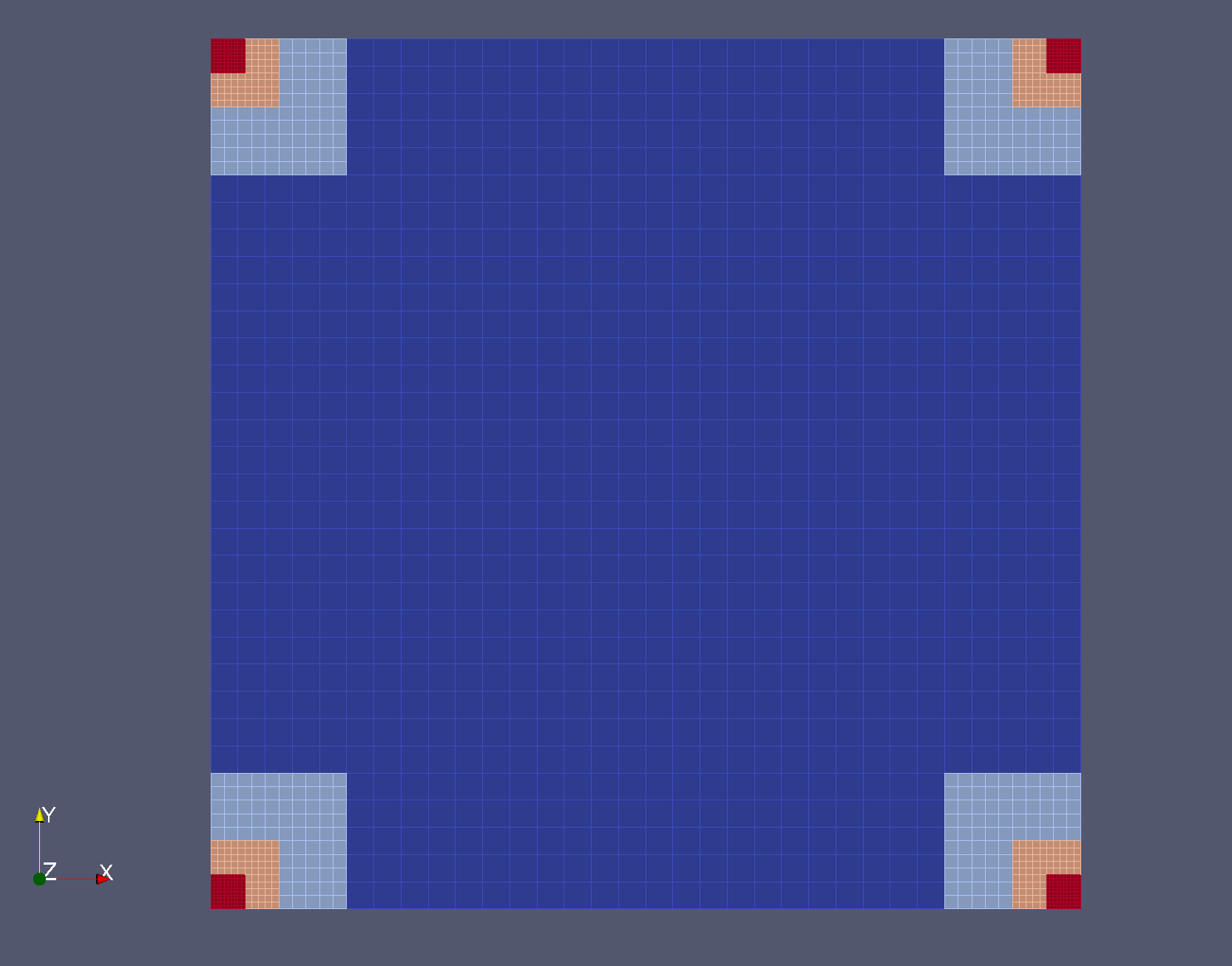}
	\caption{Hierarchical refinement strategy for the creeping lid-driven cavity flow problem.  The displayed hierarchical B\'{e}ziermesh corresponds to $h = 1/32$ where $h$ is the element size for the initial tensor-product B-spline mesh.}
	\label{f:lidcavityhierarchicalmesh}
\end{figure}

%
% Figure of stream functions for tensor product mesh of lid-driven cavity flow for k' = 1
\begin{figure}[t]
	\centering

	\subfigure[{$[0,1] \times [0,1]$, $h = 1/32$}]{\includegraphics[height=2.3in]{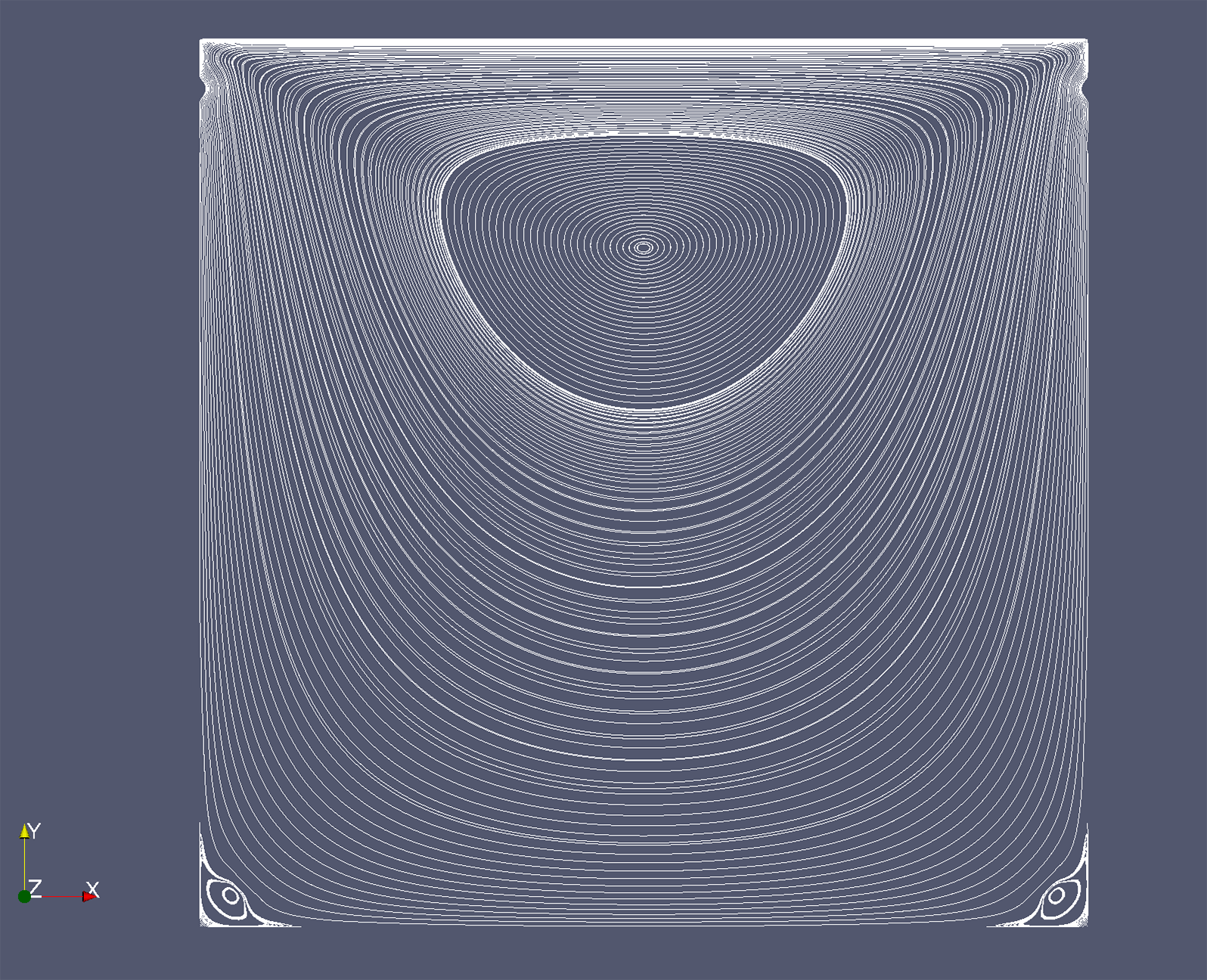}\label{f:tensorprodstreamlinesh32}}
	\qquad
	\subfigure[{$[0,0.0073] \times [0,0.0073]$, $h = 1/32$}]{\includegraphics[height=2.3in]{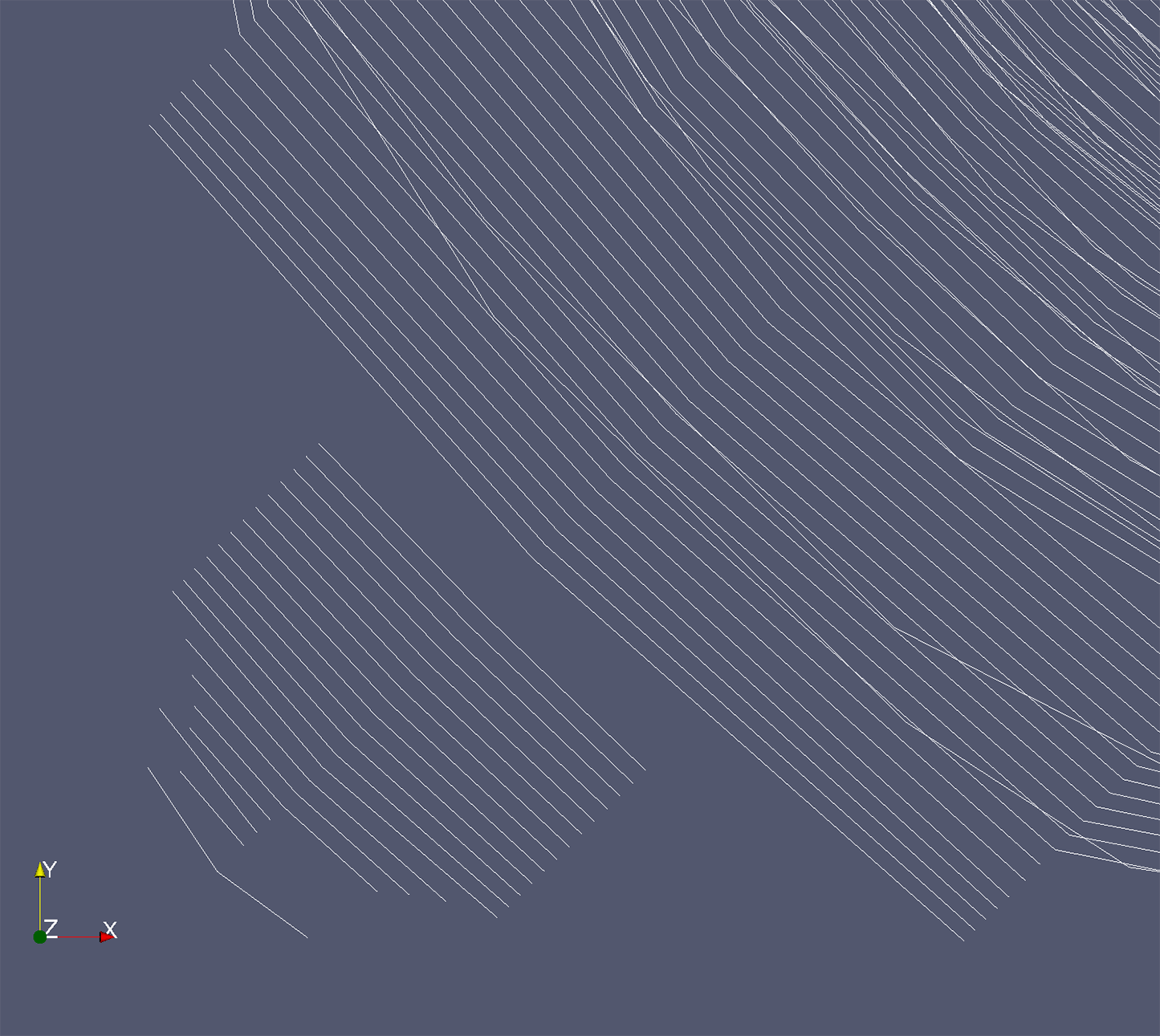}\label{f:tensorprodstreamlinesh32zoom}}
	
	\subfigure[{$[0,1] \times [0,1]$, $h = 1/128$}]{\includegraphics[height=2.3in]{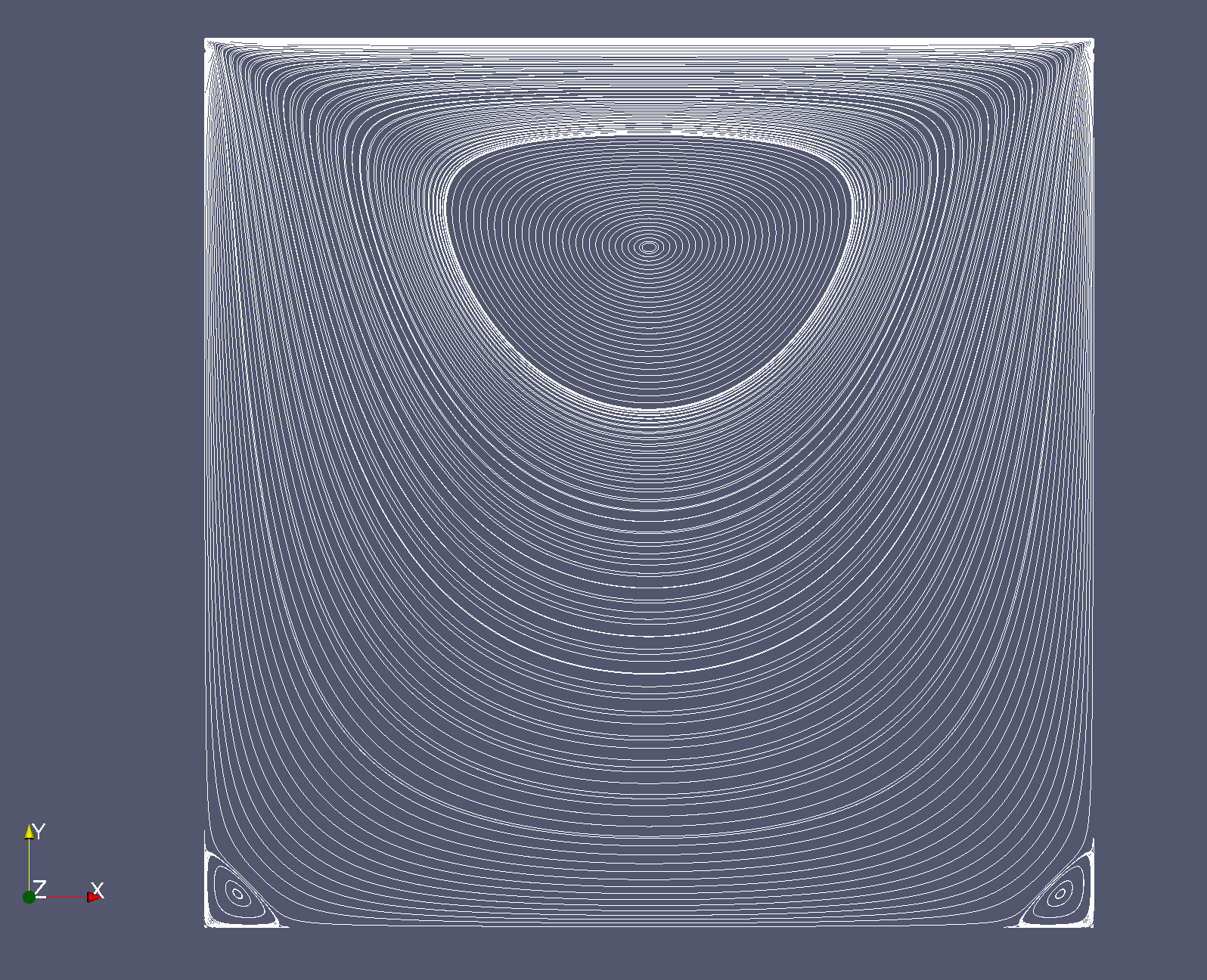}\label{f:tensorprodstreamlinesh128}}
	\qquad
	\subfigure[{$[0,0.0073] \times [0,0.0073]$, $h = 1/128$}]{\includegraphics[height=2.3in]{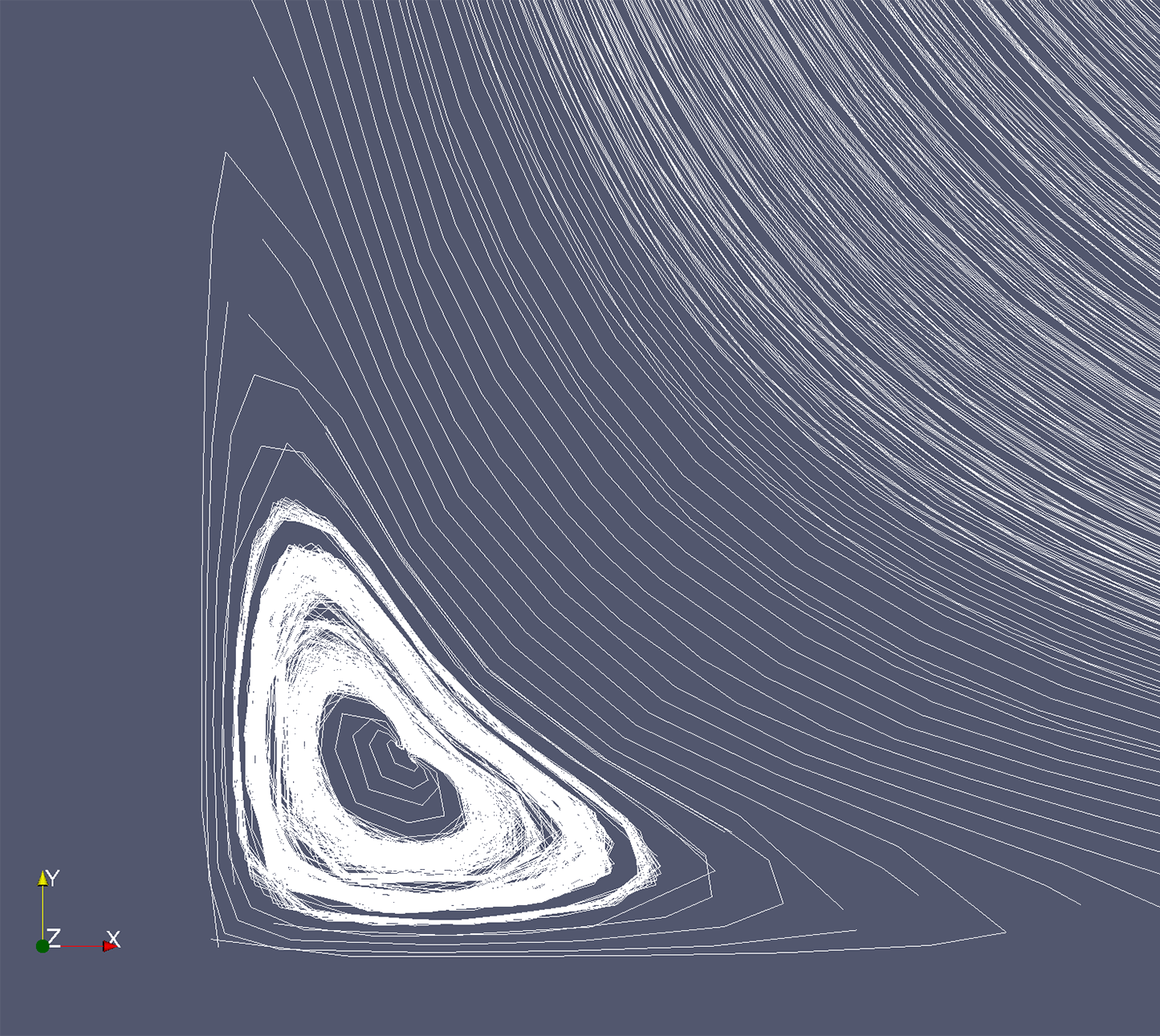}\label{f:tensorprodstreamlinesh128zoom}}

	\caption{Streamlines for the creeping lid-driven cavity problem obtained from tensor-product B-splines with $p_1 = p_2 = 2$. The first Moffatt eddies can be seen in the corners of the domain, while a second Moffatt eddy is shown by zooming in to the lower-left corner of the domain.}
	\label{f:tensorprodstreamlines}
\end{figure}

%
% Figure of stream functions for hierarchical mesh of lid-driven cavity flow for k' = 1
\begin{figure}[t]
	\centering

	\subfigure[{$[0,1] \times [0,1]$, $h = 1/32$}]{\includegraphics[height=2.3in]{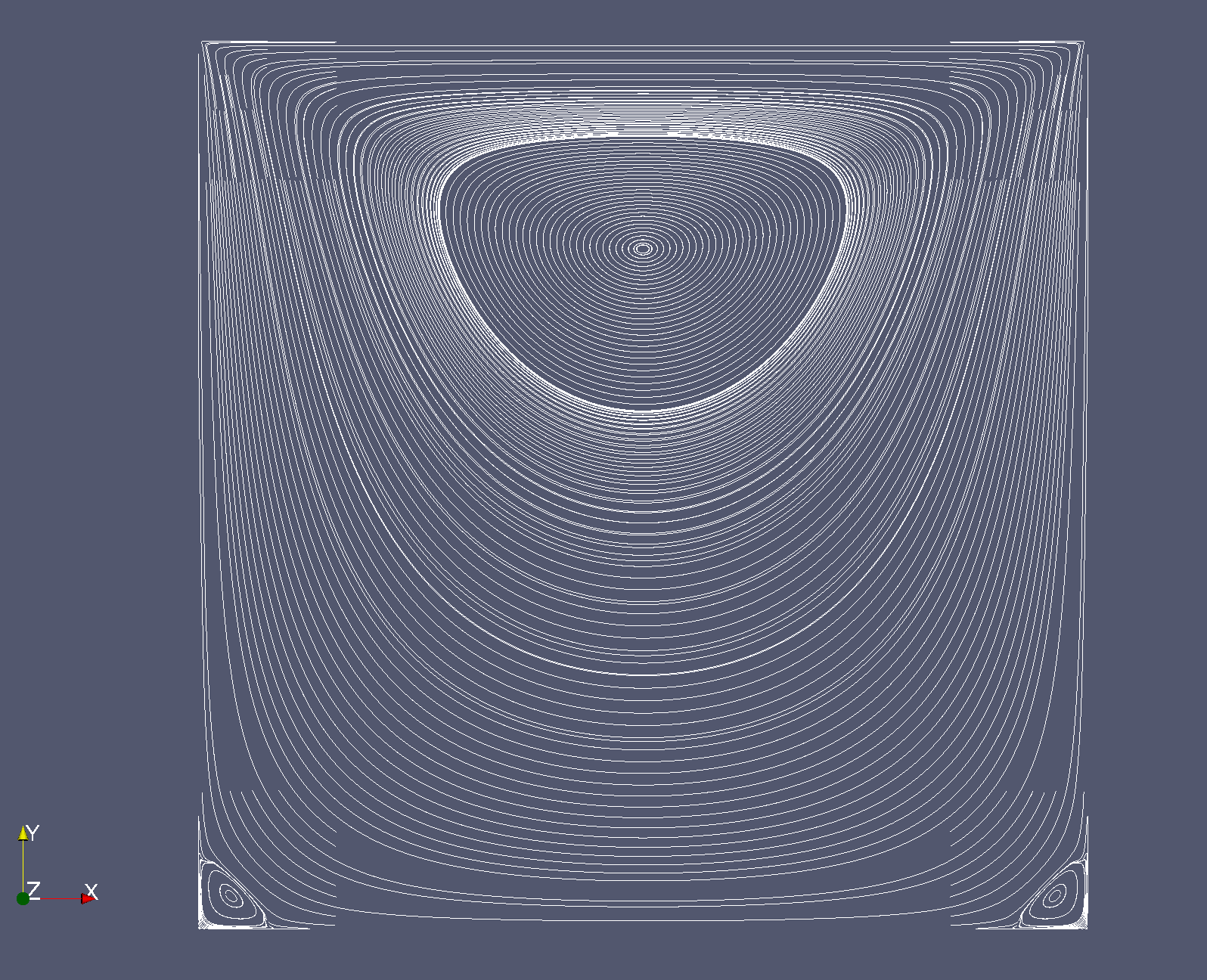}\label{f:hierarchicalstreamlinesh32}}
	\qquad
	\subfigure[{$[0,0.0073] \times [0,0.0073]$, $h = 1/32$}]{\includegraphics[height=2.3in]{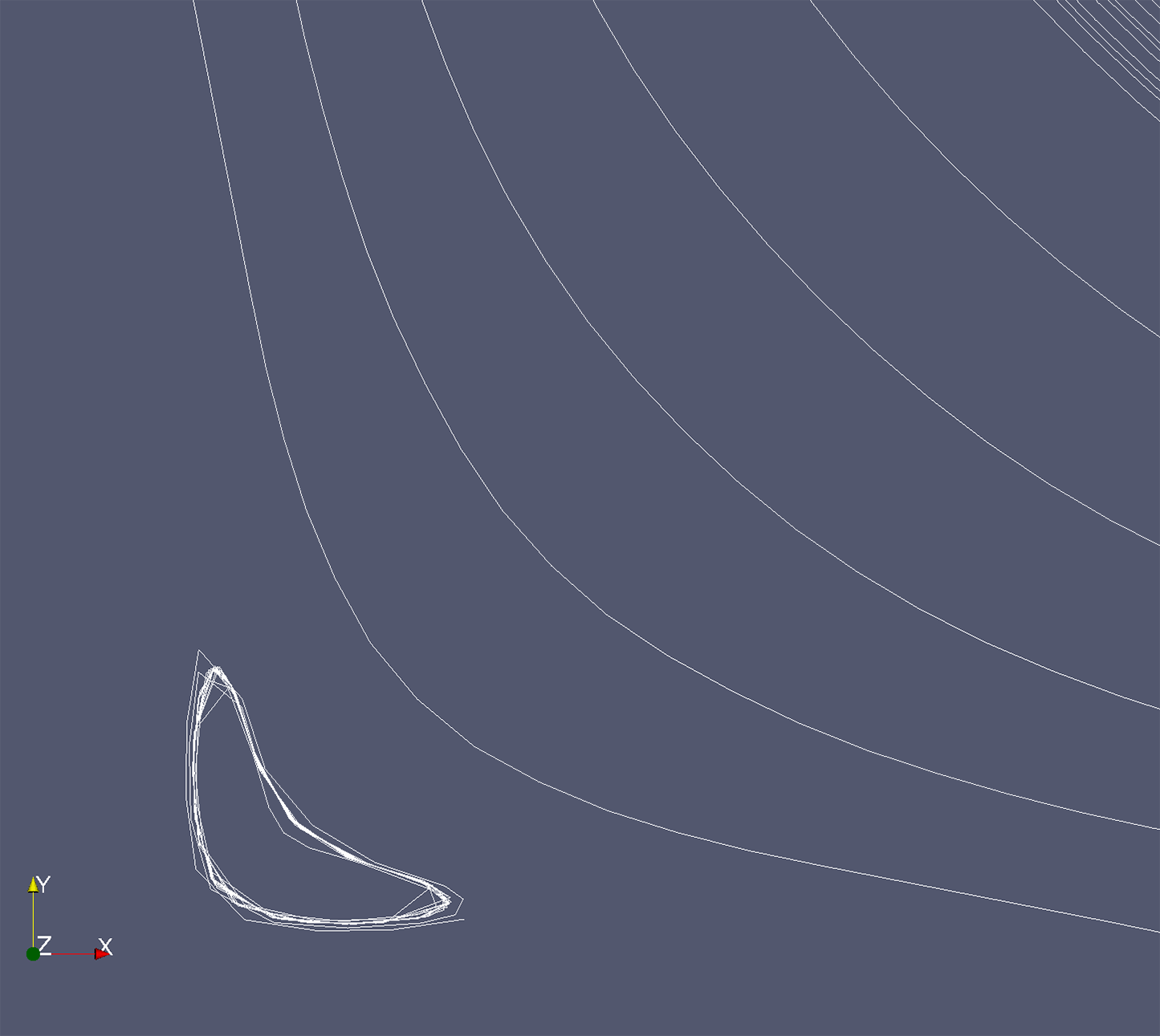}\label{f:hierarchicalstreamlinesh32zoom}}
	
	\subfigure[{$[0,1] \times [0,1]$, $h = 1/128$}]{\includegraphics[height=2.3in]{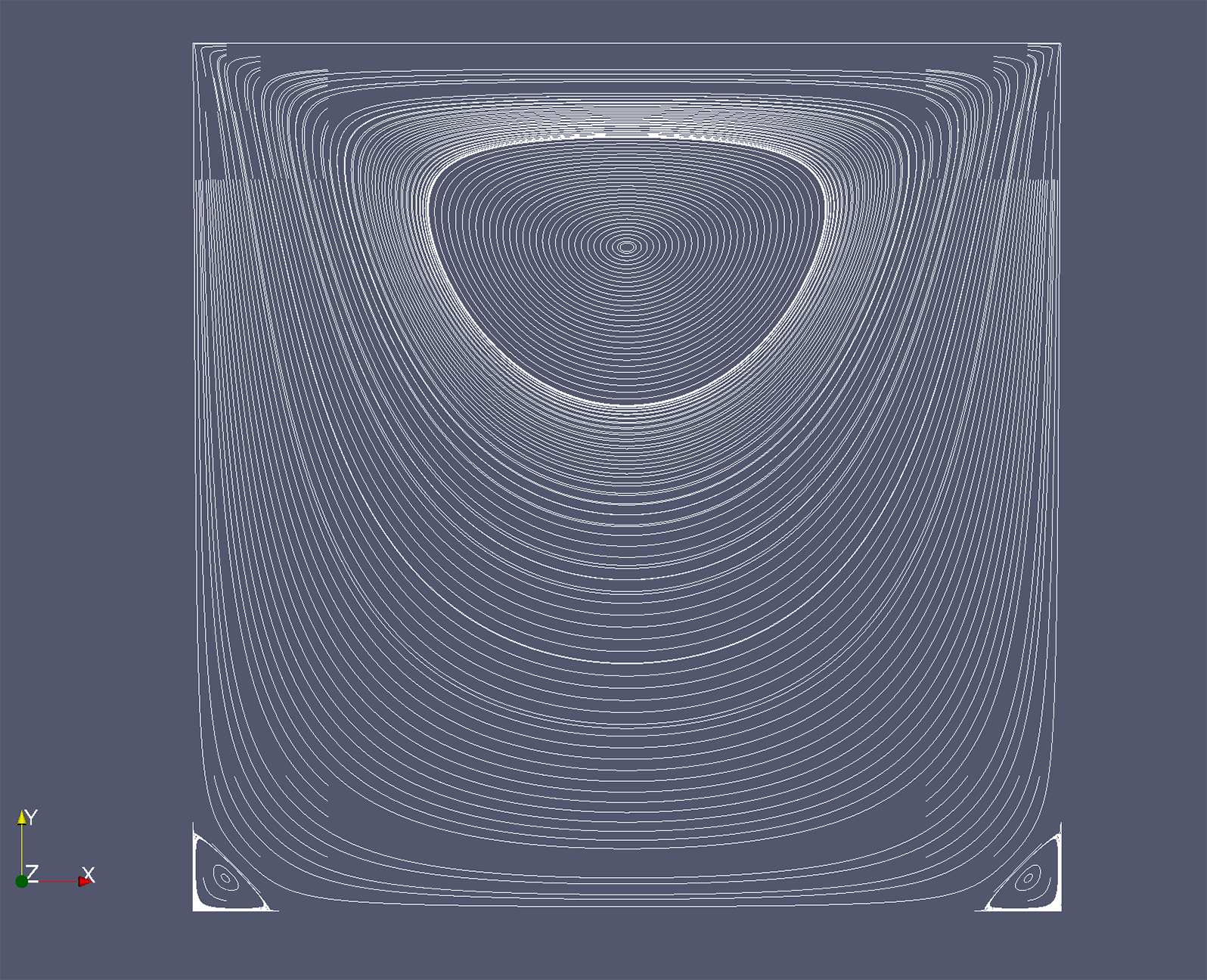}\label{f:hierarchicalstreamlinesh128}}
	\qquad
	\subfigure[{$[0,0.0073] \times [0,0.0073]$, $h = 1/128$}]{\includegraphics[height=2.3in]{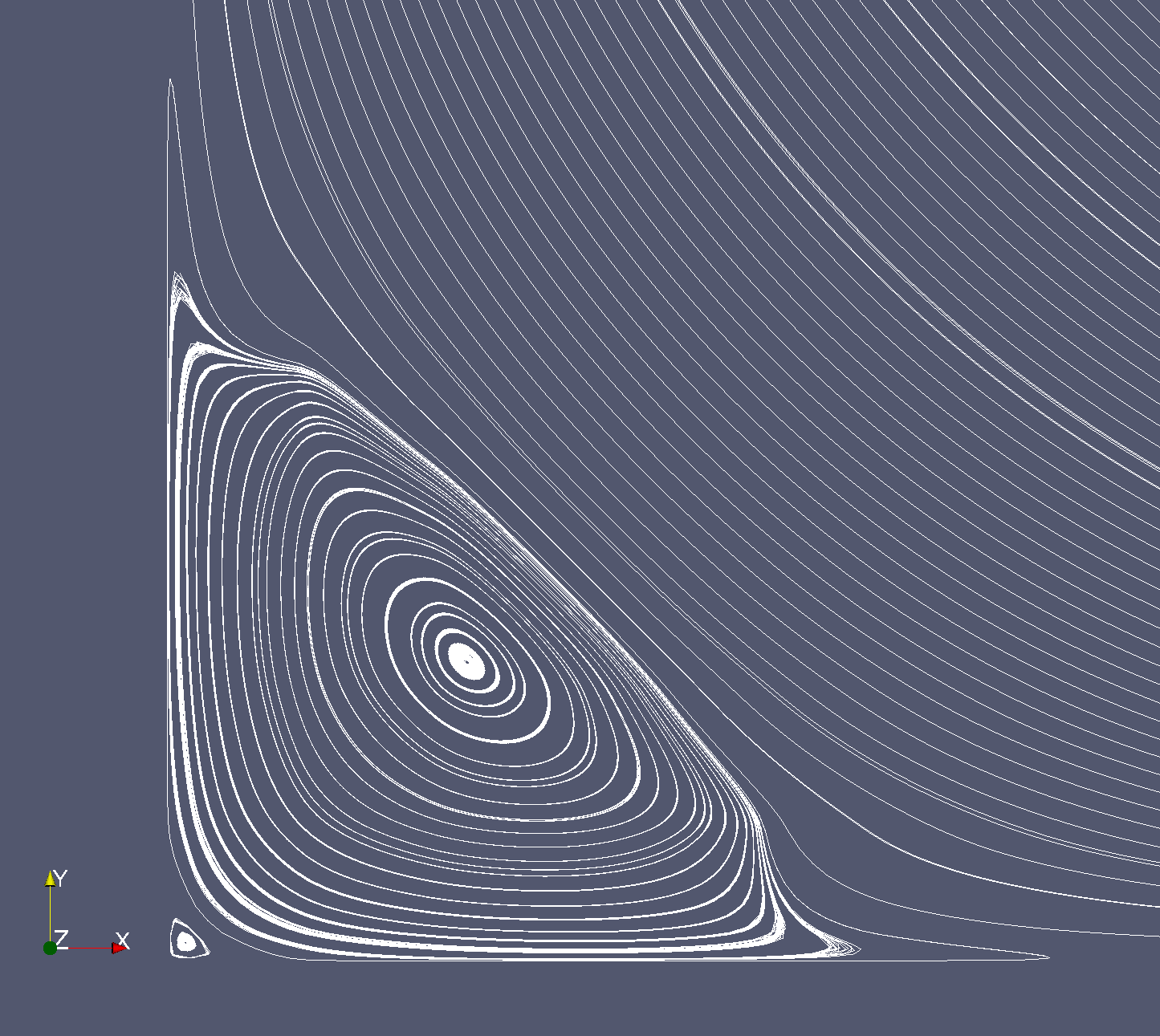}\label{f:hierarchicalstreamlinesh128zoom}}

	\caption{Streamlines for the creeping lid-driven cavity problem obtained from thrice-refined hierarchical B-splines with $p_1 = p_2 = 2$ (see Figure~\ref{f:lidcavityhierarchicalmesh}). The first Moffatt eddies can be seen in the corners of the domain, while a second and a third Moffatt eddy are shown by zooming in to the lower-left corner of the domain.}
	\label{f:hierarchicalstreamlines}
\end{figure}

To examine how well hierarchical B-splines and tensor-product B-splines resolve the corner singularities in the lid-driven cavity flow problem, we also compared the vorticity solution $(\omega = \textnormal{curl} \hspace{3pt} \mathbf{u}_{h})$ obtained by these approaches near the upper right corner of the cavity with the highly accurate pseudospectral results obtained in \cite{BoPe98}.  The results of this study are shown in Table~\ref{t:StokesLidDriven}.  From these results, it is apparent that hierarchical B-splines are much more effective in capturing the corner singularities than tensor-product B-splines.  For example, when $h = 1/256$ and $p_1 = p_2 = 4$, the hierarchical B-spline results match the pseudospectral results up to six significant digits.  This is particularly impressive when taking into consideration that the pseudospectral results were obtained using a subtraction of the leading terms of the asymptotic solution of the Stokes equations in order to exactly represent the corner singularities.

%
% Table of Lid-driven cavity flow
\begin{table}
\caption{Vorticity values at the point $\mathbf{x} = (0,0.95)$ and computational expense associated with tensor-product B-splines and hierarchical B-splines as applied to the creeping lid-driven cavity flow problem.}\label{t:StokesLidDriven}
\centering
Polynomial Degree $p_1 = p_2 = 2$ \\
\begin{tabular}{ l c c c}
\hline
Method  & $\omega$ & No. Elements & No. Unknowns \\
\hline
Tensor product B-splines, $h = 1/32$	 & 18.07580 & 1,024	 & 3,333 \\
Tensor product B-splines, $h = 1/64$ 	 & 19.18682 & 4,096	 & 12,805\\
Tensor product B-splines, $h = 1/128$ & 23.47947 & 16,384 & 50,181\\
Tensor product B-splines, $h = 1/256$ & 25.42558 & 65,536 & 198,661\\
\hline
Hierarchical B-splines, $h = 1/32$ 	& 23.59458 & 1,924 	& 6,033 \\
Hierarchical B-splines, $h = 1/64$ 	& 25.47702 & 7,696 	& 23,605 \\
Hierarchical B-splines, $h = 1/128$ 	& 26.38816 & 30,784 & 93,381 \\
Hierarchical B-splines, $h = 1/256$ 	& 26.83601 & 123,136 & 371,461 \\
\hline
Pseudospectral (Ref. \cite{BoPe98}) & 27.27901 & --- & --- \\
\hline
\end{tabular}

\vspace{12pt}
Polynomial Degree $p_1 = p_2 = 3$ \\
\begin{tabular}{ l c c c}
\hline
Method  & $\omega$ & No. Elements & No. Unknowns \\
\hline
Tensor product B-splines, $h = 1/32$	 & 33.27731 & 1,024	 & 3,536 \\
Tensor product B-splines, $h = 1/64$ 	 & 35.01708 & 4,096	 & 13,200 \\
Tensor product B-splines, $h = 1/128$ & 25.84879 & 16,384 & 50,960 \\
Tensor product B-splines, $h = 1/256$ & 27.34235 & 65,536 & 200,208\\
\hline
Hierarchical B-splines, $h = 1/32$ 	& 27.54476 & 1,924 	& 6,236 \\
Hierarchical B-splines, $h = 1/64$ 	& 27.33604 & 7,696 	& 24,000\\
Hierarchical B-splines, $h = 1/128$ 	& 27.29035 & 30,784 & 94,160\\
Hierarchical B-splines, $h = 1/256$ 	& 27.28284 & 123,136 & 373,008\\
\hline
Pseudospectral (Ref. \cite{BoPe98}) & 27.27901 & --- & --- \\
\hline
\end{tabular}

\vspace{12pt}
Polynomial Degree $p_1 = p_2 = 4$ \\
\begin{tabular}{ l c c c}
\hline
Method  & $\omega$ & No. Elements & No. Unknowns \\
\hline
Tensor product B-splines, $h = 1/32$	 & 22.52289 & 1,024	 & 3,745 \\
Tensor product B-splines, $h = 1/64$ 	 & 30.29182 & 4,096	 & 13,601 \\
Tensor product B-splines, $h = 1/128$ & 29.32455 & 16,384 & 51,745 \\
Tensor product B-splines, $h = 1/256$ & 27.64269 & 65,536 & 201,761 \\
\hline
Hierarchical B-splines, $h = 1/32$ 	& 27.25594 & 1,924 	& 6,445\\
Hierarchical B-splines, $h = 1/64$ 	& 27.28393 & 7,696 	& 24,401\\
Hierarchical B-splines, $h = 1/128$ 	& 27.27935 & 30,784 & 94,945\\
Hierarchical B-splines, $h = 1/256$ 	& 27.27905 & 123,136 & 374,561\\
\hline
Pseudospectral (Ref. \cite{BoPe98}) & 27.27901 & --- & --- \\
\hline
\end{tabular}

\end{table}

\section{Conclusions and Future Work} \label{Sec:conclusion}
In this paper, we introduced the hierarchical B-spline complex of isogeometric discrete differential forms for arbitrary spatial dimension.
By appealing to abstract cohomology theory, we derived a sufficient and necessary condition guaranteeing exactness of the hierarchical B-spline complex for arbitrary spatial dimension, and we presented an intuitive interpretation of this condition based on the topologies of the Greville grids associated with various levels of a hierarchical B-spline discretization.  We further derived a set of local, easy-to-compute, and sufficient exactness conditions for the two-dimensional setting, and we demonstrated via example that the hierarchical B-spline complex yields stable approximations of both the Maxwell eigenproblem and Stokes problem provided these local exactness conditions are satisfied.  We concluded by providing numerical results in the context of electromagnetics and creeping flow.

In future work, we plan to continue the analysis of the hierarchical B-spline complex.  First, we plan to extend our local, easy-to-compute, and sufficient exactness conditions to the three-dimensional setting.  We anticipate that this will be an exceptionally technical exercise if we follow the method of proof employed in this paper.  However, we expect that a more intelligent use of cohomology theory may yield a much simpler proof that also extends to arbitrary spatial dimension.  Second, we seek to construct commuting projection operators which make the spaces of hierarchical B-spline complex conform to a commutative de Rham diagram.  It is expected that not all hierarchical B-spline complexes will admit such commuting projection operators, though based on the results presented here, we believe that hierarchical B-spline complexes satisfying the local exactness conditions presented in this paper will.  This belief is further strengthened by the fact that hierarchical B-spline spaces satisfying our local exactness conditions are complete in that they contain all piecewise polynomial functions on the hierarchical grid with the smoothness specified by the grid and underlying polynomial degree \cite{Mokris2014}.

We also plan to further explore application of the hierarchical B-spline complex in the adaptive isogeometric solution of vector field problems such as those arising in electromagnetics and incompressible fluid flow.  In this direction, we first seek to develop refinement algorithms which yield hierarchical B-spline complexes satisfying the local exactness conditions presented in this paper with a minimal level of added degrees of freedom.   We additionally seek to develop appropriate error estimators for use in vector field problems, drawing inspiration from finite element exterior calculus \cite{Demlow14}.

\section*{Acknowledgements}
J.A. Evans and M.A. Scott were partially supported by the Air Force Office of Scientific Research under Grant No. FA9550-14-1-0113. J.A. Evans was also partially supported by the Defense Advanced Research Projects Agency under Grant No. HR0011-17-2-0022. K. Shepherd was supported by the National Science Foundation Graduate Research Fellowship under Grant No. DGE-1610403. Any opinion, findings, and conclusions or recommendations expressed in this material are those of the authors and do not necessarily reflect the views of the National Science Foundation. R. V\'azquez was partially supported by the European Research Council through the FP7 ERC Consolidator Grant no.~616563 HIGEOM (PI: Giancarlo Sangalli) and the H2020 ERC Advanced Grant no.~694515 CHANGE (PI: Annalisa Buffa). 

\begin{appendices}
%\section*{Appendix: Proof of Local and Sufficient Condition for Exactness}
%\noindent {\color{blue} \textbf{Assigned Author(s):} Vazquez and Evans}\\

%\noindent {\color{blue} \textbf{Synopsis:} I vote to place as many of the technical details as we possibly can in this appendix.  In fact, I would contend we simply state the final result in the text of the paper, and we relegate the remainder of the necessary lemmas and final results to the appendix.  This will yield a long appendix, but I fear the details may actually interfere with the other details of the paper.}

%\appendix
%%%%%%%%%%%%%%%%%%%%%%%%%%%%%%%%%%%%%%%%%%%%%%%%%%%%%%%%%%%%
\section{Proof of Theorem~\ref{th:homology}}\label{sec:appendix}
%%%%%%%%%%%%%%%%%%%%%%%%%%%%%%%%%%%%%%%%%%%%%%%%%%%%%%%%%%%%

Without loss of generality, we assume that the polynomial degree is the same in both parametric directions (i.e., $p = p_1 = p_2$) as this greatly simplifies our exposition. However, the subsequent analysis also holds in the mixed polynomial degree setting.

Let us begin by recalling notation from the main text. We denote the B\'ezier meshes of two consecutive levels $\ell$ and $\ell+1$ by $\M_{\ell}$ and $\M_{\ell+1}$, respectively, and we denote the Greville subgrids of two consecutive levels by $\MG_{\ell}$ and $\MG_{\ell+1}$.  For the subdomain $\Omega_{\ell+1}$ appearing in the hierarchical B-spline construction, we denote the corresponding B\'ezier submeshes associated with levels $\ell$ and $\ell+1$ by $\M_{\ell,{\ell+1}} \subset \M_\ell$ and $\M_{\ell+1,{\ell+1}} \subset \M_{\ell+1}$. We further denote the Greville subgrids associated with functions whose support is completely contained in $\Omega_{\ell+1}$ for levels $\ell$ and $\ell+1$ by $\MG_{\ell,\ell+1} \subset \MG_\ell$ and $\MG_{\ell+1,\ell+1} \subset \MG_{\ell+1}$. With some abuse of notation, the subdomains occupied by the Greville subgrids are also denoted by $\MG_{\ell,\ell+1}$ and $\MG_{\ell+1,\ell+1}$.

We now establish some additional definitions that will be required in our subsequent analysis.  Given a B\'ezier element $Q \in \M_{\ell}$, we define the extended support\footnote{This has been traditionally denoted by $\tilde Q$ in the IGA literature.}, denoted as $\M_{\ell,\tilde Q} \subset \M_\ell$, to be the union of the support of basis functions in ${\cal B}^k_\ell$ that do not vanish in $Q$, for $k = 0, 1, 2$. We define a corresponding Greville subgrid, denoted as $\MG_{\ell,\tilde Q} \subset \MG_\ell$, to be that formed by the vertices, edges, and cells of the Greville grid $\MG_\ell$ associated with basis functions that do not vanish in $Q$. 
%Notice the slight abuse of notation, since in the case of low regularity there may be functions completely supported in $\tilde Q$ such that their geometrical entities are not in $\MG_{\ell,\tilde Q}$.
Finally, we define an additional Greville subgrid $\MG'_{\ell,\tilde Q}$ to be the extension of $\MG_{\ell,\tilde Q}$ obtained by adding one cell in each direction (see Figure~\ref{fig:qtilde}), noting that when $\MG_{\ell,\tilde Q}$ is adjacent to the boundary the extension in the corresponding direction is not necessary.  It should be noted that the Greville subgrids $\MG_{\ell,\tilde Q}$ and $\MG'_{\ell,\tilde Q}$ are not related to the previously defined Greville subgrids $\MG_{\ell,\ell+1}$ and $\MG_{\ell+1,\ell+1}$.

%It is also worth to note that, in general, for an element $Q \in \M_{\ell,\ell+1}$ these subgrids are not necessarily contained in $\M_{\ell,\ell+1}$ and $\MG_{\ell,\ell+1}$, respectively.

\begin{figure}[t]
\begin{subfigure}[Element $Q$ and its extended support $\M_{\ell,\tilde Q}$.]{
\includegraphics[width=0.48\textwidth]{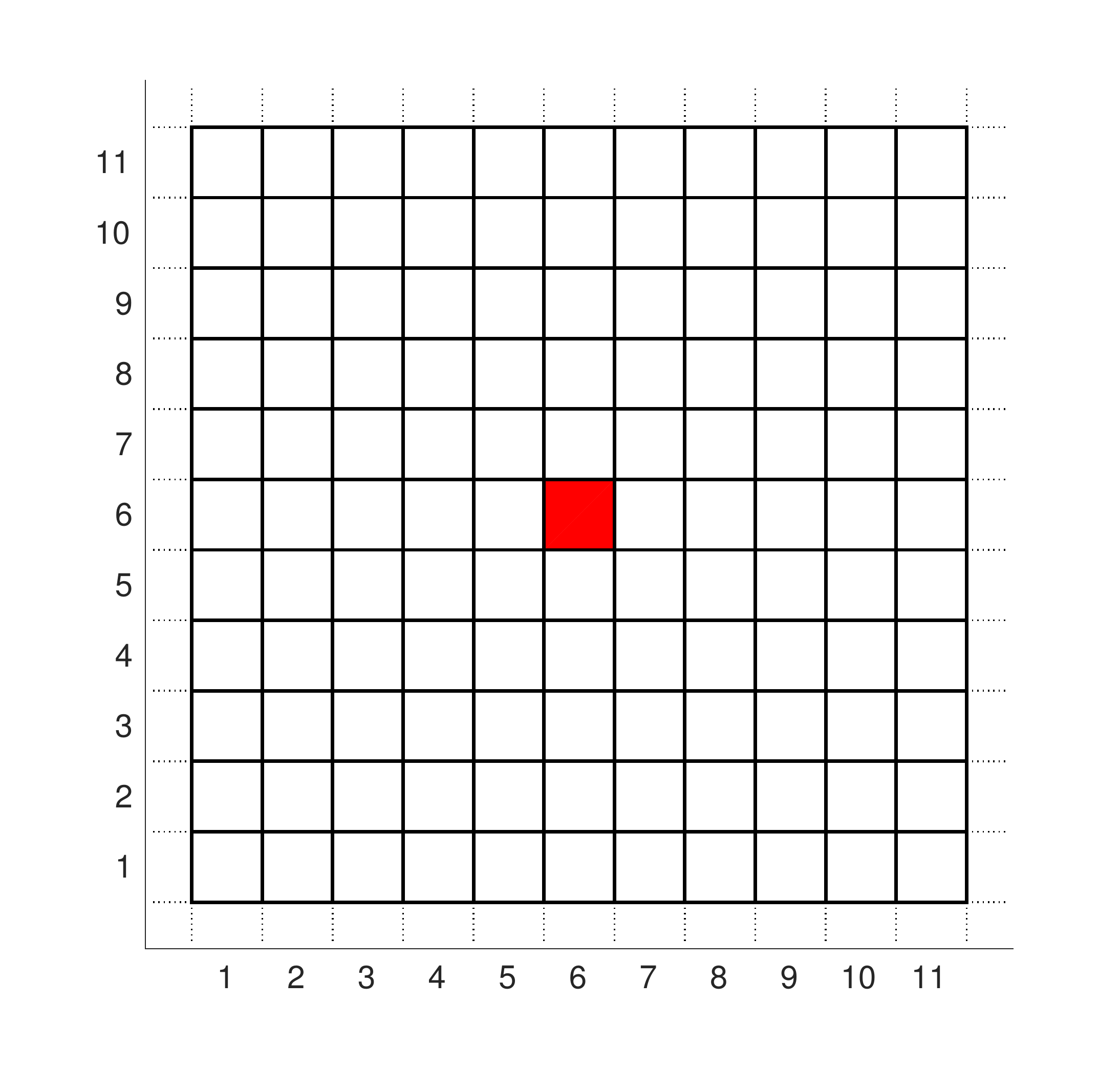}
}
\end{subfigure}
\begin{subfigure}[Greville subgrids $\MG_{\ell,\tilde Q}$ (solid) and $\MG'_{\ell,\tilde Q}$ (solid and dashed).]{
\includegraphics[width=0.48\textwidth]{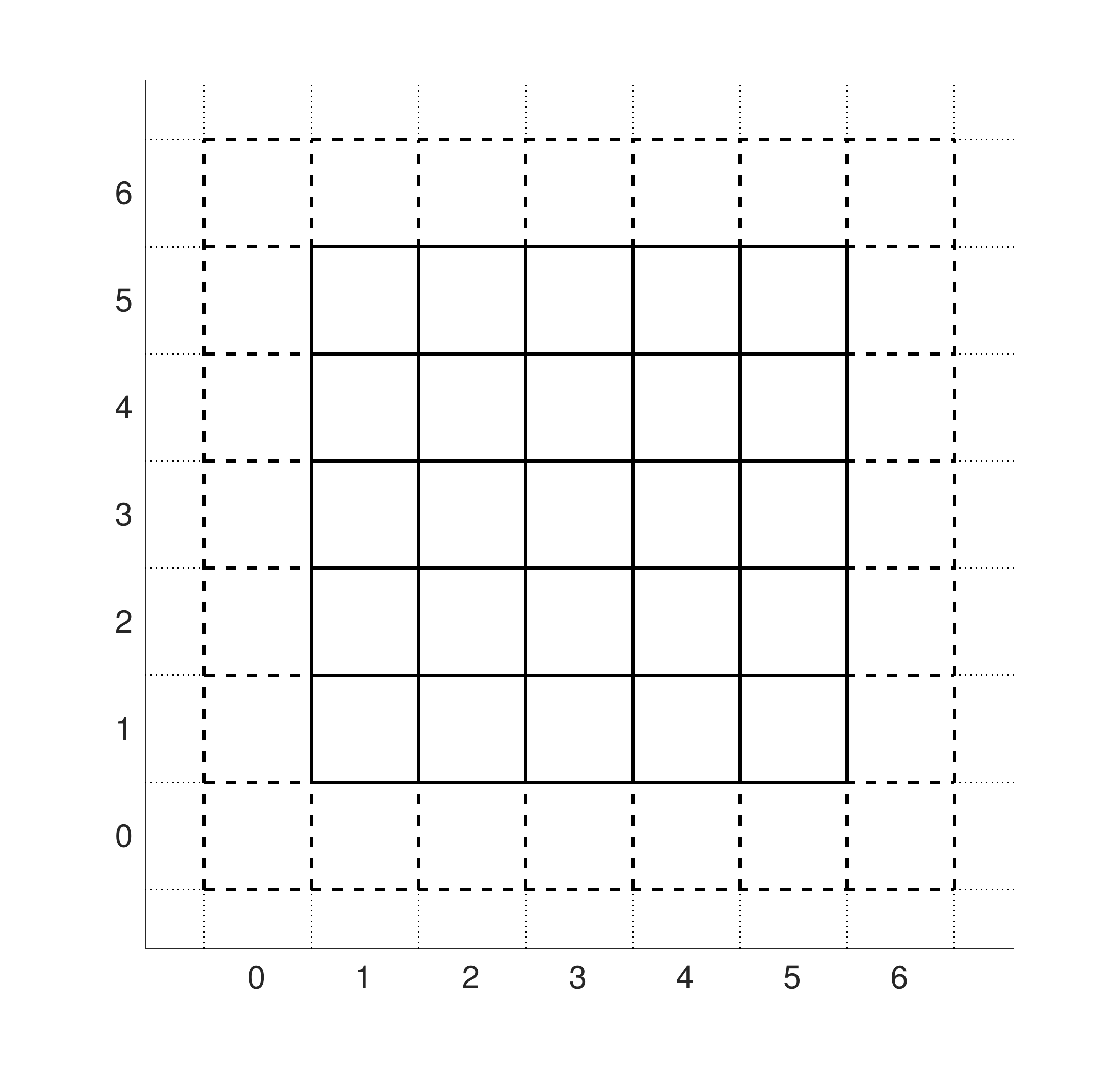}
}
\end{subfigure}
\caption{Extended support for an element in the B\'ezier mesh and the corresponding Greville subgrids for degree $p=5$.} \label{fig:qtilde}
\end{figure}

The following lemma states two geometrical results for B\'ezier and Greville grids.
\begin{lemma} \label{lemma:one-cell} The following statements hold:
\begin{enumerate}[i)]
\item For any Greville cell $K \in \MG_{\ell}$, with $\beta_K \in {\cal B}^2_\ell$ the corresponding basis function, it holds that $K \subset \supp(\beta_K)$.
\item For any B\'ezier element $Q \in \M_{\ell}$, it holds that $Q \subset \MG_{\ell,\tilde Q}$.
\end{enumerate}
\end{lemma}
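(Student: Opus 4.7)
The plan is to reduce both claims to one-dimensional statements using the tensor-product structure: $\M_\ell$, $\MG_\ell$, and the supports of all basis functions in $\mathcal{B}^k_\ell$ factor as Cartesian products of univariate objects, so componentwise inclusion will immediately imply the two-dimensional inclusion. The one ingredient I will use repeatedly is the averaging formula \eqref{eq:greville-point}, $\gamma_i = (\xi_{i+1} + \cdots + \xi_{i+p})/p$, which instantly yields the bounds $\xi_{i+1} \leq \gamma_i \leq \xi_{i+p}$.

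For part i), I first identify, in one dimension, which Curry--Schoenberg B-spline is associated with a given Greville interval $(\gamma_i, \gamma_{i+1})$. Using the differentiation rule $B'_{i,p} = D_{i,p-1} - D_{i+1,p-1}$ together with the identification of edges of the Greville grid with the 1-form basis functions described in Section~\ref{sec:geometric_interpretation}, the interval $(\gamma_i, \gamma_{i+1})$ is matched with $D_{i+1,p-1}$, whose support is $(\xi_{i+1}, \xi_{i+p+1})$. The containment $(\gamma_i, \gamma_{i+1}) \subset (\xi_{i+1}, \xi_{i+p+1})$ then follows from the two bounds $\gamma_i \geq \xi_{i+1}$ and $\gamma_{i+1} \leq \xi_{i+p+1}$, each a direct consequence of the averaging formula applied at the correct index. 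Taking tensor products and identifying $K$ with a product of two univariate Greville intervals and $\beta_K$ with the corresponding product of Curry--Schoenberg B-splines closes the 2D case.

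For part ii), I characterize the univariate 0-form basis functions not vanishing on a B\'ezier element $(\xi_a, \xi_{a+1})$: they are precisely $B_{i,p}$ for $i \in \{a-p, \ldots, a\}$, with Greville sites $\gamma_{a-p}, \ldots, \gamma_a$. The averaging formula yields $\gamma_{a-p} \leq \xi_a$ and $\gamma_a \geq \xi_{a+1}$, so these outermost Greville sites bracket $(\xi_a, \xi_{a+1})$. An analogous enumeration for the Curry--Schoenberg B-splines (equivalently, the Greville cells) that do not vanish on $(\xi_a, \xi_{a+1})$ shows the same index range, so the subdomain occupied by $\MG_{\ell,\tilde Q}$ coincides with the closure of $(\gamma_{a-p}, \gamma_a) \times (\gamma_{b-p}, \gamma_b)$, which then contains $Q = (\xi_a, \xi_{a+1}) \times (\xi_b, \xi_{b+1})$ by the componentwise check already established.

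The main difficulty here is notational rather than mathematical: one must be careful about the index shift between 0-form basis functions (labeling Greville vertices) and 2-form basis functions (labeling Greville cells), and about potential knot repetitions at the boundary of $\hat\Omega$, where some of the end intervals collapse but the inequalities remain valid. Once the identification between Curry--Schoenberg B-splines and Greville intervals is pinned down, both parts ultimately reduce to the two elementary inequalities $\xi_{i+1} \leq \gamma_i \leq \xi_{i+p}$ applied at appropriately chosen indices.
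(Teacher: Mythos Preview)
Your proposal is correct and follows essentially the same approach as the paper: reduce to the univariate case by tensor product, then use the Greville averaging formula to obtain the required inequalities. The only difference is cosmetic---you use the index convention of \eqref{eq:greville-point} (so the edge $(\gamma_i,\gamma_{i+1})$ is matched with $D_{i+1,p-1}$), whereas the paper's appendix proof shifts indices by one and associates it with $D_{i,p-1}$; both lead to the same inclusions.
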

\begin{proof}
It is sufficient to prove the result in the univariate setting of Section~\ref{sec:univariate} since the multivariate case is proved by Cartesian product.
\begin{enumerate}[i)]
\item Given two consecutive Greville points $\gamma_i = \frac{\xi_i + \ldots + \xi_{i+p-1}}{p} \ge \xi_i$ and $\gamma_{i+1} = \frac{\xi_{i+1} + \ldots + \xi_{i+p}}{p} \le \xi_{i+p}$, the basis function associated to the edge between them is $D_{i,p-1}$, which is supported in the interval $(\xi_i,\xi_{i+p})$.  Since $(\xi_i,\xi_{i+p}) \subseteq (\gamma_i,\gamma_{i+p})$, the desired result follows.
\item Given the non-empty element $(\xi_i, \xi_{i+1})$, its extended support is the interval $(\xi_{i-p}, \xi_{i+p+1})$, and the first and last basis functions that do not vanish in the element are associated to the Greville points $\gamma_{i-p+1} = \frac{\xi_{i-p+1} + \ldots + \xi_i}{p} \le \xi_i$ and $\gamma_{i+1} = \frac{\xi_{i+1} + \ldots + \xi_{i+p}}{p} \ge \xi_{i+1}$.
\end{enumerate}
\end{proof}

In what follows, we index the elements in the extended support $\M_{\ell,\tilde Q}$ from $1$ to $2p+1$ in each direction (including empty elements), the cells in the Greville subgrid $\MG_{\ell,\tilde Q}$ from 1 to $p$ in each direction, and the cells in $\MG'_{\ell,\tilde Q}$ from 0 to $p+1$ in each direction.  This is graphically depicted in Figure~\ref{fig:qtilde}. The support of the function in ${\cal B}^2_\ell$ corresponding to the cell $K_{i,j}$ in $\MG'_{\ell,\tilde Q}$ is given by the elements $\{Q_{k_1,k_2} \in \M_{\ell,\tilde Q} : i+1 \le k_1 \le i+p, j+1 \le k_2 \le j+p\}$. The following lemma states that, if two cells of $\MG_{\ell,\tilde Q}$ in the same row (column) belong to $\MG_{\ell,\ell+1}$, all the cells of the row (column) between them also belong to $\MG_{\ell,\ell+1}$.  That is, the supports of all the functions associated with cells in the row (column) are contained in $\Omega_{\ell+1}$. The proof, which we omit, is an immediate consequence of the support characterization above.

\begin{lemma} \label{lemma:supp}
Let $Q \in \M_{\ell,\ell+1}$ and further let $K_{i,j} \in \MG_{\ell,\tilde Q} \cap \MG_{\ell,\ell+1}$ where $i$ and $j$ are local indices as discussed above. If $K_{i,j_0} \in \MG'_{\ell,\tilde Q} \cap \MG_{\ell,\ell+1}$, then $K_{i,k} \in \MG_{\ell,\ell+1}$ for $\min\{j,j_0\} \le k \le \max\{j,j_0\}$. Analogously, if $K_{i_0,j} \in \MG'_{\ell,\tilde Q} \cap \MG_{\ell,\ell+1}$, then $K_{k,j} \in \MG_{\ell,\ell+1}$ for $\min\{i,i_0\} \le k \le \max\{i,i_0\}$.
\end{lemma}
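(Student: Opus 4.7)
The plan is to exploit the explicit support description displayed just above the lemma, namely that the support of the basis function associated with a Greville cell $K_{i,j} \in \MG'_{\ell,\tilde Q}$ is the rectangular block of B\'ezier elements $\{Q_{k_1,k_2} \in \M_{\ell,\tilde Q} : i+1 \le k_1 \le i+p, \, j+1 \le k_2 \le j+p\}$. Since the statement is symmetric in the two directions, I would prove only the first claim (rows) and deduce the second (columns) by an identical argument with the roles of the indices swapped.

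First, I would reduce to the case $j \le j_0$ without loss of generality. The key numerical observation is that the local indices are constrained: from $K_{i,j} \in \MG_{\ell,\tilde Q}$ we have $1 \le j \le p$, and from $K_{i,j_0} \in \MG'_{\ell,\tilde Q}$ we have $0 \le j_0 \le p+1$, so in either ordering $|j_0 - j| \le p$. Under the assumption $j \le j_0$, this gives $j_0 + 1 \le j + p + 1$, so the row intervals $\{j+1,\ldots,j+p\}$ and $\{j_0+1,\ldots,j_0+p\}$ overlap or meet, and their union equals $\{j+1,\ldots,j_0+p\}$.

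Now fix any $k$ with $j \le k \le j_0$. The support of $\beta_{K_{i,k}}$ consists of the elements $Q_{k_1,k_2}$ with $i+1 \le k_1 \le i+p$ and $k+1 \le k_2 \le k+p$. Since $k+1 \ge j+1$ and $k+p \le j_0+p$, the row indices $\{k+1,\ldots,k+p\}$ are contained in $\{j+1,\ldots,j_0+p\}$, so every such element lies in $\mathrm{supp}(\beta_{K_{i,j}}) \cup \mathrm{supp}(\beta_{K_{i,j_0}})$. By hypothesis both of these supports lie in $\Omega_{\ell+1}$, hence $\mathrm{supp}(\beta_{K_{i,k}}) \subset \Omega_{\ell+1}$, which by the defining property of $\MG_{\ell,\ell+1}$ means $K_{i,k} \in \MG_{\ell,\ell+1}$.

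The only subtle point is the index bookkeeping that yields the bound $|j_0 - j| \le p$; once this is in hand, the proof is just a set-inclusion of index ranges. I do not foresee any real obstacle, which is presumably why the authors omit the proof, although one should be careful about boundary cases where the extended support $\M_{\ell,\tilde Q}$ is truncated by $\partial \hat \Omega$, since in that case some of the extension cells of $\MG'_{\ell,\tilde Q}$ are absent but the same inequality argument goes through with the remaining indices.
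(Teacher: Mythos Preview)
Your argument is correct and is precisely the approach the paper has in mind: the authors state that ``the proof, which we omit, is an immediate consequence of the support characterization above,'' and your bookkeeping with the index ranges $\{k+1,\ldots,k+p\}\subset\{j+1,\ldots,j_0+p\}$ makes that immediacy explicit. The only thing the paper adds beyond your write-up is nothing at all---they omit the details entirely---so your version is in fact more complete.
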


With the previous definitions and results, we are now in a position to prove Theorem~\ref{th:homology}. The principal idea is to establish one-to-one correspondences between the connected components and holes of the subdomain $\Omega_{\ell+1}$ and those of the regions defined by the Greville subgrids $\MG_{\ell,\ell+1}$ and $\MG_{\ell+1,\ell+1}$. We begin with the connected components.

\begin{proposition}\label{prop:cc}
The following statements hold:
\begin{enumerate}[i)]
\item Every connected component of $\MG_{\ell,\ell+1}$ is contained in a connected component of $\MG_{\ell+1,\ell+1}$.
\item Every connected component of $\MG_{\ell+1,\ell+1}$ is contained in a connected component of $\Omega_{\ell+1}$.
\item Under Assumptions~\ref{ass:support} and~\ref{ass:overlap}, every connected component of $\Omega_{\ell+1}$ contains exactly one connected component of $\MG_{\ell,\ell+1}$.
\end{enumerate}
\end{proposition}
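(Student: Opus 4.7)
The plan is to prove the three claims in order, with (ii) being immediate, (i) reducing to a subdomain inclusion, and (iii) carrying the substantive content and the main difficulty.

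Claim (ii) is immediate from Lemma~\ref{lemma:one-cell}(i): every Greville cell $K \in \MG_{\ell+1,\ell+1}$ satisfies $K \subseteq \supp(\beta_K) \subseteq \Omega_{\ell+1}$, so $\MG_{\ell+1,\ell+1} \subseteq \Omega_{\ell+1}$ as subdomains and any connected subset of the former sits inside a unique connected component of the latter. For claim (i), I would reduce to showing the geometric inclusion $\MG_{\ell,\ell+1} \subseteq \MG_{\ell+1,\ell+1}$. Given a Greville cell $K$ of $\MG_{\ell,\ell+1}$ with associated $\beta \in {\cal B}^2_{\ell,\ell+1}$, standard B-spline knot insertion produces a non-negative refinement $\beta = \sum c_{\beta'} \beta'$ with each child $\beta' \in {\cal B}^2_{\ell+1}$ satisfying $\supp(\beta') \subseteq \supp(\beta) \subseteq \Omega_{\ell+1}$, so every such $\beta'$ belongs to ${\cal B}^2_{\ell+1,\ell+1}$. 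The key geometric step is then to verify that the union of the Greville cells of the children covers $K$, which by the tensor-product structure reduces to a univariate inequality between averages of knots at the two levels, using the boundary relations $\xi'_{k_{\min}} = \xi_{i+1}$ and $\xi'_{k_{\max}+p} = \xi_{i+p+1}$ at the endpoints of $\supp(\beta)$ together with the monotonicity of the inserted knots.

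For claim (iii), existence is the easy direction. Assumption~\ref{ass:support} writes any connected component $\omega$ of $\Omega_{\ell+1}$ as a union of supports of basis functions in ${\cal B}^2_\ell$; since each such support is connected and contained in $\Omega_{\ell+1}$, it must lie entirely in $\omega$, so at least one $\beta \in {\cal B}^2_{\ell,\ell+1}$ has $\supp(\beta) \subseteq \omega$, and by Lemma~\ref{lemma:one-cell}(i) its Greville cell sits inside $\MG_{\ell,\ell+1} \cap \omega$, furnishing a non-empty connected component of $\MG_{\ell,\ell+1}$ inside $\omega$.

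Uniqueness, which I expect to be the main obstacle of the proof, requires showing that any two Greville cells $K_1, K_2 \in \MG_{\ell,\ell+1}$ contained in the same component $\omega$ are joined by a path lying entirely in $\MG_{\ell,\ell+1}$. My plan is to argue by chaining: pick a sequence of adjacent B\'ezier elements $Q_1, \ldots, Q_r \subseteq \omega$ linking $\supp(\beta_{K_1})$ to $\supp(\beta_{K_2})$, and for each $Q_i$ examine the $p_1 \times p_2$ block of Greville cells sitting in the extended support $\MG_{\ell,\tilde{Q}_i}$. Lemma~\ref{lemma:supp}, which relies crucially on Assumption~\ref{ass:overlap} to rule out gaps in rows and columns of the Greville subgrid, then guarantees that the portions of these blocks intersecting $\MG_{\ell,\ell+1}$ interlock across adjacent $Q_i$ to yield a connected path from $K_1$ to $K_2$ inside $\MG_{\ell,\ell+1} \cap \omega$. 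The delicate point is precisely this combinatorial interlocking between the B\'ezier submesh of $\Omega_{\ell+1}$ and the shifted Greville subgrid, where Assumption~\ref{ass:overlap} is exactly what prevents $\MG_{\ell,\ell+1}$ from breaking into extra components even when $\Omega_{\ell+1}$ itself is connected.
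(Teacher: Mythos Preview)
Your treatment of parts (i), (ii), and the existence half of (iii) matches the paper's argument essentially line for line, including the refinement-to-children step for (i) and the appeal to Lemma~\ref{lemma:one-cell}(i) for (ii).

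The gap is in the uniqueness argument of (iii). First, a factual slip: Lemma~\ref{lemma:supp} does \emph{not} rely on Assumption~\ref{ass:overlap}; it is a pure support-combinatorics statement about which B\'ezier elements lie in the support of a $2$-form basis function, and the paper states its proof is an immediate consequence of the support indexing. Second, and more importantly, Lemma~\ref{lemma:supp} only connects Greville cells that share a row or column index within the local window $\MG'_{\ell,\tilde Q}$. It says nothing about two cells that are diagonally offset---and this is exactly the configuration your ``interlocking'' sketch glosses over. The paper handles this by contradiction: assuming two components $C_1,C_2$, it localises (via Assumption~\ref{ass:support}) to a single element $Q$ with a cell of $C_1$ in $\MG_{\ell,\tilde Q}$ and a cell of $C_2$ in $\MG'_{\ell,\tilde Q}$. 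If the $C_2$-cell shares a row or column index with the $C_1$-cells, Lemma~\ref{lemma:supp} already connects them. If not (the diagonal case), the paper identifies a specific $0$-form basis function at a corner vertex of $C_1\cap\MG_{\ell,\tilde Q}$ whose overlap with $\Omega^c_{\ell+1}$ is forced to be disconnected, contradicting Assumption~\ref{ass:overlap} directly. Your chaining strategy is a reasonable reformulation of the same localisation, but it will hit precisely this diagonal obstruction at some step of the chain, and you have not supplied the argument that resolves it; that argument is where Assumption~\ref{ass:overlap} actually enters.
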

\begin{proof}
We prove each statement independently.
\begin{enumerate}[i)]
\item It suffices to show that each cell in $\MG_{\ell,\ell+1}$ is contained in a connected component of $\MG_{\ell+1,\ell+1}$. Let $K \in \MG_{\ell,\ell+1}$ denote a Greville cell and let $\beta_K \in {\cal B}^2_\ell$ its associated basis function.  By the nestedness of spline refinement, we note that $\beta_K$ can be expressed in terms of basis functions associated with level $\ell + 1$.  That is, we can write $\beta_K$ uniquely as: $$\beta_K = \sum_{\beta' \in {\cal B}^2_{\ell+1}} c_{\beta'} \beta'$$
where $c_{\beta'} \in \mathbb{R}$ for all $\beta' \in {\cal B}^2_{\ell+1}$.  We refer to those functions $\beta' \in {\cal B}^2_{\ell+1}$ for which $c_{\beta'} \neq 0$ as the children of $\beta_K$.  As the B-spline basis functions in ${\cal B}^2_{\ell+1}$ are non-negative and form a partition of unity, it holds that the coefficients $c_{\beta'}$ are all non-negative, and the coefficients associated with the children are all positive.  It follows that the children of $\beta_K$ are fully supported within the support of $\beta_K$.  Since $\beta_K$ is completely supported in $\Omega_{\ell+1}$, all the children of $\beta_K$ are also supported in $\Omega_{\ell+1}$. Consequently, the Greville cells associated with the children of $\beta_K$ belong to $\MG_{\ell+1,\ell+1}$. It is easily shown that the closure of the union of the Greville cells associated with the children of $\beta_K$ contains $K$ and is connected, thus the desired result follows.

\item The result is a trivial consequence of point i) in Lemma~\ref{lemma:one-cell}.
\item The arguments of the proof work for each connected component of $\Omega_{\ell+1}$, so we can assume without loss of generality that $\Omega_{\ell+1}$ is connected. The existence of a connected component of $\MG_{\ell,\ell+1}$ contained in $\Omega_{\ell+1}$ is an immediate consequence of Assumption~\ref{ass:support} and the previous two points of this proposition. The proof of uniqueness, instead, is rather technical.

Let us assume that uniqueness does not hold and, without loss of generality, that there are only two connected components of $\MG_{\ell,\ell+1}$ which we denote by $C_1$ and $C_2$. With some abuse of notation, we denote by $\supp(C_k)$ the union of the supports of the basis functions associated to the entities (vertices, edges, cells) in $C_k$ and by
 $\supp (\overline{C_k})$ the union of the supports considering also the entities on the boundary of $C_k$.  By construction, $\supp (\overline{C_k})$ contains one more element in each direction than $\supp(C_k)$.
By Assumption~\ref{ass:support}, we know that $\Omega_{\ell+1}$ is covered by $\supp(C_1) \cup \supp(C_2)$, and since $\supp(C_1) \cup \supp(C_2)$ is connected, there is at least one B\'ezier element $Q \subset \supp(C_1) \cap \supp(\overline{C_2})$\footnote{With more connected components, for each $C_i$ there exists a $C_j$ such that the intersection in one element holds true.}. As a consequence, there exists at least one Greville cell $K_1 \subset C_1$ such that $K_1 \subset \MG_{\ell,\tilde Q}$ and another Greville cell $K_2 \subset C_2$ such that $K_2 \subset \MG'_{\ell,\tilde Q}$. We will prove that Lemma~\ref{lemma:supp} and Assumption~\ref{ass:overlap} prevent the coexistence of two such cells, resulting in a contradiction.

To proceed forward, we establish some terminology using the index notation introduced earlier.  Let $i_{left}$ and $i_{right}$ denote the column indices associated with the leftmost and rightmost Greville cells, respectively, of $C_1 \cap \MG_{\ell,\tilde Q}$, and let $j_{bottom}$ and $j_{top}$ denote the row indices associated with the bottommost and topmost Greville cells of $C_1 \cap \MG_{\ell,\tilde Q}$.  Then, either one of the following two properties hold:
\begin{enumerate}
\item The Greville cell $K_2 \subset C_2$ satisfies $K_2 = K_{i_2,j_2}$ with $i_{left} \leq i_2 \leq i_{right}$ or $j_{bottom} \leq j_2 \leq j_{top}$.
\item The Greville cell $K_2 \subset C_2$ satisfies $K_2 = K_{i_2,j_2}$ with either $i_2 < i_{left}$ or $i_2 > i_{right}$, and also $j_2 < j_{bottom}$ or $j_2 > j_{top}$.
\end{enumerate}

%% \begin{enumerate}
%% \item The Greville cell $K_2 \subset C_2$ satisfies $K_2 = K_{i_2,j_2}$ with $i_{left} \leq i_2 \leq i_{right}$ and $j_{bottom} \leq j_2 \leq j_{top}$.
%% \item The Greville cell $K_2 \subset C_2$ satisfies $K_2 = K_{i_2,j_2}$ with at least one of $i_2 < i_{left}$, $i_2 > i_{right}$, $j_2 < j_{bottom}$, $j_2 > j_{top}$.
%% \end{enumerate}

Let us assume first that property (a) listed above holds, and in particular that $K_2 = K_{i_2,j_2}$ with $i_{left} \leq i_2 \leq i_{right}$, the other case being analogous.  Since $i_{left} \leq i_2 \leq i_{right}$ and $C_1 \cap \MG_{\ell,\tilde Q}$ is connected, we know that there exists a $j*$ such that $j_{bottom} \leq j* \leq j_{top}$ and $K_{i_2,j*} \in C_1 \cap \MG_{\ell,\tilde Q}$.  By Lemma~\ref{lemma:supp}, we can then form a column of Greville cells in $\MG_{l,l+1}$ from $K_2 = K_{i_2,j_2}$ to $K_{i_2,j*}$, and as a consequence, $C_1$ and $C_2$ are connected in contradiction with our earlier assumption.

%% Let us assume first that property (a) listed above holds.  Then $K_2 = K_{i_2,j_2}$ with $i_{left} \leq i_2 \leq i_{right}$ and $j_{bottom} \leq j_2 \leq j_{top}$.  Since $i_{left} \leq i_2 \leq i_{right}$ and $C_1 \cap \MG_{\ell,\tilde Q}$ is connected, we know that there exists a $j*$ such that $j_{bottom} \leq j* \leq j_{top}$ and $K_{i_2,j*} \in C_1 \cap \MG_{\ell,\tilde Q}$.  By Lemma~\ref{lemma:supp}, we can then form a column of Greville cells in $\MG_{l,l+1}$ from $K_2 = K_{i_2,j_2}$ to $K_{i_2,j*}$, and as a consequence, $C_1$ and $C_2$ are connected in contradiction with our earlier assumption. 

Let us now assume that property (b) holds.  More specifically, let us assume that $K_2 = K_{i_2,j_2}$ with $i_2 < i_{left}$ and $j_2 < j_{bottom}$, the other three cases being analogous.  The Greville cells of $C_1 \cap \MG_{\ell,\tilde Q}$ whose row index is $i_{left}$ are the leftmost cells of $C_1 \cap \MG_{\ell,\tilde Q}$, and by Lemma~\ref{lemma:supp}, these cells form a single column.  Consequently, we may denote these cells by $K_{i_{left},j}$ with $j_* \le j \le j^*$, and in particular $j_2 < j_{bottom} \le j_*$.

%% Let us now assume that property (b) holds.  More specifically, let us assume that $K_2 = K_{i_2,j_2}$ with $i_2 < i_{left}$.  The Greville cells of $C_1 \cap \MG_{\ell,\tilde Q}$ whose row index is $i_{left}$ are the leftmost cells of $C_1 \cap \MG_{\ell,\tilde Q}$, and by Lemma~\ref{lemma:supp}, these cells form a single column.  Consequently, we may denote these cells by $K_{i_{left},j}$ with $j_* \le j \le j^*$.  By Lemma~\ref{lemma:supp}, we cannot have that $j_* \le j_2 \le j^*$, otherwise $C_1$ and $C_2$ would be connected.  Thus, either $j_2 < j_*$ or $j_2 > j^*$.  We assume moving forward that $j_2 < j_*$ as the case $j_2 > j^*$ may be handled in an analogous manner.

Consider the cells $K_{i_{left},j}$ for $j_* \leq j \leq j^*$.  By construction, the cells to the left of these, namely $K_{i_{left}-1,j}$, cannot be in $C_1$ except when $i_{left} = 1$.  However, even in this latter case, we can use the support condition as in Lemma~\ref{lemma:supp} to show $C_1$ and $C_2$ would be connected if there were both cells $K_{0,j}$ in $C_1$ and a cell $K_2 = K_{0,j_2}$ in $C_2$, yielding a contradiction.  Therefore, we can safely assume that the cells $K_{i_{left}-1,j}$ are not in $C_1$ for $j_* \leq j \leq j^*$.  Since $K_{i_{left}-1,j_*}$ is not in $C_1$, there exists at least one B\'ezier element $Q_L \equiv Q_{i_{left},j}$, with $j_*+1 \le j \le j_*+p$, within the support of its associated basis function such that it is contained in $\M_{\ell,\tilde Q}$ but not in $\Omega_{\ell+1}$.  We may repeat the same argument to show that $K_{i_{left},j_*-1}$ is not in $C_1$, and there is another element $Q_D \equiv Q_{i,j_*}$, with $i_{left}+1 \le i \le i_{left}+p$, within the support of the basis function associated with $K_{i_{left},j_*-1}$ such that it is in $\M_{\ell,\tilde Q}$ but not in $\Omega_{\ell+1}$. We have graphically depicted the previous steps of the proof in Figure~\ref{fig:point-i)} for the worst case configuration, in which the distance between the cells $K_{i_{left},j_*}$ and $K_{i_2,j_2}$ inside is the maximum possible.

\begin{figure}[ht]
%\begin{subfigure}[]{
\includegraphics[width=0.49\textwidth,trim=1cm 1cm 1cm 1cm, clip]{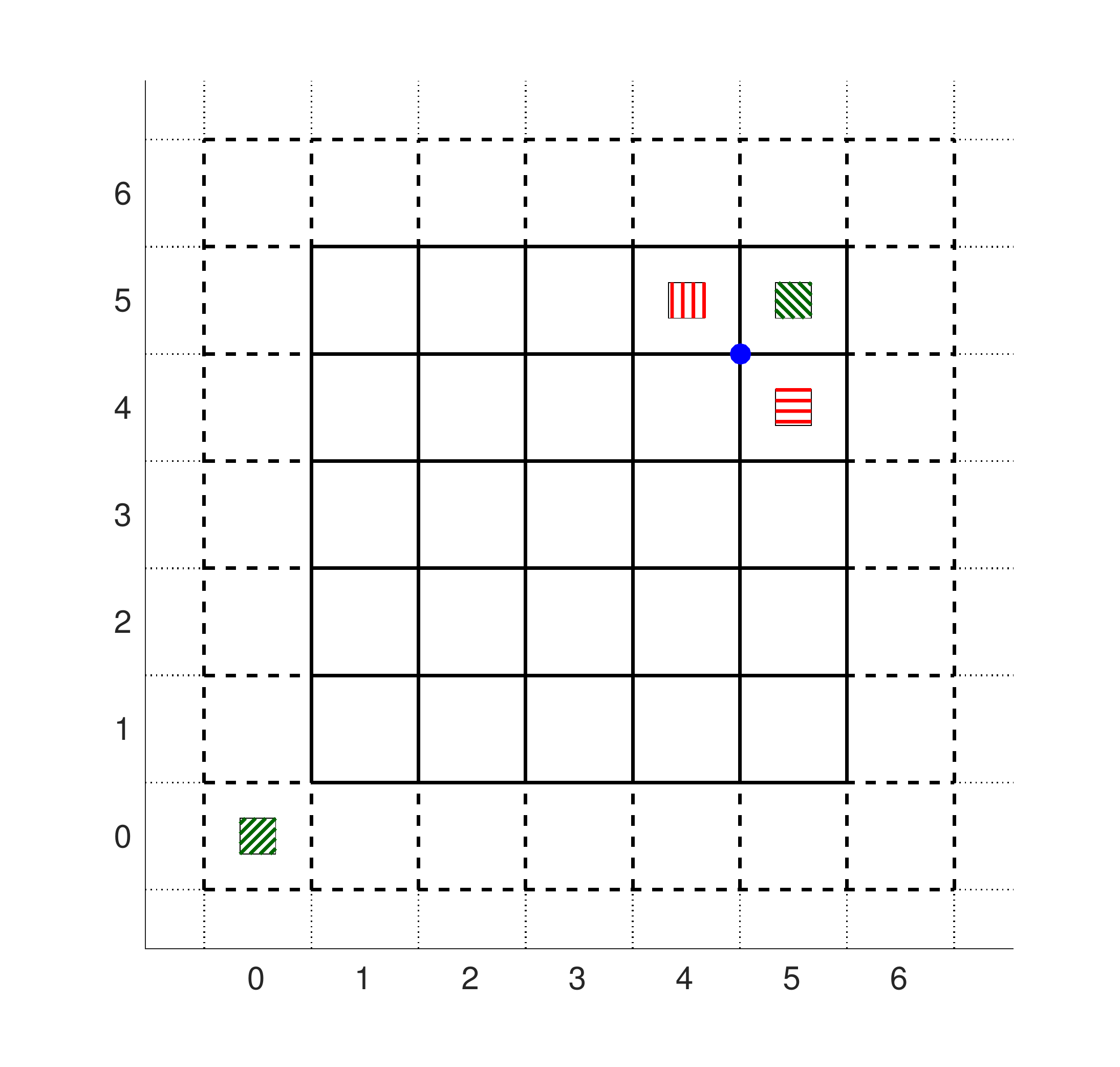}
%}
%\end{subfigure}
%\begin{subfigure}[]{
\includegraphics[width=0.49\textwidth,trim=1cm 1cm 1cm 1cm, clip]{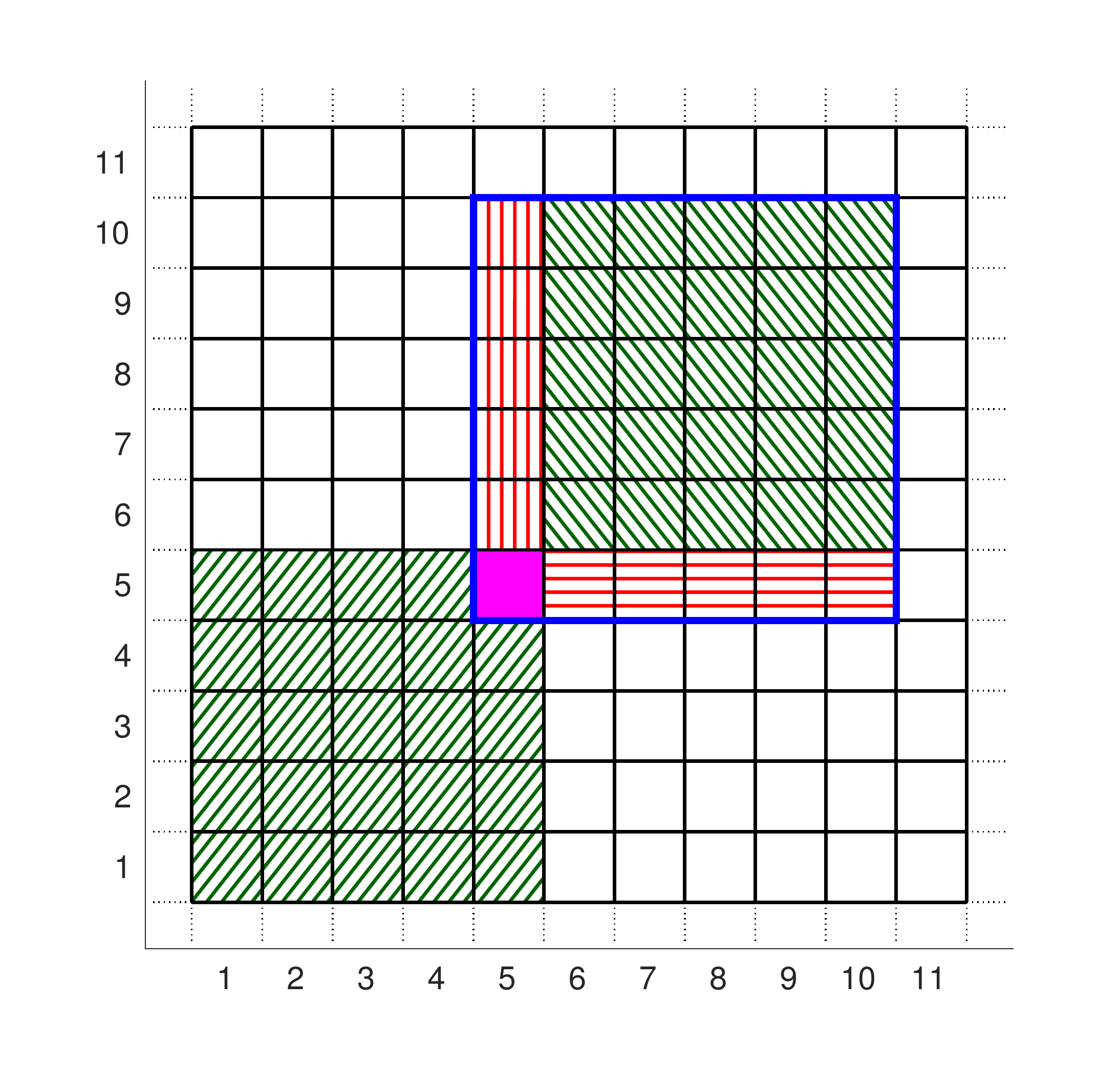}
%}
%\end{subfigure}
%\includegraphics[width=\textwidth]{}
\caption{Graphical representation of the proof of Proposition~\ref{prop:cc}, with a configuration corresponding to degree $p=5$, as in Figure~\ref{fig:qtilde}. The Greville mesh $\MG'_{\ell,\tilde Q}$ is shown on the left, while on the right we show the B\'ezier submesh $\M_{\ell,\tilde Q}$. The two green diagonally hatched Greville cells in the left figure correspond to $K_{i_{left},j_*}$ (upper-right) and $K_{i_2,j_2}$ (bottom-left), while the supports of the associated 2-form basis functions, which are contained in $\Omega_{\ell+1}$, are also diagonally hatched in the right figure. The other two Greville cells correspond to $K_{i_{left}-1,j_*}$ (red vertically hatched) and $K_{i_{left},j_*-1}$ (red horizontally hatched), while on the right we have used the same pattern to draw the region of the support of the associated basis functions that is not completely contained in $\Omega_{\ell+1}$, and where the elements $Q_L$ and $Q_D$ lie. The purple element on the right corresponds to $Q_{i_{left},j_*}$, which is contained in $\Omega_{\ell+1}$. Finally, the blue vertex in the left figure is associated to a 0-form basis function, whose support is highlighted by the blue rectangle in the right figure. Since the diagonally hatched regions and $Q_{i_{left},j_*}$ are contained in $\Omega_{\ell+1}$, its overlap is not connected, which contradicts Assumption~\ref{ass:overlap}.}
\label{fig:point-i)}
\end{figure}

Now consider the basis function associated to the lower-left vertex of $K_{i_{left},j_*}$.  The support of this basis function subsumes the supports of the basis functions associated with cells $K_{i_{left}-1,j_*}$ and $K_{i_{left},j_*-1}$, thus the support contains both $Q_L$ and $Q_D$.  By Assumption ~\ref{ass:overlap}, it follows that $Q_{i_{left},j_*}$ must not be in $\Omega_{\ell+1}$, otherwise the overlap of the basis function associated to the lower-left vertex of $K_{i_{left},j_*}$ will not be connected (see Figure~\ref{fig:point-i)}).  This is in contradiction with the presence of a cell $K_2 \equiv K_{i_2,j_2}$ in $C_2$ with $0\le i_2 < i_{left}$ and $0 \le j_2 < j_*$ since the element $Q_{i_{left},j_*}$ belongs to the support of the basis function associated with this cell and therefore should be in $\Omega_{\ell+1}$.

We may repeat the same arguments as above to show that a contradiction arises if we assume that $i_2 > i_{right}$, $j_2 < j_{bottom}$, or $j_2 > j_{top}$.  Thus, we have arrived at the desired result.
\end{enumerate}
\end{proof}

We now prove the analogous result for the holes. Before proceeding, let us denote the complementary regions of the subdomains of interest by $\Omega^c_{\ell+1} := \Omega_0 \setminus \overline \Omega_{\ell+1}$, $\MG^c_{\ell,\ell+1} := \Omega_0 \setminus \overline \MG_{\ell,\ell+1}$ and $\MG^c_{\ell+1,\ell+1} := \Omega_0 \setminus \overline \MG_{\ell+1,\ell+1}$. Since we are in the planar two-dimensional case, we can define a hole of $\Omega_{\ell+1}$ as a connected component $\hole \subset \Omega^c_{\ell+1}$ such that $\partial \hole \cap \partial \Omega_0 = \emptyset$. The holes for the Greville subgrids are defined analogously.

\begin{proposition}\label{prop:holes}
The following statements hold:
\begin{enumerate}[i)]
\item Every hole of $\MG_{\ell+1,\ell+1}$ contains at least one hole of $\Omega_{\ell+1}$.
\item Every hole of $\MG_{\ell,\ell+1}$ contains at least one hole of $\MG_{\ell+1,\ell+1}$.
\item Under Assumptions~\ref{ass:support} and~\ref{ass:overlap}, every hole of $\MG_{\ell,\ell+1}$ contains exactly one hole of $\Omega_{\ell+1}$.
\item Under Assumptions~\ref{ass:support} and~\ref{ass:overlap}, every hole of $\Omega_{\ell+1}$ is contained in a hole of $\MG_{\ell,\ell+1}$.
\end{enumerate}
\end{proposition}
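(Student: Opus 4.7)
The plan is to prove iv) by contradiction, mirroring the structure of the proof of point iii) of Proposition~\ref{prop:cc}. First I would record the easy inclusion $\overline{\MG}_{\ell,\ell+1}\subset\overline{\Omega}_{\ell+1}$, which follows from point i) of Lemma~\ref{lemma:one-cell} applied to each Greville cell of $\MG_{\ell,\ell+1}$ together with the fact that its associated 2-form basis function has its entire support in $\Omega_{\ell+1}$. Consequently $\Omega^c_{\ell+1}\subset\MG^c_{\ell,\ell+1}$, so any hole $\eta$ of $\Omega_{\ell+1}$ lies in a unique connected component $C$ of $\MG^c_{\ell,\ell+1}$, and it suffices to prove $\partial C\cap\partial\Omega_0=\emptyset$.

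Suppose for contradiction that $\partial C\cap\partial\Omega_0\neq\emptyset$. Then there exists a piecewise-linear path $\gamma:[0,1]\to\overline C$, meeting B\'ezier element boundaries transversally, with $\gamma(0)\in\eta$ and $\gamma(1)\in\partial\Omega_0$. Since $\eta$ is a connected component of $\Omega^c_{\ell+1}$ bounded away from $\partial\Omega_0$, the path must exit $\eta$ through $\partial\eta\subset\partial\Omega_{\ell+1}$ into some B\'ezier element $Q_0\in\M_{\ell,\ell+1}$; and whenever $\gamma(t)$ subsequently lies in $\overline{\Omega}_{\ell+1}$, it must in fact lie in the boundary layer $\overline{\Omega}_{\ell+1}\setminus\overline{\MG}_{\ell,\ell+1}$, because $C\subset\MG^c_{\ell,\ell+1}$. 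Tracing the elements visited by $\gamma$ thus yields a chain in $\M_{\ell,\ell+1}$, possibly interleaved with excursions through intermediate components of $\Omega^c_{\ell+1}$, joining $\eta$ to a component of $\Omega^c_{\ell+1}$ that touches $\partial\Omega_0$.

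The decisive step, and the main obstacle, is to locate along this chain a 0-form basis function $\beta^0\in{\cal B}^0_\ell$ whose overlap $\supp(\beta^0)\cap\Omega^c_{\ell+1}$ is disconnected, thereby contradicting Assumption~\ref{ass:overlap}. Assumption~\ref{ass:support} forces each B\'ezier element $Q_i$ of the chain to lie in the support of some $\beta\in S\subset{\cal B}^2_{\ell,\ell+1}$, so that $Q_i$ sits next to a Greville cell of $\MG_{\ell,\ell+1}$; this produces a skeleton of $\overline{\MG}_{\ell,\ell+1}$ straddling the path. A careful element-by-element analysis in the spirit of Lemma~\ref{lemma:supp} and the second half of the proof of point iii) of Proposition~\ref{prop:cc} would then pinpoint two consecutive elements $Q_i,Q_{i+1}$ and a shared vertex whose 0-form support meets $\eta$ on one side and the outer region (or another hole) on the other side, through a corridor avoiding $\overline{\MG}_{\ell,\ell+1}$; this is the required overlap failure. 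The hard part is the bookkeeping, since the chain may pass through several intermediate components of $\Omega^c_{\ell+1}$, so the local argument must be iterated along each successive component. A conceptually cleaner alternative would be to bypass this case analysis by a cardinality count: Proposition~\ref{prop:cc} together with the already-established point iii) give an injection $\phi:\mathrm{holes}(\MG_{\ell,\ell+1})\to\mathrm{holes}(\Omega_{\ell+1})$, so it would suffice to show $\chi(\overline{\MG}_{\ell,\ell+1})=\chi(\overline{\Omega}_{\ell+1})$, which in principle can be established from Assumptions~\ref{ass:support}--\ref{ass:overlap} by matching CW-counts on the Greville subgrid and the B\'ezier submesh.
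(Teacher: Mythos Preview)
Your overall strategy for iv) matches the paper's: show that the connected component $C$ of $\MG^c_{\ell,\ell+1}$ containing the hole $\eta$ cannot reach $\partial\Omega_0$, and derive a contradiction with Assumption~\ref{ass:overlap} along a path. The initial inclusion $\MG_{\ell,\ell+1}\subset\Omega_{\ell+1}$ via Lemma~\ref{lemma:one-cell}\,i) is correct and is essentially how the paper embeds $\eta$ into $C$.

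The gap is that you never carry out the decisive step. You trace a geometric path through \emph{B\'ezier elements}, anticipate that ``a careful element-by-element analysis \ldots\ would then pinpoint'' a $0$-form with disconnected overlap, and explicitly label this ``the hard part''. The Euler-characteristic alternative is likewise left at the level of ``in principle can be established''. So as written the proposal stops exactly where the actual work begins.

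The paper sidesteps your bookkeeping entirely by staying on the \emph{Greville} side. Since $C=\eta^\MG$ is a connected union of Greville cells, one takes a path of \emph{adjacent Greville cells} inside $\eta^\MG$: it starts at a cell in $\tilde\eta^\MG:=\bigcup_{Q\subset\eta}\MG_{\ell,\tilde Q}$ (whose associated $2$-form support meets $\eta$), and ends at a Greville cell adjacent to $\partial\Omega_0$. For the latter, the open-knot-vector property forces the support into boundary-adjacent B\'ezier elements, so it meets a component $\eta'\neq\eta$ of $\Omega^c_{\ell+1}$. At this point the argument is \emph{literally} that of part~iii): along the Greville-cell path, two consecutive cells have supports meeting distinct components of $\Omega^c_{\ell+1}$, so the $1$-form basis function on the shared Greville edge has disconnected overlap. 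No B\'ezier-element chain, no boundary-layer analysis, no Euler characteristic.

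Finally, your proposal addresses only iv); parts i)--iii) still require proofs. These are short and follow the pattern of Proposition~\ref{prop:cc}, and in particular iii) furnishes precisely the Greville-path argument you then reuse for iv).
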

\begin{proof} We prove each statement independently.
\begin{enumerate}[i)]
\item As a consequence of point ii) in Proposition~\ref{prop:cc}, any connected component of $\Omega_{\ell+1}^c$ is contained in a connected component of $\MG^c_{\ell+1,\ell+1}$. Let $\hole^\MG \subset \MG^c_{\ell+1,\ell+1}$ be a hole, which is a connected component of $\MG^c_{\ell+1,\ell+1}$ with $\partial \hole^\MG \cap \partial \Omega_0 = \emptyset$.  Thus if $\hole^\MG$ contains a connected component of $\Omega_{\ell+1}^c$, that component must also be a hole. We will show that there is at least one.

Let us consider a function associated to a Greville cell in $\hole^\MG$. Since the cell is not in $\MG_{\ell+1,\ell+1}$, there exists at least one element $Q \in \M_{\ell+1}$ in the support of the associated basis function that is not contained in $\Omega_{\ell+1}$. As a consequence, $\MG_{\ell+1,\tilde Q} \subset \MG^c_{\ell+1,\ell+1}$, and by point ii) in Lemma~\ref{lemma:one-cell} and since $\MG_{\ell+1,\tilde Q}$ is connected, we have $Q \subset \MG_{\ell+1,\tilde Q} \subset \hole^\MG$, which proves the result.
\item Let $\hole^\MG_\ell \subset \MG^c_{\ell,\ell+1}$ be a hole. Reasoning as in point i), and using point i) in Proposition~\ref{prop:cc}, it is readily shown that any connected component of $\MG^c_{\ell+1,\ell+1}$ contained in $\hole^\MG_\ell$ must be a hole. To show that there is at least one, consider a Greville cell in $\hole^\MG_\ell$ and its associated basis function. The support of this function must contain an element $Q \subset \Omega^c_{\ell+1}$, and the function must have a child in ${\cal B}^2_{\ell+1}$ such that its support also intersects $Q$. The Greville cell associated with the child is in $\MG^c_{\ell+1,\ell+1}$, and it must be contained in $\MG_{\ell,\tilde Q} \subset \hole^\MG_\ell$.  This proves the result.
\item Assume that a hole $\hole^\MG \subset \MG^c_{\ell,\ell+1}$ contains more than one hole of $\Omega_{\ell+1}$. For each hole $\hole \subset \Omega_{\ell+1}^c$ we can consider a basis function in ${\cal B}^2_\ell$, with associated Greville cell in $\hole^\MG$, such that its support intersects $\hole$. Since $\hole^\MG$ is connected, we can construct a ``path'' of Greville cells in $\hole^\MG$ between the cells of two such functions corresponding to two different holes in $\Omega_{\ell+1}^c$. The support of every function associated to a Greville cell in this path intersects $\Omega_{\ell+1}^c$, and by Assumption~\ref{ass:overlap}, this intersection is contained in one connected component of $\Omega_{\ell+1}^c$. But since the holes are connected components, there exist two adjacent Greville cells such that the supports of the associated basis functions intersect two different connected components. Then, the support of the basis function in ${\cal B}^1_\ell$ associated to the intermediate edge intersects both, which is a contradiction with Assumption~\ref{ass:overlap}.  This proves the result.
\item Let $\hole \subset \Omega^c_{\ell+1}$ be a hole. Let us take all the B\'ezier elements in $\hole$ and define $\tilde \hole^\MG = \bigcup_{Q \subset \hole} \MG_{\ell,\tilde Q}$, which is connected and contained in $\MG^c_{\ell,\ell+1}$.  By Lemma~\ref{lemma:one-cell}, $\hole \subset \tilde \hole^\MG$. Consider the connected component of $\MG^c_{\ell,\ell+1}$ that contains $\tilde \hole^\MG$ which we henceforth denote as $\hole^\MG$. We want to prove that $\hole^\MG$ is a hole, that is, $\partial \hole^\MG \cap \partial \Omega_0 = \emptyset$.

Assume it is not true, i.e., there exists a Greville cell in $\hole^\MG$ adjacent to the boundary of $\Omega_0$. Since we are using an open knot vector, the support of the corresponding basis function is contained in B\'ezier elements adjacent to the boundary, and since the Greville cell is in $\hole^\MG \subset \MG^c_{\ell,\ell+1}$ and $\hole$ is a hole, one of these elements must be in $\Omega^c_{\ell+1}$ and contained in a connected component $\hole' \ne \hole$. Moreover, there exists a Greville cell in $\tilde \hole^\MG$ such that the support of the associated function in ${\cal B}^2_\ell$ intersects $\hole$, and reasoning as in point iii) we can build a path of Greville cells between this cell and the one adjacent to the boundary, which leads to a contradiction with Assumption~\ref{ass:overlap}, because $\hole$ and $\hole'$ are not connected.
\end{enumerate}
\end{proof}

The proof of Theorem~\ref{th:homology} is a direct consequence of Propositions~\ref{prop:cc} and~\ref{prop:holes}.

\begin{remark}
It is likely that the proof of Theorem~\ref{th:homology} can be simplified, and generalized to arbitrary dimension, with a more intelligent use of homology theory. All our attempts in this direction have failed due to the difficulties related to having three different grids defined on different subdomains. That is the reason why we restricted ourselves to the planar two-dimensional case.%, {\Rd and also the reason to the ``weird'' definition of hole.}
\end{remark}

%%%%%%%%%%%%%%%%%%%%%%%%%%%%%%%%%%%%%%%%%%%%%%%%%%%%%%%%%%%%
\section{Proof of Lemma~\ref{lemma:curves}, point ii)}\label{sec:appendixb}
%%%%%%%%%%%%%%%%%%%%%%%%%%%%%%%%%%%%%%%%%%%%%%%%%%%%%%%%%%%%

For the proof we can assume, without loss of generality, that $\Omega_{\ell+1}$ is connected. If it is not, the procedure of the proof can be applied to each connected component separately.

Let $\Gamma_0$ be the most external boundary of $\Omega_{\ell+1}$, and let $\Gamma_1, \ldots, \Gamma_{\nh}$ be the other connected components of the boundary. By Proposition~\ref{prop:holes}, the sets $\Gamma_0, \ldots, \Gamma_{\nh}$ are in a one-to-one correspondence with the boundaries $\Gamma^\MG_j$ in the Greville subgrid of Figure~\ref{fig:harmonic}. Let us denote by ${\bf x}_j \in \Gamma_j$ their respective lowermost (and leftmost) corners, and assume that the $\Gamma_j$ are ordered from the position of ${\bf x}_j$ starting from the lowermost and leftmost, moving first horizontally and then vertically. Since we are in the planar case, and $\Omega_{\ell+1}$ is a manifold with boundary from Lemma~\ref{lemma:omwb}, each $\Gamma_j$ is associated to a hole $\hole_j$. We define $\gamma_j$, for $j = 1, \ldots, \nh$, as the straight line that, starting from ${\bf x}_j$, goes downwards until it encounters another boundary $\Gamma_k$, with $0 \le k < j$. Recalling that ${\boldsymbol \zeta}^1_j$ are the generators of the first cohomology group for splines, $H^1(\Xhat_{\ell,\ell+1})$, to prove the result it is enough to show that:
\begin{enumerate}[a)]
\item $\displaystyle \int_{\gamma_j} {\boldsymbol \zeta}^1_j \cdot d{\bf s}> 0$, \quad for $j = 1, \ldots, \nh$,
\item $\displaystyle \int_{\gamma_j} {\boldsymbol \zeta}^1_k \cdot d{\bf s} = 0$, \quad for $1 \le j < k \le \nh$,
\end{enumerate}
since in this case the matrix given by the coefficients $\left\{\int_{\gamma_j} {\boldsymbol \zeta}^1_k \cdot d{\bf s} \right\}_{j,k=1}^{\nh}$ is a lower triangular matrix with positive diagonal entries and hence non-singular.

To prove a), let us consider a curve $\gamma_j$ and the associated harmonic function ${\boldsymbol \zeta}^1_j$ which is constructed as in Figure~\ref{fig:harmonic}. Note that the B\'ezier element above and to the right of ${\bf x}_j$ is not contained in $\Omega_{\ell+1}$.  Moreover, note the support of any basis function associated to a degree of freedom pointing upwards in Figure~\ref{fig:harmonic} (right) is contained in $\Omega_{\ell+1}$ and located above the aforementioned element.  Hence, such a basis function does not intersect $\gamma_j$. Alternatively, it is trivial to see that there is at least one function associated to a degree of freedom pointing downwards such that its support intersects $\gamma_j$. Since both $\gamma_j$ and the function are oriented downwards, we have obtained a).

To prove b), let us consider the same curve $\gamma_j$ but a different harmonic function ${\boldsymbol \zeta}^1_k$ with $k > j$ built as in Figure~\ref{fig:harmonic} for a different hole. From the ordering of $\Gamma_j$, and reasoning as in point a), the support of the functions associated to degrees of freedom pointing upwards cannot intersect $\gamma_j$. Assume now that there is at least one degree of freedom pointing downwards such that the support of its associated basis functions intersects $\gamma_j$. In this case, consider the Greville point above the chosen Greville edge, which is in $\Gamma^\MG_k \subset \partial \MG_{\ell,\ell+1}$, and its associated basis function, which is in $\Xhat^0_\ell$ but not in $\Xhat^0_{\ell,\ell+1}$. Obviously, the support of this function intersects the hole $\hole_k$, and since it contains the support of the function associated to the Greville edge, it also intersects $\gamma_j$. Finally, since $k > j$, from the ordering of the boundaries and the construction of $\gamma_j$, the support of this function must also intersect the hole $\hole_j$.  Since the holes are connected components of $\Omega_{\ell+1}^c$ this contradicts Assumption~\ref{ass:overlap}, which ends the proof.

\end{appendices}

\bibliographystyle{plain}
\bibliography{biblio}

\end{document}